\numberwithin{equation}{section}
\def\e{\varepsilon}
\def\epsilon{\varepsilon}
\newcommand{\ol}{\overline}
\newcommand{\wt}{\widetilde}
\def\alb#1\ale{\begin{align*}#1\end{align*}}
\newcommand{\eqb}{\begin{equation}}
\newcommand{\eqe}{\end{equation}}
\DeclareMathOperator{\dist}{dist}
\newcommand{\bbC}{\mathbb{C}}
\newcommand{\bbD}{\mathbb{D}}
\newcommand{\bbE}{\mathbb{E}}
\newcommand{\bbH}{\mathbb{H}}
\newcommand{\bbR}{\mathbb{R}}
\newcommand{\bbP}{\mathbb{P}}
\newcommand{\QD}{\mathrm{QD}}
\newcommand{\QT}{\mathrm{QT}}
\newcommand{\LF}{\mathrm{LF}}
\newcommand{\SLE}{\mathrm{SLE}}
\newcommand{\IG}{\mathrm{IG}}
\newcommand{\Wd}{\mathrm{Weld}}
\newcommand{\Md}{{\mathcal{M}}^\mathrm{disk}}
\newtheorem{theorem}{Theorem}[section]
\newtheorem{lemma}[theorem]{Lemma}
\newtheorem{proposition}[theorem]{Proposition}
\newtheorem*{proposition*}{Proposition}
\newtheorem{corollary}[theorem]{Corollary}
\newtheorem*{corollary*}{Corollary}
\newtheorem{remark}[theorem]{Remark}
\newtheorem*{thma}{Theorem A}
\newtheorem{definition}[theorem]{Definition}
\newtheorem*{definitions*}{Definitions}
\newtheorem*{example*}{\bf Example}
\numberwithin{equation}{section}
\title{SLE partition functions via conformal welding of random surfaces}
\author{Xin Sun\thanks{Beijing International Center for Mathematical Research, Peking University, \href{mailto:xinsun@bicmr.pku.edu.cn}{xinsun@bicmr.pku.edu.cn}}  \qquad Pu Yu\thanks{Courant Institute of Mathematical Sciences, New York University, \href{mailto:py628@nyu.edu}{py628@nyu.edu}}}
\begin{document}

\maketitle

\begin{abstract}
SLE curves describe the scaling limit of interfaces from many 2D lattice models. 
Heuristically speaking, the SLE partition function is the continuum counterpart of the partition function of the corresponding discrete model. It is well known that conformally welding of Liouville quantum gravity (LQG) surfaces gives SLE curves as the interfaces.  
In this paper, we demonstrate in several settings how the SLE partition function  arises from conformal welding of LQG surfaces. The common theme is that we  conformally weld a collection of canonical LQG surfaces which produces a topological configuration with   {a random conformal structure}.   {Conditioning on the conformal modulus}, the surface after welding is described by Liouville conformal field theory (LCFT), and the density of   {the random modulus} contains the SLE partition function for the interfaces as a multiplicative factor.
The settings we treat includes the multiple SLE for $\kappa\in (0,4)$, the flow lines of imaginary geometry on the disk with boundary marked points, and the boundary Green function. These results demonstrate an alternative approach to construct and study the SLE partition function, which complements the traditional method based on stochastic calculus and differential equation. 
\end{abstract}

\section{Introduction}
Two dimensional conformal random geometry has been an active area of research in probability theory over the past two decades, and three central topics in this area are Schramm-Loewner evolution (  {$\SLE$}), Liouville quantum gravity (LQG) and Liouville conformal field theory (LCFT). $\SLE$  is an important 
family of random non-self-crossing curves   {with a single parameter $\kappa$} introduced by Schramm~\cite{Sch00},
which have been proved or conjectured to describe the scaling limits of a large class of two-dimensional lattice models at criticality, e.g.~\cite{smirnov2001critical,lawler2011conformal,schramm2009contour,chelkak2014convergence}. LQG, as introduced by Polyakov~\cite{polyakov1981quantum}, is a  canonical one-parameter family of random surfaces describing the scaling limits of random planar maps, see e.g.~\cite{LeGall13, BM17,HS19,gwynne2021convergence}. LCFT is the 2D quantum field theory which is made rigorous by~\cite{DKRV16} and follow up works%~\cite{HRV-disk,david2016liouville,guillarmou2019polyakov,remy2018liouville}
, and has natural connections with LQG (e.g.~\cite{AHS17,cercle2021unit,AHS21}.) See~\cite{Law08,vargas-dozz,GHS19,BP21,gwynne2020random,She22} for more background on these topics.

Based on~\cite{She16a,DMS14}, a fundamental connection between LQG and SLE is that the SLE curves arise as the interface of conformal welding of LQG surfaces. Conformal welding results from~\cite{She16a,DMS14} are mainly for infinite area surfaces. In~\cite{AHS20}, such results were extended to  a canonical class of finite area LQG surfaces called (two-pointed) quantum disks. Recently, it was realized in~\cite{AHS21,ASY22} that LQG surfaces defined in terms of LCFT include well-studied LQG surfaces such as the  quantum disks, and greatly extends the family of LQG surfaces that are well behaved under conformal welding.

  {As suggested by Lawler~\cite{lawler2009partition}}, a key aspect of SLE is the partition function. The SLE curves describe the scaling limit of interfaces from certain 2D  {statistical physics} models. Heuristically speaking, the SLE partition function is the continuum counterpart of the partition function of the corresponding discrete model. Based on this intuition, it has been defined rigorously   for several variants of SLE. For example, in~\cite{dubedat09partition}, the partition function was introduced for the $\SLE_\kappa(\underline \rho)$, which were later realized as flow lines in imaginary geometry~\cite{MS16a,ig4}. Another well-studied example is  the multiple SLE~\cite{bauer2005multiple,kozdron2006configurational,dubedat2007commutation,graham2007multiple,lawler2009partition,kytola2016pure}, which is a canonical coupling of multiple SLE curves.  
In addition, the SLE Green function~\cite{Lawler2015g,zhan17g,Zhan22b} can  be viewed as the partition function for SLE curves conditioned on hitting some given marked points. 
In many cases, the SLE partition function  can be expressed explicitly and are closely related to  conformal field theory~\cite{bauer2004conformal,peltola2019toward}.

In this paper, we demonstrate in several settings how the SLE partition function  arises from conformal welding of LQG surfaces. The common theme is that when the conformal welding produces a topological configuration with   {a random} conformal structure, the density of   {the random modulus} is given by the LCFT partition functions times the SLE partition function for the interfaces.  Our Theorem~\ref{thm:main} is an example of such  {a result} for the multiple $\SLE$ with  $\kappa\in (0,4)$, where LQG disks are  conformal welded together according to certain topological rules. Theorem~\ref{thm:N=2} is a similar result for the hypergeometric SLE studied in~\cite{Zhan-hSLE,wu2020hypergeometric}\footnote{The name hypergeometric SLE was originally used in~\cite{Qian-hSLE,Qian-hSLE2} to denote a broader class of SLE curves.}. 
Theorem~\ref{thm:IG} is the corresponding result for boundary imaginary geometry flow  {lines} considered in~\cite{MS16a}, where the LQG surfaces welded together are the quantum triangles introduced in~\cite{ASY22}.  
Theorem~\ref{thm:SLE-Green} is for the SLE boundary Green's function in~\cite{Zhan22b}. In Theorem~\ref{thm:green-rho}, we further extend it to the  2-point boundary Green's function for $\SLE_\kappa(\rho)$ considered by~\cite{Zhan21}.
{Our results are consistent with and enrich the picture that SLE coupled with LQG desribes the scaling limit of random planar maps decorated with statistical physics models.}
As our results show, LQG can be used to construct and study the SLE partition function, which complements the traditional method based on stochastic calculus and differential equations. 

The rest of the paper is organized as follows. In Sections~\ref{subsec:intro-sle} and~\ref{subsec:intro-lqg}, we briefly recall backgrounds on SLE, LQG, and conformal welding. In Section~\ref{subsec:intro-result} we state Theorems~\ref{thm:main}-\ref{thm:green-rho}. In Section~\ref{subsec:outlook}, we discuss future directions. We provide a detailed preliminary in Section~\ref{sec:pre} and prove the theorems in Sections~\ref{sec:proof}-\ref{sec:green}.

\subsection{Schramm-Loewner evolution}\label{subsec:intro-sle}
%We start with some brief preliminaries, while a more detailed introduction is given in Section~\ref{subsec:pre-sle}. 
The chordal $\SLE_\kappa$ in the upper half plane $\bbH$ is a probability measure $\mu_\bbH(0,\infty)$ on non-crossing curves from 0 to $\infty$ which is scale invariant and satisfies the domain Markov property. The $\SLE_\kappa$ curves are simple when $\kappa\in(0,4]$, non-simple and non-space-filling for $\kappa\in(4,8)$, and space-filling when $\kappa\ge8$. 
By conformal invariance, for a simply connected domain $D$ and $z,w\in\partial D$ distinct, one can define the $\SLE_\kappa$ probability measure $\mu_D(z,w)^\#$ on $D$ by taking conformal maps $f:\bbH\to D$ where $f(0)=z$, $f(\infty)=w$. For $\rho^-,\rho^+>-2$, $\SLE_\kappa(\rho^-;\rho^+)$  {is} a classical variant of $\SLE_\kappa$, which  {was} introduced in~\cite{LSW03} and studied in e.g.~\cite{Dub05,MS16a}.

{It is natural to extend the notion of SLE to describe multiple random curves, which gives rise to the notion of \emph{multiple SLE}~\cite{bauer2005multiple}. There are two canonical formulations of the multiple SLE. The first approach is to work with the time evolution of several $\SLE_\kappa$ curves via Loewner equations, which lead to the local multiple SLEs~\cite{dubedat2007commutation,graham2007multiple,kytola2016pure}. The second formulation, as considered in~\cite{kozdron2006configurational,lawler2009partition} and known as the global multiple SLE, is to weight the law of independent $\SLE_\kappa$ curves by a term which can be written down in terms of the Brownian loop measure.}

Now we briefly recall the construction of global multiple $\SLE_\kappa$ in~\cite{peltola2019global} when $\kappa\le4$. For $N>0$, consider $N$ disjoint simple curves in $\overline{\bbH}$ connecting $1,2,...,2N\in\partial\bbH$. Topologically, these $N$ curves form a planar pair
partition, which we call a link pattern and denote by  $\alpha=\{\{i_1,j_1\},...,\{i_N,j_N\}\}$. The pairs $\{i,j\}$ in $\alpha$ are called links, and the set of link patterns with $N$ links is denoted by $\mathrm{LP}_N$. Let $D$ be a simply connected domain $D\subset\bbC$ with $2N$ marked points $x_1,...,x_{2N}$ on the boundary in counterclockwise order. % let $\bbP_{\alpha}^{\otimes N}(D;x_1,...,x_{2N}):=\otimes_{k=1}^N\mu_D(x_{i_k},x_{j_k})^\#$ be the law of $N$ independent chordal $\SLE_\kappa$ in $D$ following the link pattern $\alpha$. 
 Let $X_0^\alpha(D;x_1,...,x_{2N})$  {be the
set of all $N$-tuples $(\eta_1,...,\eta_N)$ of disjoint continuous curves} in $\overline{D}$ which does not intersect $\partial D$ except at the starting and ending points such that for each $1\le k\le N$, $\eta_k$ links $x_{i_k}$ with $x_{j_k}$. Then the \emph{global $N$-$\SLE_\kappa$ associated to $\alpha$}, is  {the} probability measure on $(\eta_1,...,\eta_N)\subset X_0^\alpha(D;x_1,...,x_{2N})$ such that for each $1\le k\le N$, given $\eta_1,...,\eta_{k-1},\eta_{k+1},...,\eta_N$, the conditional law of $\eta_k$ is the chordal $\SLE_\kappa$ in $D\backslash\{\eta_1,...,\eta_{k-1},\eta_{k+1},...,\eta_N\}$ connecting $x_{i_k}$ and $x_{j_k}$. By~\cite{beffara2021uniqueness}, these conditional laws uniquely specify joint law of the $N$ curves, which we denote by $\mathrm{mSLE}_{\kappa,\alpha}(D;x_1,...,x_{2N})^\#$. This resampling uniqueness follows from a Markov chain mixing argument, which originated from~\cite{MS16b}, later extended to  {the case when the $N$ curves are not necessarily disjoint from each other} in~\cite{MSW19} and more recently generalized to possibly infinite measures in~\cite{Yu22,Zhan23} via Markov chain  {irreducibility} from~\cite{Meyn-Tweedie}.  The global $N$-$\SLE_\kappa$ is constructed in~\cite[Section 3]{peltola2019global} via the Brownian loop measure and the conformal restriction property of $\SLE$~\cite{LSW03}, and has been extended to radial and multiply connected domains~\cite{jahangoshahi2018multiple,healey2021n}. 

 For a link pattern $\alpha\in\mathrm{LP}_N$, the corresponding  \emph{pure partition function} $\mathcal{Z}_\alpha(D;x_1,...,x_{2N})$ can be characterized by a PDE, conformal covariance and asymptotic behavior when two marked points merge into one; see~\cite[Theorem 1.1]{peltola2019global}. It is shown in~\cite{dubedat2007commutation} that the \emph{local $N$-$\SLE_\kappa$} can be  {constructed} by the partition function $\mathcal{Z}$ in terms of Loewner driving functions, while~\cite[Theorem 1.3]{peltola2019global} proved that the global $N$-$\SLE_\kappa$ agree with local  $N$-$\SLE_\kappa$ when $\mathcal{Z}=\mathcal{Z}_\alpha$. This leads to the notion of the global $N$-$\SLE_\kappa$ as a non-probability measure, which we write  as
 $$\mathrm{mSLE}_{\kappa,\alpha}(D;x_1,...,x_{2N}) = \mathcal{Z}_\alpha(D;x_1,...,x_{2N})\cdot \mathrm{mSLE}_{\kappa,\alpha}(D;x_1,...,x_{2N})^\#.$$

The information of both  multiple SLE and its pure partition function are encoded in the measure $\mathrm{mSLE}_{\kappa,\alpha}(D;x_1,...,x_{2N})$. 
Our Theorem~\ref{thm:main} is a conformal welding result concerning $\mathrm{mSLE}_{\kappa,\alpha}(D;x_1,...,x_{2N})$. In Section~\ref{subsec:intro-result},  before stating Theorem~\ref{thm:N=2}---\ref{thm:green-rho}, we will introduce the counterpart of $\mathrm{mSLE}_{\kappa,\alpha}(D;x_1,...,x_{2N})$ for other variants SLE, where the total mass of the non-probability measure is the partition function.

\subsection{Liouville quantum gravity surfaces and conformal welding}\label{subsec:intro-lqg}
Let $D\subset\bbC$ be a simply connected domain. The Gaussian Free Field (GFF) on $D$ is the centered Gaussian process on $D$ whose covariance kernel is the Green's function~\cite{She07}. For $\gamma\in(0,2)$ and $\phi$ a variant of the GFF, the $\gamma$-LQG area measure in $D$ and length measure on $\partial D$ is roughly defined by $\mu_\phi(dz)=e^{\gamma\phi(z)}dz$ and $\nu_\phi(dx) = e^{\frac{\gamma}{2}\phi(x)}dx$, and are made rigorous by regularization and renormalization~\cite{DS11}. Two pairs $(D,h)$  and $(D',h')$ represent the same quantum surface if there is a conformal map between $D$ and $D'$ preserving the geometry.  {See Section~\ref{subsec:pre-lqg} for further details.}

For $W>0$, the two-pointed quantum disk of weight $W$, {whose law is denoted by $\Md_2(W)$}, is a quantum surface with two boundary marked points introduced in~\cite{DMS14,AHS20}, which has finite quantum area and length.  The surface is simply connected when $W\ge\frac{\gamma^2}{2}$, and consists of a chain of countably many disks when $W\in(0,\frac{\gamma^2}{2})$. For the special case $W=2$, the two boundary marked points are \emph{quantum typical} with respect to the LQG boundary length measure~\cite[Proposition A.8]{DMS14}. By sampling additional marked points from the boundary quantum length measure, we obtain multiply marked quantum disks. For $m\ge1$, we write $\QD_{m}$ for the law of the LQG disks with $m$ marked points on the boundary sampled from the LQG length measure; see Definition~\ref{def:QD} for a precise description.

As shown in~\cite{cercle2021unit,AHS21}, the quantum disks can be alternatively described in terms of LCFT. The \emph{Liouville field} $\LF_\bbH$ is an infinite measure on the space of generalized functions on $\bbH$ obtained by an additive perturbation of the GFF.  {For $(\beta_i,s_i)\in\bbR\times\partial\bbH$,\ $i=1,...,m$}, we can make sense of the measure $\LF_\bbH^{(\beta_i,s_i)_i}(d\phi) = \prod_i e^{\frac{\beta_i}{2}\phi(s_i)}\LF_\bbH(d\phi)$ via  regularization and renormalization, which leads to the notion of \emph{Liouville fields with boundary insertions}. See Definition~\ref{def-lf-H-bdry} and Lemma~\ref{lm:lf-insertion-bdry}. For $W_1,W_2,W_3>0$, as introduced in~\cite{ASY22}, the \emph{quantum triangle} of weights $W_1,W_2,W_3$ is a finite volume quantum surface with three marked points on the boundary  whose law is denoted by $\QT(W_1,W_2,W_3)$. The definition is based on Liouville fields with three boundary insertions; see Section~\ref{subsec:QT} for a precise definition. 

Now we briefly recap the \emph{conformal welding} of quantum surfaces, as studied in~\cite{She16a,DMS14,AHS20,ASY22}. Let $\mathcal{M}^1, \mathcal{M}^2$ be measures on  {the space of} quantum surfaces with boundary marked points. For $i=1,2$, fix some boundary arcs $e_1, e_2$ such that $e_i$ are boundary arcs on samples from $\mathcal{M}^i$, and define the measure $\mathcal{M}^i(\ell_i)$ via the disintegration 
$$\mathcal{M}^i = \int_0^\infty \mathcal{M}^i(\ell_i)d\ell_i$$
over the quantum lengths of $e_i$. For $\ell>0$, given a pair of surfaces sampled from the product measure $\mathcal{M}^1(\ell)\times \mathcal{M}^2(\ell)$, suppose that  they can a.s.\ be \emph{conformally welded} along arcs $e_1$ and $e_2$ according to the boundary LQG-length,  yielding a {single} surface decorated with an interface from the gluing. We write $\text{Weld}( \mathcal{M}^1(\ell), \mathcal{M}^2(\ell))$ for the law of the resulting curve-decorated surface, and let $$\text{Weld}(\mathcal{M}^1, \mathcal{M}^2):=\int_{\bbR}\, \text{Weld}(  \mathcal{M}^1(\ell), \mathcal{M}^2(\ell))\,d\ell$$ be the conformal welding of $\mathcal{M}^1,  \mathcal{M}^2$ along the boundary arcs $e_1$ and $e_2$. By induction, this definition extends to the conformal welding of multiple quantum surfaces, where we first specify some pairs of boundary arcs on the quantum surfaces, and then identify each pair of arcs according to the LQG length.  {The interfaces under such conformal welding of quantum surfaces are typically SLE-type curves. Throughout this paper, for a measure $\mathcal{M}$ on the space of quantum surfaces and a measure $\mathcal{P}$ on the space of curves with conformal invariance, we write $\mathcal{M}\times\mathcal{P}$ for the measure describing the law of the curve decorated quantum surface $(S,\eta)$ where $(S,\eta)$ is sampled from the usual product measure and $\eta$ is drawn on $S$. See Section~\ref{subsec:pre-lqg} for further details.}

Finally we briefly recall the conformal welding of quantum disks result in~\cite{AHS20}. Roughly speaking, consider the embedding of a sample from $\Md_2(W_1+W_2)$ and embed it as $(D,\phi,a,b)$ with $a,b$ being the two marked boundary points. Let $\kappa=\gamma^2$, and sample an independent $\SLE_\kappa(W_1-2;W_2-2)$ curve from $a$ to $b$ with force points at $a^-;a^+$. Then the curve decorated quantum surface $(D,\phi,\eta,a,b)/\sim_\gamma$ is equal in law to the conformal welding of a pair of weights $W_1$ and $W_2$ two-pointed quantum disk. To be more precise, we have the following result. Also see Figure~\ref{fig:diskwelding} for an illustration.
\begin{thma}[Theorem 2.2 of \cite{AHS20}]\label{thm:disk-welding}
Let $\gamma\in(0,2),\kappa=\gamma^2$ and $W_1,W_2>0$. Then there exists a constant $c:=c_{W_1,W_2}\in(0,\infty)$ such that
$$ {\Md_2(W_1+W_2)\times \SLE_\kappa(W_1-2;W_2-2)} = c\,\Wd(\Md_2(W_1),\Md_2(W_2)).  $$
\end{thma}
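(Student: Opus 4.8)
The plan is to reduce the identity to the analogous conformal welding statement for quantum \emph{wedges} from \cite{DMS14} and then transfer it from the infinite-area setting to the finite-area quantum disk. First observe that, since $\kappa=\gamma^2\in(0,4)$ and $\rho^-:=W_1-2>-2$, $\rho^+:=W_2-2>-2$, a sample $\eta$ from $\SLE_\kappa(W_1-2;W_2-2)$ in $\bbH$ from $0$ to $\infty$ is a.s.\ a simple curve which is conformally removable; when some $W_i\le\gamma^2/2$ the curve touches the corresponding side of $\partial\bbH$ and the piece it cuts off there is a chain of beads (a thin quantum disk), but this causes no difficulty. Conformal removability gives that the left-hand measure determines, and is determined by, the ordered pair of quantum surfaces obtained by cutting $(\bbH,\phi)$ along $\eta$ together with their LQG boundary lengths; equivalently, two quantum surfaces with a.s.\ matching boundary lengths admit an a.s.\ unique conformal welding (cf.\ \cite{She16a}). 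Hence both sides of the claimed identity are genuine measures on curve-decorated quantum surfaces, and it remains to match them.

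The engine is the mating-of-trees welding of wedges: writing $\mathcal{W}(W)$ for the weight-$W$ quantum wedge, \cite{DMS14} yields $\mathcal{W}(W_1+W_2)\otimes\SLE_\kappa(W_1-2;W_2-2)=c'\,\Wd(\mathcal{W}(W_1),\mathcal{W}(W_2))$ for a constant $c'\in(0,\infty)$. The key geometric input is that a weight-$W$ two-pointed quantum disk and a weight-$W$ quantum wedge have the same local behaviour at a marked point: embedding both as $(\bbH,\cdot,0,\infty)$, their fields are mutually absolutely continuous when restricted to a fixed neighbourhood $U$ of $0$, with a Radon--Nikodym derivative measurable with respect to that restriction --- reflecting that the boundary-length process of the disk is a bridge/excursion version of that of the wedge, so they differ only in how the bulk and far boundary close up --- and in both cases one appends an independent $\SLE_\kappa(W_1-2;W_2-2)$ whose initial arc, before it exits $U$, agrees. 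Applying this near the common origin $0$ to each of the two welded factors, together with the wedge identity, one obtains a local version of the claimed identity in a neighbourhood of $0$; by the symmetry $0\leftrightarrow\infty$ the same holds near $\infty$.

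To globalize, I would characterize each side by (i) the law of a quantum-length-$\eps$ neighbourhood of $0$ together with the portion of $\eta$ inside it, and (ii) the conditional law of the complementary curve-decorated surface given that data. Part (i) is supplied above. Part (ii) follows from the domain Markov property of $\SLE_\kappa(W_1-2;W_2-2)$ (given its initial segment, the rest is again such a process in the complement), the Markov property of the quantum disk near a marked point (comparable, via the absolute continuity above, to the renewal property of the quantum wedge), and, on the welded side, the conditional independence of the two glued surfaces given the interface quantum length, which holds because $\eta$ is a local set for the underlying GFF. Matching (i) and (ii) forces the two measures to be proportional; finiteness and positivity of the constant $c=c_{W_1,W_2}$ then follow by comparing total masses after restricting to surfaces whose quantum area and total boundary length lie in a fixed compact subset of $(0,\infty)^2$, where both sides are finite and nonzero.

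\textbf{Main obstacle.} The delicate step is the wedge-to-disk transfer: conformal welding glues the two boundary arcs along their \emph{entire} length and the welding homeomorphism depends on the \emph{global} fields, whereas the disk--wedge comparison is only an absolute-continuity statement \emph{near one marked point}. One must therefore run the comparison from \emph{both} marked points, show that the Radon--Nikodym tilt commutes with the welding operation, and control the accumulation of error along the arc. Equivalently, the technical heart is to isolate a Markovian characterization of the two-pointed quantum disk --- a quantum surface that is ``wedge-like'' near each of its two marked points and has the correct conditional structure in between must be a weight-$(W_1+W_2)$ disk --- and to verify that $\Wd(\Md_2(W_1),\Md_2(W_2))$ possesses this structure.
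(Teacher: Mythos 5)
The paper does not prove Theorem~\hyperref[thm:disk-welding]{A}; it quotes it verbatim from~\cite{AHS20} (Theorem~2.2 there), so there is no in-paper argument to compare against. Your high-level strategy --- import the quantum-wedge welding of~\cite{DMS14} and transfer it to the finite-area disk --- is the right one, and it is indeed the route taken in~\cite{AHS20}.

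That said, what you have written is not a proof, and you correctly flag the reason yourself. Two specific points. First, the ``local absolute continuity near a marked point'' between a weight-$W$ two-pointed quantum disk and a weight-$W$ quantum wedge is more delicate than stated: $\Md_2(W)$ is an \emph{infinite} measure (the additive constant $\mathbf c$ is sampled from $\frac{\gamma}{2}e^{(\beta-Q)c}dc$), the wedge is a scale-invariant object defined only modulo a one-parameter family of embeddings, and the equivalence class $(\bbH,\cdot,0,\infty)/\!\sim_\gamma$ does not come with a canonical embedding, so ``restrict the fields to a fixed neighborhood $U$ of $0$'' is not well posed until one commits to a normalization and disintegrates over $\mathbf c$. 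A version of this comparison is true, but it needs to be set up carefully (and in~\cite{AHS20} it is). Second, and more importantly, the globalization step --- the Markovian characterization ``a surface wedge-like near each marked point with the correct conditional structure in between is a weight-$(W_1+W_2)$ disk,'' together with the claim that the Radon--Nikodym tilt commutes with the welding operation --- is not a technical loose end to be patched; it \emph{is} the theorem. Your sketch names this as the ``main obstacle'' but does not supply it, so as written the argument reduces the stated identity to an unproved lemma of comparable difficulty. The finiteness-and-positivity argument for $c_{W_1,W_2}$ at the end is also only a plausibility heuristic: you would need to show that each side assigns finite nonzero mass to the same event on curve-decorated surfaces, which again presupposes the identity (or at least mutual absolute continuity) that you are trying to prove.
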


\begin{figure}
\centering
\includegraphics[width=0.8\textwidth]{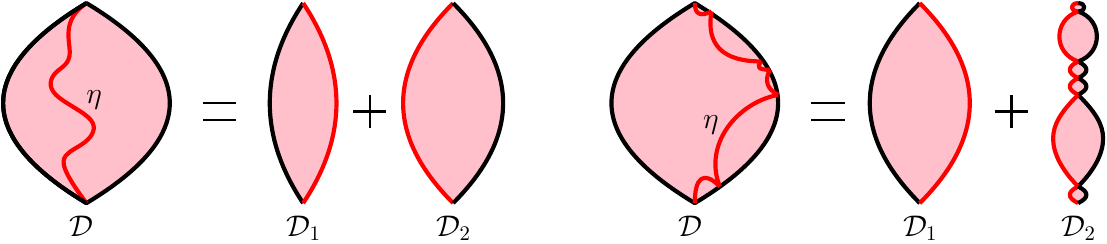}
    \caption{An illustration of Theorem~\hyperref[thm:disk-welding]{A}. In the left regime $W_1,W_2\ge\frac{\gamma^2}{2}$ while in the right picture $W_2<\frac{\gamma^2}{2}$ and $\mathcal{D}_2$ is a thin quantum disk, which is a beaded quantum surface.  }
    \label{fig:diskwelding}
\end{figure}

\subsection{Main results}\label{subsec:intro-result}
\subsubsection{Multiple SLE}
Fix $N>0$ and a link pattern $\alpha\in\mathrm{LP}_N$. Let $D\subset\bbC$ be a simply connected domain with $2N$ marked boundary points. We draw $N$ disjoint simple curves $\eta_1,...,\eta_N$ according to $\alpha$, dividing $D$ into $N+1$ connected components $S_1,...,S_{N+1}$. For $1\le k\le N+1$, let $n_k$ be the number of points  on the boundary of $S_k$, and $\eta_{k,1},...,\eta_{k,m_k}$ be the interfaces which are part of the boundary of $S_k$. Then for each $S_k$, we assign a quantum disk with $n_k$ marked points on the boundary from $\mathrm{QD}_{n_k}$, and consider the disintegration $\mathrm{QD}_{n_k} = \int_{\bbR_+^{n_k}}\mathrm{QD}_{n_k}(\ell_{k,1},...,\ell_{k,m_k})\,d\ell_{k,1}...d\ell_{k,m_k}$ over the quantum length of the boundary arcs  corresponding to the interfaces. For $(\ell_1,...,\ell_N)\in\bbR_+^N$, let $\ell_{k,j} = \ell_i$ if the interface $\eta_{k,j}=\eta_i$. We sample $N+1$ quantum surfaces from $\prod_{k=1}^{N+1}\mathrm{QD}_{n_k}(\ell_{k,1},...,\ell_{k,m_k})$ and conformally weld them together by LQG boundary length according to the link pattern $\alpha$, and write $\Wd_{\alpha}(\mathrm{QD}^{N+1})(\ell_1,...,\ell_N)$ for the law of the resulting quantum surface decorated by $N$ interfaces. Define $\Wd_{\alpha}(\mathrm{QD}^{N+1})$ by
$$ \Wd_{\alpha}(\mathrm{QD}^{N+1}) = \int_{\bbR_+^N}\Wd_{\alpha}(\mathrm{QD}^{N+1})(\ell_1,...,\ell_N)\,d\ell_1...d\ell_N. $$
See Figure~\ref{fig:thm-main} for an illustration.
\begin{figure}[htb]
    \centering
    \begin{tabular}{ccc} 
		\includegraphics[width=0.31\textwidth]{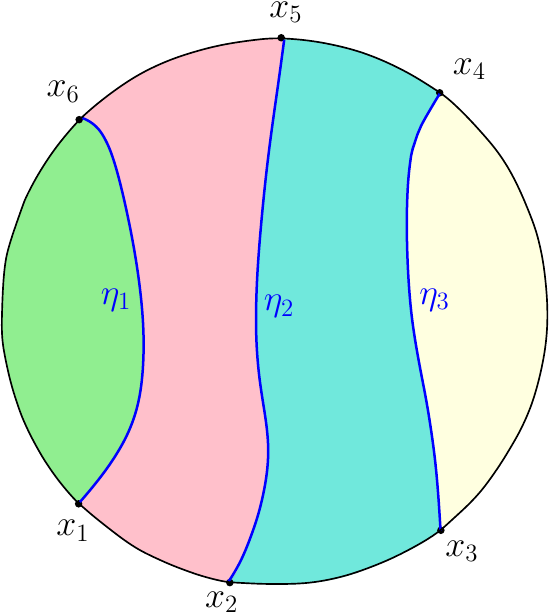}
		& \qquad \qquad\qquad&
		\includegraphics[width=0.32\textwidth]{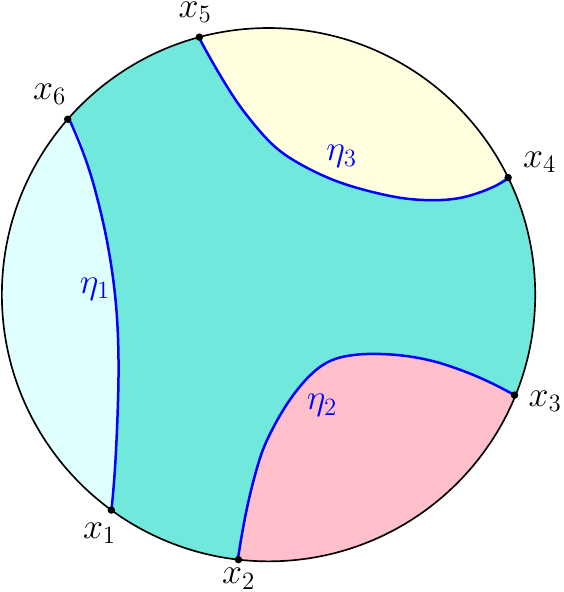}
	\end{tabular}
 \caption{An illustration of the definition of $\Wd_\alpha(\mathrm{QD}^{N+1})$ when $N=3$. \textbf{Left:} Under the link pattern $\alpha=\{\{1,6\},\{2,5\},\{3,4\} \}$, we are welding two samples from $\QD_{2}$ with two samples from $\QD_{4}$, and if we let $\ell_1,\ell_2,\ell_3$ be the interface lengths, then the precise welding equation can be written as $\int_{\bbR_+^3}\Wd(\QD_{2}(\ell_1),\QD_{4}(\ell_1,\ell_2),\QD_{4}(\ell_2,\ell_3),\QD_{2}(\ell_3) )d\ell_1\,d\ell_2\,d\ell_3 $. \textbf{Right:} In the setting where the link pattern $\alpha = \{\{1,6\},\{2,3\},\{4,5\} \}$, we are welding three samples from $\QD_{2}$ with one sample from $\QD_{6}$, and the corresponding welding equation is given by $\int_{\bbR_+^3}\Wd(\QD_{2}(\ell_1),\QD_{2}(\ell_2),\QD_{2}(\ell_3),\QD_{6}(\ell_1,\ell_2,\ell_3))d\ell_1\,d\ell_2\,d\ell_3 $. By Theorem~\ref{thm:main}, the entire disk decorated with the three interfaces $(\eta_1,\eta_2,\eta_3)$ can be realized as a Liouville field with 6 insertions decorated by an independent multiple $\SLE_\kappa$ with the corresponding link pattern, and the random cross ratio of the marked points is encoded by the multiple $\SLE_\kappa$ pure partition function. }\label{fig:thm-main}
 \end{figure}
{Now we are ready to state our  result for the multiple SLE.}
\begin{theorem}\label{thm:main}
Let $\gamma\in(0,2)$, $\kappa=\gamma^2$ and $\beta = \gamma-\frac{2}{\gamma}$. Let $N\ge 2$ and $\alpha\in\mathrm{LP}_N$ be a link pattern.  {Let  $c:=c_{2,2}\in(0,\infty)$ be the constant in Theorem~\hyperref[thm:disk-welding]{A} for $W_1=W_2=2$ depending only on $\kappa$. Let $c_N = \frac{\gamma(Q-\gamma)^{2N-2} c^{-N}}{2(Q-\beta)^{2N}}$. Then}
\begin{equation}\label{eq:thm-main}
\begin{split}
   \Wd_{\alpha}(\mathrm{QD}^{N+1}) &=   {c_N}\int_{0<x_1<...<x_{2N-3}<1}\bigg[\LF_\bbH^{(\beta,0),(\beta,1),(\beta,\infty),(\beta,x_1),...,(\beta,x_{2N-3})}\\&\times \mathrm{mSLE}_{\kappa,\alpha}(\bbH,0,x_1,...,x_{2N-3},1,\infty)\bigg]dx_1...dx_{2N-3}
    \end{split}
\end{equation}
where the  {right} hand side is understood as the law of a curve-decorated quantum surface,  {i.e., the law of $(\bbH,h,0,1,\infty,x_1,...,x_{2N-3},\eta_1,...,\eta_N)/\sim_\gamma$ with 
$(h,\eta_1,...,\eta_N)$  sampled from $\LF_\bbH^{(\beta,0),(\beta,1),(\beta,\infty),(\beta,x_1),...,(\beta,x_{2N-3})}\times \mathrm{mSLE}_{\kappa,\alpha}(\bbH,0,x_1,...,x_{2N-3},1,\infty)$}. 
\end{theorem}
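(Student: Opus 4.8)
\emph{Strategy.} The plan is to prove \eqref{eq:thm-main} by induction on $N$, peeling off one interface at a time and passing to Liouville conformal field theory. Two ingredients are needed. First, the identification (recalled in Section~\ref{sec:pre}) of $\QD_{0,n}$ with the Liouville field on $\bbH$ carrying $n$ boundary insertions of weight $\gamma$ -- the value attached to a quantum-typical boundary point -- up to an explicit constant whose dependence on $n$ is a power $(Q-\gamma)^{n-2}$. Second, a Liouville-field reformulation of Theorem~\hyperref[thm:disk-welding]{A}: if two Liouville fields on $\bbH$ with boundary insertions are conformally welded along a boundary arc whose two endpoints each carry a $\gamma$-insertion on both sides, then the result is, up to the constant $c=c_{2,2}$, a Liouville field on $\bbH$ in which those two points have each been merged into a single insertion of weight $\beta=\gamma-\tfrac{2}{\gamma}$ (consistent with the weight addition $2+2=4$ of Theorem~\hyperref[thm:disk-welding]{A}, since $\beta$ is the weight of a marked point of $\Md_2(4)$), decorated by an \emph{independent chordal} $\SLE_\kappa$ between the merged points -- ``chordal'' because along such an arc both sides present weight $2$, so all force points have weight $0$. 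This follows from Theorem~\hyperref[thm:disk-welding]{A} with $W_1=W_2=2$ and the first ingredient. The value $\beta=\gamma-\tfrac{2}{\gamma}$ is forced: its Liouville boundary weight $\Delta_\beta=\tfrac{\beta}{2}(Q-\tfrac{\beta}{2})$ satisfies $\Delta_\beta+h=1$, where $h=\tfrac{6-\kappa}{2\kappa}$ is the boundary conformal weight of $\SLE_\kappa$, so a $\beta$-insertion decorated by an $\SLE_\kappa$ endpoint carries the same conformal weight ($=1$) as a quantum-typical point.

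\emph{The peeling.} Starting from the left side of \eqref{eq:thm-main}, choose a link $\{a,a+1\}$ of $\alpha$ joining consecutive marked points (one always exists). The region it cuts off is a $\QD_{0,2}$, glued along the interface $\eta_a$ to exactly one other region; disintegrating over the quantum length of $\eta_a$ and applying the Liouville-field form of Theorem~\hyperref[thm:disk-welding]{A} replaces this $\QD_{0,2}$ and $\eta_a$ by an independent chordal $\SLE_\kappa$ and merges the two $\gamma$-insertions at $x_a$ and at $x_{a+1}$ into $\beta$-insertions, at the cost of $c$; what remains we relate to the $(N-1)$-curve configuration $\Wd_{\widehat\alpha}(\QD^{N})$ for the link pattern $\widehat\alpha$ obtained by deleting $\{a,a+1\}$. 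Applying the inductive hypothesis, performing all $N$ welds (each $x_i$ is welded exactly once, so every $x_i$ ends with a $\beta$-insertion), and uniformizing so that three marked points go to $0,1,\infty$ and the rest to $0<y_1<\dots<y_{2N-3}<1$, we obtain that, conditionally on the moduli $\underline y=(y_1,\dots,y_{2N-3})$, the surface is $\LF_\bbH^{(\beta,0),(\beta,1),(\beta,\infty),(\beta,y_1),\dots,(\beta,y_{2N-3})}$ up to a $\underline y$-dependent constant, decorated by $N$ curves $\eta_1,\dots,\eta_N$. It then remains to identify the conditional law of the curves and the density of $\underline y$. The base case $N=2$ is this argument carried out with two applications of Theorem~\hyperref[thm:disk-welding]{A}.

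\emph{The curves and the partition function.} Conditionally on the surface, the joint law of $(\eta_1,\dots,\eta_N)$ is the global multiple $\SLE_\kappa$: for each $k$, peeling $\eta_k$ last -- after separately welding the two subtrees of the gluing tree on either side of $\eta_k$ -- realizes $\eta_k$, given the surface and the other curves, as a chordal $\SLE_\kappa$ in the component of $\bbH\setminus\bigcup_{j\neq k}\eta_j$ containing its endpoints, reweighted by the moduli densities of the two sides; the uniqueness theorem \cite{beffara2021uniqueness} then identifies this conditional law with $\mathrm{mSLE}_{\kappa,\alpha}(\bbH,0,y_1,\dots,y_{2N-3},1,\infty)^\#$, up to a constant $Z(\underline y)$. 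Finally one shows $Z(\underline y)$ together with the accumulated Liouville constants equals $c_N\,\mathcal Z_\alpha(\bbH,0,y_1,\dots,1,\infty)$, using the characterization of the pure partition function in \cite{peltola2019global} by conformal covariance of weight $h$ (matching $1-\Delta_\beta$, read off from the Liouville field) and by the asymptotics as two consecutive marked points collide; on the LQG side the latter correspond to the degeneration of the welded surface as the $\QD_{0,2}$-region cut off by that pair shrinks, which both reduces \eqref{eq:thm-main} for $\alpha$ to the instance for the link pattern with that link removed and drives the induction. The prefactor is then assembled from the $N$ applications of Theorem~\hyperref[thm:disk-welding]{A} (giving $c^{-N}$), the $N+1$ conversions $\QD_{0,n_k}\to\LF_\bbH$ (giving $\prod_k(Q-\gamma)^{n_k-2}=(Q-\gamma)^{2N-2}$ since $\sum_k(n_k-2)=4N-2(N+1)=2N-2$), and the normalization of $\LF_\bbH$ with its $2N$ boundary $\beta$-insertions (giving $(Q-\beta)^{-2N}$ and the overall $\tfrac{\gamma}{2}$).

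\emph{Main obstacle.} The delicate step is this last identification: one must pass from the order-dependent peeling description of the curves to the symmetric definition of the global multiple $\SLE_\kappa$ (checking the conditional-law property for every $k$, not just the outermost links), and simultaneously match the degeneration of the conformally welded quantum surface and of the Liouville correlation functions, as a pair of marked points collides, with the defining asymptotics and normalization of $\mathcal Z_\alpha$ -- it is the interplay of these two degenerations that forces the density of $\underline y$ to be \emph{exactly} the pure partition function and not merely a covariant function with the correct singularities. Controlling this degeneration uniformly while keeping the Liouville normalization constants aligned is where the bulk of the work will lie.
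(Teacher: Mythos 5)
Your overall inductive strategy—peel off one $\QD_{0,2}$ attached at a consecutive link, pass to Liouville fields, weld via Theorem~\hyperref[thm:disk-welding]{A} with $W_1=W_2=2$, and convert the pair of $\gamma$-insertions at each welded endpoint into a single $\beta$-insertion—is essentially the paper's strategy (the paper normalizes to the outermost link $\{1,2N\}$ using the rotation Lemma~\ref{lm:rotation}, but that is cosmetic). The key algebraic observation $\Delta_\beta+b=1$, and the identification of $\beta=\gamma-\tfrac2\gamma$ as the $\Md_2(4)$ insertion weight, are exactly right and are what make the change-of-coordinates Jacobians cancel.

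The gap is in the step identifying the moduli density with $c_N\,\mathcal{Z}_\alpha$. You propose to (i) deduce the conditional curve law is $\mathrm{mSLE}_{\kappa,\alpha}^\#$ from~\cite{beffara2021uniqueness}, and then (ii) pin down the density by the covariance and collision asymptotics of~\cite{peltola2019global}. But the Peltola--Wu characterization of the pure partition functions (their Theorem 1.1) is by the BPZ-type second-order PDE \emph{together with} covariance and asymptotics; covariance plus asymptotics alone do not determine a function uniquely, and you neither verify the PDE from the LQG side nor argue it can be dropped. The paper instead invokes Proposition~\ref{prop:msle-margin} (Peltola--Wu's Proposition 3.5): under $\mathrm{mSLE}_{\kappa,\alpha}$ the marginal law of a single link $\eta$ is $\mu_D(x_i,x_j)$ weighted by $\mathcal Z_{\alpha^L}(D^L_\eta;\cdot)\,\mathcal Z_{\alpha^R}(D^R_\eta;\cdot)$. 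In the inductive computation, the $\gamma\!\to\!\beta$ insertion change produces exactly the Jacobian factors $\prod_k\psi_\eta'(s_k)^{1-\Delta_\beta}$ that transport $\mathcal Z_{\alpha_0}(\bbH;\cdot)$ into $\mathcal Z_{\alpha_0}(D_\eta;\cdot)$, so the resulting measure \emph{is} the Proposition~\ref{prop:msle-margin} description of $\mathrm{mSLE}_{\kappa,\alpha}$ with no further characterization needed. You should replace your covariance-plus-asymptotics argument with this recursion. A secondary inaccuracy: the constant in $\QD_{0,n}\leftrightarrow\LF_\bbH$ is $\tfrac{\gamma}{2(Q-\gamma)^2}$, independent of $n$, not $(Q-\gamma)^{n-2}$; the $(Q-\gamma)^{2N-2}$ in $c_N$ arises from the interaction of the welding constants with the $2N$ insertion changes, not from a per-region conversion factor, so your bookkeeping of the prefactor is off even though the final exponent happens to agree.
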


The main idea of the proof is to start with the conformal welding of two quantum disks as proved in~\cite{AHS20}, and then add new boundary marked points as in~\cite{AHS21}. The proof is based on an induction, and we shall start with the following $N=2$ case, where we are welding two samples from $\mathrm{QD}_{2}$ with a sample from $\mathrm{QD}_{4}$ on opposite sides. In fact, for this case, the two two-pointed disks can be replaced by general weight $W$ quantum disks. For $W_1,W_2>0$ and $i=1,2$, let $b_i = \frac{W_i(W_i+4-\kappa)}{4\kappa}$. For $0<x<1$, define a measure on pair of curves $(\eta_1,\eta_2)$ as follows.  {For a simply connected domain $D$ and $x,y\in\partial D$ such that $\partial D$ is smooth near $x,y$, let $H_D(x,y) = \frac{f'(x)f'(y)}{|f(x)-f(y)|^2}$ be the boundary Poisson kernel where $f:D\to\bbH$ is some arbitrary conformal map.} First sample $\eta_1$ as a curve in $\bbH$ from 0 to $\infty$ from the probability measure $\SLE_\kappa(W_1-2)$ with the force point at $0^-$ and weight its law by $H_{D_{\eta_1}}(x,1)^{b_2}$, where $D_{\eta_1}$ is the component of $\bbH\backslash\eta_1$ lying to the right side of $\eta_1$. Then in  $D_{\eta_1}$, sample $\eta_2$ as a curve from $x$ to 1 from the probability measure $\SLE_\kappa(W_2-2)$ with the force point at $x^+$. Let $\mathfrak{m}_x(W_1,W_2)$ ( {which is a non-probability measure}) be the law of $(\eta_1,\eta_2)$.  {Note that for $j=1,2$, the curve $\eta_j$ would be boundary hitting if $W_j<\frac{\gamma^2}{2}$.}

\begin{figure}
    \centering
    \begin{tabular}{ccc} 
		\includegraphics[scale=0.51]{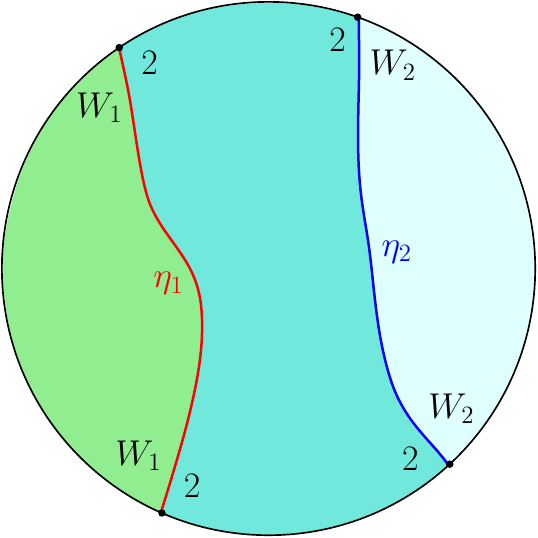}
		& \qquad &
		\includegraphics[scale=0.52]{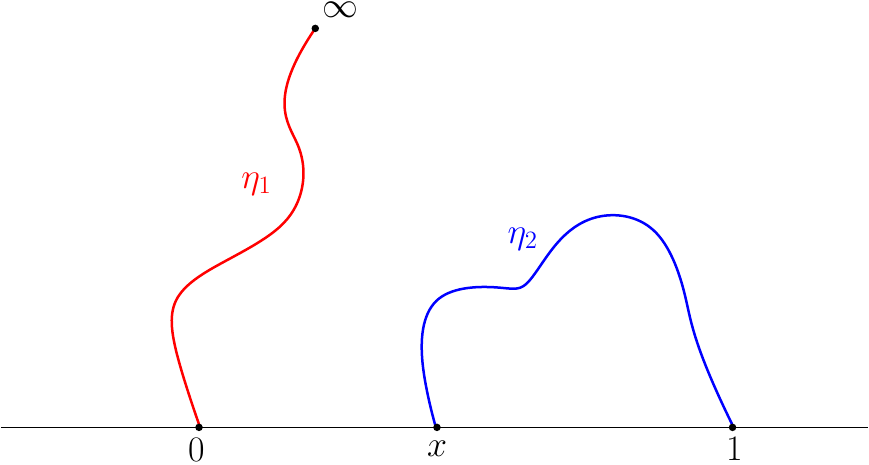}
	\end{tabular}
 \caption{An illustration of Theorem~\ref{thm:N=2}  {with $W_1,W_2\geq\frac{\gamma^2}{2}$}. \textbf{Left:} The conformal welding of three quantum disks as on the right hand side of~\eqref{eq:thm-N=2}. \textbf{Right:} The interface $(\eta_1,\eta_2)$ sampled from $\mathfrak{m}_x(W_1,W_2)$. }\label{fig:thm-N=2}
 \end{figure}

\begin{theorem}\label{thm:N=2}
    Let $\gamma\in(0,2)$, $\kappa=\gamma^2$, $W_1,W_2>0$. For $i=1,2$, let $\beta_i = \gamma-\frac{W_i}{\gamma}$. Then there exists a constant $c\in(0,\infty)$ such that
    \begin{equation}\label{eq:thm-N=2}
\begin{split}
    \int_0^1&\LF_\bbH^{(\beta_1,0),(\beta_2,1),(\beta_1,\infty),(\beta_2,x)}\times \mathfrak{m}_x(W_1,W_2) dx= c\Wd(\Md_2(W_1),\mathrm{QD}_{4},\Md_2(W_2))
    \end{split}
\end{equation}
where the left hand side is understood as the law of a curve-decorated quantum surface  {in the sense of Theorem~\ref{thm:main}}. 
\end{theorem}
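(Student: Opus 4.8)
\textbf{Proof proposal for Theorem~\ref{thm:N=2}.}
The plan is to build the three-disk welding on the right of~\eqref{eq:thm-N=2} in two stages, using Theorem~\hyperref[thm:disk-welding]{A} as the base case and the marked-point-adding technique of~\cite{AHS21} to introduce the extra boundary insertions. First I would weld $\Md_2(W_1)$ and $\Md_2(W_2+\text{(the middle disk's weight contribution)})$; more precisely, since the middle piece $\QD_{0,4}$ is a weight-$2$ two-pointed quantum disk with two extra boundary points sampled from the length measure, I would first apply Theorem~\hyperref[thm:disk-welding]{A} three times in succession: weld $\Md_2(W_1)$ to $\Md_2(2)$ along one arc to get $\Md_2(W_1+2)$ decorated with an $\SLE_\kappa(W_1-2;0)$ curve, then weld the result to $\Md_2(W_2)$ along the opposite arc of the middle disk. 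This produces $\Md_2(W_1+W_2+2)$ decorated by two interfaces $\eta_1,\eta_2$, where conditionally $\eta_1$ is $\SLE_\kappa(W_1-2;W_2)$-type and $\eta_2$ is drawn in the appropriate complementary component. The bookkeeping of which force points and weights appear is exactly the chain-rule for the constants $c_{W_1,W_2}$, and I would track the Radon--Nikodym factors carefully here.

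Next I would convert the outer surface $\Md_2(W_1+W_2+2)$ and the two quantum-typical marked points of the middle $\QD_{0,4}$ into the Liouville field description. By the LCFT characterization of quantum disks (Section~\ref{subsec:QT} and the results of~\cite{AHS21,cercle2021unit}), a weight-$W$ two-pointed quantum disk embedded in $\bbH$ with marked points at $0,\infty$ is (up to an explicit constant and the length disintegration) a Liouville field $\LF_\bbH^{(\beta,0),(\beta,\infty)}$ with $\beta=\gamma-\frac{W}{\gamma}$; adding a boundary point sampled from the quantum length measure multiplies the field by $e^{\frac{\gamma}{2}\phi(x)}dx$ with a cosmological-constant/length reweighting, i.e.\ inserts a $(\gamma,x)$ — but here the outer disk has weight $W_1+W_2+2$, and the two points where the middle disk was attached carry weight $2$ insertions which become $(\beta_1,0),(\beta_2,1),(\beta_1,\infty)$ after uniformizing. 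The key computation is that $W_1+W_2+2 = $ the weight whose $\beta$ matches $\gamma - \frac{W_1+W_2+2}{\gamma}$, and that splitting this insertion into the pair corresponding to attaching the middle piece produces precisely $\beta_1 = \gamma-\frac{W_1}{\gamma}$ and $\beta_2 = \gamma - \frac{W_2}{\gamma}$ together with the interface-length integral; I would invoke the disk/triangle LCFT identities to make this rigorous, with the fourth insertion $(\beta_2,x)$ coming from the second quantum-typical point and the integral $\int_0^1 dx$ from its law.

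The SLE side is then read off from the conditional laws: given the embedding and the moduli (here a single cross-ratio encoded by $x$), the interface pair has the law $\mathfrak{m}_x(W_1,W_2)$ by definition — the first curve is $\SLE_\kappa(W_1-2)$ reweighted by a boundary-Poisson-kernel power $H_{D_{\eta_1}}(x,1)^{b_1}$ coming from the conditional law of the quantum length of the arc separating the middle disk (the exponent $b_1 = \frac{W_1(W_1+4-\kappa)}{4\kappa}$ is exactly the weight one gets from the length-disintegration of the outer disk, matching the standard computation in~\cite{AHS20,ASY22}), and the second curve is $\SLE_\kappa(W_2-2)$ in the cut-off component. Matching total masses fixes the constant $c$. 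The main obstacle I expect is the second paragraph: correctly identifying how the weight-$(W_1+W_2+2)$ outer disk's field splits under the two weldings into the four insertions $(\beta_1,0),(\beta_2,1),(\beta_1,\infty),(\beta_2,x)$, and in particular verifying that the reweighting by $H_{D_{\eta_1}}(x,1)^{b_1}$ in the definition of $\mathfrak{m}_x$ is precisely the Jacobian/length factor produced by disintegrating $\QD_{0,4}$ and $\Md_2(W_2)$ along the second interface — this requires carefully combining the conformal-welding disintegration with the LCFT Girsanov/insertion computations rather than any single black-box citation.
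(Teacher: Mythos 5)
Your plan has a topological gap in the first paragraph. In $\Wd(\Md_2(W_1),\QD_{0,4},\Md_2(W_2))$ the middle $\QD_{0,4}$ must retain two free boundary arcs: in the final embedding these are $(0,x)$ and $(1,\infty)$, and the four insertions in \eqref{eq:thm-N=2} sit at the four marked points of the $\QD_{0,4}$ piece. If you glue $\Md_2(W_1)$ and $\Md_2(W_2)$ to the two full boundary arcs of the underlying $\Md_2(2)$ (``the opposite arc of the middle disk''), the middle disk has no free boundary left, so the outcome would be a two-pointed disk with two interfaces from $0$ to $\infty$, not the four-pointed configuration of the theorem. In the correct picture only one of the gluings is along a full $\Md_2(2)$-arc (where Theorem~\hyperref[thm:disk-welding]{A} applies directly); the other is along the sub-arc of $\QD_{0,4}$ between its two quantum-typical marked points, and Theorem~\hyperref[thm:disk-welding]{A} does not cover that. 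Handling this second welding is exactly the content of Proposition~\ref{prop:disk+4-pt-disk}, which your proposal neither invokes nor re-derives, and which is where all the work is. The paper's route is to observe $\QD_{0,4}=\Md_{2,\bullet,\bullet}(2)$, weld $\Md_2(2)$ with $\Md_2(W_2)$ by Theorem~\hyperref[thm:disk-welding]{A}, sample the two typical points on the weight-$2$ side to get $\Md_{2,\bullet,\bullet}(W_2+2)$ decorated with an independent $\SLE_\kappa(0;W_2-2)$ curve, and then apply Proposition~\ref{prop:disk+4-pt-disk} with $W'=W_1$, $W=W_2+2$.

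Your second paragraph also has a parameter slip and a misidentified mechanism. A weight-$W$ quantum disk carries insertions $\beta=\gamma+\frac{2-W}{\gamma}$, not $\gamma-\frac{W}{\gamma}$, and the insertions $\beta_i=\gamma-\frac{W_i}{\gamma}$ in \eqref{eq:thm-N=2} do not arise by ``splitting'' a weight-$(W_1+W_2+2)$ outer disk's insertion; note that $\Md_2(W_1+W_2+2)$ with two added typical points would have insertions $\gamma-\frac{W_1+W_2}{\gamma}$ and $\gamma$, which do not match. What actually happens is that each marked point of the final surface is the welding of a $\gamma$-insertion of $\QD_{0,4}$ with a $(\gamma+\frac{2-W_i}{\gamma})$-insertion of $\Md_2(W_i)$, and welding adds weights, turning $\gamma$ into $\gamma+\frac{2-(W_i+2)}{\gamma}=\beta_i$. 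Making this rigorous is the weight-changing step carried out via Lemmas~\ref{lm:gamma-insertion} and~\ref{lm:lf-change-weight} inside Proposition~\ref{prop:disk+4-pt-disk}, and the factor $H_{D_\eta}(x,1)^{b_W}$ with $b_W=1-\Delta_\beta$ is produced there by the Girsanov/Loewner-derivative reweighting when changing a $\gamma$-insertion to a $\beta$-insertion through the uniformizing map $\psi_\eta$, not simply from ``the conditional law of the quantum length.'' Without Proposition~\ref{prop:disk+4-pt-disk} or an equivalent the proposal does not close.
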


See Figure~\ref{fig:thm-N=2} for an illustration. In the case $W_1=2$, the measure $\mathfrak{m}_x(W_1,W_2)$ is finite, and the marginal law of $\eta_1$ under the probability measure proportional to $\mathfrak{m}_x(W_1,W_2)$ agrees with the hypergeometric SLE $\mathrm{hSLE}_\kappa(W_2-2)$ in~\cite{wu2020hypergeometric}. Moreover, one can infer by Theorem~\ref{thm:N=2} and symmetry that up to a multiplicative constant, a sample from $\mathfrak{m}_y(W_1,W_2)$ can also be produced by (i) sample $\eta_2$ as an $\SLE_\kappa(W_2-2)$ curve in $\bbH$ from $x$ to 1 with force point at $x^+$ and weight its law by $H_{D_{\eta_2}}(0,\infty)^{b_1}$ (where $D_{\eta_2}$ is the left component of $\bbH\backslash\eta_2$) and (ii) sample $\eta_1$ as an  $\SLE_\kappa(W_1-2)$ curve in $D_{\eta_2}$ from 0 to $\infty$ with force point at $0^-$. This is the so-called commutation relation. It is also straightforward to see  from Theorem~\ref{thm:N=2} that  the law of the time reversal of  $\mathfrak{m}_x(W_1,W_2)$ is $\mathfrak{m}_x(W_2,W_1)$.   {Commutation relation and reversibility are natural from the conformal welding perspective in other settings as well}.

\subsubsection{Imaginary geometry flow lines}

{Let $\kappa\in(0,4)$, $\chi=\frac{2}{\sqrt{\kappa}}-\frac{\sqrt{\kappa}}{2}$, $\theta\in\bbR$ and $\mathfrak h$ be a Gaussian free field  {on $\bbH$} with piecewise  {constant} boundary conditions. In the framework of imaginary geometry in~\cite{MS16a},   it is possible make sense of the flow lines of the vector field $e^{i(\mathfrak h/\chi+\theta)}$ starting at  fixed boundary points of the domain. Such curves are $\SLE_\kappa(\underline\rho)$ processes, and are  {referred to as the flow lines} of $\mathfrak h$ with angle $\theta$.  One can also emanate flow lines of the GFF from
different boundary points with the same target point. }

For $\underline{x} = (x_1, ..., x_n)\in\bbR^n$ with $x_1<...<x_n$,  $\underline{\theta} = (\theta_1, ..., \theta_n)\in\bbR^n$ and $\underline{\lambda} = (\lambda_0, ..., \lambda_{n})\in\bbR^{n+1}$, let $\mathfrak h$ be the Dirichlet GFF on $\bbH$ with whose boundary value is given by $\lambda_{j-1}$ on $(x_{j-1}, x_j)$ for each $1\le j\le n+1$ (where $x_0 = -\infty$ and $x_{n+1}=+\infty$). For each $1\le i\le n$, let $\eta_i$ be the flow line of $\mathfrak h$ from $x_i$ with angle $\theta_i$, and let $\rho_i = \frac{(\lambda_i-\lambda_{i-1})\sqrt\kappa}{\pi}$. We write $\mathrm{IG}_{\underline{x}, \underline{\lambda}, \underline{\theta}}^\#$ for the joint law of $(\eta_1, ..., \eta_n)$. {Following~\cite{dubedat09partition}, a natural choice of the partition function for $\mathrm{IG}_{\underline{x}, \underline{\lambda}, \underline{\theta}}^\#$ is $\prod_{1\le i<j\le n}(x_j-x_i)^{\frac{\rho_i\rho_j}{2\kappa}}$, which aligns with the $\SLE_\kappa(\underline\rho)$ partition function in~\cite{SW05} of $\eta_j$ for each $1\le j\le n$. Then we set $\mathrm{IG}_{\underline{x}, \underline{\lambda}, \underline{\theta}} = \prod_{1\le i<j\le n}(x_j-x_i)^{\frac{\rho_i\rho_j}{2\kappa}}\mathrm{IG}_{\underline{x}, \underline{\lambda}, \underline{\theta}}^\#$. }

\begin{figure}[ht]
	\centering
 \begin{tabular}{ccc}
	\includegraphics[scale=0.58]{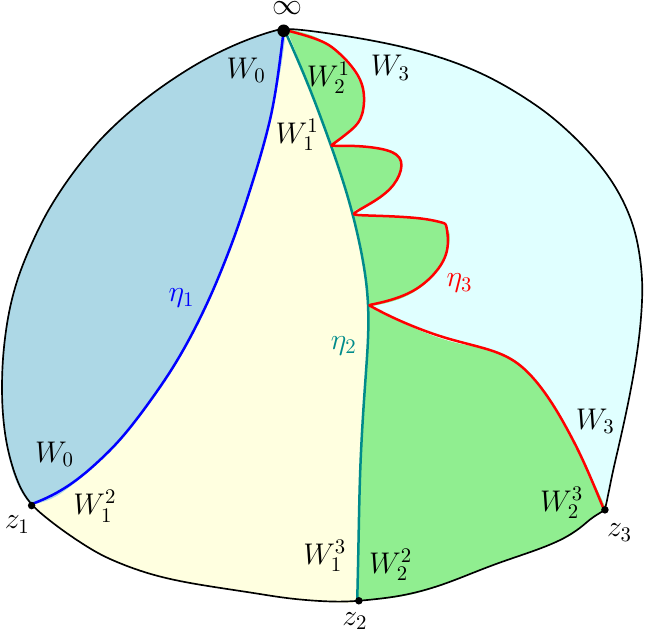} & \qquad & \includegraphics[scale=0.72]{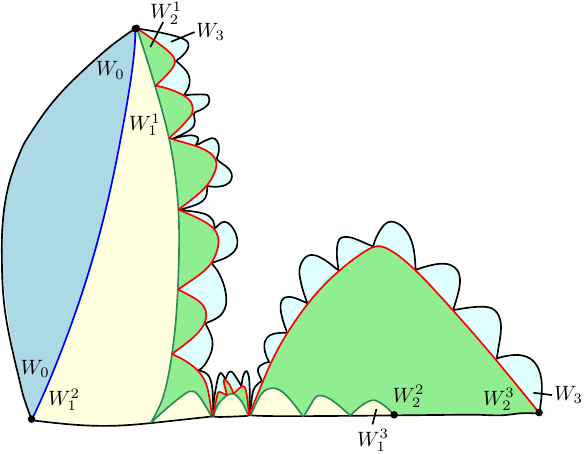}
 \end{tabular}
	\caption{\textbf{Left}: An illustration of Theorem~\ref{thm:IG} where $n=3$. \textbf{Right}: An example where the surface is not simply connected when $W_1^3+W_2^1+W_3<\frac{\gamma^2}{2}$.}\label{fig:thm-IG}
\end{figure}

\begin{theorem}\label{thm:IG}
    Let $n\ge 2$, $\gamma\in(0,2)$, $\kappa = \gamma^2$, $\chi=\frac{2}{\gamma}-\frac{\gamma}{2}$ and $\lambda = \frac{\pi}{\gamma}$. Let $W_0, W_n>0$, and $((W_1^1, W_1^2, W_1^3), ..., (W_{n-1}^1, W_{n-1}^2, W_{n-1}^3))\in (\bbR_+^3)^{n-1}$, such that for each $1\le j\le n-1$, $W_j^1+2 = W_j^2+W_j^3$. Also assume that for every $0\le i<j\le n$, $W_i^3+\sum_{i<k<j}W_k^1+W_j^2>\frac{\gamma^2}{2}$, where $W_0^1=W_0^3=W_0$ and $W_n^1=W_n^2 = W_n$. Let $\lambda_0 = -\lambda(W_0-1)$,  $\lambda_j = \lambda(\sum_{i=1}^{j-1}W_i^1+W_j^2-1)$, $\theta_j = -\sum_{i=1}^{j-1}\frac{\lambda W_i^1}{\chi}$, $\rho_j = W_{j-1}^3+W_j^2-2$ and $\beta_j = \gamma-\frac{\rho_j}{\gamma}$ for each $1\le j\le n$. Let $\beta_\infty = \gamma+\frac{2-\sum_{j=0}^n W_j^1}{\gamma}$. Consider  the conformal welding of samples from 
$\Md_2(W_0), \QT(W^1_1,W_1^2,W_1^3), \cdots,\QT(W_{n-1}^1,W_{n-1}^2,W_{n-1}^3),\Md_2(W_n)$ {in that order}. Then the output curve-decorated quantum surface is simply connected and can be embedded as $(\bbH, \phi, x_1, ..., x_n, \eta_1, ..., \eta_n, \infty)$, and there exists some constant $c\in(0,\infty)$ such that the law of $(\phi, x_1, ..., x_n, \eta_1, ..., \eta_n)$ is
\begin{equation}\label{eq:thm-IG}
   c\cdot \mathds{1}_{0<x_2<...<x_{n-1}<1}\LF_\bbH^{(\beta_j,x_j)_{1\le j\le n},(\beta_\infty, \infty)}(d\phi)\times {\mathrm{IG}_{\underline{x}, \underline{\lambda}, \underline{\theta}}}(d\eta_1...d\eta_n) \, dx_2...dx_{n-1}
\end{equation}
where $x_1=0$ and $x_n=1$.
\end{theorem}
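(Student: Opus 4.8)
\emph{Plan.} I would prove Theorem~\ref{thm:IG} by induction on $n$, assembling the welded surface one piece at a time and feeding in three inputs: the conformal welding identities for two-pointed quantum disks and quantum triangles from~\cite{AHS20,ASY22}; the calculus for computing such welding identities at the level of Liouville fields with boundary insertions from~\cite{AHS21}; and the imaginary geometry flow-line machinery of~\cite{MS16a}, in particular the rules for flow lines with a common target and the way the boundary data and the angle seen by a flow line change when it crosses a previously drawn one. The hypothesis $W_i^3+\sum_{i<k<j}W_k^1+W_j^2>\frac{\gamma^2}{2}$ is used exactly to ensure that every surface produced at an intermediate stage of the welding is thick (simply connected, not beaded), so that the welding theorems of~\cite{ASY22} apply in their basic form.

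\emph{Base case $n=2$.} Here the modulus integral is empty and one welds $\Md_2(W_0)$, $\QT(W_1^1,W_1^2,W_1^3)$, $\Md_2(W_2)$ in order. Welding $\Md_2(W_0)$ onto the appropriate side of the triangle, and then welding the result onto $\Md_2(W_2)$, each step governed by the disk--triangle welding theorem of~\cite{ASY22}, exhibits the output (up to a finite constant) as a quantum surface decorated by two curves, where $\eta_1$ is an $\SLE_\kappa(W_0-2;W_1^2-2)$ from $x_1$ to $\infty$ and, given $\eta_1$, $\eta_2$ is an $\SLE_\kappa(W_1^3-2;W_2-2)$ from $x_2$ to $\infty$ in the component of $\bbH\setminus\eta_1$ to its right. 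On the field side the two weldings fuse the boundary insertions of the three pieces into $\LF_\bbH$ with three insertions; a short computation using $\beta=\gamma-\frac{W-2}{\gamma}$ for a weight-$W$ boundary point together with $W_1^1+2=W_1^2+W_1^3$ returns precisely $(\beta_1,0),(\beta_2,1),(\beta_\infty,\infty)$. It then remains to recognize $(\eta_1,\eta_2)$ as the flow lines $\mathrm{IG}_{\underline x,\underline\lambda,\underline\theta}^\#$: by~\cite{MS16a} the flow line of a GFF with boundary data $(\lambda_0,\lambda_1,\lambda_2)$ started at $x_1$ with angle $\theta_1$ is an $\SLE_\kappa(\underline\rho)$ whose force-point weights encode the boundary jumps, and conditionally on it the flow line from $x_2$ with angle $\theta_2$ is the corresponding flow line in the complement; checking that the jumps reproduce $\rho_1,\rho_2$ and that the angle shift $\theta_2-\theta_1=-\lambda W_1^1/\chi$ is exactly the one forced by crossing $\eta_1$ closes the case, and this is where $\lambda=\pi/\gamma=\pi/\sqrt{\kappa}$ enters.

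\emph{General $n$.} One builds the surface by adding $\Md_2(W_0),\QT_1,\dots,\QT_{n-1},\Md_2(W_n)$ one at a time. Welding on $\QT_k$ to the current surface, by the welding theorems of~\cite{ASY22,AHS20}, creates exactly one new interface $\eta_k$, an $\SLE_\kappa(\rho^-;\rho^+)$ from $x_k$ to $\infty$ with the two force points immediately adjacent to $x_k$ (the side of weight $W_{k-1}^3$ exposed by the previous step and the side of weight $W_k^2$ of the new triangle are identified along $\eta_k$, and their common endpoint becomes $x_k$ with fused insertion $\gamma-\frac{W_{k-1}^3+W_k^2-2}{\gamma}=\beta_k$, while the two target-endpoints merge into the single $\infty$ with insertion accumulating towards $\beta_\infty$). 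By the flow-line Markov property of~\cite{MS16a}, appending $\eta_k$ turns the current flow-line configuration into that of a GFF in the enlarged domain, with the boundary data along $\eta_{k-1}$ jumping by the amount dictated by $W_{k-1}^1$ and the angle of the new curve shifted by $-\lambda W_{k-1}^1/\chi$, i.e.\ exactly $\lambda_k,\theta_k$. Simultaneously, using the insertion calculus of~\cite{AHS21}, one tracks the Radon--Nikodym factor between the welded law of the conformal modulus freed at this step and the corresponding Liouville correlation function; each such step contributes an extra factor $\prod_{i<k}(x_k-x_i)^{\rho_i\rho_k/2\kappa}$. Multiplying these over $k$ produces the full factor $\prod_{i<j}(x_j-x_i)^{\rho_i\rho_j/2\kappa}$, which is precisely what relates $\mathrm{IG}_{\underline x,\underline\lambda,\underline\theta}$ to $\mathrm{IG}_{\underline x,\underline\lambda,\underline\theta}^\#$, and this yields~\eqref{eq:thm-IG}.

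\emph{Main obstacle.} I expect the technical heart to be isolating, inside the conformal welding, this extra multiplicative factor $\prod_{i<j}(x_j-x_i)^{\rho_i\rho_j/2\kappa}$ in the density of the random moduli: this requires combining the behavior of Liouville correlation functions and their Seiberg-type fusion rules under adding boundary insertions (à la~\cite{AHS21}) with a careful Jacobian/Girsanov bookkeeping of how the $\SLE_\kappa(\underline\rho)$ interfaces reweight the embedding, equivalently matching the total-mass accounting of the iterated welding against $\int Z_{\mathrm{LCFT}}(\underline x)\prod_{i<j}(x_j-x_i)^{\rho_i\rho_j/2\kappa}\,d\underline x$. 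A secondary but genuine difficulty is checking that the \emph{jointly} nested family of $\SLE_\kappa(\underline\rho)$ interfaces produced by the welding coincides with the flow lines of a single GFF carrying the prescribed piecewise boundary data and angles; this rests on the crossing/interaction rules of~\cite{MS16a} and on the compatibility of the constraints $W_j^1+2=W_j^2+W_j^3$ and $\lambda=\pi/\gamma$ with the required boundary-data and angle shifts across each curve.
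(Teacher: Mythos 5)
Your high-level plan matches the paper's strategy quite closely: induction on $n$, the disk--triangle welding theorem of \cite{ASY22}, the $\gamma$-to-$\beta$ insertion change of \cite{AHS21}, and the flow-line interaction rules of \cite{MS16a}. However, you identify the ``Jacobian/Girsanov bookkeeping'' as the main obstacle without supplying the specific mechanism that resolves it, and this is precisely where the paper's proof has genuine content that your proposal leaves as a black box. Two pieces are missing.

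First, the paper's key technical input is a Schramm--Wilson type $\SLE_\kappa(\underline\rho)$ coordinate-change martingale (the paper's Propositions~\ref{prop:martingale} and~\ref{prop:martingale-1}), together with an analysis of the $t\to\infty$ limit of this martingale (Lemma~\ref{lm:martingale}) which converts the $\prod |f_t'(x_i)|^{\cdot}|f_t(x_i)-f_t(x_j)|^{\cdot}$ factors into boundary Poisson kernels and derivatives of the uniformizing map $\psi_\eta$ of the unbounded complementary component. This is what lets Lemma~\ref{lm:weld-IG} take the curve that the welding produces --- an $\SLE_\kappa(\rho_-;\rho_0,\rho')$ with only \emph{two} right force points --- and, after multiplying by the ratio $\prod_{i<j}(x_j-x_i)^{\hat\rho_i\hat\rho_j/2\kappa}/\prod_{i<j}(x_j-x_i)^{\rho_i\rho_j/2\kappa}$ coming from the change of insertion weights in the LCFT correlator, reidentify it as the \emph{multi}-force-point $\SLE_\kappa(\rho_-;\rho_0,\ldots,\rho_n)$ required by imaginary geometry. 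Your claim that ``welding on $\QT_k$ creates exactly one new interface $\eta_k$, an $\SLE_\kappa(\rho^-;\rho^+)$ from $x_k$ to $\infty$ with the two force points immediately adjacent'' is where the gap is sharpest: the theorem requires that the curve be a multi-force-point $\SLE_\kappa(\underline\rho)$ seeing \emph{all} the other marked points, and the passage from the two-force-point process that welding naively produces to the multi-force-point process is exactly what the SW martingale plus the partition-function reweighting achieves; this step is not automatic and is not a direct application of Girsanov without the explicit martingale computation.

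Second, your proposal does not address the case analysis that the paper needs in the induction step. Lemma~\ref{lm:weld-IG} (the disk-on-the-left welding) requires the intermediate force-point weights to satisfy $\rho_i>\frac{\kappa}{2}-2$ and partial sums to stay above $\frac{\kappa}{2}-2$, so that the interface does not hit $(0,\infty)$ and the $t\to\infty$ limit of the SW martingale exists. This is not guaranteed by the theorem's hypotheses at every intermediate stage, and the paper works around it by splitting into three cases depending on $W_0+W_1^2$ versus $2$, in each case re-assembling the leftmost pieces (using Theorem~\hyperref[thm:disk-welding]{A} and Theorem~\ref{thm:disk+QT} to split or merge disks/triangles) so that the new interface sits in a regime where Lemma~\ref{lm:weld-IG} applies and the flow-line interpretation (Theorem~\ref{thm:IGflowline}) carries over. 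Without this, your induction step as described would fail whenever $W_0+W_1^2 \le 2$.
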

See Figure~\ref{fig:thm-IG} for an illustration. The constraint $W_i^3+\sum_{i<k<j}W_k^1+W_j^2\ge\frac{\gamma^2}{2}$ for every $0\le i<j\le n$ is the necessary and sufficient condition to assure that the surface we obtained by gluing all the quantum disks and triangles together is a.s.\ simply connected.  {The surface would possibly be pinched if this condition is violated; see the right panel of Figure~\ref{fig:thm-IG} as an example.} If any of the sum above equals $\frac{\gamma^2}{2}$, using the same {method}  from~\cite[Section 2.5]{ASY22}, it is possible to define the Liouville field with the so-called $Q^-$ insertions and prove analogous results. We skip it here for simplicity.
The quantum triangles with constraint $W_1+2=W_2+W_3$ are called good quantum triangles in~\cite{ASY22}.
In that paper we proved a special case of Theorem~\ref{thm:IG}, namely ~\cite[Theorem 1.3]{ASY22}. In that setting, $\lambda_0=...=\lambda_n$ hence  the partition function is the constant 1. Moreover, the surface obtained by conformal welding  is $\QD_{n}$.

\subsubsection{Boundary Green's function}

Next we explain the connections with the SLE boundary Green's function established in the recent work~\cite{Zhan22b}. 
Let $N>0$. Parallel to the definition of link patterns, consider a simple curve $\eta$ in $\overline{\bbH}$ from $i_0\in\{0,...,N-1\}$ and ending at $N$ such that $\eta\cap\partial\bbH = \{0, ..., N\}$.  Topologically $\eta$ forms a planar partition of $\bbH$, which we call a curve link pattern and denote by $\alpha = (i_0, ..., i_{N-1})$ where $i_0, ..., i_{N-1}$ is the order of the marked points $(0, ..., N-1)$ visited by $\eta$. Denote the set of  curve link pattern with $N+1$ marked points by $\mathrm{CLP}_N$. For $\alpha\in\mathrm{CLP}_N$, let $\alpha^0\in\mathrm{CLP}_{N-1}$ be the curve link pattern obtained by removing the first segment from $\alpha$. 

For $N\ge 2$, $x_0<...<x_{N-1}\in\bbR$ distinct and $\alpha\in\mathrm{CLP}_N$, following~\cite{zhan17g,Zhan22b} we recursively define a measure $M_\alpha(x_0, ..., x_{N-1})$ on $N$ simple curves in $\bbH$ and a function $G_\alpha(x_0,...,x_{N-1})$ as follows. Let $b_2 = \frac{8}{\kappa}-1$ be the boundary scaling exponent, and $G_1(z) = |z|^{-b_2}$.  Start with $N=2$, and let $\alpha = (i_0,i_1)$.  First sample an $\SLE_\kappa(\kappa-8)$ curve $\eta_1$ from $x_{i_0}$ and aimed at $\infty$ with the force point located at $x_{i_1}$. By~\cite{Dub09, MS16a}, $\eta_1$ a.s.\ terminates at $x_{i_1}$, and given $\eta_1$, we then grow an $\SLE_\kappa$ curve $\eta_2$ from $x_{i_1}$ to $\infty$ in the unbounded component of $\bbH\backslash\eta_1$. We write $M_\alpha(x_0,x_1)^\#$ for the joint law of $(\eta_1,\eta_2)$, and let   $M_\alpha(x_0,x_1)=G_\alpha(x_0,x_1)\cdot M_\alpha(x_0,x_1)^\#$ with $G_\alpha(x_0,x_1)=G_1(x_1-x_0)$. Suppose $M_\alpha(x_0, ..., x_{N-1})$ and $G_\alpha(x_0,...,x_{N-1})$ has been defined for $N$. For $N+1$ case and $\alpha = (i_0, ..., i_{N})\in \mathrm{CLP}_{N+1}$, %if $i_1>i_2$ assume $x_1<x_0$ and otherwise let $x_1>x_0$. Then
a sample $(\eta_1,...,\eta_N)$ from $M_\alpha(x_0, ..., x_{N})$ is produced as follows:
\begin{enumerate}[(i)]
    \item Sample an $\SLE_\kappa(\kappa-8)$ curve $\eta_1$ from $x_{i_0}$ and aimed at $\infty$ with the force point located at $x_{i_1}$. Let $(f_t)_{t>0}$ be its centered Loewner map and $T_1$ be the capacity time when $\eta_1$ hits $x_{i_1}$;
    \item Weight the law of $\eta_1$ by $$G_1(x_{i_1}-x_{i_0})\prod_{j=2}^Nf_{T_1}'(x_{i_j})^{b_2}\cdot G_{\alpha^0}(f_{T_1}(x_0),...,f_{T_1}(x_{i_0-1}),f_{T_1}(x_{i_0+1}),...,f_{T_1}(x_N)); $$
    \item Sample $(\eta_2^0, ..., \eta_{N+1}^0)$ from  the measure $M_{\alpha^0}(f_{T_1}(x_0),...,f_{T_1}(x_{i_0-1}),f_{T_1}(x_{i_0+1}),...,f_{T_1}(x_N))^\#$ and for $2\le j\le N+1$ let $\eta_j = f_{T_1}^{-1}\circ\eta_j^0$. 
\end{enumerate}
Then $G_\alpha(x_0,...,x_N)$ is defined by $|M_\alpha(x_0, ..., x_N)|$ and $M_\alpha(x_0,...,x_N)^\# = \frac{M_\alpha(x_0,...,x_N)}{G_\alpha(x_0,...,x_N)}$.

Now we comment on the relationship between our measure $M_\alpha(x_0, ..., x_{N-1})$ and the SLE boundary Green's function. For $x_0, ..., x_{n}\in\bbR$ distinct,  the $n$-point SLE boundary Green's function is defined by the limit
\begin{equation}\label{eq:def-SLE-Green}
    G(x_0,...,x_{n}) = \lim_{r_1,...,r_n\to 0^+}r_1^{-b_2}...r_n^{-b_2}\,\bbP\big(\dist(\eta, x_j)<r_j,\ 1\le j\le n \big)
\end{equation}
where $\eta$ is an $\SLE_\kappa$ curve from $x_0$ to $\infty$. The existence of the limit~\eqref{eq:def-SLE-Green} has been shown in~\cite{Lawler2015g} when $n=1$ or $n=2$ with $x_1,x_2>x_0$ and in~\cite{Zhan22b} in full generality. One variant of~\eqref{eq:def-SLE-Green} is the ordered boundary Green's function, which is defined by 
\begin{equation}\label{eq:def-SLE-Green-order}
    \vec{G}(x_0,...,x_{n}) = \lim_{r_1,...,r_n\to 0^+}r_1^{-b_2}...r_n^{-b_2}\,\bbP\big(\tau^{x_1}_{r_1}<...<\tau^{x_n}_{r_n}<\infty \big)
\end{equation}
 where $\tau^{x_j}_{r_j}$ denotes the first time when the $\SLE_\kappa$ curve $\eta$ hits $\{z:|z-x_j|<r_j\}$. By~\cite[Theorem 4.1 and Lemma 3.7]{Zhan22b}, for $\alpha = (i_0,i_1, ..., i_n)$, $x_0<...<x_n$, the identity
 \begin{equation}\label{eq:SLE-Green-order}
     G_\alpha(x_0,...,x_n) =  {\hat{c}^{n-1}}\cdot\vec{G}(x_{i_0}, ..., x_{i_n})
 \end{equation}
holds for some constant  {$\hat c\in(0,\infty)$} at least when (i) $n\le 2$ or (ii)  {$\alpha = (0,1,...,n)$ or $\alpha = (n,n-1,...,0)$. Here $\hat c$ is the constant from one-point boundary Green's function~\cite{Lawler2015g} depending only on $\kappa$.} %$x_0<x_1<...<x_n$ or $x_n<...<x_1<x_0$. 
A sample from $M(x_0,x_1)^\#$ is called two-sided chordal $\SLE_\kappa$ from $x_0$ to $\infty$ through $x_1$, while a sample from $M_\alpha(x_0,x_1,...,x_{n})^\#$ can be thought as the chordal $\SLE_\kappa$ from $x_0$ to $\infty$ ``conditioned" on hitting $x_1,...,x_n$ following the order induced by $\alpha$. {Moreover, one can infer from~\cite[Theorem 4.1]{Zhan22b} that $|M_\alpha(x_0, ..., x_{N})|<cG(x_0,...,x_{N})$ for some constant $c$ and thus our definition makes sense.}

\begin{figure}
    \centering
    \begin{tabular}{ccc} 
		\includegraphics[scale=0.51]{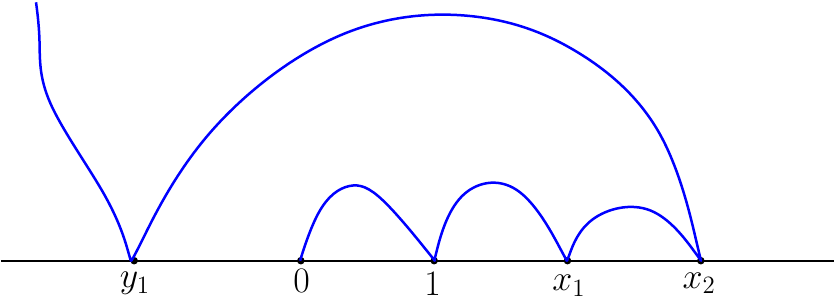}
		& \qquad &
		\includegraphics[scale=0.4]{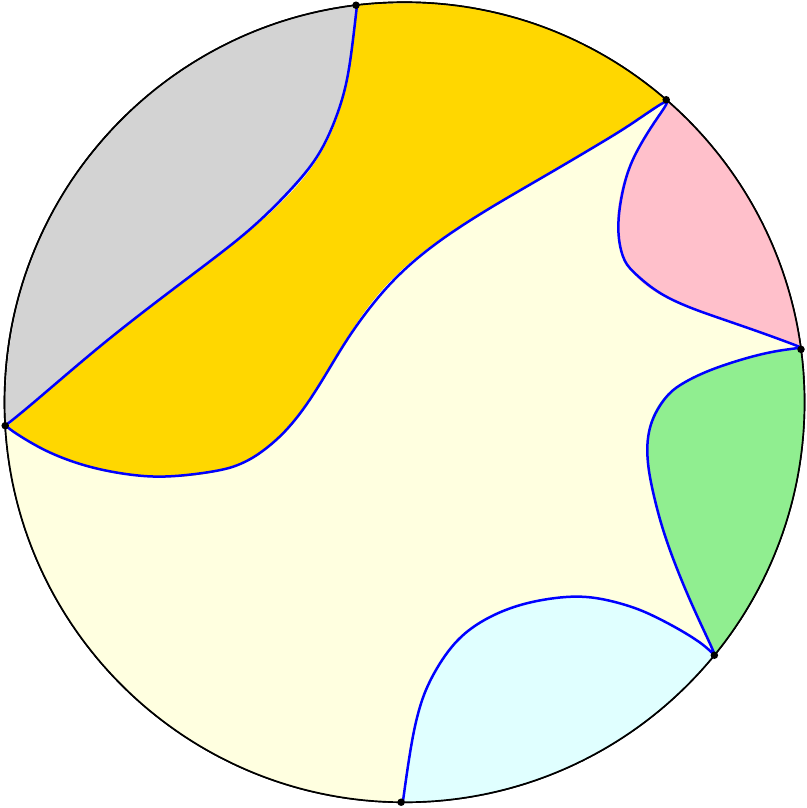}
	\end{tabular}
 \caption{An illustration of Theorem~\ref{thm:SLE-Green}. \textbf{Left:} A curve link pattern $\alpha = (1,2,3,4,0)\in\mathrm{CLP}_5$. \textbf{Right:} The conformal welding of quantum disks $\Wd_\alpha(\QD^{N+1})$ according to $\alpha$, where we are welding 4 samples from $\QD_{2}$, a sample from $\QD_{3}$ and a sample from $\QD_{5}$ together. }\label{thm:Green}
 \end{figure}

We state our result as follows. For $\alpha\in \mathrm{CLP}_N$, let $\Wd_\alpha(\QD^{N+1})$ be the conformal welding of quantum disks defined analogously with Theorem~\ref{thm:main}. See Figure~\ref{thm:Green} for an illustration.
\begin{theorem}\label{thm:SLE-Green}
    Let $\gamma\in(0,2)$, $\kappa=\gamma^2$, $\beta = \gamma-\frac{2}{\gamma}$ and $\beta_2 = \gamma-\frac{4}{\gamma}$. Let $N\ge 2$, $\alpha = (i_0, ..., i_{N-1})\in \mathrm{CLP}_N$ be a curve link pattern. Suppose $i_1>i_0$. Then there exists a constant $c\in(0,\infty)$ such that 
    \begin{equation}\label{eq:thm-Green}
\begin{split}
    &\int_{y_{i_0}<...<y_1<0<1<x_1<...<x_{N-i_0-2}}\bigg[\LF_\bbH^{(\beta,0),(\beta,\infty),(\beta_2,1),(\beta_2,x_1),...,(\beta_2,x_{N-i_0-2}),(\beta_2,y_1),...,(\beta_2,y_{i_0})}\times \\ &M_{\alpha}(y_{i_0},...,y_1,0,1,x_1,...,x_{N-i_0-2})\bigg]dy_1...dy_{i_0}dx_1...dx_{N-i_0-2}= c\,\Wd_{\alpha}(\mathrm{QD}^{N+1})
    \end{split}
\end{equation}
where the left hand side is understood as the law of a curve-decorated quantum surface  {in the sense of Theorem~\ref{thm:main}}. Likewise, if $i_1<i_0$, then for some $c\in(0,\infty)$, 
\begin{equation}\label{eq:thm-Green-1}
\begin{split}
    &\int_{y_{i_0-1}<...<y_1<-1<0<x_1<...<x_{N-i_0-1}}\bigg[\LF_\bbH^{(\beta,0),(\beta,\infty),(\beta_2,1),(\beta_2,x_1),...,(\beta_2,x_{N-i_0-1}),(\beta_2,y_1),...,(\beta_2,y_{i_0-1})}\times \\ &M_{\alpha}(y_{i_0-1},...,y_1,-1,0,x_1,...,x_{N-i_0-1})\bigg]dy_1...dy_{i_0-1}dx_1...dx_{N-i_0-1}= c\,\Wd_{\alpha}(\mathrm{QD}^{N+1}).
    \end{split}
\end{equation}
\end{theorem}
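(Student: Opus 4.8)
\textbf{Proof strategy for Theorem~\ref{thm:SLE-Green}.}
The plan is to run the same two-layer induction that underlies Theorems~\ref{thm:main} and~\ref{thm:IG}: an outer induction on $N$ that peels off one curve at a time, matched against the recursive definition of $M_\alpha(x_0,\dots,x_{N-1})$, and an inner application of Theorem~\hyperref[thm:disk-welding]{A} plus the marked-point-adding machinery of~\cite{AHS21}. Since $\Wd_\alpha(\mathrm{QD}^{N+1})$ is built by welding $\QD_{0,2}$'s, one $\QD_{0,3}$ corresponding to the component containing the first segment's endpoint behavior and one $\QD_{0,5}$-type component (more precisely, the component structure dictated by $\alpha$), the natural move is to first isolate the component adjacent to $\eta_1$. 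I would first treat the base case $N=2$, where $\alpha=(i_0,i_1)$ with $i_1>i_0$, so $\eta_1$ is an $\SLE_\kappa(\kappa-8)$ from $x_{i_0}$ to $\infty$ forced at $x_{i_1}$ terminating at $x_{i_1}$, and $\eta_2$ is chordal $\SLE_\kappa$ from $x_{i_1}$ to $\infty$. On the welding side this is the welding of two $\QD_{0,2}$'s, one $\QD_{0,3}$ and (for $N=2$) possibly fewer pieces; the key input is that $\SLE_\kappa(\kappa-8) = \SLE_\kappa(W-2)$ with $W = \kappa-6 = \gamma^2-6$, which is negative, so one is in the thin-disk/weight regime, and Theorem~\hyperref[thm:disk-welding]{A} with $W_1 = \kappa-6$ (or the appropriate positive weights after reindexing) identifies the welded surface. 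One then uses the conformal welding with an extra marked point, exactly as in~\cite[Proposition 2.18 or similar]{AHS21} and as used to get Theorem~\ref{thm:main}, to produce the $\beta_2 = \gamma - \tfrac4\gamma$ insertions at the hit points and the $\beta = \gamma-\tfrac2\gamma$ insertions at the curve endpoints $0,\infty$.

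For the inductive step, suppose the identity holds for $\mathrm{CLP}_{N}$-patterns. Given $\alpha = (i_0,\dots,i_{N-1})\in\mathrm{CLP}_N$ with $i_1>i_0$, I would condition on $\eta_1$ (the $\SLE_\kappa(\kappa-8)$ from $x_{i_0}$ to $\infty$ hitting $x_{i_1}$), which cuts $\bbH$ into the region $D_{\eta_1}$ to one side, where the remaining curves $(\eta_2,\dots,\eta_N)$ live with conditional law $M_{\alpha^0}(\cdot)^\#$ after applying the centered Loewner map $f_{T_1}$. On the welding side, conditioning on $\eta_1$ corresponds exactly to detaching the quantum-disk component(s) that $\eta_1$ bounds (a $\QD_{0,2}$ together with $\QD_{0,3}$, glued along $\eta_1$, which by Theorem~\hyperref[thm:disk-welding]{A} and the marked-point-adding lemma is a weight-$(\kappa-6)$-type disk carrying the correct $\beta$-insertions), leaving a welding $\Wd_{\alpha^0}(\mathrm{QD}^{N})$ in the residual region. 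Pushing forward by $f_{T_1}$ and using the conformal covariance of $\LF_\bbH$ with boundary insertions (Lemma~\ref{lm:lf-insertion-bdry}) together with the conformal covariance of $M_{\alpha^0}$ built into step (ii) of its recursive definition — the weighting by $G_1(x_{i_1}-x_{i_0})\prod_j f_{T_1}'(x_{i_j})^{b_2}\, G_{\alpha^0}(f_{T_1}(\cdot))$ is precisely what is needed to make the two covariance factors cancel — one recovers the $N-1$ statement, closing the induction. The case $i_1<i_0$ is symmetric, with the shift to base points $-1,0$ rather than $0,1$ reflecting that $\eta_1$ now passes $x_{i_0}$ on the other side; one reduces to~\eqref{eq:thm-Green} by the reflection $z\mapsto -z$ combined with the relabeling of the curve link pattern.

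There are three places where care is needed. First, the measure $M_\alpha$ is built from $\SLE_\kappa(\kappa-8)$, whose force-point weight is very negative; one must check that the corresponding $\QD_2(W)$ with $W = \kappa-6 < 0$ still fits the welding framework (it does, as a thin/beaded quantum surface in the sense of~\cite{AHS20}, the same regime as the $W_2<\gamma^2/2$ picture in Figure~\ref{fig:diskwelding}), and that the insertion weight there works out to $\beta_2 = \gamma - \tfrac4\gamma$ — consistent with the recorded value $\beta_2 = \gamma-\frac{4}{\gamma}$ via $\beta_2 = \gamma - \tfrac{W+2}{\gamma}$ with $W+2 = \kappa-4 = \gamma^2-4$, so $\tfrac{W+2}{\gamma} = \gamma - \tfrac4\gamma$... wait, that gives $\beta_2 = \gamma - \gamma + \tfrac4\gamma = \tfrac4\gamma$, so instead the matching is $\beta_2 = \gamma - \tfrac{\rho+2}{\gamma}$ with the appropriate $\rho$; I would track this bookkeeping carefully against the conventions of Theorem~\ref{thm:N=2} and~\ref{thm:IG}. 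Second, finiteness: $M_\alpha$ is a priori an infinite measure, so the identity~\eqref{eq:thm-Green} is an equality of (possibly infinite) measures, and I would lean on the bound $|M_\alpha(x_0,\dots,x_N)|<cG(x_0,\dots,x_N)$ noted in the excerpt (from~\cite[Theorem 4.1]{Zhan22b}) together with the finiteness of the $\LF_\bbH$ side to justify the disintegration manipulations. The main obstacle I anticipate is the base case: verifying that welding $\QD_{0,2}$, $\QD_{0,3}$ (and the structure forced by $\alpha$) along $\eta_1 \sim \SLE_\kappa(\kappa-8)$ produces exactly the two-sided chordal $\SLE_\kappa$ law $M_\alpha(x_0,x_1)^\#$ with the Green-function normalization $G_1(x_1-x_0) = |x_1-x_0|^{-b_2}$ as the moduli density — this is where Theorem~\hyperref[thm:disk-welding]{A} in the thin-disk regime must be combined with the disintegration over the hitting point, and getting the power $b_2 = \tfrac8\kappa - 1$ to emerge requires matching the $\QD_{0,3}$ normalization constants against the LCFT structure constants, the same kind of computation as in~\cite{ASY22}.
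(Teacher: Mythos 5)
The overall induction scaffolding you propose---peel off $\eta_1$, match against the recursive definition of $M_\alpha$, and weld one more $\QD_{0,2}$ per step---does line up with the direction the paper takes. But there is a genuine gap at the heart of both your base case and your inductive step, and it is exactly where you flag uncertainty.

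You propose to produce the curve $\eta_1\sim\SLE_\kappa(\kappa-8)$ from Theorem~\hyperref[thm:disk-welding]{A} ``with $W_1 = \kappa-6$ (or the appropriate positive weights after reindexing).'' This cannot work. Theorem~\hyperref[thm:disk-welding]{A} requires $W_1,W_2>0$, and for $\kappa\in(0,4)$ the weight $W=\kappa-6$ is negative; there is no reindexing that rescues this, since an $\SLE_\kappa(\rho^-;\rho^+)$ coming from a disk--disk welding always has $\rho^\pm>-2$, whereas $\kappa-8<-2$. The measure $\Md_2(W)$ is simply undefined for $W\le 0$, and ``thin disk'' does not mean ``negative weight''---it means $W\in(0,\gamma^2/2)$. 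The paper's proof instead uses Theorem~\ref{thm:disk+QT}: one welds a sample of $\QD_{0,2}$ to a \emph{quantum triangle} of weight $(4,2,4)$, obtaining an $\SLE_\kappa$-type interface from $0$ to $1$ with force points $\infty;0^+$, and then applies the Schramm--Wilson coordinate change (\cite[Theorem~3]{SW05}) to recognize this curve, viewed as targeted at $\infty$, as the $\SLE_\kappa(\kappa-8)$ with force point at $1$. The quantum triangle and the SW05 target change are the essential inputs; without them there is no way to generate the force-point weight $\kappa-8$. Relatedly, your description of the pieces adjacent to $\eta_1$---``a $\QD_{0,2}$ together with a $\QD_{0,3}$, glued along $\eta_1$''---is not the right picture: $\eta_1$ bounds a single $\QD_{0,2}$ on one side and the remaining welded composite on the other, and the $\QT(4,2,4)$ is the intermediate object playing the role of that composite in the recursion, not a $\QD_{0,3}$.

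Two further issues follow from the missing quantum-triangle step. First, your $\beta_2$ bookkeeping never closes (you visibly hit a contradiction in the proposal and defer it); in the paper the exponent $\beta_2=\gamma-\frac4\gamma$ is produced by starting from the $\gamma$-insertion created when a quantum-typical point is sampled and then reweighting it via Lemma~\ref{lm:lf-change-weight}, with the target value dictated by the weight-$4$ corner of the $(4,2,4)$ triangle and the relation $1-\Delta_{\beta_2}=b_2=\frac8\kappa-1$; the factor $\psi_{\eta_1}'(\cdot)^{1-\Delta_{\beta_2}}$ that appears is exactly what reconstructs $G_{\alpha^0}$ via the dilation covariance of $M_\alpha$ (Lemma~\ref{lm:green-dilation}), which is another ingredient your proposal omits. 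Second, the conformal covariance of $\LF_\bbH^{(\beta_i,s_i)}$ you invoke is Lemma~\ref{lm:lcft-H-conf}, not Lemma~\ref{lm:lf-insertion-bdry} (the latter is the Girsanov-type insertion lemma). Your remark on the $i_1<i_0$ case and on finiteness via $|M_\alpha|<cG$ is fine, but the proposal as written would stall at the very first step.
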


When $N=3$, following~\cite{Zhan21}, the aforementioned result can also be extended to $\SLE_\kappa(\rho)$ curves.  Let $\rho>-2$ and $b_\rho = \frac{(\rho+2)(2\rho+8-\kappa)}{2\kappa}$. For $x>1$, define a measure $M(\rho;x)$ on three simple curves in $\overline{\bbH}$ as follows. First sample an $\SLE_\kappa(\rho,\kappa-8-2\rho)$ curve $\eta_1$ from 0 to $\infty$ with force points $0^+,1$ (such curve a.s. terminates at $1$  {following the criterion from~\cite{Dub09,MS16a} since $\rho+\kappa-8-2\rho<\frac{\kappa}{2}-4$}) and then weight its law by $f_\tau'(x)^{b_\rho}(f_\tau(x)-f_\tau(1))^{-b_\rho}$, where $(f_t)_{t>0}$ is the centered Loewner map for $\eta_1$ and $\tau$ is the time when $\eta_1$ hits 1. Then sample an $\SLE_\kappa(\rho,\kappa-8-2\rho)$ curve $\eta_2$ in the unbounded connected component of $\bbH\backslash\eta_1$ from 1 to $\infty$ with force points at $1^+,x$  {which also a.s.\  terminates at $x$}. Finally sample an $\SLE_\kappa(\rho)$ curve $\eta_3$ in the unbounded connected component of $\bbH\backslash(\eta_1\cup\eta_2)$ from $x$ to $\infty$ with force points $x^+$. Let $M(\rho;x)$ be the joint law of $(\eta_1,\eta_2,\eta_3)$. By~\cite[Remark 5.7]{Zhan21}, $|M(\rho;x)|$ is a constant times the limit
\begin{equation}
    \lim_{r_1\to0,r_2\to0}r_1^{-b_\rho}r_2^{-b_\rho}\bbP(\dist(\eta,1)<r_1,\dist(\eta,x)<r_2)
\end{equation}
where $\eta$ is an $\SLE_\kappa(\rho)$ curve from 0 to $\infty$ with force point at $0^+$.  {Also note that the curves $\eta_1,\eta_2,\eta_3$ does not hit $(0,+\infty)$ except at their starting and ending points if and only if $\rho\geq\frac{\kappa}{2}-2$.}

\begin{figure}
    \centering
    \begin{tabular}{ccc} 
		\includegraphics[scale=0.51]{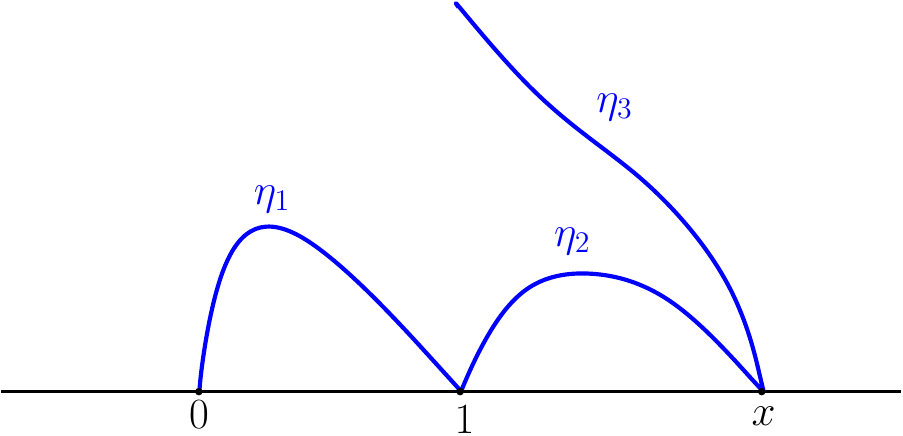}
		& \qquad &
		\includegraphics[scale=0.4]{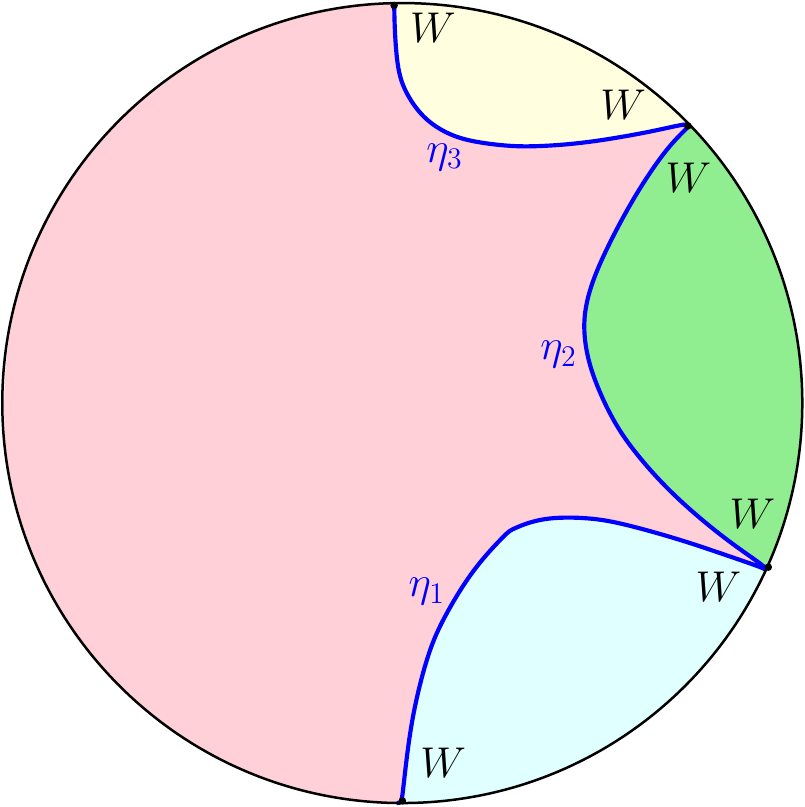}
	\end{tabular}
 \caption{An illustration of Theorem~\ref{thm:green-rho}. \textbf{Left:} An illustration of the measure $M(\rho;x)$. \textbf{Right:} The conformal welding of three quantum disks $\Md_2(W)$ and a sample from $\QD_{4}$ following the link pattern induced by $M(\rho;x)$.}\label{fig:green-rho}
 \end{figure}

Our result is the following. See also Figure~\ref{fig:green-rho} for an illustration.
\begin{theorem}\label{thm:green-rho}
    Let $\gamma\in(0,2)$, $\kappa=\gamma^2$, $\rho>-2$ and $W=\rho+2$. Let $\beta_\rho = \gamma-\frac{2+\rho}{\gamma}$ and $\beta_{2,\rho}=\gamma-\frac{2+2\rho}{\gamma}$. Consider the conformal welding of three samples from $\Md_2(W)$ and a sample from $\QD_{4}$ as in Figure~\ref{fig:green-rho}. Then for some constant $c\in(0,\infty)$, the output curve-decorated quantum surface can be embedded as $(\bbH,\phi,0,1,x,\infty,\eta_1,\eta_2,\eta_3)$ where $(\phi,x,\eta_1,\eta_2,\eta_3)$ has law
    \begin{equation}\label{eq:thm-green-rho}
        c\cdot\mathds{1}_{x\in[1,\infty)} \LF_\bbH^{(\beta_\rho,0),(\beta_{2,\rho},1),(\beta_{2,\rho},x),(\beta_\rho,\infty)}(d\phi)\times M(\rho;x)(d\eta_1d\eta_2d\eta_3)dx.
    \end{equation}
\end{theorem}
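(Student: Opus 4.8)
The plan is to follow the strategy of Theorems~\ref{thm:main},~\ref{thm:N=2} and~\ref{thm:SLE-Green}: build the welded curve-decorated surface by gluing the three copies of $\Md_2(W)$ onto $\QD_{0,4}$ one arc at a time, using at each stage the disk welding Theorem~\hyperref[thm:disk-welding]{A} together with the operation of adding a boundary marked point sampled from the quantum length measure (which, on the level of the Liouville field, inserts a $\gamma-\tfrac{2}{\gamma}$ weight at the new point, as in~\cite{AHS21}); and then to identify the law of the triple of interfaces produced with $M(\rho;x)$ using the conformal Markov and restriction properties of $\SLE_\kappa(\underline\rho)$ and the recursive structure of $M(\rho;x)$.

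\textbf{Gluing the disks.} Since the three interfaces $\eta_1,\eta_2,\eta_3$ form the chain $0\to1\to x\to\infty$, the sample of $\QD_{0,4}$ is the complementary region and the three copies of $\Md_2(W)$ are its three pockets, glued onto three consecutive boundary arcs of $\QD_{0,4}$. I would first glue the pockets of $\eta_1$ and $\eta_3$ onto the corresponding (opposite) pair of arcs; by Theorem~\ref{thm:N=2} with $W_1=W_2=W$ this already gives, after fixing the embedding with corners at $0,1,x_0,\infty$, a constant times $\int\LF_\bbH^{(\beta_\rho,0),(\beta_\rho,1),(\beta_\rho,x_0),(\beta_\rho,\infty)}(d\phi)\times\mathfrak{m}_{x_0}(W,W)(d\eta_1 d\eta_3)\,dx_0$, with all four insertions equal to $\beta_\rho=\gamma-\tfrac{2+\rho}{\gamma}$ and $(\eta_1,\eta_3)$ joining $\{0,1\}$ and $\{x_0,\infty\}$. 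I would then glue the remaining copy of $\Md_2(W)$ onto the middle free arc, the one running between the corners $1$ and $x_0$. Because this pocket is glued along a single arc whose two endpoints are marked points, Theorem~\hyperref[thm:disk-welding]{A} (applied locally, using the add-a-marked-point operation to restore the other marked points, and the analysis of such "forced-to-hit-a-marked-point" interfaces in~\cite{ASY22}) shows that a new interface $\eta_2$ joining those two corners appears, with law $\SLE_\kappa(\rho,\kappa-8-2\rho)$ conditionally on the rest; that the insertions at $1$ and $x_0$ change, by the welding/merging rule, from $\beta_\rho$ to $\beta_{2,\rho}=\gamma-\tfrac{2+2\rho}{\gamma}$ while those at $0,\infty$ are unchanged; and that the joint law of $(\eta_1,\eta_3)$ together with the modulus gets reweighted. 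Relabeling $x_0$ as $x$, we conclude that the welded surface equals a constant times $\int_1^\infty\LF_\bbH^{(\beta_\rho,0),(\beta_{2,\rho},1),(\beta_{2,\rho},x),(\beta_\rho,\infty)}(d\phi)\times\widetilde M_x(d\eta_1 d\eta_2 d\eta_3)\,dx$ for some measure $\widetilde M_x$ on interface triples with the topology of $M(\rho;x)$.

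\textbf{Identifying the interface law.} It remains to show $\widetilde M_x=M(\rho;x)$ up to the overall constant. Since the welding is symmetric in the three pockets, I may peel the interfaces off in the order $\eta_1,\eta_2,\eta_3$, as in the definition of $M(\rho;x)$. Peeling off the pocket cut out by $\eta_1$: by the conformal restriction and Markov properties of $\SLE_\kappa(\underline\rho)$ and the conformal covariance of $\LF_\bbH$ under the uniformizing map $f_\tau$ of the unbounded component of $\bbH\setminus\eta_1$, the marginal of $\eta_1$ is an $\SLE_\kappa(\rho,\kappa-8-2\rho)$ curve (force points $0^+,1$) reweighted by the one-point $\SLE_\kappa(\rho)$ boundary Green's function of the surface remaining after $\eta_1$, which is precisely $f_\tau'(x)^{b_\rho}(f_\tau(x)-f_\tau(1))^{-b_\rho}$; and conditionally on $\eta_1$ the surface to its left is again of the same type, so that $(\eta_2,\eta_3)$ follows the same recursion, reproducing $M(\rho;x)$. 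The identification of this Green's-function factor is exactly where~\cite{Zhan21} is used, in place of~\cite{Zhan22b} in the proof of Theorem~\ref{thm:SLE-Green}; peeling $\eta_2$ and then $\eta_3$ and matching force points completes the identification, and also yields, by the symmetry of the welding, the commutation relation alluded to after Theorem~\ref{thm:N=2}.

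\textbf{Well-definedness, the constant, and the main obstacle.} Since $\rho>-2$ each $\Md_2(W)$ is a (possibly thin) quantum disk of finite positive mass and the welded surface is a.s.\ simply connected, so the left side of~\eqref{eq:thm-green-rho} is a well-defined law on curve-decorated quantum surfaces; the $\sigma$-finiteness needed to perform the disintegrations follows from~\cite[Remark 5.7]{Zhan21}, which identifies $|M(\rho;x)|$ with the two-point boundary Green's function of $\SLE_\kappa(\rho)$. The constant $c$ is a product of the constants $c_{W_1,W_2}$ of Theorem~\hyperref[thm:disk-welding]{A} and of the normalizing constants relating $\Md_2(W)$ and $\QD_{0,4}$ to Liouville fields, and I would not track its value. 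The main obstacle is the identification step: carrying the conformal anomaly of $\LF_\bbH$ and the Jacobian relating the modulus $x$ to the moduli of the unwelded pieces through the maps that successively uncover $\eta_1,\eta_2,\eta_3$, and matching the output with both the recursive definition of $M(\rho;x)$ and the $\SLE_\kappa(\rho)$ Green's-function recursion of~\cite{Zhan21}, so as to produce precisely the factor $f_\tau'(x)^{b_\rho}(f_\tau(x)-f_\tau(1))^{-b_\rho}$. A secondary subtlety, to be handled via~\cite{ASY22}, is justifying that gluing $\Md_2(W)$ onto a single arc of a multiply-marked disk yields the forced-to-hit curve $\SLE_\kappa(\rho,\kappa-8-2\rho)$ rather than an $\SLE_\kappa(\rho)$ aimed at $\infty$, especially when $W<\tfrac{\gamma^2}{2}$ and the pocket is a beaded surface.
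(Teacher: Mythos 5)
Your decomposition is genuinely different from the paper's, and that difference is where the gap lies. The paper welds the two pockets that will carry $\eta_2$ and $\eta_3$ onto a $\QD_{0,3}$ (at this stage only a weight-$(2+W,2+W,2)$ quantum triangle is present), then samples a fourth boundary typical point $y_0$ from the LQG length measure on the unglued arc and conformally maps it to $\infty$, so that the remaining arc to be welded becomes a half-infinite arc in the $\bbH$-embedding; the third $\Md_2(W)$ pocket is then welded via Theorem~\ref{thm:disk+QT} exactly as for a quantum triangle, and finally the fourth-point insertion is changed from $\gamma$ to the target $\beta_{2,\rho}$ via Lemma~\ref{lm:lf-change-weight}. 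That Girsanov insertion change is what \emph{produces} the boundary Green's-function factor $f_\tau'(x)^{b_\rho}(f_\tau(x)-f_\tau(1))^{-b_\rho}$. You instead begin with Theorem~\ref{thm:N=2} (gluing the $\eta_1$ and $\eta_3$ pockets onto opposite arcs of $\QD_{0,4}$, giving $\beta_\rho$-insertions at all four corners) and then try to weld the third $\Md_2(W)$ onto the middle, bounded arc whose endpoints already carry $\beta_\rho$-insertions.

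The gap is exactly in that middle welding. No lemma in the paper welds $\Md_2(W)$ onto a bounded arc of a multiply-marked surface whose endpoint insertions differ from $\gamma$: Theorem~\hyperref[thm:disk-welding]{A} and Theorem~\ref{thm:disk+QT} weld along canonical half-infinite arcs or full sides of quantum triangles, and Proposition~\ref{prop:disk+4-pt-disk} first \emph{arranges} $\gamma$-insertions at the arc endpoints (by sampling quantum-length-typical points) precisely so that the subsequent $\gamma\to\beta$ change produces the desired reweighting. Invoking ``Theorem~\hyperref[thm:disk-welding]{A} applied locally'' together with the ``forced-to-hit'' analysis of~\cite{ASY22} does not supply this; you would need to redo the entire add-a-marked-point/change-the-insertion argument for a four-marked surface with $\beta_\rho$-endpoints, which is not a secondary subtlety but the crux of the proof. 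Moreover, in the ``Identifying the interface law'' step you obtain the reweighting of $\eta_1$'s marginal by asserting it is ``the one-point $\SLE_\kappa(\rho)$ boundary Green's function of the surface remaining after $\eta_1$''; producing that Green's-function factor is exactly what the theorem is supposed to establish, so this is circular. The paper derives $f_\tau'(x)^{b_\rho}(f_\tau(x)-f_\tau(1))^{-b_\rho}$ directly from the Liouville-field martingale (using that $1-\Delta_{\beta_{2,\rho}}=b_\rho$ and that $\psi_{\eta_1}-1$ is a constant multiple of the centered Loewner map at the hitting time), and uses~\cite{Zhan21} only a posteriori to identify $|M(\rho;x)|$ with the two-point Green's function. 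Your ``Main obstacle'' paragraph is candid, but what it defers is not bookkeeping: it is the core of the argument, and it is easier with the paper's decomposition, which matches the recursive structure of $M(\rho;x)$ (the $\eta_1$-pocket is welded \emph{last}, so the Green's-function reweighting appears at the final insertion change).
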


\subsubsection{Overview of the proof and consequences}
Different from the existing works on conformal welding of LQG surfaces~\cite{DMS14,AHS20,AHS21,ASY22}, in this work we emphasize on the conformal welding of a multiple number of LQG surfaces,  {where after the welding we obtain LQG surfaces with more than three marked points on the boundary and thus produce} nontrivial conformal structure. To prove Theorems~\ref{thm:main}-\ref{thm:green-rho}, we use induction. We begin with the conformal welding of two quantum disks, or a quantum disk with a quantum triangle. Then we sample additional marked points on the boundary from the LQG length measure, and conformally weld the other surfaces along the new edge. One  {novel technique in our proof} is, we shall constantly shift between different embeddings of the same surface via random conformal maps. In addition to the conformal covariance of Liouville CFT, we also view the SLE measures as non-probability measures with non-trivial parition functions, and perform these conformal maps as well. As we will see in the proof, in all cases in Theorems~\ref{thm:main}-\ref{thm:green-rho} the sum of ``conformal weights" of SLE and LCFT add up to 1, which allows us to switch the random embedding. To identify the interfaces and their partition function after gluing the new surface, we shall also combine existing constructions of the multiple SLE curves in literature. In particular, for Theorem~\ref{thm:main}, we apply the cascade probabilistic construction of multiple SLE in~\cite{peltola2019global}, for Theorem~\ref{thm:IG} we use a refined version of the Schramm-Wilson martingale~\cite{SW05} as in Section~\ref{sec:IG}, and for Theorem~\ref{thm:SLE-Green} and Theorem~\ref{thm:green-rho} we adapt the iterative construction of SLE boundary Green's function in~\cite{Zhan21,Zhan22b}.

In a recent work~\cite{ARS22},  {Ang, Remy and one of us} derived the law of the  moduli of annuli in random conformal geometry, while a main result of  {our current} paper is the  moduli of the surfaces from random conformal welding. In~\cite{ARS22}, the derivation of the random moduli mainly relied on integrability results for annulus LCFT, while  {in this paper}  the derivation is intrinsic on the SLE side and is independent of  LCFT integrability. Nevertheless,  {results of this paper combined with LCFT integrability} are still useful for studying the SLE partition functions. For instance, based on Theorems~\ref{thm:SLE-Green} and~\ref{thm:green-rho} along with the techniques from~\cite{ARS22}, it  {may be} possible to give an  {explicit expression or other fine properties of} the SLE boundary Green's function. One other direction is the finiteness of the SLE partition functions in other settings. In a subsequent work~\cite{AHSY23} with Ang and Holden, we prove the finiteness of the multiple SLE partition function for $\kappa\in(4,8)$ based on the $\kappa\in(4,8)$ analog of Theorem~\ref{thm:main}. Moreover, the finiteness of partition function for SLE in general multiply connected domains is an open question for $\kappa\in(\frac{8}{3},4]$~\cite{lawler2011defining}. Some  {interesting} progress has been made recently by Aru and Bordereau~\cite{aru2024sle}, and we believe that this question can be settled by conformal welding of LQG surfaces as well; the coupling of SLE and CFT in multiply connected domains has also been studied recently in \cite{alberts2024conformal}.

\subsection{Future directions}\label{subsec:outlook}

As we have shown in various settings, conformal welding of LQG surfaces can be used  construct the SLE partition function.  The examples considered in this paper are relatively well understood. The ones in Theorems~\ref{thm:N=2} and ~\ref{thm:IG} are explicit. The one in Theorem~\ref{thm:main} have a PDE characterization. The boundary Green functions have an explicit limit construction. 
In principle, all the information of the SLE partition function is completely contained in the conformal welding of LQG surfaces. It is of great interest to derive the known properties of these partition functions from LQG.  Moreover, we plan to use the LQG method to construct and study SLE partition function in other settings, including the following.
\begin{itemize}
 \item In this paper we focus on $\kappa \in (0,4)$, where the SLE curves are simple. We expect no major difficulty extending the results to $\kappa=4$. For $\kappa\in (4,8)$ where the curves become non-simple, we need to first develop the corresponding conformal welding techniques. We will do this  {in the {following} work~\cite{AHSY23}} with Ang and Holden, based on which we  prove the analog of Theorem~\ref{thm:main} for $\kappa\in (4,8)$. 
  As an application, we prove the finiteness of partition function for multiple $\SLE_\kappa$ for $\kappa\in(4,8)$. Previously this is only known for $N=2$ ~\cite{miller2018connection} or $\kappa\le 6$ when $N\geq3$~\cite{wu2020hypergeometric,peltola2019global}. One might try to settle this problem via the construction discussed in~\cite{miller2018connection} for  {the} $N=2$ case; yet our work~\cite{AHSY23} is the first paper that provides a  {full carried-out} proof.
  
 \item In this paper, we focus on the case where the marked points all lie on the boundary of the domain. 
 Recently  multiple radial SLE were constructed  in~\cite{healey2021n}, where we have one interior marked point in additional to the boundary marked points. Moreover, various interior Green function were considered in~\cite{LR15Green,zhan17g,zhan182green}.  We believe that the analog conformal welding results hold in these case, but except for a few special cases, a rigorous proof would require new ideas.

\item Our Theorem~\ref{thm:IG} shows that imaginary geometry 
 flow lines emanating from the boundary can be obtained by conformally weld good quantum triangles. Similarly, we can conformally weld quantum triangles to form a sphere. It is a natural question to understand the law of the resulting interfaces. In~\cite{ig4}, the imaginary geometry on the sphere were mainly developed for the case of two marked points. We believe that imaginary geometry on the sphere has an interesting extension that allows arbitrarily many marked points. Moreover, under proper assumptions on the quantum triangles, the interfaces can still be interpreted as flow lines in the extended theory.

    \item It is also natural to conformally weld quantum disks and triangles to form non-simply connected surfaces.
With Ang, the second named author  will prove in a future work that conditioning on the moduli, such surfaces are described by LCFT, and the law of SLE interfaces are decoupled with the field. 
It is then natural to define the partition function of these SLE curves using the density of the random moduli. Multiple SLE curves on non-simply connected surfaces is a less explored topic; see  \cite{jahangoshahi2018multiple} on the case of  multiple-connected planar domains. We believe that it is an interesting and fruitful direction. In particular, these SLE partition functions should be related to conformal field theory on Riemann surfaces, which have rich structures. 
\end{itemize}

\medskip
\noindent\textbf{Acknowledgements.} 
We are grateful to Morris Ang for working together with us at the early stage of the project. 
We thank Dapeng Zhan and Baojun Wu for helpful discussions and Hao Wu for pointing out references for Lemma~\ref{lm:martingale}. We thank two anonymous referees for their valuable feedback.  P.Y. were partially supported by NSF grant DMS-1712862.  X.S.\ was partially supported by the NSF Career award 2046514, a start-up grant from the University of Pennsylvania, and a fellowship from the Institute for Advanced Study (IAS) at Princeton.  P.Y. thanks IAS for hosting his visit during Fall 2022.

\section{Preliminaries}\label{sec:pre}
In this paper we work with non-probability measures and extend the terminology of ordinary probability to this setting. For a finite or $\sigma$-finite  measure space $(\Omega, \mathcal{F}, M)$, we say $X$ is a random variable if $X$ is an $\mathcal{F}$-measurable function with its \textit{law} defined via the push-forward measure $M_X=X_*M$. In this case, we say $X$ is \textit{sampled} from $M_X$ and write $M_X[f]$ for $\int f(x)M_X(dx)$. \textit{Weighting} the law of $X$ by $f(X)$ corresponds to working with the measure $d\tilde{M}_X$ with Radon-Nikodym derivative $\frac{d\tilde{M}_X}{dM_X} = f$, and \textit{conditioning} on some event $E\in\mathcal{F}$ (with $0<M[E]<\infty$) refers to the probability measure $\frac{M[E\cap \cdot]}{M[E]} $  over the space $(E, \mathcal{F}_E)$ with $\mathcal{F}_E = \{A\cap E: A\in\mathcal{F}\}$. If $M$ is finite, we write $|M| = M(\Omega)$ and $M^\# = \frac{M}{|M|}$ for its normalization.  We also fix the notation $|z|_+:=\max\{|z|,1\}$ for $z\in\bbC$.% and $s_+:=\max\{s,0\}$ for $s\in\bbR$.

We also extend the terminology to the setting of more than one random variable sampled from non-probability measures. By saying ``we first sample $X_1$ from $M_1$ and then sample $X_2$ from  $M_2$" ( {where the measure $M_2$ possibly depends on $X_1$}), we refer to a sample $(X_1,X_2)$ from  {$ M_2(x_1,dx_2)M_1(dx_1)$}.  In this setting, weighting the law of $X_2$ by $f(X_2)$ corresponds to working with the measure $\wt{M}_X$ with Radon-Nikodym derivative  {$\frac{\wt{M}_X(dx_1,dx_2)}{M_2(x_1,dx_2)M_1(dx_1)}(x_1,x_2) = f(x_2)$}. In the case where $M_2$ is a probability measure, we say that the marginal law of $X_1$ is $M_1$.

For a M\"{o}bi{u}s transform $f:\bbH\to\bbH$ and $s\in\bbR$, if $f(s)=\infty$, then we define $f'(s) = (-\frac{1}{f(w)})'|_{w=s}$. Likewise, if $f(\infty)=s$, then we set $f'(\infty) = ((f^{-1})'(s))^{-1}$. In particular, if $f(z) = a+\frac{\lambda}{x-z}$, then $f'(x) = \lambda^{-1}$ and $f'(\infty) = \lambda$. If $f(z) = a+rz$ with $a\in\bbR,r>0$, then we write $f'(\infty)=r^{-1}$.  These align with the conventions in~\cite{lawler2009partition}.

For a conformal map $\varphi:D\to \tilde D$ and a mesaure $\mu(D;x,y)$ on continuous curves from $x$ to $y$ in $\overline{D}$, we write $\varphi\circ\mu(D,x,y)$ for the law of $\varphi\circ\eta$ when $\eta$ is sampled from $\mu(D;x,y)$.

\subsection{The Gaussian free field and Liouville quantum gravity surfaces}\label{subsec:pre-lqg}
Let $m$ be the uniform measure on the unit  semicircle $\bbH\cap\bbD$. Define the Dirichlet inner product $\langle f,g\rangle_\nabla = (2\pi)^{-1}\int_X \nabla f\cdot\nabla g $ on the space $\{f\in C^\infty(\bbH):\int_\bbH|\nabla f|^2<\infty; \  \int f(z)m(dz)=0\},$ and let $H(\bbH)$ be the closure of this space w.r.t.\ the inner product $\langle f,g\rangle_\nabla$. Let $(f_n)_{n\ge1}$ be an orthonormal basis of $H(\bbH)$, and $(\alpha_n)_{n\ge1}$ be a collection of independent standard Gaussian variables. Then the summation
$$h = \sum_{n=1}^\infty \alpha_nf_n$$
a.s.\ converges in the space of distributions on $\bbH$, and $h$ is the \emph{Gaussian Free Field} on $\bbH$ normalized such that $\int h(z)m(dz) = 0$. {Let $P_\bbH$ be the law of $h$. Following~\cite{She07,dubedat09partition}, it can be shown that $P_\bbH$ is a probability measure on $H^{-1}(\bbH)$, where $H^{-1}(\bbH)$ is the dual space of $H(\bbH)$.}   See~\cite[Section 4.1.4]{DMS14} for more details.

For $z,w\in\overline{\bbH}$, we define
$$
    G_\bbH(z,w)=-\log|z-w|-\log|z-\bar{w}|+2\log|z|_++2\log|w|_+; \ G_\bbH(z,\infty) = 2\log|z|_+.
$$
Then the GFF $h$ is the centered Gaussian field on $\bbH$ with covariance structure $\bbE [h(z)h(w)] = G_\bbH(z,w)$. As pointed out in~\cite[Remark 2.3]{AHS21}, if $\phi = h+f$ where $f$ is a  function continuous everywhere except for finitely many log-singularities, then $\phi$ is a.s.\ in the dual space $H^{-1}(\bbH)$ of $H(\bbH)$.   

Now let $\gamma\in(0,2)$ and $Q=\frac{2}{\gamma}+\frac{\gamma}{2}$. Consider the space of pairs $(D,h)$, where $D\subseteq \mathbb C$ is a planar domain and $h$ is a distribution on $D$ (often some variant of the GFF). For a conformal map $g:D\to\tilde D$ and a generalized function $h$ on $D$, define the generalized function $g\bullet_\gamma h$ on $\tilde{D}$ by setting 
\begin{equation}\label{eq:lqg-changecoord}
    g\bullet_\gamma h:=h\circ g^{-1}+Q\log|(g^{-1})'|.
\end{equation}
 Define the following equivalence relation $\sim_\gamma$, where $(D, h)\sim_\gamma(\wt{D}, \wt{h})$ if there is a conformal map $\varphi:D\to\wt{D}$ such that $\tilde h = \varphi\bullet_\gamma h$.
A \textit{quantum surface} $S$ is an equivalence class of pairs $(D,h)$ under the equivalence relation $\sim_\gamma$, and we say that $(D,h)$ is an \emph{embedding} of $S$ if $S = (D,h)/\mathord\sim_\gamma$. Likewise, a \emph{quantum surface with} $k$ \emph{marked points} is an equivalence class of elements of the form
	$(D, h, x_1,\dots,x_k)$, where $(D,h)$ is a quantum surface, the points  $x_i\in {\overline{D}}$, and with the further
	requirement that marked points (and their ordering) are preserved by the conformal map $\varphi$ in \eqref{eq:lqg-changecoord}. A \emph{curve-decorated quantum surface} is an equivalence class of tuples $(D, h, \eta_1, ..., \eta_k)$,
	where $(D,h)$ is a quantum surface, $\eta_1, ..., \eta_k$ are curves in $\overline D$, and with the further
	requirement that $\eta$ is preserved by the conformal map $\varphi$ in \eqref{eq:lqg-changecoord}. Similarly, we can
	define a curve-decorated quantum surface with $k$ marked points.  {Consider a measure $\mathcal{M}$ on the space of quantum surfaces and a conformally covariant measure $\mathcal{P}$ (which is usually some variant of SLE) on the space of curves. Suppose $D\subseteq\bbC$ is some domain and  $\mathcal{M}_D$ is some measure on the space of generalized functions on $D$ such that for $h_D$ sampled from $D$, the law of $(D,h_D)/\sim_\gamma$ is $\mathcal{M}$. Let $\mathcal{P}_D$ be the measure $\mathcal{P}$ on the domain $D$. Then we write $\mathcal{M}\times\mathcal{P}$ for the measure describing the law of $(D,h_D,\eta_D)/\sim_\gamma$ where $(h_D,\eta_D)$ is a sample from $\mathcal{M}_D\times\mathcal{P}_D$.} Throughout this paper,  {we will mostly work on the case illustrated as above.} %the curves $\eta_1, ..., \eta_k$ are $\mathrm{SLE}_\kappa$ type curves (which have conformal invariance properties) sampled independently of the surface $(D, h)$.  

 For a $\gamma$-quantum surface $(D, h, z_1, ..., z_m)$, its \textit{quantum area measure} $\mu_h$ is defined by taking the weak limit $\epsilon\to 0$ of $\mu_{h_\epsilon}:=\epsilon^{\frac{\gamma^2}{2}}e^{\gamma h_\epsilon(z)}d^2z$, where $d^2z$ is the Lebesgue area measure and $h_\epsilon(z)$ is the circle average of $h$ over $\partial B(z, \epsilon)$. When $D=\mathbb{H}$, we can also define the  \textit{quantum boundary length measure} $\nu_h:=\lim_{\epsilon\to 0}\epsilon^{\frac{\gamma^2}{4}}e^{\frac{\gamma}{2} h_\epsilon(x)}dx$ where $h_\epsilon (x)$ is the average of $h$ over the semicircle $\{x+\epsilon e^{i\theta}:\theta\in(0,\pi)\}$. It has been shown in \cite{DS11, SW16} that all these weak limits are well-defined  for the GFF and its variants we are considering in this paper, and that   $\mu_{h}$ and $\nu_{h}$ can be conformally extended to other domains using the relation $\bullet_\gamma$.

 As argued in \cite[Section 4.1]{DMS14},  we have the decomposition $H(\mathbb{H}) = H_1(\mathbb{H})\oplus H_2(\mathbb{H})$, where $H_1(\mathbb{H})$ is the subspace of radially symmetric functions, and $H_2(\mathbb{H})$ is the subspace of functions having mean 0 about all semicircles $\{|z|=r,\ \text{Im}\ z>0\}$.  {Moreover, $H_1(\mathbb{H})$ and $H_2(\mathbb{H})$ are orthogonal with respect to the Dirichlet inner product.} As a consequence, for the GFF $h$ sampled from $P_\bbH$, we can decompose $h=h_1+h_2$, where $h_1$ and $h_2$ are independent distributions given by the projection of $h$ onto $H_1({\mathbb{H}})$ and $ H_2({\mathbb{H}})$, respectively. 

 We now turn to the definition of \emph{quantum disks}, which is  {split} in two different cases: \emph{thick quantum disks} and \emph{thin quantum disks}. These surfaces can also be equivalently constructed via methods in Liouville conformal field theory (LCFT) as we shall briefly discuss in the next subsection; see e.g.~\cite{DKRV16, HRV-disk} for these constructions and see \cite{AHS17,cercle2021unit,AHS21} for proofs of equivalence with the surfaces defined above. 
	
	\begin{definition}[Thick quantum disk]\label{def-quantum-disk}
		Fix $\gamma\in(0,2)$ and let $(B_s)_{s\ge0}$ and $(\wt{B}_s)_{s\ge0}$ be independent standard one-dimensional Brownian motions.  Fix a weight parameter $W\ge\frac{\gamma^2}{2}$ and let $\beta = \gamma+ \frac{2-W}{\gamma}\le Q$. Let $\mathbf{c}$ be sampled from the infinite measure $\frac{\gamma}{2}e^{(\beta-Q)c}dc$ on $\bbR$ independently from $(B_s)_{s\ge0}$ and $(\wt{B}_s)_{s\ge0}$.
			Let 	
			\begin{equation*}
				Y_t=\left\{ \begin{array}{rcl} 
					B_{2t}+\beta t+\mathbf{c} & \mbox{for} & t\ge 0,\\
					\wt{B}_{-2t} +(2Q-\beta) t+\mathbf{c} & \mbox{for} & t<0,
				\end{array} 
				\right.
			\end{equation*}
			conditioned on  $B_{2t}-(Q-\beta)t<0$ and  $ \wt{B}_{2t} - (Q-\beta)t<0$ for all $t>0$. Let $h$ be a free boundary  GFF on $\mathbb{H}$ independent of $(Y_t)_{t\in\bbR}$ with projection onto $H_2(\mathbb{H})$ given by $h_2$. Consider the random distribution
			\begin{equation*}
				\psi(\cdot)= {Y}_{-\log|\cdot|} + h_2(\cdot) \, .
		\end{equation*}
		Let $\mathcal{M}_2^{\mathrm{disk}}(W)$ be the infinite measure describing the law of $({\mathbb{H}}, \psi,0,\infty)/\mathord\sim_\gamma $. 
		We call a sample from $\mathcal{M}_2^{\textup{disk}}(W)$ a \emph{quantum disk} of weight $W$ with two marked points.
		
		We call $\nu_\psi((-\infty,0))$ and $\nu_\psi((0,\infty))$ the left and right  quantum   {boundary} length of the quantum disk  $(\mathbb{H}, \psi, 0, \infty)$.
	\end{definition}
	
	When $0<W<\frac{\gamma^2}{2}$, we define the \emph{thin quantum disk} as the concatenation of weight $\gamma^2-W$ thick disks with two marked points as in \cite[Section 2]{AHS20}.

	\begin{definition}[Thin quantum disk]\label{def-thin-disk}
		Fix $\gamma\in(0,2)$. For $W\in(0, \frac{\gamma^2}{2})$, the infinite measure $\mathcal{M}_2^{\textup{disk}}(W)$ is defined as follows. First sample a random variable $T$ from the infinite measure $(1-\frac{2}{\gamma^2}W)^{-2}\textup{Leb}_{\mathbb{R}_+}$; then sample a Poisson point process $\{(u, \mathcal{D}_u)\}$ from the intensity measure $\mathds{1}_{t\in [0,T]}dt\times \mathcal{M}_2^{\textup{disk}}(\gamma^2-W)$; and finally consider the ordered (according to the order induced by $u$) collection of doubly-marked thick quantum disks $\{\mathcal{D}_u\}$, called a \emph{thin quantum disk} of weight $W$.
		
		Let $\mathcal{M}_2^{\textup{disk}}(W)$ be the infinite measure describing the law of this ordered collection of doubly-marked quantum disks $\{\mathcal{D}_u\}$.
		The left and right   boundary length of a sample from $\mathcal{M}_2^{\textup{disk}}(W)$ is set to be equal to the sum of the left and right boundary lengths of the quantum disks $\{\mathcal{D}_u\}$.
	\end{definition}

  For $W>0$, one can {disintegrate} the measure $\Md_2(W)$ according to its the quantum length of the left and right boundary arc, i.e., 
 \begin{equation}
     \Md_2(W) = \int_0^\infty\int_0^\infty \Md_2(W;\ell_1,\ell_2)d\ell_1\,d\ell_2,
 \end{equation}
where $\Md_2(W;\ell_1,\ell_2)$ is supported on the set of doubly-marked quantum surfaces with left and right boundary arcs having quantum lengths $\ell_1$ and $\ell_2$, respectively. One can also define  $\Md_2(W;\ell) := \int_0^\infty \Md_2(W;\ell,\ell')d\ell'$, i.e., the disintegration over the quantum length of the left (or right) boundary arc. 

Finally the weight 2 quantum disk is special in the sense that its two marked points are typical with
respect to the quantum boundary length measure~\cite[Proposition A.8]{DMS14}. Based on this we can define the family of quantum disks marked with multiple quantum typical points,  {i.e., marked points sampled from the quantum boundary length measure.}

\begin{definition}\label{def:QD}
    Let $(\mathbb{H},\phi, 0, \infty)$ be  the embedding of a sample from $\Md_2(2)$ as in Definition~\ref{def-quantum-disk}. Let $L=\nu_\phi(\partial\bbH)$, and $\mathrm{QD}$ be the law of $(\bbH,\phi)$ under the reweighted measure $L^{-2}\Md_2(2)$. For $n\ge0$, let  $(\bbH,\phi)$ be a sample  {from $\frac{1}{n!}L^n\mathrm{QD}$ and then sample $s_1,...,s_n$ on $\partial\bbH$ according to the probability measure  {$(n-1)!\cdot1_{A_n}\nu_\phi^\#(ds_1)...\nu_\phi^\#(ds_n)$, where $A_n$ is the event that $s_1,...,s_n$ are ordered counterclockwise along $\partial\bbH$}}. Let $\mathrm{QD}_{n}$ be the law of $(\bbH,\phi,s_1,...,s_n)/\sim_\gamma$, and we call a sample from $\QD_{n}$ \emph{a quantum disk with $n$ boundary marked points}.
\end{definition}

\begin{remark}\label{remark:QD}
   { {In~\cite{ARS21,AHS21,ASY22}, the quantum disks there could have $m$ bulk marked points and $n$ boundary marked points, and their corresponding laws are denoted by  $\QD_{m,n}$. Since in this paper we mostly work on boundary marked points, we drop the $m$ parameter to simplify the notation.} The notion $\QD_{n}$ defined here also differs by a constant  {$(n-1)!$} compared with that defined in~\cite{AHS21,ASY22}. It also follows easily that for $n\ge3$, $\QD_{n}$ can also be defined recursively. Namely, if $(\bbH,\phi,s_1,...,s_n)$ is an embedding of $\QD_{n}$ with $s_1<...<s_n$, then as we weight the law of $(\bbH,\phi,s_1,...,s_n)$ by  {$\nu_\phi(s_n,s_1)$} and sample $s_{n+1}$ from $\nu_\phi|_{(s_n,s_1)}^\#$, then  $(\bbH,\phi,s_1,...,s_{n+1})$ is an embedding of $\QD_{n+1}$. }
\end{remark}

\begin{proposition}[Proposition A.8 of~\cite{DMS14}]
We have $\QD_{2}=\Md_2(2)$.
\end{proposition}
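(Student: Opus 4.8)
The plan is to peel off the combinatorial and normalization layers in Definition~\ref{def:QD} until the statement reduces to the quantum typicality of the two boundary marked points of the weight-$2$ disk, which is exactly \cite[Proposition~A.8]{DMS14}. Concretely, I would first record that, by the definition of $\QD$, we have $\QD = L^{-2}\Md_2(2)$ as measures on the field $(\bbH,\phi)$ (an embedding $(\bbH,\phi,0,\infty)$ of a $\Md_2(2)$-sample as in Definition~\ref{def-quantum-disk} with its two marked points forgotten), so $\tfrac{1}{2!}L^{2}\,\QD=\tfrac12\,\Md_2(2)$ at the level of fields. Feeding this into Definition~\ref{def:QD} with $n=2$ and noting that the prefactor $\tfrac1{2!}$ cancels against the factor $2!$ in the law of $(s_1,s_2)$, we see that $\QD_{0,2}$ is obtained by sampling a field $(\bbH,\phi)$ from $\Md_2(2)$ with its marked points forgotten, then sampling $(s_1,s_2)$ from $1_{s_1<s_2}\,\nu_\phi^\#(ds_1)\,\nu_\phi^\#(ds_2)$, and taking $(\bbH,\phi,s_1,s_2)/\sim_\gamma$. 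Thus the assertion reduces to: resampling the two marked points of a weight-$2$ quantum disk from the boundary length probability measure (taken in increasing order in the embedding) gives back $\Md_2(2)$.

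This last statement is precisely the content of \cite[Proposition~A.8]{DMS14}, so after the reduction above I would simply invoke it. It can also be seen directly from the LCFT description of the weight-$2$ disk: since $\beta=\gamma+\frac{2-W}{\gamma}=\gamma$ for $W=2$, a $\Md_2(2)$-sample is, up to a constant and a change of coordinates, a Liouville field on $\bbH$ with two $\gamma$-insertions at $0$ and $\infty$, and the basic property of a boundary $\gamma$-insertion (weighting $\LF_\bbH$ by $\nu_\phi(ds)$ and integrating creates the insertion $(\gamma,s)$; cf.\ Lemma~\ref{lm:lf-insertion-bdry}) together with the conformal covariance of $\LF_\bbH$ under a M\"obius map carrying $(s_1,s_2)$ to $(0,\infty)$ turns the resampled configuration back into a weight-$2$ disk marked at $0,\infty$.

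The step I expect to be the real obstacle is not the probabilistic input but the normalization bookkeeping: one has to check that the $n!$'s, the reweightings by $L^{\pm2}$, the ordering constraint $s_1<s_2$ — as opposed to the way the marked points $0,\infty$ of $\Md_2(2)$ are labelled, together with the residual dilation symmetry of that embedding, which makes the unordered pair $\{0,\infty\}$ correspond to an ordered pair $(s_1,s_2)$ in two ways — and (in the LCFT route) the Jacobian factors from the covariance rule all combine to the constant $1$. This is precisely where the specific normalization in Definition~\ref{def:QD}, which differs by $n!$ from the convention of \cite{AHS21,ASY22} (cf.\ Remark~\ref{remark:QD}), is used, and it is what makes the Proposition an exact equality rather than an equality up to a multiplicative constant.
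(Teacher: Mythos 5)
The paper gives no proof of this statement; it simply attributes it to \cite[Proposition~A.8]{DMS14}, and Definition~\ref{def:QD} (with its $\tfrac1{n!}$ and $n!$ prefactors) is visibly set up precisely so that the $n=2$ case becomes a restatement of that proposition. Your plan — peel off the $L^{\pm 2}$ reweighting, the $\tfrac1{2!}$ and $2!$ normalizations, and the ordering, and then invoke the quantum typicality of the two marked points of the weight-$2$ disk — is therefore exactly the intended reading, and the LCFT route you sketch (the $W=2$ disk has $\beta=\gamma$, so its two insertions are $\gamma$-insertions, and a boundary $\gamma$-insertion is by Lemma~\ref{lm:lf-insertion-bdry} the same as weighting by the LQG boundary length measure and integrating, plus conformal covariance under the M\"obius map to $(0,\infty)$) is precisely the mechanism used in \cite{AHS21} and reprised in Lemma~\ref{lm:qd-4-pted} for adding further quantum-typical points.

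The one place your write-up stops short of a complete argument is the very point you flag as the obstacle. Your intermediate reduction lands on ``sample $(\bbH,\phi)$ from $\Md_2(2)$ and then $(s_1,s_2)$ from $1_{s_1<s_2}\,\nu_\phi^\#(ds_1)\,\nu_\phi^\#(ds_2)$,'' a kernel of total mass $\tfrac12$, whereas the resampling statement from \cite{DMS14} is naturally phrased with two \emph{independent} samples from $\nu_\phi^\#$, of mass $1$. Reconciling these requires pinning down exactly how the unordered pair in \cite{DMS14} corresponds to the ordered pair with $s_1<s_2$ here (this is where the fact that the labelling of the two points of $\Md_2(2)$ carries no extra information, i.e.\ the weight-$2$ disk is invariant under swapping its two marked points, must be used), and that is the content of the $n!$ discrepancy flagged in Remark~\ref{remark:QD}. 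You identify this correctly as ``the real obstacle'' and assert that the constants combine to $1$, but you do not actually carry out that check. Since the constant here is asserted to be exactly $1$ (unlike most welding statements in the paper, which hold up to an unspecified $c\in(0,\infty)$), this is the step that genuinely needs to be written out — either via \cite{DMS14} directly or via the disintegrated LCFT identities (Lemma~\ref{lm:lf-length} together with Lemma~\ref{lm:lf-insertion-bdry} and Lemma~\ref{lm:lcft-H-conf}), tracking the Jacobian of the M\"obius map and the contribution of the residual dilation gauge.
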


\subsection{Liouville conformal field theory on the upper half plane}
Recall that $P_\bbH$ is the law of the free boundary GFF on  $\bbH$ normalized to have average zero on $\partial\bbD\cap\bbH$,  {and $|z|_+ = \max\{|z|,1\}$.}
\begin{definition}
Let $(h, \mathbf c)$ be sampled from $P_\bbH \times [e^{-Qc} dc]$ and $\phi = h-2Q\log|z|_+ + \mathbf c$. We call $\phi$ the Liouville field on $\bbH$, and we write
$\LF_\bbH$ for the law of $\phi$.
\end{definition}

\begin{definition}[Liouville field with boundary insertions]\label{def-lf-H-bdry}
	Let $\beta_i\in\mathbb{R}$  and $s_i\in \partial\mathbb{H}\cup\{\infty\}$ for $i = 1, ..., m,$ where $m\ge 1$ and all the $s_i$'s are distinct. Also assume $s_i\neq\infty$ for $i\ge 2$. We say $\phi$ is a \textup{Liouville Field on $\mathbb{H}$ with insertions $\{(\beta_i, s_i)\}_{1\le i\le m}$} if $\phi$ can be produced as follows by  first sampling $(h, \mathbf{c})$ from $C_{\mathbb{H}}^{(\beta_i, s_i)_i}P_\mathbb{H}\times [e^{(\frac{1}{2}\sum_{i=1}^m\beta_i - Q)c}dc]$ with
	$$C_{\mathbb{H}}^{(\beta_i, s_i)_i} =
	\left\{ \begin{array}{rcl} 
\prod_{i=1}^m  |s_i|_+^{-\beta_i(Q-\frac{\beta_i}{2})} \exp(\frac{1}{4}\sum_{j=i+1}^{m}\beta_i\beta_j G_\mathbb{H}(s_i, s_j)) & \mbox{if} & s_1\neq \infty\\
	\prod_{i=2}^m  |s_i|_+^{-\beta_i(Q-\frac{\beta_i}{2}-\frac{\beta_1}{2})}\exp(\frac{1}{4}\sum_{j=i+1}^{m}\beta_i\beta_j G_\mathbb{H}(s_i, s_j)) & \mbox{if} & s_1= \infty
	\end{array} 
	\right.
	 $$
	and then taking
	\begin{equation}\label{eqn-def-lf-H}
\phi(z) = h(z) - 2Q\log|z|_++\frac{1}{2}\sum_{i=1}^m\beta_i G_\mathbb{H}(s_i, z)+\mathbf{c}
	\end{equation}
with the convention $G_\mathbb{H}(\infty, z) = 2\log|z|_+$. We write $\textup{LF}_{\mathbb{H}}^{(\beta_i, s_i)_i}$ for the law of $\phi$.
\end{definition}

The following lemma explains that adding a $\beta$-insertion point at $s\in\partial\mathbb{H}$  is equivalent to weighting the law of Liouville field $\phi$ by $e^{\frac{\beta}{2}\phi(s)}$. 

\begin{lemma}\label{lm:lf-insertion-bdry}
	For $\beta, s\in\mathbb{R}$ such that $s\notin\{s_1, ..., s_m\}$, in the sense of vague convergence of measures,
	\begin{equation}
	\lim_{\epsilon\to 0}\epsilon^{\frac{\beta^2}{4}}e^{\frac{\beta}{2}\phi_\epsilon(s)}\textup{LF}_{\mathbb{H}}^{(\beta_i, s_i)_i} = \textup{LF}_{\mathbb{H}}^{(\beta_i, s_i)_i, (\beta, s)}.
	\end{equation}
 Similarly, for $a,s_1,...,s_m\in\bbR$, we have
 \begin{equation}
\lim_{R\to\infty}R^{Q\beta-\frac{\beta^2}{4}}e^{\frac{\beta}{2}\phi_R(a)}    \LF_{\bbH}^{(\beta_i,s_i)_i} =  \LF_{\bbH}^{(\beta_i,s_i)_i,(\beta,\infty)}.
\end{equation}
\end{lemma}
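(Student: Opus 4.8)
\textbf{Proof proposal for Lemma~\ref{lm:lf-insertion-bdry}.}

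The plan is to treat the two limits in parallel, the key point being that the $\epsilon$-regularized (resp.\ $R$-regularized) exponential functional of the field splits into a deterministic factor coming from the mean of $\phi_\epsilon(s)$ and a stochastic factor coming from the Gaussian fluctuation, and that a Girsanov/Cameron--Martin computation identifies the latter with the reweighting of $P_\bbH$ that produces exactly the new constant $C_\bbH^{(\beta_i,s_i)_i,(\beta,s)}$. First I would unpack the definition of $\LF_\bbH^{(\beta_i,s_i)_i}$: a sample is $\phi = h + f_0 + \frac12\sum_i \beta_i G_\bbH(s_i,\cdot) + \mathbf c$ where $h\sim C_\bbH^{(\beta_i,s_i)_i} P_\bbH$, $f_0 = -2Q\log|z|_+$, and $\mathbf c$ has density proportional to $e^{(\frac12\sum\beta_i - Q)c}$. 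The semicircle average $\phi_\epsilon(s)$ then decomposes as $h_\epsilon(s) + f_0(s) + \frac12\sum_i \beta_i G_\bbH(s_i,s) + \mathbf c + o(1)$, where I would use that for $s\in\partial\bbH$ away from the $s_i$'s the function $f_0 + \frac12\sum\beta_i G_\bbH(s_i,\cdot)$ is continuous at $s$, so its $\epsilon$-average converges to its value at $s$.

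Next I would compute the effect of the weight $\epsilon^{\beta^2/4}e^{\frac\beta2\phi_\epsilon(s)}$ term by term. The $\mathbf c$-dependence contributes $e^{\frac\beta2 \mathbf c}$, which turns the density $e^{(\frac12\sum_{i\le m}\beta_i - Q)c}$ into $e^{(\frac12\sum_{i\le m}\beta_i + \frac\beta2 - Q)c}$, matching the $\mathbf c$-law for the insertion set $(\beta_i,s_i)_i,(\beta,s)$. The deterministic factor $e^{\frac\beta2(f_0(s) + \frac12\sum_i\beta_i G_\bbH(s_i,s))} = |s|_+^{-Q\beta}\exp(\frac14\sum_i\beta\beta_i G_\bbH(s_i,s))$ is precisely the ratio $C_\bbH^{(\beta_i,s_i)_i,(\beta,s)}/C_\bbH^{(\beta_i,s_i)_i}$ divided by the pure-$h$ reweighting factor. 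For the stochastic factor $\epsilon^{\beta^2/4}e^{\frac\beta2 h_\epsilon(s)}$: since $h_\epsilon(s)$ is Gaussian with variance $\var h_\epsilon(s) = \log\frac1\epsilon + O(1)$ (here I would cite the standard boundary GFF semicircle-average estimate, e.g.\ from~\cite{DMS14} or~\cite{HRV-disk}; the normalization is such that $\bbE[h_\epsilon(s)h_\delta(s')] \to G_\bbH(s,s')$), the normalization $\epsilon^{\beta^2/4}$ exactly cancels $\bbE[e^{\frac\beta2 h_\epsilon(s)}] = e^{\frac{\beta^2}{8}\var h_\epsilon(s)}$ up to an explicit $O(1)$ constant, and Girsanov's theorem says that weighting $P_\bbH$ by the (normalized) martingale $e^{\frac\beta2 h_\epsilon(s)}/\bbE[e^{\frac\beta2 h_\epsilon(s)}]$ shifts $h$ by $\frac\beta2 \bbE[h(\cdot)h_\epsilon(s)]$, which converges to $\frac\beta2 G_\bbH(s,\cdot)$. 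Collecting the shift with the existing mean gives the new insertion, and collecting the $O(1)$ constants with the deterministic factor reproduces $C_\bbH^{(\beta_i,s_i)_i,(\beta,s)}$; vague convergence then follows by a uniform-integrability / dominated convergence argument after testing against a bounded continuous functional.

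The case $s=\infty$ with scaling $R^{Q\beta - \beta^2/4}$ is entirely analogous after the inversion $z\mapsto 1/z$ (or working directly with the ``average over the semicircle of radius $R$'' functional $\phi_R(a)$), using $G_\bbH(\infty,z) = 2\log|z|_+$ and the convention $|s|_+^{-\beta(Q-\beta/2-\beta_1)}$ in the $s_1=\infty$ branch of $C_\bbH$; the different power of the regulator ($R^{Q\beta-\beta^2/4}$ versus $\epsilon^{\beta^2/4}$) is exactly accounted for by the extra $-2Q\log|z|_+$ term in~\eqref{eqn-def-lf-H} contributing $-Q\beta\log R$ to the mean of $\phi_R(a)$. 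I expect the main obstacle to be the bookkeeping needed to make the vague convergence rigorous rather than merely matching the constant: one must show that the test integrals converge, which requires a moment bound on $e^{\frac\beta2\phi_\epsilon(s)}$ uniform in small $\epsilon$ (handled by the Gaussian structure and the fact that $\beta$ is fixed) and control of the interaction with the $\mathbf c$-integral, whose total mass is only $\sigma$-finite. This is standard in the LCFT literature — indeed the analogous statement for interior insertions and for insertions on other domains appears in~\cite{DKRV16,HRV-disk,AHS21} — so I would phrase the argument as an adaptation of those computations, citing them for the convergence estimates and carrying out only the constant-matching in detail.
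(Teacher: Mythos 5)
Your proposal is correct and follows essentially the same route as the paper: both parts come down to a Girsanov/Cameron--Martin computation for the (semicircle-averaged) boundary GFF, together with bookkeeping of the deterministic drift $-2Q\log|z|_+ + \tfrac12\sum_i\beta_i G_\bbH(s_i,\cdot)$ and the $\mathbf c$-integral, and then matching the resulting constant with $C_\bbH^{(\beta_i,s_i)_i,(\beta,s)}$. The only cosmetic difference is that the paper simply cites~\cite[Lemma 2.6]{AHS21} for the finite-$s$ case and carries out the Girsanov argument only for the $R\to\infty$ case, whereas you sketch the Girsanov argument for both.
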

\begin{proof}
The first part is precisely~\cite[Lemma 2.6]{AHS21}. For the second part, set $\psi(z) = \sum_{i=1}^m\frac{\beta_i}{2}G_\bbH(z,s_i)-2Q\log|z|_+$ and $s_0 = \frac{1}{2}\sum_i\beta_i-Q$. Then as $R\to\infty$, $\psi_R(a) = -2Q\log R+2\sum_{i=1}^m\alpha_i\log|z_i|_++\sum_{j=1}^n{\beta_j}\log|s_j|_++o_R(1)$. One also has $\mathrm{Var}(h_R(a)) = 2\log R + o_R(1)$. Then for any non-negative continuous functions $F$ on $H^{-1}(\bbH)$, one has
\begin{equation*}
\begin{split}
    &\lim_{R\to\infty}\int\int R^{Q\beta-\frac{\beta^2}{4}}C_{\bbH}^{(\beta_i,s_i)_i}F(h+\psi+c)e^{s_0c}e^{\frac{\beta}{2}(h_R(0)+\psi_R(0)+c)}P_\bbH(dh)\ dc\\ &= \lim_{R\to\infty}\int\int R^{Q\beta-\frac{\beta^2}{4}}C_{\bbH}^{(\beta_i,s_i)_i} F(h+\psi+c+\frac{\beta}{2}G_R(\cdot,a))e^{(s_0+\frac{\beta}{2})c}\bbE[e^{\frac{\beta}{2}h_R(0)}]e^{\frac{\beta}{2}\psi_R(0)}P_\bbH(dh)\ dc\\
    &=\int\int C_{\bbH}^{(\beta_i,s_i)_i,(\beta,\infty)} F(h+\psi+\beta\log|z|_++c)e^{(s_0+\frac{\beta}{2})c}P_\bbH(dh)\ dc.
\end{split}
\end{equation*}
Here $G_R(z,a) = \bbE[h(z)h_R(a)]$ and  we have applied the Girsanov's theorem. Therefore the lemma follows.
\end{proof}
In general, the Liouville fields has nice compatibility with the notion of quantum surfaces. To be more precise, for a measure $M$ on the space of distributions on a domain $D$ and a conformal map $\psi:D\to \tilde{D}$, let $\psi_* M$ be the push-forward of $M$ under the mapping $\phi\mapsto\psi\bullet_\gamma\phi$. Then we have the following conformal covariance of the Liouville field. For $\beta\in\bbR$, we use the shorthand 
$$\Delta_\beta:=\frac{\beta}{2}(Q-\frac{\beta}{2}).$$
 {Also recall that as defined above Section~\ref{subsec:pre-lqg}, for a M\"{o}bius map $f:\bbH\to\bbH$, if $f(z) = a+\frac{\lambda}{x-z}$, then $f'(x) = \lambda^{-1}$ and $f'(\infty) = \lambda$. If $f(z) = a+rz$ with $a\in\bbR,r>0$, then we write $f'(\infty)=r^{-1}$. }

\begin{lemma}\label{lm:lcft-H-conf}
	Fix $(\beta_i, s_i)\in\mathbb{R}\times\partial \bbH $ for $i=1, ..., m$ with $s_i$'s being distinct. Here $\partial \bbH = \bbR\cup\{\infty\}$.  Suppose $\psi:\mathbb{H}\to\mathbb{H}$ is conformal map. Then $\textup{LF}_{\mathbb{H}} = \psi_*\textup{LF}_{\mathbb{H}}$, and 
	\begin{equation}
	\textup{LF}_{\mathbb{H}}^{(\beta_i, \psi(s_i))_i} = \prod_{i=1}^m|\psi'(s_i)|^{-\Delta_{\beta_i}}\psi_*\textup{LF}_{\mathbb{H}}^{(\beta_i, s_i)_i}.
	\end{equation} 
\end{lemma}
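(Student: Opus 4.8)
The plan is to argue by induction on the number $m$ of boundary insertions, adding one insertion at a time via Lemma~\ref{lm:lf-insertion-bdry}. The base case $m=0$ is the Möbius invariance $\LF_\bbH = \psi_*\LF_\bbH$: writing $\phi = h - 2Q\log|\cdot|_+ + \mathbf{c}$ with $(h,\mathbf{c})\sim P_\bbH\times[e^{-Qc}dc]$, one has $\psi\bullet_\gamma\phi = h\circ\psi^{-1} - 2Q\log|\psi^{-1}(\cdot)|_+ + Q\log|(\psi^{-1})'| + \mathbf{c}$, and the claim follows from the conformal covariance of the free boundary GFF together with the observation that the exponent $-Q$ in the law of $\mathbf{c}$ is exactly calibrated to the $-2Q\log|\cdot|_+$ background so that all Jacobian factors cancel. (This is the conformal invariance of $\LF_\bbH$, see~\cite{AHS21}; alternatively one may check it on the generators $z\mapsto\lambda z$, $z\mapsto z+t$, $z\mapsto-1/z$ of $\mathrm{Aut}(\bbH)$.)

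For the induction step, assume the identity for $(\beta_i,s_i)_{1\le i\le m}$ and add an insertion $(\beta,s)$ with $s\in\bbR\setminus\{s_1,\dots,s_m\}$ (an insertion at, or sent by $\psi$ to, $\infty$ being treated the same way using the $R\to\infty$ clause of Lemma~\ref{lm:lf-insertion-bdry} and the $\psi'(\infty)$ convention of Section~\ref{sec:pre}). Setting $\tilde\phi = \psi\bullet_\gamma\phi$, so that $\phi = \tilde\phi\circ\psi + Q\log|\psi'|$, the standard comparison of semicircle averages under conformal maps (using $|\psi(z)-\psi(w)| = |z-w||\psi'(z)|^{1/2}|\psi'(w)|^{1/2}$ for Möbius $\psi$) gives $\phi_\epsilon(s) = \tilde\phi_{|\psi'(s)|\epsilon}(\psi(s)) + Q\log|\psi'(s)| + o(1)$ as $\epsilon\to0$. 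Pushing the vague convergence in Lemma~\ref{lm:lf-insertion-bdry} forward under the continuous map $\phi\mapsto\psi\bullet_\gamma\phi$ and substituting $\delta = |\psi'(s)|\epsilon$, the weight $\epsilon^{\beta^2/4}e^{\frac\beta2\phi_\epsilon(s)}$ becomes $|\psi'(s)|^{-\beta^2/4}|\psi'(s)|^{\beta Q/2}\delta^{\beta^2/4}e^{\frac\beta2\tilde\phi_\delta(\psi(s))}e^{o(1)} = |\psi'(s)|^{\Delta_\beta}\delta^{\beta^2/4}e^{\frac\beta2\tilde\phi_\delta(\psi(s))}e^{o(1)}$, since $\Delta_\beta = \frac{\beta Q}{2} - \frac{\beta^2}{4}$; this is precisely where the exponent $\Delta_\beta$ enters. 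Hence
\[
\psi_*\LF_\bbH^{(\beta_i,s_i)_i,(\beta,s)} = |\psi'(s)|^{\Delta_\beta}\lim_{\delta\to0}\delta^{\beta^2/4}e^{\frac\beta2\tilde\phi_\delta(\psi(s))}\,\psi_*\LF_\bbH^{(\beta_i,s_i)_i}.
\]
Substituting the inductive hypothesis in the form $\psi_*\LF_\bbH^{(\beta_i,s_i)_i} = \prod_i|\psi'(s_i)|^{\Delta_{\beta_i}}\LF_\bbH^{(\beta_i,\psi(s_i))_i}$ and applying Lemma~\ref{lm:lf-insertion-bdry} once more, now at the point $\psi(s)$ for the field with law $\LF_\bbH^{(\beta_i,\psi(s_i))_i}$, gives $\psi_*\LF_\bbH^{(\beta_i,s_i)_i,(\beta,s)} = |\psi'(s)|^{\Delta_\beta}\prod_i|\psi'(s_i)|^{\Delta_{\beta_i}}\LF_\bbH^{(\beta_i,\psi(s_i))_i,(\beta,\psi(s))}$, which rearranges to the asserted identity with $m+1$ insertions.

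The main obstacle is the analytic one of carrying the deterministic $o(1)$ error in the semicircle-average comparison through the vague limit after the change of variables $\delta = |\psi'(s)|\epsilon$; this is handled as in the proof of Lemma~\ref{lm:lf-insertion-bdry} (cf.~\cite{AHS21}), using that the comparison is uniform on compact subsets of $\bbH$ and that $\psi$ is bi-Lipschitz there, together with tightness of the approximating measures. The remaining care is the bookkeeping for insertions at $\infty$, where the normalization $R^{Q\beta-\beta^2/4}$ in the second display of Lemma~\ref{lm:lf-insertion-bdry} replaces $\epsilon^{\beta^2/4}$ and, combined with the convention for $\psi'(\infty)$, again produces the factor $|\psi'(\infty)|^{\Delta_\beta}$.
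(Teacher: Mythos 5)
Your proof is correct and uses essentially the same mechanism as the paper: regularize each insertion via Lemma~\ref{lm:lf-insertion-bdry}, push forward by $\psi\bullet_\gamma$, compare the semicircle averages at $s$ and $\psi(s)$, and observe that the mismatched powers of $\epsilon$ (or $R$) together with the $Q\log|\psi'|$ term produce exactly the factor $|\psi'(s)|^{\Delta_\beta}$. The only organizational difference is that the paper cites \cite[Proposition 2.7]{AHS21} for the case where no $s_i$ or $\psi(s_i)$ equals $\infty$ and then checks the extension to $\infty$ by a direct two-insertion computation with a specific Möbius map, whereas you give a self-contained induction on $m$ covering all configurations uniformly, with the $\infty$ cases handled by the $R\to\infty$ clause and the $\psi'(\infty)$ convention; the exponent bookkeeping in both is identical.
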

\begin{proof}
    When none of $s_i,\psi(s_i)$ is $\infty$, the lemma is proved in \cite[Proposition 2.7]{AHS21}. For the remaining part, we work with the case  $m=2$, $\psi(z) = a+\frac{\lambda}{x-z}$, $(s_1,s_2) = (x,\infty)$ where $a,x\in\bbR$ and $\lambda>0$, while the general case follows similarly. 

    Let $\phi$ be a sample from $\LF_\bbH$ and we weight its law by $R^{\beta_2 Q-\frac{\beta_2^2}{4}}e^{\frac{\beta_2}{2}\phi_R(x)}\e^{\frac{\beta_1^2}{4}}e^{\frac{\beta_1}{2}\phi_\e(x)}$. On one hand, if we send $\e\to0,R\to\infty$, by Lemma~\ref{lm:lf-insertion-bdry}, the law of $\phi$ converges in vague topology to $\LF_\bbH^{(\beta_1,x),(\beta_2,\infty)}$. On the other hand, the law of $\bar{\phi}:=\psi\bullet_\gamma\phi$ is weighted by $\lambda^{\frac{(\beta_1-\beta_2)Q}{2}}\e^{\frac{\beta_1^2}{4}-\beta_1Q}R^{-\frac{\beta_2^2}{4}}e^{\frac{\beta_1}{2}\bar{\phi}_{\frac{\lambda}{\e}}(a)+ \frac{\beta_2}{2}\bar{\phi}_{\frac{\lambda}{R}}(a)}$ and thus converges in vague topology to $\lambda^{\Delta_{\beta_2}-\Delta_{\beta_1}}\LF_\bbH^{(\beta_1,\infty),(\beta_2,a)}$ by Lemma~\ref{lm:lf-insertion-bdry}. Therefore the claim follows by definition that $\psi'(x)=\lambda^{-1}$ and $\psi'(\infty)=\lambda$.
\end{proof}

Next we recall the relations between marked quantum disks and Liouville fields. The statements in~\cite{AHS21} are involving Liouville fields on the strip $\mathcal{S}:=\bbR\times(0,\pi)$, yet we can use the map $z\mapsto e^z$ to transfer to the upper half plane.

\begin{definition}\label{def:three-pointed-disk}
	Fix $W>0$. First sample  $(D,\phi, x,y)$ from $\mathcal{M}_2^{\textup{disk}}(W)$ and weight its law by the quantum length of its left boundary.  Then sample a point $s$ on the left boundary arc according to the probability measure proportional to $\nu_{\phi}$. We denote the law of the surface  $(D,\phi, x,y,s){/\sim_\gamma}$ by $\mathcal{M}_{2, \bullet}^{\textup{disk}}(W)$. 
\end{definition}

\begin{proposition}[Proposition 2.18 of \cite{AHS21}]\label{prop:m2dot}
	For $W>\frac{\gamma^2}{2}$ and $\beta = \gamma+\frac{2-W}{\gamma}$, let $\phi$ be sampled from $\frac{\gamma}{2(Q-\beta)^2}\textup{LF}_{\mathbb{H}}^{(\beta,\infty), (\beta,0), (\gamma, 1)}$. Then  {the law of $(\bbH, \phi, 0, \infty, 1)/{\sim_\gamma}$ is $\mathcal{M}_{2, \bullet}^{\textup{disk}}(W)$}.
\end{proposition}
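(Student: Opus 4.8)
The plan is to reduce this to its strip counterpart --- which is \cite[Proposition~2.18]{AHS21} --- by conformally transporting along $\psi(z)=e^z$, and then to verify that the multiplicative constant is unaffected by the transport. The map $\psi$ sends the strip $\mathcal S=\bbR\times(0,\pi)$ conformally onto $\bbH$, carrying the two ends $-\infty,+\infty$ of $\mathcal S$ to $0,\infty$ and the boundary origin $0\in\partial\mathcal S$ to $1\in\partial\bbH$. Since $\mathcal{M}_{2,\bullet}^{\mathrm{disk}}(W)$ is a law on \emph{quantum surfaces} it is coordinate independent, so the whole matter reduces to tracking the Liouville-field side. By the strip version of Lemma~\ref{lm:lcft-H-conf} (which, as noted before Definition~\ref{def:three-pointed-disk}, is inherited from the $\bbH$ statements through $z\mapsto e^z$), pushing the Liouville field on $\mathcal S$ with insertions $\beta,\beta$ at the two ends and $\gamma$ at $0$ forward under $\phi\mapsto\psi\bullet_\gamma\phi$ produces $\mathrm{LF}_\bbH^{(\beta,\infty),(\beta,0),(\gamma,1)}$ up to Jacobian factors $|\psi'(s_i)|^{-\Delta_{\beta_i}}$ at the three insertion points: the factor at the $\gamma$-insertion is $|\psi'(0)|^{-\Delta_\gamma}=1$, and the two factors attached to the ends $\pm\infty$ combine to $1$ (the relevant derivatives at the two ends are reciprocal and both carry the exponent $-\Delta_\beta$). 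One then checks the surviving constant equals $\tfrac{\gamma}{2(Q-\beta)^2}$, which gives the claim upon recalling $\beta=\gamma+\tfrac{2-W}{\gamma}$.

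For a proof worked entirely on $\bbH$, I would instead begin from the identification of the weight-$W$ two-pointed quantum disk with the Liouville field carrying two $\beta$-insertions at $0$ and $\infty$ (\cite{cercle2021unit,AHS21}), then adjoin the extra boundary marked point $s\in\bbR_+$ sampled from the quantum length measure and rewrite the length-measure weighting as an insertion: by Lemma~\ref{lm:lf-insertion-bdry} with insertion parameter $\gamma$, $\nu_\phi(ds)=\lim_{\e\to0}\e^{\gamma^2/4}e^{\frac{\gamma}{2}\phi_\e(s)}\,ds$ turns the $(\beta,\infty),(\beta,0)$ Liouville field into $\mathrm{LF}_\bbH^{(\beta,\infty),(\beta,0),(\gamma,s)}$, after which Lemma~\ref{lm:lcft-H-conf} with the dilation $z\mapsto z/s$ moves $s$ to $1$. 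What makes the resulting identity a genuine equality of measures on quantum surfaces --- rather than one valid only after a gauge choice --- is that the two-pointed configuration still admits the one-parameter group of dilations of $\bbH$ fixing $0$ and $\infty$, and this residual symmetry is broken the moment the $\nu_\phi$-typical point is pinned at $1$; on $\bbH$ the cleanest way to carry the bookkeeping through this symmetry breaking is to perform the re-rooting at the level of the radial, Williams-type path decomposition of the disk's conditioned radial process. The constant $\tfrac{\gamma}{2(Q-\beta)^2}$ then assembles from the prefactor $\tfrac{\gamma}{2}$ in the law of $\mathbf c$ in Definition~\ref{def-quantum-disk} together with the two factors of $(Q-\beta)^{-1}$ contributed by the two sides of that conditioned radial process.

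The main obstacle is bookkeeping rather than conceptual. One must follow every multiplicative constant --- the $\tfrac{\gamma}{2}$ from $\mathbf c$, the $(Q-\beta)^{-2}$ from the re-rooting, and the Jacobian of $z\mapsto e^z$ at the insertion points --- and check they combine to exactly $\tfrac{\gamma}{2(Q-\beta)^2}$. In addition, invoking Lemma~\ref{lm:lf-insertion-bdry} inside the definition of $\mathcal{M}_{2,\bullet}^{\mathrm{disk}}(W)$ requires justifying the exchange of the $\e\to0$ limit with the integrals over $\phi$ (against $\mathcal{M}_2^{\mathrm{disk}}(W)$) and over $s$; this needs a uniform-integrability estimate for the regularized GFF exponential of the type already used to prove Lemma~\ref{lm:lf-insertion-bdry}. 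Once these points and the handling of the residual dilation symmetry are in place, the identity follows.
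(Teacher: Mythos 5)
Your proposal is correct and takes essentially the same approach as the paper. The paper gives no proof of its own: the proposition is quoted directly from \cite{AHS21} (Proposition~2.18 there, stated on the strip $\mathcal{S}=\bbR\times(0,\pi)$), and in the text just before Definition~\ref{def:three-pointed-disk} the authors note that the half-plane version is obtained by pushing forward under $z\mapsto e^z$ --- which is precisely what your first paragraph executes, while your direct-on-$\bbH$ sketch (radial/Williams decomposition, symmetry broken at the pinned $\gamma$-insertion, constants $\tfrac{\gamma}{2}$ from $\mathbf c$ and $(Q-\beta)^{-2}$ from the two conditioned half-lines) mirrors the internal argument of \cite{AHS21}.
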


The proposition above gives rise to the quantum disks with general third insertion points, which could be defined via three-pointed Liouville fields.

\begin{definition}\label{def:m2dot-alpha}
	Fix $W>\frac{\gamma^2}{2},\beta=\gamma+\frac{2-W}{\gamma}$ and let $\beta_3\in\mathbb{R}$. Set $\mathcal{M}_{2, \bullet}^{\textup{disk}}(W;\beta_3)$ to be the law of $(\bbH, \phi, 0,\infty, 1)/\sim_\gamma$ with $\phi$ sampled from $\frac{\gamma}{2(Q-\beta)^2}\textup{LF}_{\bbH}^{(\beta,0),(\beta,\infty), (\beta_3, 1)}$. We call the boundary arc between the two $\beta$-singularities with (resp. not containing) the $\alpha$-singularity the marked (resp. unmarked) boundary arc.
\end{definition}

In general, for a Liouville field $\LF_\bbH^{(\beta_i,s_i)_i}$, sampling a marked point from the boundary quantum length measure corresponds to adding a $\gamma$-insertion to the field.
\begin{lemma}\label{lm:gamma-insertion}
    Let $m\ge 2,n\ge1$ and $(\beta_i,s_i)\in \bbR\times\partial \bbH$ with  {$ s_1<s_2<...<s_m\leq +\infty$}. Let $1\le k\le m-1$ and $g$ be a non-negative measurable function supported on  {$[s_{k},s_{k+1}]^n$}. Then as measures we have the identity
\begin{equation}\label{eq:gamma-insertion}
\begin{split}
    & {\mathds{1}_{x_1,...,x_n\in[{s_{k}},{s_{k+1}}]} g(x_1,...,x_n)\nu_\phi(dx_1)...\nu_\phi(dx_n) \LF_\bbH^{(\beta_i,s_i)_i}(d\phi)} \\&= \mathds{1}_{x_1,...,x_n\in[{s_{k}},{s_{k+1}}]} \LF_\bbH^{(\beta_i,s_i)_i,(\gamma,x_1),...,(\gamma,x_n)}(d\phi)\, g(x_1,...,x_n)dx_1...dx_n.
    \end{split}
\end{equation}
\end{lemma}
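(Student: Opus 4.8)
The plan is to reduce the general statement to the single-insertion case $n=1$ of Lemma~\ref{lm:lf-insertion-bdry} and then iterate. First I would observe that both sides of~\eqref{eq:gamma-insertion} are measures on pairs (field $\phi$, points $x_1,\dots,x_n$) supported on $x_i\in[s_{k+1},s_k]$, and that it suffices to test against product functions $g(x_1,\dots,x_n)=\prod_{j=1}^n g_j(x_j)$ with each $g_j$ non-negative continuous and compactly supported in the open arc $(s_{k+1},s_k)$, by a standard monotone class argument. Staying away from the endpoints avoids any interaction with the existing singularities $s_i$, so $\nu_\phi$ has no atoms there and all circle/semicircle averages below are well behaved.

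Next I would handle $n=1$. The key input is the identity $\nu_\phi(dx) = \lim_{\e\to 0}\e^{\gamma^2/4} e^{\frac{\gamma}{2}\phi_\e(x)}\,dx$ in the sense of weak convergence of measures on the arc, which holds for $\phi$ an embedding of $\LF_\bbH^{(\beta_i,s_i)_i}$ because $\phi$ differs from the free-boundary GFF by a continuous function away from the $s_i$ (cf.\ the Gaussian multiplicative chaos construction recalled in Section~2.1). Applying Fubini and dominated convergence, the left side of~\eqref{eq:gamma-insertion} with $n=1$ equals
\begin{equation*}
\int_{s_{k+1}}^{s_k} g_1(x)\,\Big(\lim_{\e\to 0}\e^{\gamma^2/4}\int e^{\frac{\gamma}{2}\phi_\e(x)}\,\LF_\bbH^{(\beta_i,s_i)_i}(d\phi)\Big)\,dx,
\end{equation*}
and by the $\beta=\gamma$ case of the first part of Lemma~\ref{lm:lf-insertion-bdry} the inner limit is exactly $\LF_\bbH^{(\beta_i,s_i)_i,(\gamma,x)}$ (as a vague limit, which upgrades to what we need against the fixed $g_1$ with compact support in the open arc, together with the uniform integrability coming from the explicit form of $C_\bbH^{(\beta_i,s_i)_i,(\gamma,x)}$ being continuous and bounded in $x$ on $\mathrm{supp}(g_1)$). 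This gives the right side for $n=1$.

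For general $n$ I would induct: having placed $\gamma$-insertions at $x_1,\dots,x_{n-1}$, apply the $n=1$ case to the measure $\LF_\bbH^{(\beta_i,s_i)_i,(\gamma,x_1),\dots,(\gamma,x_{n-1})}$, noting that adding these extra insertions at interior points of $(s_{k+1},s_k)$ does not change the qualitative structure (the field is still GFF plus a continuous-away-from-singularities function, and the new singularities lie in the open arc), so the hypotheses of the $n=1$ step are preserved. The symmetry of the resulting expression under permuting the $x_j$ — visible from the explicit constant $C_\bbH$, which only involves the pairwise Green's functions $G_\bbH(x_i,x_j)$ and the $|x_i|_+$ factors — ensures consistency regardless of the order in which insertions are added, and since $g$ was already symmetrized away by the product-function reduction this is automatic. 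The main obstacle is the interchange of the $\e\to 0$ limits with the $x$-integration and with each other in the iteration: one must check a uniform (in $x$ ranging over the compact support of $g_j$, and jointly in the previously inserted points) integrability bound so that dominated convergence applies at each stage. This follows from the Cameron–Martin/Girsanov computation already used in the proof of Lemma~\ref{lm:lf-insertion-bdry} together with the local uniform boundedness of $C_\bbH^{(\beta_i,s_i)_i,(\gamma,x_1),\dots,(\gamma,x_j)}$ in the inserted variables, but it is the only step requiring genuine care; the rest is bookkeeping.
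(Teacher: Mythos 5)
Your proposal is correct and uses the same core idea as the paper: write $\nu_\phi(dx)$ as the $\e\to0$ limit of $\e^{\gamma^2/4}e^{\frac{\gamma}{2}\phi_\e(x)}\,dx$, exchange the limit with the integrations, and invoke Lemma~\ref{lm:lf-insertion-bdry}. The paper carries this out in a single step for all $n$ insertions simultaneously (with a typo: the integrand should read $\prod_{j}\e^{\gamma^2/4}e^{\frac{\gamma}{2}\phi_\e(x_j)}$), treating a general $g$ directly and handling unbounded $g$ by monotone approximation; your reduction to product test functions and the induction on $n$ are unnecessary detours, since the argument works uniformly in $n$ and for arbitrary non-negative measurable $g$. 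On the other hand, you explicitly flag the exchange of the $\e\to0$ limit with the $dx$ integration, which the paper passes over silently, so on that particular point you are being more careful; the justification you sketch (Girsanov plus local uniform boundedness of $C_\bbH^{(\cdot)}$ in the inserted points away from the $s_i$) is the right one.
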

\begin{proof}
    First assume that $g$ is bounded with compact support. Then for any non-negative measurable function $F$ on $H^{-1}(\bbH)$, 
    \begin{equation}
        \begin{split}
            &\int\int_{x_1,...,x_n\in[{s_{k+1}},{s_k}]}\, F(\phi)g(x_1,...,x_n)\nu_\phi(dx_1)...\nu_\phi(dx_n) \LF_\bbH^{(\beta_i,s_i)_i}(d\phi)\\& = \lim_{\e\to0}\int_{x_1,...,x_n\in[{s_{k+1}},{s_k}]}\int\, \prod_{i=1}^n(\e^{\frac{\gamma^2}{4}}e^{\frac{\gamma}{2}\phi_\e(x_i)})F(\phi)\LF_\bbH^{(\beta_i,s_i)_i}(d\phi)\, g(x_1,...,x_n)dx_1...dx_n\\
            &=\int_{x_1,...,x_n\in[{s_{k+1}},{s_k}]}\int\,F(\phi) \LF_\bbH^{(\beta_i,s_i)_i,(\gamma,x_1),...,(\gamma,x_n)}(d\phi)\, g(x_1,...,x_n)dx_1...dx_n
        \end{split}
    \end{equation}
    where we have applied Lemma~\ref{lm:lf-insertion-bdry}. For general non-negative measurable function $g$, we may take $g_n\uparrow g$ where $g_n$ is bounded with compact support.
\end{proof}

As a consequence, we have the following lemma on sampling two quantum typical points on a weight $W$ quantum disk.
\begin{lemma}\label{lm:qd-4-pted}
   Fix $W>\frac{\gamma^2}{2}$ and $\beta = \gamma+\frac{2-W}{\gamma}$. First sample  {a quantum disk from  $\frac{1}{2}L_R^2\mathcal{M}_2^{\textup{disk}}(W)$ where $L_R$ is its right boundary length, and embed it as $(\mathbb{H},\phi, 0, \infty)$.} Then sample $x,y\in\mathbb{R}_+$ according to the probability measure proportional to $1_{x<y}\nu_{\phi}|_{\mathbb{R}_+}(dx)\nu_{\phi}|_{\mathbb{R}_+}(dy)$. Then the law of the quantum surface $(\mathbb{H},\phi, 0, x,y,\infty)/\sim_\gamma$, denoted by $\Md_{2,\bullet,\bullet}(W)$, agrees with that of $(\mathbb{H},\phi_0, 0, x,1,\infty)/\sim_\gamma$, where $(\phi_0,x)$ is sampled from
\begin{equation}\label{eq:lm-qt-4-pted}
   {  \frac{\gamma}{2(Q-\beta)^2}\cdot\mathds{1}_{x\in(0,1)} \LF_\bbH^{(\beta,0),(\beta,\infty),(\gamma,x),(\gamma,1)}(d\phi_0)\,dx.}
\end{equation}   
\end{lemma}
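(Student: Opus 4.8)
The plan is to start from the two marked point surface $\Md_2(W)$, add two boundary points from the quantum length measure, and translate this operation into the Liouville field language via the lemmas just established. First I would note that by definition $\Md_{2,\bullet,\bullet}(W)$ is obtained from $\frac12\nu_\phi(\bbR_+)^2\Md_2(W)$ by sampling $x<y$ from the normalized product length measure on $\bbR_+$; since the normalization is exactly $\nu_\phi(\bbR_+)^2$, sampling $x,y$ and reweighting together amounts to the unnormalized operation of integrating $1_{x<y}\nu_\phi(dx)\nu_\phi(dy)$ against $\frac12\Md_2(W)$. Next I would invoke Proposition~\ref{prop:m2dot}: with $\beta=\gamma+\frac{2-W}{\gamma}$, the law $\Md_{2,\bullet}(W)$ of $(\bbH,\phi,0,s,\infty)$ (one quantum typical point on $\bbR_+$) equals that of $(\bbH,\phi,0,\infty,1)$ with $\phi$ from $\frac{\gamma}{2(Q-\beta)^2}\LF_\bbH^{(\beta,0),(\beta,\infty),(\gamma,1)}$. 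Unwinding the normalization there, this says $\nu_\phi(\bbR_+)\Md_2(W)[d\phi]$, with one extra point placed via $\nu_\phi|_{\bbR_+}$, is the same as $\frac{\gamma}{2(Q-\beta)^2}\LF_\bbH^{(\beta,0),(\beta,\infty),(\gamma,1)}$ after identifying the third marked point with $1$ by a M\"obius map.

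The cleanest route is then to use Lemma~\ref{lm:gamma-insertion} directly on the already-three-pointed picture. Concretely: embed $\Md_{2,\bullet}(W)$ as $(\bbH,\phi,0,1,\infty)$ so that $\phi\sim\frac{\gamma}{2(Q-\beta)^2}\LF_\bbH^{(\beta,0),(\gamma,1),(\beta,\infty)}$ (reordering the insertions is harmless). The boundary arc $\bbR_+$ now splits into $(0,1)$ and $(1,\infty)$ by the inserted point $1$. To get $\Md_{2,\bullet,\bullet}(W)$ I need to add \emph{one further} quantum typical point on $\bbR_+=(0,1)\cup(1,\infty)$, weighting by $\nu_\phi(\bbR_+)$; but I must be careful to match the combinatorics of ``$x<y$ among the two typical points'' with ``$x$ and $1$'' once we declare the conformal coordinate. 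The symmetry trick is: the two typical points are exchangeable, so $\frac12\nu_\phi(\bbR_+)^2$ with $1_{x<y}$ is the same as $\frac12\nu_\phi(\bbR_+)^2$ without the ordering constraint, divided by the number of orderings — i.e.\ equals $\frac12\nu_\phi(dx)\nu_\phi(dy)$ symmetrized. I would then apply Lemma~\ref{lm:gamma-insertion} twice (or once with $n=2$): adding two $\gamma$-insertions at $x_1,x_2\in\bbR_+$ to $\LF_\bbH^{(\beta,0),(\beta,\infty)}$ reproduces $\int\int_{x_1,x_2\in\bbR_+}\nu_\phi(dx_1)\nu_\phi(dx_2)\LF_\bbH^{(\beta,0),(\beta,\infty)}$. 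Combining with the $\frac{\gamma}{2(Q-\beta)^2}$ prefactor from Proposition~\ref{prop:m2dot} and using Lemma~\ref{lm:lcft-H-conf} to move one of the two $\gamma$-points to the location $1$ (picking up the Jacobian factor $|\psi'|^{-\Delta_\gamma}$, which here should come out to $1$ because $\Delta_\gamma=\frac{\gamma}{2}(Q-\frac{\gamma}{2})=1$ and the relevant M\"obius derivatives conspire — this needs a short check), I get exactly $\frac{\gamma}{2(Q-\beta)^2}\int_0^1\LF_\bbH^{(\beta,0),(\beta,\infty),(\gamma,x),(\gamma,1)}(d\phi_0)\,dx$, where restricting to $x\in(0,1)$ records the ordering $x<1$ of the two typical points.

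More carefully on the bookkeeping: rather than going through $\Md_{2,\bullet}$, I think the slickest is to start from $\Md_2(W)$, apply Lemma~\ref{lm:gamma-insertion} with $n=2$ and the arc $\bbR_+$ to write $\frac12\nu_\phi(\bbR_+)^2\Md_2(W)$ with two sampled points as $\frac12\int\int_{x_1,x_2\in\bbR_+}\Md_2(W)\text{-version of }\LF_\bbH^{(\beta,0),(\beta,\infty),(\gamma,x_1),(\gamma,x_2)}dx_1dx_2$ — except that $\Md_2(W)$ itself is not literally $\LF_\bbH^{(\beta,0),(\beta,\infty)}$ up to a constant (that identity, from \cite{AHS21}, is what Proposition~\ref{prop:m2dot} encodes after adding the $\gamma$-point). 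So the honest order of operations is: (i) by Proposition~\ref{prop:m2dot}, $\nu_\phi(\bbR_+)\Md_2(W)$ with one sampled point $=\frac{\gamma}{2(Q-\beta)^2}\LF_\bbH^{(\beta,0),(\beta,\infty),(\gamma,\cdot)}$ as measures on three-pointed surfaces; (ii) on both sides sample one more point on $\bbR_+$ weighted by $\nu_\phi(\bbR_+)$ and apply Lemma~\ref{lm:gamma-insertion} to the right side to convert it to $\frac{\gamma}{2(Q-\beta)^2}\int_{\bbR_+}\LF_\bbH^{(\beta,0),(\beta,\infty),(\gamma,\cdot),(\gamma,x)}dx$; (iii) fix the embedding by sending the first typical point to $1$ via a M\"obius map and use Lemma~\ref{lm:lcft-H-conf} to rewrite in the target coordinates, the constant $\Delta_\gamma=1$ making the Jacobian trivial; (iv) account for the factor $\frac12$ and the ordering $1_{x<y}$: symmetrizing the two exchangeable typical points converts $\int_{\bbR_+}dx$ over the second point into $\int_0^1 dx$ (the configurations with $x>1$ being identified, after a M\"obius inversion fixing $\{0,\infty\}$ and swapping the roles of the two points, with the $x<1$ configurations), which is precisely where the $\int_0^1$ in~\eqref{eq:lm-qt-4-pted} comes from and why the $\frac12$ disappears. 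The main obstacle I anticipate is precisely this last step — getting the combinatorial factor and the range of integration right, i.e.\ carefully matching ``unordered pair of $\gamma$-insertions anywhere on $\bbR_+$, weight $\tfrac12$'' with ``one $\gamma$-insertion at $1$, one $\gamma$-insertion in $(0,1)$'' — together with verifying that the M\"obius Jacobians from Lemma~\ref{lm:lcft-H-conf} really do cancel given $\Delta_\gamma = 1$. Everything else is a routine application of Lemmas~\ref{lm:lf-insertion-bdry}, \ref{lm:gamma-insertion} and Proposition~\ref{prop:m2dot}.
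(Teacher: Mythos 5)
Your proposal identifies the right two ingredients (Proposition~\ref{prop:m2dot} and Lemma~\ref{lm:gamma-insertion}) and the overall plan is sound, so the argument will go through; it is essentially the paper's proof with extra detours. The point where you expend the most effort — step (iv), the symmetrization/M\"obius fold converting $\tfrac12\int_{\bbR_+}$ into $\int_0^1$ — can be avoided entirely by exploiting the ordering $1_{x<y}$ directly. Writing the combined measure as
$$\frac12\nu_\phi(\bbR_+)^2\Md_2(W)\cdot\frac{1_{x<y}\nu_\phi(dx)\nu_\phi(dy)}{\frac12\nu_\phi(\bbR_+)^2}=1_{0<x<y}\nu_\phi(dx)\nu_\phi(dy)\Md_2(W)=\big(\nu_\phi(dy)\Md_2(W)\big)\cdot 1_{(0,y)}(x)\,\nu_\phi(dx),$$
the $\tfrac12$ cancels against the normalization of the ordered probability measure, and sampling $y$ first via Proposition~\ref{prop:m2dot} (which embeds $y\mapsto 1$ and packages the $\Delta_\gamma=1$ Jacobian for you) leaves $x$ confined to $(0,1)$ with no symmetrization, no relabelling, and no $x\mapsto 1/x$ change of variables; one application of Lemma~\ref{lm:gamma-insertion} on $[0,1]$ then finishes. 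Your step (iii) is also redundant: after invoking Proposition~\ref{prop:m2dot} the first typical point is already at $1$, so there is no further M\"obius map to apply. That said, your fold in step (iv) is in fact correct — under $z\mapsto z/x$ the prefactors $|\psi'(0)|^{-\Delta_\beta}|\psi'(\infty)|^{-\Delta_\beta}=1$ and $|\psi'(x)|^{-\Delta_\gamma}|\psi'(1)|^{-\Delta_\gamma}=x^{2}$ combine with $dx=du/u^2$ under $u=1/x$ to give exactly $\int_1^\infty\LF_\bbH^{(\beta,0),(\beta,\infty),(\gamma,x),(\gamma,1)}dx=\int_0^1\LF_\bbH^{(\beta,0),(\beta,\infty),(\gamma,1),(\gamma,u)}du$ as laws of quantum surfaces — so what you propose is a valid alternative bookkeeping, just a longer one.
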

\begin{proof}
    By Proposition~\ref{prop:m2dot} and Lemma~\ref{lm:gamma-insertion}, the quantum surface $(\mathbb{H},\phi, 0, x,y,\infty)/\sim_\gamma$ agrees in law with $(\bbH,\phi_1,0,x,1,\infty)/\sim_\gamma$ where $(\phi_1,x)$ is sampled from
    \begin{equation}\label{eq:lm-qt-4-pted-1}
    \begin{split}
    &\frac{\gamma}{2(Q-\beta)^2} {\mathds{1}_{x\in(0,1)}}\nu_{\phi_1}(dx)\LF_\bbH^{(\beta,0),(\beta,\infty),(\gamma,1)}(d\phi_1) = \frac{\gamma}{2(Q-\beta)^2} {\mathds{1}_{x\in(0,1)}}\LF_\bbH^{(\beta,0),(\beta,\infty),(\gamma,1),(\gamma,x)}(d\phi_1)dx.\\
    \end{split}
\end{equation} 
% On the other hand, by Lemma~\ref{lm:lcft-H-conf}, for $x>1$ and $f_x(z)=xz$, $$\LF_\bbH^{(\beta,0),(\beta,\infty),(\gamma,1),(\gamma,x)} = x^{-2}(f_x)_*\LF_\bbH^{(\beta,0),(\beta,\infty),(\gamma,x^{-1}),(\gamma,1)}.$$
% Therefore the claim follows by applying a change of variables $y=x^{-1}$ to the right hand side of~\eqref{eq:lm-qt-4-pted-1} when $x>1$.
\end{proof}

\subsection{SLE and Multiple SLE}\label{subsec:pre-sle}
In this section, we review the background and some basic properties of the chordal SLE and multiple SLE as established in e.g.~\cite{lawler2009partition,peltola2019global}.

Fix $\kappa>0$. We start with the $\SLE_\kappa$ process on the upper half plane $\bbH$. Let $(B_t)_{t\ge0}$ be the standard Brownian motion. The $\SLE_\kappa$ is the probability measure on continuously growing  {curves $\eta$ in $\overline{\bbH}$},  {whose mapping out function $(g_t)_{t\ge0}$ (i.e., the unique conformal transformation from the unbounded component of $\mathbb{H}\backslash \eta([0,t])$ to $\mathbb{H}$ such that $\lim_{|z|\to\infty}|g_t(z)-z|=0$) can be described by}
\begin{equation}\label{eq:def-sle}
g_t(z) = z+\int_0^t \frac{2}{g_s(z)-W_s}ds, \ z\in\mathbb{H},
\end{equation} 
where $W_t=\sqrt{\kappa}B_t$ is the Loewner driving function.

% The SLE$_\kappa$ curves, as introduced in \cite{Sch00}, is a conformally invariant measure on continuously growing compact hulls $K_t$ with the Loewner driving function $W_t=\sqrt{\kappa}B_t$ (where $B_t$ is the standard Brownian motion). When the background domain is the upper half plane, this can be described by 
% \begin{equation}\label{eqn-def-sle}
% g_t(z) = z+\int_0^t \frac{2}{g_s(z)-W_s}ds, \ z\in\mathbb{H},
% \end{equation} 
% and $g_t$ is the unique conformal transformation from $\mathbb{H}\backslash K_t$ to $\mathbb{H}$ such that $\lim_{|z|\to\infty}|g_t(z)-z|=0$.

% SLE$_\kappa$ curves also has a natural variant called SLE$_\kappa(\rho^-;\rho^+)$, where we keep track of two marked points lying immediately close to the root. 
% Let $\rho^-,\rho^+>-2$. The  SLE$_\kappa(\rho^-;\rho^+)$ process is the measure on compact hulls $(K_t)_{t\ge 0}$ with the conformal map $g_t:\mathbb{H}\backslash K_t\to\bbH$ satisfying the same identity~\eqref{eqn-def-sle} except that $W_t$  is characterized by
% $$W_t=\sqrt{\kappa}B_t- \int_0^t \big(\frac{\rho^-}{V_s^--W_s}+\frac{\rho^+}{V_s^+-W_s} \big)ds; \ \ V_t^{\pm} = \int_0^t \frac{\rho^\pm}{V_t^\pm-W_s}ds.  $$
% It has been shown in \cite{MS16a} that the  SLE$_\kappa(\rho^-;\rho^+)$ processes a.s.\ exists, is unique and generates a continuous curve.
The $\SLE_\kappa$, as a probability measure, can be defined on other domains by conformal maps. To be more precise, let $\mu_\bbH(0,\infty)$ be the $\SLE_\kappa$ on $\bbH$ from 0 to $\infty$, $D$ be a simply connected domain, and $f:\bbH\to D$ be a conformal map with $f(0)=x,f(\infty)=y$. Then we can define a probability measure $\mu_D({x,y})^\# = f\circ\mu_\bbH(0,\infty)$. Let $$b=\frac{6-\kappa}{2\kappa}$$ be the \emph{boundary scaling exponent}, and recall that for $x,y\in\partial D$ such that $\partial D$ is smooth near $x,y$, the boundary Poisson kernel is defined by $H_D(x,y)=\varphi'(x)\varphi'(y)(\varphi(x)-\varphi(y))^{-2}$ where $\varphi:D\to\bbH$ is a conformal map. Then as in~\cite{lawler2009partition}, one can define the $\SLE_\kappa$ in $(D,x,y)$ as a non-probability measure by setting $\mu_D({x,y}) = H_D(x,y)^b\cdot\mu_D({x,y})^\#$, which satisfies the conformal covariance
$$f\circ\mu_{D}(x,y) = |f'(x)|^b|f'(y)|^b\mu_{f(D)}(f(x),f(y))  $$
for any conformal map $f:D\to f(D)$.

Let $\kappa\in(0,4]$, $\alpha\in\mathrm{LP}_N$, $D\subset\bbC$ be a simply connected domain with $x_1,...,x_{2N}\in\partial D$ be $2N$ distinct marked points in counterclockwise order. %Let $X_\alpha(D;x_1,...,x_{2N})$ be the set of $N$ continuous curves in $\overline{D}$ following the linking the marked points as indicated by $\alpha$ which does not intersect $\partial D$ except at the starting and ending points.
Let $\bbP_{\alpha}^{\otimes N}(D;x_1,...,x_{2N}):=\otimes_{k=1}^N\mu_D(x_{i_k},x_{j_k})^\#$ be the law of $N$ independent chordal $\SLE_\kappa$ in $D$ following the link pattern $\alpha$. Then $\mathrm{mSLE}_{\kappa,\alpha}(D;x_1,...,x_{2N})^\#$, the {global $N$-$\SLE_\kappa$ associated with $\alpha$}, is absolutely continuous with respect to $\bbP_\alpha^{\otimes N}(D;x_1,...,x_{2N})$ with the Radon-Nikodym derivative written in terms of the Brownian loop measure~\cite{kozdron2006configurational,lawler2009partition,peltola2019global}. % Moreover, by definition, for $1\le k\le N$, given $\eta_1,...,\eta_{k-1},\eta_{k+1},...,\eta_N$, the conditional law of $\eta_k$ is the chordal $\SLE_\kappa$ in $D\backslash\{\eta_1,...,\eta_{k-1},\eta_{k+1},...,\eta_N\}$, and these conditional laws uniquely specify $\mathrm{mSLE}_{\kappa,\alpha}(D;x_1,...,x_{2N})^\#$ as a probability measure~\cite{beffara2021uniqueness}.
Moreover, the measure $\mathrm{mSLE}_{\kappa,\alpha}$ is conformally invariant, in the sense that
\begin{equation}\label{eq:msle-prob-conf}
    f\circ\mathrm{mSLE}_{\kappa,\alpha}(D;x_1,...,x_{2N})^\# = \mathrm{mSLE}_{\kappa,\alpha}(f(D);f(x_1),...,f(x_{2N}))^\#
\end{equation}
for any conformal map $f:D\to f(D)$.

The measure $\mathrm{mSLE}_{\kappa,\alpha}(D;x_1,...,x_{2N})$ is defined by multiplying the  measure $\mathrm{mSLE}_{\kappa,\alpha}(D;x_1,...,x_{2N})^\#$ by the {pure partition function} $\mathcal{Z}_\alpha(D;x_1,...,x_{2N})$. As proved in~\cite[Theorem 1.1]{peltola2019global}, when $D=\bbH$ and $x_1<...<x_{2N}$, $\mathcal{Z}_\alpha(D;x_1,...,x_{2N})$ is the solution to the PDE
\begin{equation}\label{eq:msle-pde}
    \bigg[\frac{\kappa}{2}\partial_i^2+\sum_{j\neq i}\big(\frac{2}{x_j-x_i}\partial_j-\frac{2b}{(x_j-x_i)^2} \big)   \bigg]\mathcal{Z}_\alpha(\bbH;x_1,...,x_{2N}) = 0\ \ \ \text{for\ } i=1,...,2N,
\end{equation}
satisfies the conformal covariance
\begin{equation}\label{eq:msle-conformal-conf}
    \mathcal{Z}_\alpha(\bbH;x_1,...,x_{2N}) = \prod_{i=1}^{2N}f'(x_i)^b\times \mathcal{Z}_\alpha(\bbH;f(x_1),...,f(x_{2N}))
\end{equation}
for any conformal map $f:\bbH\to\bbH$ with $f(x_1)<...<f(x_{2N})$, and has certain asymptotic behavior when $x_k,x_{k+1}\to\xi$ for fixed $\xi\in(x_{k-1},x_{k+2})$. By~\eqref{eq:msle-prob-conf} and~\eqref{eq:msle-conformal-conf}, this allows us to define the measure $\mathrm{mSLE}_{\kappa,\alpha}(D;x_1,...,x_{2N})$ via
\begin{equation}\label{eq:msle-conf}
    f\circ\mathrm{mSLE}_{\kappa,\alpha}(D;x_1,...,x_{2N}) =\prod_{i=1}^{2N}f'(x_i)^b \mathrm{mSLE}_{\kappa,\alpha}(f(D);f(x_1),...,f(x_{2N}))
\end{equation}
for any conformal map $f:D\to f(D)$ such that $\partial D$ (resp.\ $\partial f(D)$) is smooth near every $x_i$ (resp.\ $f(x_i)$).

Finally,  the curves from $\mathrm{mSLE}_{\kappa,\alpha}(D;x_1,...,x_{2N})$ can be sampled in a recursive way.  Let $\{i,j\}\in\alpha$ be a link dividing $\alpha$ into two sub-link patterns $\alpha^L,\alpha^R$, and $\eta$ be the curve connecting $x_i$ and $x_j$ a sample from $\mathrm{mSLE}_{\kappa,\alpha}(D;x_1,...,x_{2N})$. Let $D_\eta^L$ (resp.\ $D_\eta^R$) be the left (resp.\ right) connected component of $D\backslash\eta$, and $\{s_1,...,s_{2k}\}$ (resp. $\{w_1,...,w_{2N-2-2k}\}$) be the subset of  $\{x_1,...,x_{2N}\}$ lying to the left (resp.\ right) of $\eta$. Then the following immediately follows from~\cite[Proposition 3.5]{peltola2019global}.
\begin{proposition}\label{prop:msle-margin}
    The marginal law of $\eta$ under $\mathrm{mSLE}_{\kappa,\alpha}(D;x_1,...,x_{2N})^\#$ is absolutely continuous with respect to the law $\mu_D(x_i,x_j)^\#$ of $\SLE_\kappa$ in $D$ connecting $x_i$ and $x_j$ with Radon-Nikodym derivative
    \begin{equation}
        \frac{H_D(x_i,x_j)^b}{\mathcal{Z}_\alpha(D;x_1,...,x_{2N})}\times \mathcal{Z}_{\alpha^L}(D_\eta^L;s_1,...,s_{2k})\times \mathcal{Z}_{\alpha^R}(D_\eta^R;y_1,...,y_{2N-2k-2}).
    \end{equation}
    In particular, a sample from $\mathrm{mSLE}_{\kappa,\alpha}(D;x_1,...,x_{2N})$ can be produced by 
    \begin{enumerate}[(i)]
        \item Sample $\eta$ from the measure 
    $$ \mathcal{Z}_{\alpha^L}(D_\eta^L;s_1,...,s_{2k})\times \mathcal{Z}_{\alpha^R}(D_\eta^R;y_1,...,y_{2N-2k-2})\, \mu_D(x_i,x_j); $$
    \item Sample $(\eta_1^L,...,\eta_k^L),(\eta_1^R,...,\eta_{N-1-k}^R)$ from the probability measure $$\mathrm{mSLE}_{\kappa,\alpha^L}(D_\eta^L;s_1,...,s_{2k})^\#\times \mathrm{mSLE}_{\kappa,\alpha^R}(D_\eta^R;y_1,...,y_{2N-2k-2})^\#; $$  
    \item Output $(\eta;\eta_1^L,...,\eta_k^L;\eta_1^R,...,\eta_{N-1-k}^R)$.
    \end{enumerate}
    
\end{proposition}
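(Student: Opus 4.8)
The plan is to deduce the proposition from the construction and characterization of the global $N$-$\SLE_\kappa$ in~\cite{peltola2019global}, in particular from~\cite[Proposition 3.5]{peltola2019global}, by translating between the probability measures used there and the non-probability measures $\mathrm{mSLE}_{\kappa,\alpha}$ and $\mu_D$ used here. The two relevant normalizations are $\mathrm{mSLE}_{\kappa,\alpha}(D;x_1,\dots,x_{2N}) = \mathcal{Z}_\alpha(D;x_1,\dots,x_{2N})\cdot\mathrm{mSLE}_{\kappa,\alpha}(D;x_1,\dots,x_{2N})^\#$ and $\mu_D(x_i,x_j) = H_D(x_i,x_j)^b\,\mu_D(x_i,x_j)^\#$. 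Accordingly the argument splits into two parts: (a) identify the marginal law of $\eta$, and (b) identify the conditional law of the remaining $N-1$ curves given $\eta$.

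For (a), I would invoke~\cite[Proposition 3.5]{peltola2019global}, which states that under $\mathrm{mSLE}_{\kappa,\alpha}(D;x_1,\dots,x_{2N})^\#$ the curve $\eta$ joining $x_i$ and $x_j$ is absolutely continuous with respect to $\mu_D(x_i,x_j)^\#$ with Radon--Nikodym derivative $\frac{H_D(x_i,x_j)^b\,\mathcal{Z}_{\alpha^L}(D_\eta^L;s_1,\dots,s_{2k})\,\mathcal{Z}_{\alpha^R}(D_\eta^R;w_1,\dots,w_{2N-2k-2})}{\mathcal{Z}_\alpha(D;x_1,\dots,x_{2N})}$. Multiplying this density by $\mathcal{Z}_\alpha(D;x_1,\dots,x_{2N})$ turns $\mathrm{mSLE}_{\kappa,\alpha}^\#$ into $\mathrm{mSLE}_{\kappa,\alpha}$ and cancels the denominator, and $H_D(x_i,x_j)^b\,\mu_D(x_i,x_j)^\# = \mu_D(x_i,x_j)$, so the marginal of $\eta$ under $\mathrm{mSLE}_{\kappa,\alpha}(D;x_1,\dots,x_{2N})$ is exactly $\mathcal{Z}_{\alpha^L}(D_\eta^L;s_1,\dots,s_{2k})\,\mathcal{Z}_{\alpha^R}(D_\eta^R;w_1,\dots,w_{2N-2k-2})\,\mu_D(x_i,x_j)$, which is the measure in step~(i). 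As a consistency check, the total mass of this measure is $\int \mathcal{Z}_{\alpha^L}\mathcal{Z}_{\alpha^R}\,d\mu_D(x_i,x_j)$, which equals $\mathcal{Z}_\alpha(D;x_1,\dots,x_{2N})$ precisely because $\mathrm{mSLE}_{\kappa,\alpha}^\#$ is a probability measure; this matches the total mass $\mathcal{Z}_\alpha$ of $\mathrm{mSLE}_{\kappa,\alpha}$.

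For (b), I would either quote the conditional part of~\cite[Proposition 3.5]{peltola2019global} directly or argue as follows. Since the link $\{i,j\}$ separates $D\setminus\eta$ into the two components $D_\eta^L$ and $D_\eta^R$, the defining property of the global $N$-$\SLE_\kappa$ — that conditionally on all other curves, each curve is a chordal $\SLE_\kappa$ in its complementary domain — implies that, conditionally on $\eta$, the curves lying in $D_\eta^L$ again satisfy the property characterizing $\mathrm{mSLE}_{\kappa,\alpha^L}(D_\eta^L;s_1,\dots,s_{2k})^\#$ (the complement seen by a left curve depends only on $\eta$ and the other left curves), and symmetrically on the right; the same separation shows that the two families are conditionally independent given $\eta$. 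By the uniqueness of the curves satisfying this characterization~\cite{beffara2021uniqueness}, the conditional law given $\eta$ is the product $\mathrm{mSLE}_{\kappa,\alpha^L}(D_\eta^L;s_1,\dots,s_{2k})^\#\times\mathrm{mSLE}_{\kappa,\alpha^R}(D_\eta^R;w_1,\dots,w_{2N-2k-2})^\#$, which is step~(ii). Combining (a) and (b) reconstructs $\mathrm{mSLE}_{\kappa,\alpha}(D;x_1,\dots,x_{2N})$, which is step~(iii).

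Since the analytic substance here — the existence of the marginal/cascade formula (equivalently, the consistency of the Brownian-loop-measure reweighting used to build the global $N$-$\SLE_\kappa$) and the uniqueness theorem — is furnished by~\cite{peltola2019global} and~\cite{beffara2021uniqueness}, the only real work, and the step I expect to require care, is bookkeeping: matching orientation conventions (counterclockwise ordering of the $x_i$, which component of $D\setminus\eta$ counts as ``left''), tracking which of $x_1,\dots,x_{2N}$ land in $D_\eta^L$ versus $D_\eta^R$ and in which cyclic order they inherit there, confirming that $H_D(x_i,x_j)^b$ is exactly the Radon--Nikodym factor relating $\mu_D(x_i,x_j)$ to $\mu_D(x_i,x_j)^\#$, and verifying the base and edge cases ($N=2$, where $\alpha^L$ or $\alpha^R$ is empty and $\mathcal{Z}_\emptyset\equiv 1$, and outermost versus nested links) so that the recursion terminates correctly.
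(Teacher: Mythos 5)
Your proposal is correct and takes essentially the same route as the paper: the paper gives no independent argument, simply noting that the statement "immediately follows from [PW19, Proposition 3.5]" after unwinding the normalizations $\mathrm{mSLE}_{\kappa,\alpha}=\mathcal{Z}_\alpha\cdot\mathrm{mSLE}_{\kappa,\alpha}^\#$ and $\mu_D(x_i,x_j)=H_D(x_i,x_j)^b\mu_D(x_i,x_j)^\#$, which is exactly your parts (a) and (b).
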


\section{Conformal welding and multiple SLE}\label{sec:proof}
In this section,  {we first prove} Theorem~\ref{thm:N=2}. The proof is based on Theorem~\hyperref[thm:disk-welding]{A} and adding additional marked points as in~\cite{AHS21}. 
Then by an induction, we prove Theorem~\ref{thm:main} via the conformal covariance of the Liouville field and multiple $\SLE_\kappa$.

For $\beta\in\bbR$, $\rho^-,\rho^+>-2$, define the measure $\wt{\SLE}_\kappa(\rho^-;\rho^+;\beta)$ on curves $\eta$ from 0 to $\infty$ on $\mathbb{H}$ as follows. Let $D_\eta$ be the component of $\mathbb{H}\backslash \eta$ containing $1$, and $\psi_\eta$ the unique conformal map from $D_\eta$ to $\mathbb{H}$ fixing 1 and sending the first (resp. last) point on $\partial D_\eta$ hit by $\eta$ to 0 (resp. $\infty$). Then our $\widetilde{\SLE}_\kappa(\rho_-;\rho_+;\beta)$ on $\bbH$ is defined by
\begin{equation}\label{eqn-sle-CR-1}
\frac{d \widetilde{\SLE}_\kappa(\rho_-;\rho_+;\beta)}{d {\SLE}_\kappa(\rho_-;\rho_+)}(\eta) = |\psi_{\eta}(1)'|^\beta.
\end{equation} 
This definition can be  extended to other domains via conformal transforms.

We begin with the following variant of Theorem~\hyperref[thm:disk-welding]{A} proved in~\cite{AHS21}.  See the left panel of Figure~\ref{fig:disk34pt}.

\begin{proposition}[Proposition 4.5 of~\cite{AHS21}]\label{prop:3-pt-disk}
	Suppose $W_1>0,W_2>\frac{\gamma^2}{2}$,  {and $c_{W_1, W_2}\in (0,\infty)$ is the constant in  Theorem~\hyperref[thm:disk-welding]{A}.}  Then for all $\beta\in\mathbb{R}$, 
	\begin{equation}\label{eqn-3-pt-disk}
	\begin{split}
	\mathcal{M}_{2,\bullet}^{\textup{disk}}(W_1+W_2;\beta)& {\times} \widetilde{\SLE}_\kappa(W_1-2;W_2-2;1-\Delta_{\beta})= c_{W_1, W_2}\Wd(\mathcal{M}_2^{\textup{disk}}(W_1), \mathcal{M}_{2,\bullet}^{\textup{disk}}(W_2;\beta)),
	\end{split}
	\end{equation} 
	where we are welding along the unmarked boundary arc of  $\mathcal{M}_{2,\bullet}^{\textup{disk}}(W_2;\beta)$ and  $\Delta_\beta = \frac{\beta}{2}(Q-\frac{\beta}{2})$.
\end{proposition}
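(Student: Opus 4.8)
The plan is to bootstrap from Theorem~\hyperref[thm:disk-welding]{A} in two moves: first add a quantum-typical marked point to reach the case $\beta=\gamma$, and then promote the $\gamma$-insertion to a general $\beta$-insertion by a regularized vertex-operator reweighting at the marked point, whose Jacobian will produce exactly the factor $|\psi_\eta'(1)|^{1-\Delta_\beta}$ that turns $\SLE_\kappa$ into $\widetilde{\SLE}_\kappa$.

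\emph{Step 1: the case $\beta=\gamma$.} Here $\Delta_\gamma=\tfrac{\gamma}{2}(Q-\tfrac{\gamma}{2})=1$, so $\widetilde{\SLE}_\kappa(W_1-2;W_2-2;1-\Delta_\gamma)=\SLE_\kappa(W_1-2;W_2-2)$, and \eqref{eqn-3-pt-disk} reduces to Theorem~\hyperref[thm:disk-welding]{A} with one extra marked point. I would multiply both sides of Theorem~\hyperref[thm:disk-welding]{A} by $\int_A\delta_s\,\nu_\phi(ds)$, where $A$ is the boundary arc lying on the right of the interface $\eta$: on the welding side this is the non-welded arc of the weight-$W_2$ disk, and on the other side it is the right arc $(0,\infty)$ of the embedded $\mathcal{M}_2^{\textup{disk}}(W_1+W_2)$. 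Since the constant $c_{W_1,W_2}$ is unaffected by this common operation and the sampled point is independent of $\eta$, Definition~\ref{def:three-pointed-disk}, Proposition~\ref{prop:m2dot} and Definition~\ref{def:m2dot-alpha} (with third insertion weight $\gamma$) give
\[
\mathcal{M}_{2,\bullet}^{\textup{disk}}(W_1+W_2;\gamma)\otimes\SLE_\kappa(W_1-2;W_2-2)=c_{W_1,W_2}\,\Wd\bigl(\mathcal{M}_2^{\textup{disk}}(W_1),\mathcal{M}_{2,\bullet}^{\textup{disk}}(W_2;\gamma)\bigr),
\]
where the welding is along the unmarked arc of the weight-$W_2$ disk.

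\emph{Step 2: general $\beta$.} For $\beta'\in\mathbb{R}$ set $V^{\beta'}_\epsilon:=\epsilon^{(\beta')^2/4}e^{\frac{\beta'}{2}\phi_\epsilon(1)}$, the regularized boundary vertex operator at the third marked point $1$; by Lemma~\ref{lm:lf-insertion-bdry}, adding a $(\beta',1)$-insertion to a Liouville field is the vague limit of reweighting by $V^{\beta'}_\epsilon$. I would reweight both sides of the Step~1 identity by $V^{\beta}_\epsilon/V^{\gamma}_\epsilon$ and let $\epsilon\to0$. On the left, $\mathcal{M}_{2,\bullet}^{\textup{disk}}(W_1+W_2;\gamma)$ is a constant times $\LF_\bbH^{(\beta_0,0),(\beta_0,\infty),(\gamma,1)}$ with $\beta_0=\gamma+\frac{2-W_1-W_2}{\gamma}$, and by Lemma~\ref{lm:lf-insertion-bdry} the reweighting converts it to $\LF_\bbH^{(\beta_0,0),(\beta_0,\infty),(\beta,1)}$, i.e.\ to $\mathcal{M}_{2,\bullet}^{\textup{disk}}(W_1+W_2;\beta)$, while leaving $\SLE_\kappa(W_1-2;W_2-2)$ intact. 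On the right, in the conformal welding the weight-$W_2$ disk is glued into the component $D_\eta$ of $\bbH\backslash\eta$ containing $1$, so $\phi|_{D_\eta}=\psi_\eta^{-1}\bullet_\gamma\phi_2$, where $\phi_2$ is the field of the $\mathcal{M}_{2,\bullet}^{\textup{disk}}(W_2;\gamma)$ sample and $\psi_\eta\colon D_\eta\to\bbH$ is precisely the map of \eqref{eqn-sle-CR-1} fixing $1$ and sending the endpoints of $\eta$ to $0$ and $\infty$. Expanding the semicircle average under this map, $(\psi_\eta^{-1}\bullet_\gamma\phi_2)_\epsilon(1)=(\phi_2)_{\epsilon|\psi_\eta'(1)|}(1)+Q\log|\psi_\eta'(1)|+o_\epsilon(1)$, so with $\delta=\epsilon|\psi_\eta'(1)|$ one gets $V^{\beta'}_\epsilon[\phi]=|\psi_\eta'(1)|^{\Delta_{\beta'}}V^{\beta'}_\delta[\phi_2]+o_\epsilon(1)$ and hence $V^{\beta}_\epsilon[\phi]/V^{\gamma}_\epsilon[\phi]=|\psi_\eta'(1)|^{\Delta_\beta-\Delta_\gamma}\,V^{\beta}_\delta[\phi_2]/V^{\gamma}_\delta[\phi_2]+o_\epsilon(1)$. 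The $\phi_2$-reweighting converts $\mathcal{M}_{2,\bullet}^{\textup{disk}}(W_2;\gamma)$ into $\mathcal{M}_{2,\bullet}^{\textup{disk}}(W_2;\beta)$ in the limit (Lemma~\ref{lm:lf-insertion-bdry}), and the prefactor $|\psi_\eta'(1)|^{\Delta_\beta-\Delta_\gamma}=|\psi_\eta'(1)|^{\Delta_\beta-1}$ remains as a weight on the curve-decorated surface. Equating the two reweighted limits, transposing this Jacobian to the $\SLE$ factor, and using that $|\psi_\eta'(1)|^{1-\Delta_\beta}\cdot\SLE_\kappa(W_1-2;W_2-2)=\widetilde{\SLE}_\kappa(W_1-2;W_2-2;1-\Delta_\beta)$ by \eqref{eqn-sle-CR-1}, one obtains \eqref{eqn-3-pt-disk}.

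\emph{Main obstacle.} Since $\phi$ has a logarithmic singularity at $1$, the reweightings $V^{\beta}_\epsilon$ and $V^{\gamma}_\epsilon$ individually blow up as $\epsilon\to0$, so "reweight by $V^{\beta}_\epsilon/V^{\gamma}_\epsilon$, then $\epsilon\to0$" is only formal; it must be carried out at fixed $\epsilon>0$ with the genuine finite reweightings and the vague convergence of Lemma~\ref{lm:lf-insertion-bdry}, and one must justify exchanging the limit $\epsilon\to0$ with the conformal welding operation and with the re-embedding to $(\bbH,\phi,0,\infty,1)$, as well as control the $\eta$-dependent rescaling $\epsilon\mapsto\delta=\epsilon|\psi_\eta'(1)|$. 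I expect this to be the delicate part. It should follow from the fact that the perturbation is supported in a shrinking neighbourhood of the third marked point, which lies on the marked boundary arc and therefore never meets the welding interface, so it affects neither the almost-sure weldability nor the identification of the interface $\eta$; the change-of-coordinates bookkeeping for the insertion is exactly Lemma~\ref{lm:lcft-H-conf}.
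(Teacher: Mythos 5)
Your two-step strategy—add a quantum-typical boundary point to the $\beta=\gamma$ case and then change the insertion weight by a regularized vertex-operator reweighting—is exactly the argument the paper itself employs for the closely analogous Proposition~\ref{prop:disk+4-pt-disk}, where Step~3 of that proof carries out precisely the $\gamma\to\beta$ reweighting with the Jacobian $\psi_\eta'(\cdot)^{1-\Delta_\beta}$ supplying the $\widetilde\SLE$-factor; the paper there explicitly defers to the argument of~\cite[Proposition 4.5]{AHS21}, i.e.\ the statement you are proving. Your computation of $V_\epsilon^{\beta'}[\phi]=|\psi_\eta'(1)|^{\Delta_{\beta'}}V_{\epsilon|\psi_\eta'(1)|}^{\beta'}[\phi_2]+o_\epsilon(1)$ and the resulting prefactor $|\psi_\eta'(1)|^{\Delta_\beta-\Delta_\gamma}=|\psi_\eta'(1)|^{\Delta_\beta-1}$ are correct.

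One technical refinement you should make explicit: to justify exchanging the $\epsilon\to0$ limit with the conformal welding (your "main obstacle"), the reweighting must be carried out on the measures disintegrated over the interface boundary length $\ell$. This means the correct reference for the reweighting convergence is Lemma~\ref{lm:lf-change-weight} (which is stated for $\LF_{\bbH,\ell}^{(\beta_i,s_i)_i}$) rather than Lemma~\ref{lm:lf-insertion-bdry}. Concretely: fix $\ell$, observe that the weight $V^\beta_\delta[\phi_2]/V^\gamma_\delta[\phi_2]$ touches only the $\Md_{2,\bullet}(W_2;\gamma)(\ell)$ factor (the insertion at $1$ sits on the unmarked arc, away from the welding seam), so $\Md_2(W_1;\ell)$ is unaffected and the $\epsilon\to0$ limit commutes with the gluing $\Wd(\Md_2(W_1;\ell),\,\cdot\,)$; then integrate over $\ell$. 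Without the disintegration, the Girsanov drift from the insertion at $1$ changes the marginal law of the interface length, and the interchange of limit and welding would not be immediate. With this adjustment your argument is complete and coincides with the paper's method.
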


\begin{figure}
    \centering
    \begin{tabular}{ccc} 
		\includegraphics[scale=0.6]{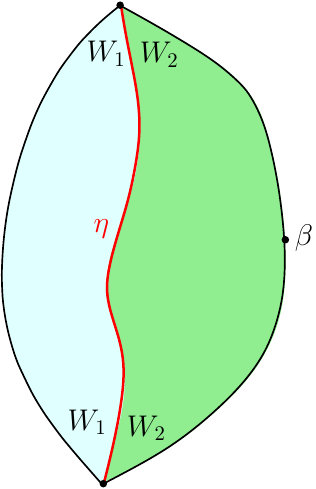}
		& \qquad \qquad \qquad &
		\includegraphics[scale=0.6]{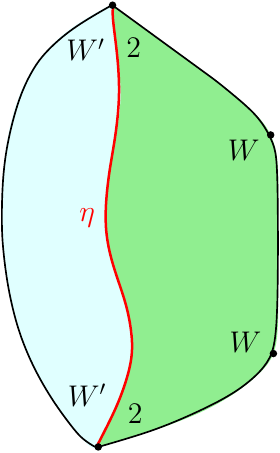}
	\end{tabular}
 \caption{\textbf{Left:} The conformal welding of a weight $W_1$ quantum disk with a sample from $\mathcal{M}_{2,\bullet}^{\textup{disk}}(W_2;\beta)$ along the unmarked boundary arc as in Proposition~\ref{prop:3-pt-disk}. \textbf{Right:} The setup of Proposition~\ref{prop:disk+4-pt-disk}, where we first sample two boundary typical points from LQG length measure of a weight $W$ quantum disk, and then conformally weld a weight $W'$ quantum disk along the two marked points.}\label{fig:disk34pt}
 \end{figure}

We also need  the argument of changing weight of insertions in the conformal welding. We begin with the following disintegration of Liouville fields according to quantum lengths.
\begin{lemma}\label{lm:lf-length}
    Let $m\ge 2$ and $0=s_1<s_2<...<s_m=+\infty$. Fix $\beta_1,...,\beta_m<Q$.  Let $C_\bbH^{(\beta_i,s_i)_i}$ and $P_\bbH$ be as in Definition~\ref{def-lf-H-bdry}, and  
    $\tilde{h} = h - 2Q\log|\cdot|_++\frac{1}{2}\sum_{i=1}^m\beta_iG_\bbH(s_i,\cdot)$, and $L=\nu_{\tilde{h}}((-\infty,0))$. For $\ell>0$, let $\LF_{\bbH,\ell}^{(\beta_i,s_i)_i}$ be the law of  {$\tilde{h}+\frac{2}{\gamma}\log\frac{\ell}{L}$} under the reweighted measure $\frac{2}{\gamma}\frac{\ell^{\frac{1}{\gamma}(\sum_j\beta_j -2Q)-1}}{L^{\frac{1}{\gamma}(\sum_j\beta_j -2Q)}}C_\bbH^{(\beta_i,s_i)_i}P_\bbH(dh)$. Then $\LF_{\bbH,\ell}^{(\beta_i,s_i)_i}$ is supported on $\{\phi:\nu_\phi((-\infty,0))=\ell\}$, and we have 
    \begin{equation}
       \LF_{\bbH}^{(\beta_i,s_i)_i} = \int_0^\infty\LF_{\bbH,\ell}^{(\beta_i,s_i)_i}d\ell.
    \end{equation}
\end{lemma}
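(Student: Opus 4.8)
The plan is to reduce the statement to a scaling computation about how the Liouville field behaves under a multiplicative shift of the boundary length. The main point is that adding a constant $c$ to a field $\phi$ multiplies the boundary length measure $\nu_\phi$ by $e^{\gamma c/2}$, so the map $c \mapsto \ell = \frac{2}{\gamma}\log(\frac{\gamma L}{2}) + c$ (where $L = \nu_{\tilde h}((-\infty,0))$ at $c=0$) is a bijection that lets us disintegrate over $\ell$. Concretely, recall that $\phi$ sampled from $\mathrm{LF}_\bbH^{(\beta_i,s_i)_i}$ has the form $\phi = h - 2Q\log|\cdot|_+ + \frac12\sum_i \beta_i G_\bbH(s_i,\cdot) + \mathbf c = \tilde h + \mathbf c$ with $(h,\mathbf c)$ sampled from $C_\bbH^{(\beta_i,s_i)_i} P_\bbH \times [e^{(\frac12\sum_i\beta_i - Q)c}dc]$. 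Then $\nu_\phi((-\infty,0)) = e^{\gamma \mathbf c/2} L$, so I would first change variables from $\mathbf c$ to $\ell := e^{\gamma \mathbf c/2} L$, i.e.\ $\mathbf c = \frac{2}{\gamma}\log(\ell/L)$, $d\mathbf c = \frac{2}{\gamma}\frac{d\ell}{\ell}$. Under this change of variables the weight $e^{(\frac12\sum_i\beta_i - Q)\mathbf c}d\mathbf c$ becomes $\frac{2}{\gamma} \frac{\ell^{\frac{1}{\gamma}(\sum_i\beta_i - 2Q) - 1}}{L^{\frac{1}{\gamma}(\sum_i\beta_i-2Q)}} d\ell$, which is exactly the prefactor appearing in the definition of $\mathrm{LF}_{\bbH,\ell}^{(\beta_i,s_i)_i}$.

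Second, I would check that under this reparametrization the field itself becomes $\tilde h + \frac{2}{\gamma}\log(\ell/L) = \tilde h + \mathbf c$, and, writing $\phi = \tilde h + \frac{2}{\gamma}\log\ell - \frac{2}{\gamma}\log L$, observe that on the event being conditioned on we have $\nu_\phi((-\infty,0)) = \ell$ by construction; so $\mathrm{LF}_{\bbH,\ell}^{(\beta_i,s_i)_i}$ is indeed supported on $\{\phi : \nu_\phi((-\infty,0)) = \ell\}$. One has to be slightly careful about the precise form of the field: the statement writes the field as $\tilde h + \frac{2\ell}{\gamma L}$, which I read as $\tilde h$ plus the additive constant $\frac{2}{\gamma}\log(\ell/L)$ (I would rewrite it in the logarithmic form to avoid ambiguity, since a literal reading of $\frac{2\ell}{\gamma L}$ does not give a length-$\ell$ field). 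With that reading, the identity $\mathrm{LF}_\bbH^{(\beta_i,s_i)_i} = \int_0^\infty \mathrm{LF}_{\bbH,\ell}^{(\beta_i,s_i)_i}\,d\ell$ follows immediately from Fubini/Tonelli applied to the joint law of $(h,\mathbf c)$: integrate out $\mathbf c$ via the substitution above, for $h$ fixed, and then integrate over $h$.

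The only genuine subtlety — and the step I expect to take the most care — is measurability and the conditional-law interpretation: one must argue that $\ell \mapsto \mathrm{LF}_{\bbH,\ell}^{(\beta_i,s_i)_i}$ is a regular disintegration of $\mathrm{LF}_\bbH^{(\beta_i,s_i)_i}$ with respect to the map $\phi \mapsto \nu_\phi((-\infty,0))$, i.e.\ that for every bounded measurable $F$ on $H^{-1}(\bbH)$ one has $\mathrm{LF}_\bbH^{(\beta_i,s_i)_i}[F] = \int_0^\infty \mathrm{LF}_{\bbH,\ell}^{(\beta_i,s_i)_i}[F]\,d\ell$. This is a direct computation once the change of variables is set up, using that $L = \nu_{\tilde h}((-\infty,0))$ depends only on $h$ (not $\mathbf c$) and is a.s.\ finite and positive for $\beta_i < Q$ — the latter being where the assumption $\beta_i<Q$, together with the integrability of $\nu_{\tilde h}$ near the insertion points, enters. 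Everything else is bookkeeping of exponents; in particular the exponent $\frac{1}{\gamma}(\sum_i\beta_i - 2Q)-1$ is precisely what the Jacobian of $\mathbf c \mapsto \ell$ produces, so no miracle is needed. I would also remark that the choice of the left arc $(-\infty,0)$ is arbitrary and the same argument gives the analogous disintegration over any boundary arc or over several arcs simultaneously, which is how the lemma will be used.
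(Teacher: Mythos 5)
Your proposal is correct and is precisely the change-of-variables argument that the paper defers to (citing \cite[Lemma 2.27]{ASY22}): substitute $\ell = e^{\gamma\mathbf c/2}L$ with Jacobian $d\mathbf c = \frac{2}{\gamma}\frac{d\ell}{\ell}$, which produces exactly the stated prefactor and the desired support, and then apply Fubini. You are also right to flag that the expression $\tilde h + \frac{2\ell}{\gamma L}$ in the lemma statement must be read as $\tilde h + \frac{2}{\gamma}\log\frac{\ell}{L}$ for the support claim to hold — this is a misprint in the statement.
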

\begin{proof}
    The proof is identical to that of~\cite[Lemma 2.27]{ASY22}.
\end{proof}
\begin{lemma}\label{lm:lf-change-weight}
   In the setting of Lemma~\ref{lm:lf-length}, {let  {$\e\in(0,1)$}. For fixed $2<j<m$, and any non-negative function $f$ on $H^{-1}(\bbH)$ for which $\phi\mapsto f(\phi)$ only depends on $\phi|_{\bbH\backslash B_\e(s_j)}$, we have
   \begin{equation}
       \int f(\phi)\times\e^{\frac{\beta_j'^2-\beta_j^2}{4}}e^{\frac{\beta_j'-\beta_j}{2}\phi_\e(s_j)}\LF_{\bbH,\ell}^{(\beta_i,s_i)_i}(d\phi) = \int f(\phi)\LF_{\bbH,\ell}^{(\beta_i,s_i)_{i\neq j}, (\beta_j',s_j)}(d\phi).
   \end{equation}
   In particular, }we have the vague convergence of measures
    \begin{equation*}
        \lim_{\e\to 0}\e^{\frac{\beta_j'^2-\beta_j^2}{4}}e^{\frac{\beta_j'-\beta_j}{2}\phi_\e(s_j)}\LF_{\bbH,\ell}^{(\beta_i,s_i)_i}(d\phi) = \LF_{\bbH,\ell}^{(\beta_i,s_i)_{i\neq j}, (\beta_j',s_j)}(d\phi).
    \end{equation*}
\end{lemma}
 \begin{proof}
        The proof is identical to that of \cite[Lemma 4.7]{ARS21}, which is a direct application of the Girsanov theorem. We omit the details. 
    \end{proof}
    
Let $W>\frac{\gamma^2}{2}$ and recall the notion $\Md_{2,\bullet,\bullet}(W)$ from Lemma~\ref{lm:qd-4-pted}. We are going to show that, for a sample from $\Md_{2,\bullet,\bullet}(W)$, if we weld a weight $W'$ quantum disk along its boundary arc between the two quantum typical points, the interface we obtain is the $\SLE_\kappa(W'-2)$ process weighted by a power of the boundary Poisson kernel. See the right panel of Figure~\ref{fig:disk34pt} for an illustration.

\begin{proposition}\label{prop:disk+4-pt-disk}
    Let $W>\frac{\gamma^2}{2}$, $W'>0$, $\beta=\gamma+\frac{2-W}{\gamma}$ and $\tilde\beta = \gamma-\frac{W'}{\gamma}$. Let $b_W=\frac{(W-2)(W+2-\kappa)}{4\kappa}$. Then there is a constant $c\in(0,\infty)$ such that, in the sense of curve-decorated quantum surfaces,
    \begin{equation}\label{eq:disk+4-pt-disk}
        c\,\bigg[\int_0^1 \LF_\bbH^{(\tilde\beta,0),(\beta,x),(\beta,1),(\tilde\beta,\infty)}\times H_{D_\eta}{(x,1)}^{b_W}dx\bigg]\cdot\big(\SLE_\kappa(W'-2;0)\big)(d\eta)= \Wd(\Md_2(W'),\Md_{2,\bullet,\bullet}(W))
    \end{equation}
    where $D_\eta$ is the component of $\bbH\backslash\eta$ to the right of $\eta$, and the welding is along its boundary arc between the two quantum typical points of the surface from $\Md_{2,\bullet,\bullet}(W)$.
\end{proposition}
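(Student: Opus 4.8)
The plan is to deduce Proposition~\ref{prop:disk+4-pt-disk} from Proposition~\ref{prop:3-pt-disk}, first in the case $W=2$ and then, for general $W$, by reweighting two boundary insertions; this last reweighting is what creates the factor $H_{D_\eta}(x,1)^{b_W}$. We use repeatedly that $\Delta_\gamma=1$, so $1-\Delta_\gamma=0$, and that $1-\Delta_\beta=b_W$ for $\beta=\gamma+\frac{2-W}{\gamma}$, which is a direct computation.

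\emph{The case $W=2$} (so $\beta=\gamma$ and $b_W=0$). By Lemma~\ref{lm:qd-4-pted} together with Lemma~\ref{lm:gamma-insertion}, a sample from $\Md_{2,\bullet,\bullet}(2)$ is a sample from $\mathcal{M}_{2, \bullet}^{\textup{disk}}(2;\gamma)$ with one extra quantum-typical boundary point placed on the marked arc; since then all four marked points carry weight $\gamma$, we may take the two weight-$\gamma$ main points of $\mathcal{M}_{2, \bullet}^{\textup{disk}}(2;\gamma)$ to be the two quantum-typical points of $\Md_{2,\bullet,\bullet}(2)$, so that the arc between the latter is exactly the unmarked arc of $\mathcal{M}_{2, \bullet}^{\textup{disk}}(2;\gamma)$. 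Applying Proposition~\ref{prop:3-pt-disk} with $W_1=W'$, $W_2=2$, $\beta_3=\gamma$, carrying the extra quantum-typical point through the welding (which is legitimate since an $\SLE$ curve is blind to a $\gamma$-insertion, as $1-\Delta_\gamma=0$), and then using Lemma~\ref{lm:qd-4-pted} once more to identify $\mathcal{M}_{2, \bullet}^{\textup{disk}}(W'+2;\gamma)$ with its extra point as $\Md_{2,\bullet,\bullet}(W'+2)$, we obtain
\[
\Wd\bigl(\Md_2(W'),\Md_{2,\bullet,\bullet}(2)\bigr)= c\int_0^1 \LF_\bbH^{(\tilde\beta,0),(\gamma,x),(\gamma,1),(\tilde\beta,\infty)}\otimes \SLE_\kappa(W'-2;0)\,dx,
\]
which is \eqref{eq:disk+4-pt-disk} for $W=2$.

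\emph{General $W$.} We reweight both sides of the last display by the $\varepsilon\to0$ limit of a Girsanov tilt that turns the two $\gamma$-insertions at $x$ and $1$ into $\beta$-insertions; this uses Lemma~\ref{lm:lf-length} to disintegrate over a boundary length and Lemma~\ref{lm:lf-change-weight} twice (after, if needed, relabelling via Lemma~\ref{lm:lcft-H-conf}). On the welded side the points $x,1$ are the images of the two marked points of the weight-$2$ disk underlying $\Md_{2,\bullet,\bullet}(2)$, and the tilt is precisely the operation turning that weight-$2$ disk into a weight-$W$ disk; hence it turns $\Md_{2,\bullet,\bullet}(2)$ into $\Md_{2,\bullet,\bullet}(W)$ and the left-hand side into $\Wd(\Md_2(W'),\Md_{2,\bullet,\bullet}(W))$. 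On the Liouville side one re-expresses the same tilt in terms of the embedded curve-decorated surface $(\bbH,\phi,\eta)$, using that the field of the $\Md_{2,\bullet,\bullet}(2)$-surface is $\phi$ restricted to the right component $D_\eta$ and pulled back by the welding uniformization: the $\bullet_\gamma$ coordinate change, the shift of the Liouville normalising constants $C_\bbH^{(\beta_i,s_i)_i}$ of Definition~\ref{def-lf-H-bdry}, and the exponent $1-\Delta_\beta$ that Proposition~\ref{prop:3-pt-disk} attaches to a $\beta$-insertion (versus $1-\Delta_\gamma=0$ for a $\gamma$-insertion) combine so that the net effect on the Liouville side is exactly to replace the two $\gamma$-insertions by $\beta$-insertions and to weight the $\SLE_\kappa(W'-2;0)$ curve by $H_{D_\eta}(x,1)^{b_W}$; here one uses $H_{D_\eta}(x,1)=\psi_\eta'(x)\psi_\eta'(1)\bigl(\psi_\eta(x)-\psi_\eta(1)\bigr)^{-2}$ and the conformal covariance of the Liouville field (Lemma~\ref{lm:lcft-H-conf}) to see that the choice of uniformizing map $\psi_\eta$ is immaterial. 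Absorbing all bounded multiplicative constants (from $\frac{\gamma}{2(Q-\gamma)^2}$ versus $\frac{\gamma}{2(Q-\beta)^2}$, from Theorem~\hyperref[thm:disk-welding]{A}, and from the limits in Lemma~\ref{lm:lf-change-weight}) into a single $c\in(0,\infty)$ gives \eqref{eq:disk+4-pt-disk}.

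The main obstacle is the bookkeeping in the last step: one must verify that the Girsanov tilt of the $\Md_{2,\bullet,\bullet}$-field, the Jacobians of the welding uniformization, and the shift of the Liouville normalising constants assemble into exactly $H_{D_\eta}(x,1)^{b_W}$ with $b_W=\frac{(W-2)(W+2-\kappa)}{4\kappa}=1-\Delta_\beta$; note that, unlike in the case $W=2$, the curve $\eta$ and the field become correlated after the reweighting, with the correlation carried precisely by that factor. A secondary point is to check that the weight change on the welded side genuinely carries $\Md_{2,\bullet,\bullet}(2)$ to $\Md_{2,\bullet,\bullet}(W)$ at the level of measures disintegrated over boundary lengths, and then to match the remaining combinatorial constants.
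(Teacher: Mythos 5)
Your plan is correct and is recognizably a variant of the paper's proof, but you have re-organized the argument in a way that is worth spelling out.

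The paper's proof runs in one pass for general $W$: (Step~1) a M\"obius coordinate change puts the two quantum-typical points of $\Md_{2,\bullet,\bullet}(W)$ at $0,\infty$, at the cost of a Jacobian density $(1-y)^{2\Delta_\beta-2}dy$; (Step~2) Proposition~\ref{prop:3-pt-disk} is applied with $\beta_3=\beta$, so the third insertion is already a $\beta$-insertion and the $\widetilde\SLE$ weight $\psi_\eta'(1)^{1-\Delta_\beta}$ is built in, and the extra boundary point is then sampled from the \emph{non-uniform} density $(1-y)^{2\Delta_\beta-2}\nu_X(dy)$; (Step~3) a single Girsanov tilt turns the remaining $\gamma$-insertion at $x$ into a $\beta$-insertion, contributing $\psi_\eta'(x)^{1-\Delta_\beta}$. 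You instead first prove the clean $W=2$ case, where $\beta=\gamma$, $b_W=0$, all four insertions are $\gamma$-typical and the sampling density is uniform, by applying Proposition~\ref{prop:3-pt-disk} with $\beta_3=\gamma$ (so the $\widetilde\SLE$ weight is trivially $1$), and then do \emph{two} Girsanov tilts (at $x$ and at $1$) to reach general $W$. So where the paper builds one of the two $\beta$-insertions into the welding statement and compensates with a tailored sampling density, you keep the base case symmetric and pay with an extra tilt.

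The point you flag as a ``secondary point'' is in fact the crux and deserves to be said precisely. The tilt at $X_\e(\psi_\eta(x))$ and $X_\e(1)$ turns the $\QD_{0,4}$ field, in the welded embedding (welding endpoints at $0,\infty$), into $\int_0^1 \LF_\bbH^{(\gamma,0),(\beta,y),(\beta,1),(\gamma,\infty)}\,dy$ with \emph{uniform} $dy$, whereas the paper's Step~1 shows that $\Md_{2,\bullet,\bullet}(W)$ in that same embedding carries the density $(1-y)^{2\Delta_\beta-2}\,dy$. Thus the tilted welded side is $(1-y)^{2b_W}\Md_{2,\bullet,\bullet}(W)$, not $\Md_{2,\bullet,\bullet}(W)$. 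This is not an error: on the Liouville side your tilt only produces the Jacobian factors $\psi_\eta'(x)^{1-\Delta_\beta}\psi_\eta'(1)^{1-\Delta_\beta}$, and since $y=\psi_\eta(x)$ and $\psi_\eta(1)=1$ the extra $(1-\psi_\eta(x))^{2b_W}$ appears on both sides of the tilted identity and cancels, yielding exactly $H_{D_\eta}(x,1)^{b_W}=(\psi_\eta'(x)\psi_\eta'(1))^{b_W}|\psi_\eta(x)-1|^{-2b_W}$. You should make that cancellation explicit rather than attribute it loosely to ``the shift of the Liouville normalising constants''; the relevant mechanism is the mismatch between the tilt-in-the-welded-embedding and the canonical embedding of $\Md_{2,\bullet,\bullet}(W)$, not the $C_\bbH$ prefactors per se. Also, the phrase ``the exponent $1-\Delta_\beta$ that Proposition~\ref{prop:3-pt-disk} attaches to a $\beta$-insertion'' is a red herring in your route, since you chose $\beta_3=\gamma$; that factor for you comes entirely from the second tilt. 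With these clarifications the proof is correct.
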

\begin{proof}
\emph{Step 1: Embedding of $\Md_{2,\bullet,\bullet}(W)$.} By Lemma~\ref{lm:qd-4-pted}, a sample from $\Md_{2,\bullet,\bullet}(W)$ can be embedded as $$\frac{\gamma}{2(Q-\beta)^2}\cdot\mathds{1}_{y'\in(0,1)} \LF_\bbH^{(\beta,0),(\beta,\infty),(\gamma,y'),(\gamma,1)}(d\phi_0)\,dy'.$$ On the other hand, if we perform the coordinate change $z\mapsto f_{y'}(z):=\frac{{y'}(z-1)}{z-{y'}}$, then by Lemma~\ref{lm:lcft-H-conf} ( {also recall the definitions of $f_{y'}'({y'})$ and $f_{y'}'(\infty)$ there and at the beginning of Section~\ref{sec:pre}}),  {the  quantum surface $(\bbH,\phi_0,0,y',1,\infty)/\sim_\gamma$ described as above has the same law as  $(\bbH,\phi,0,{y'},1,\infty)/\sim_\gamma$ where $(\phi,{y'})$ has law} 
\begin{equation}\label{eq:lm-qt-4-pted-2}
\begin{split}
    &\ \ \frac{\gamma}{2(Q-\beta)^2} \mathds{1}_{{y'}\in(0,1)} f_{y'}'(0)^{\Delta_\beta}f_{y'}'(\infty)^{\Delta_\beta}f_{y'}'({y'})^{\Delta_\gamma}f_{y'}'(1)^{\Delta_\gamma}\LF_\bbH^{(\beta,f_{y'}(0)),(\beta,f_{y'}(\infty)),(\gamma,f_{y'}({y'})),(\gamma,f_{y'}(1))}(d\phi)\,d{y'} \\&=\frac{\gamma}{2(Q-\beta)^2}\mathds{1}_{{y'}\in(0,1)} (1-{y'})^{2\Delta_\beta-2}\, \LF_\bbH^{(\gamma,0),(\beta,{y'}),(\beta,1),(\gamma,\infty)}(d\phi)\,d{y'}.
    \end{split}
\end{equation}

\emph{Step 2: Add a typical point to the welding of $\Md_{2}(W')$ and $\Md_{2,\bullet}(2;\beta)$.} Consider the welding in Proposition~\ref{prop:3-pt-disk} with $W_1=W'$, $W_2=2$ {and the constant $c':=c_{W',2}$}, and the surface $S$ on the left hand side of~\eqref{eqn-3-pt-disk} is embedded as  {$(\bbH,X,\eta,0,\infty,1)$} where  {$X\sim \frac{\gamma}{2(Q-\tilde\beta)^2}\LF_\bbH^{(\tilde\beta,0),(\tilde\beta,\infty),(\beta,1)}$}, and $\eta$ is the $\SLE_\kappa(W-2;0)$ process in $\bbH$ weighted by $\psi_\eta'(1)^{1-\Delta_\beta}$. Let 
\begin{equation}\label{eq:lm-qt-4-pted-4}
  {Y=X\circ\psi_\eta^{-1}+Q\log|(\psi_\eta^{-1})'|,} 
\end{equation}
$D_\eta^1$ be the union of the components of $\bbH\backslash\eta$ whose boundaries contain a segment of $(-\infty,0)$. Let $S_1=(D_\eta^1,X)/\sim_\gamma$, and $S_2 = (\bbH,Y,0,\infty,1)/\sim_\gamma$. We sample a marked point $y$ on $S_2$ from the measure $1_{y\in(0,1)}(1-y)^{2\Delta_\beta-2}\nu_Y(dy)$. Then by Lemma~\ref{lm:gamma-insertion}, the surface $S$ is now the conformal welding of a weight $W'$ quantum disk with a 4-pointed quantum disk with embedding
\begin{equation}
    \frac{c'\gamma}{2(Q-\gamma)^2}\int_0^1 (1-y)^{2\Delta_\beta-2}\, \LF_\bbH^{(\gamma,0),(\gamma,y),(\beta,1),(\gamma,\infty)}(d\phi)\,dy.
\end{equation}
On the other hand, for $x=\psi_\eta^{-1}(y)$, the law of $(X,\eta,x)$ is given by
$$ \frac{\gamma}{2(Q-\tilde\beta)^2}\bigg[\mathds{1}_{x\in(0,1)}(1-\psi_\eta(x))^{2\Delta_\beta-2}\,\nu_Y(dx)\,\LF_\bbH^{(\tilde\beta,0),(\tilde\beta,\infty),(\beta,1)}(dX)\bigg]\cdot\psi_\eta'(1)^{1-\Delta_\beta}\big(\SLE_\kappa(W'-2;0)\big)(d\eta) $$
which, by Lemma~\ref{lm:gamma-insertion}, is equal to 
\begin{equation}\label{eq:lm-qt-4-pted-3}
    \frac{\gamma}{2(Q-\tilde\beta)^2}\bigg[\mathds{1}_{x\in(0,1)} (1-\psi_\eta(x))^{2\Delta_\beta-2}\,\LF_\bbH^{(\tilde\beta,0),(\tilde\beta,\infty),(\beta,1),(\gamma,x)}(dX)\,dx\bigg]\cdot\psi_\eta'(1)^{1-\Delta_\beta}\big(\SLE_\kappa(W'-2;0)\big)(d\eta).
\end{equation}

\emph{Step 3: Change the insertion from $\gamma$ to $\beta$.} Weight the law of $(x,X,\eta)$ in~\eqref{eq:lm-qt-4-pted-3} by  {$\frac{(Q-\gamma)^2}{c'(Q-\beta)^2}\e^{\frac{\beta^2-\gamma^2}{4}} e^{\frac{\beta-\gamma}{2}Y_\e(y)}$}, where $Y$ is given by~\eqref{eq:lm-qt-4-pted-4} and $y=\psi_\eta(x)$. Then following from the same argument as in~\cite[Proposition 4.5]{AHS21}, we have:
\begin{enumerate}[(i)]
    \item  {Once we disintegrate over} the interface length $\ell$,  {the joint law of $(S_1,S_2)$   is given by $\Md_2(W';\ell)\times \mu_{2,\e}^\ell$, where $\mu_{2,\e}^\ell$ is the law of $(\bbH,Y,0,\infty,1,y)/\sim_\gamma$ with  $(Y,y)$ sampled from
    $$ {\frac{\gamma}{2(Q-\beta)^2}} \mathds{1}_{y\in(0,1)} \e^{\frac{\beta^2-\gamma^2}{4}} e^{\frac{\beta-\gamma}{2}Y_\e(y)}(1-y)^{2\Delta_\beta-2}\, \LF_{\bbH,\ell}^{(\gamma,0),(\gamma,y),(\beta,1),(\gamma,\infty)}(dY)\,dy. $$
    } By Lemma~\ref{lm:lf-change-weight}, after sending $\e\to0$, the law of $(Y,y)$ converges in vague topology to
    $$ {\frac{\gamma}{2(Q-\beta)^2}}\mathds{1}_{y\in(0,1)} (1-y)^{2\Delta_\beta-2}\, \LF_{\bbH,\ell}^{(\gamma,0),(\beta,y),(\beta,1),(\gamma,\infty)}(dY)\,dy.$$
    In particular, following~\eqref{eq:lm-qt-4-pted-2},  {when disintegrated over the interface length $\ell$, the joint law of $(S_1,S_2)$ equals $\Md_2(W';\ell)\times \Md_{2,\bullet,\bullet}(W;\ell)$, and   $(S_1,S_2)$ has the same law as the conformal welding of $\Md_{2,\bullet,\bullet}(W)$ with $\Md_2(W')$}.
    \item The law of $(X,x,\eta)$ is weighted by 
    \begin{equation}\label{eq:lm-qt-4-pted-5}
        \frac{(Q-\gamma)^2}{c'(Q-\beta)^2}\e^{\frac{\beta^2-\gamma^2}{4}} e^{\frac{\beta-\gamma}{2}\big((X,\theta_\e(x))+Q\log|(\psi_\eta^{-1})'(y)| \big)} = \frac{(Q-\gamma)^2}{c'(Q-\beta)^2}\bigg(\frac{\e}{\psi_\eta'(x)}\bigg)^{\frac{\beta^2-\gamma^2}{4}}e^{\frac{\beta-\gamma}{2}(X,\theta^\eta_\e(x))}\big|\psi_\eta'(x)\big|^{1-\Delta_\beta} 
    \end{equation}
    where $\theta_\e^\eta$ is push-forward of the uniform probability measure on $B_\e(y)\cap\bbH$ under $\psi_\eta^{-1}$ and we used the fact that $\log|(\psi_\eta^{-1})(z)|$ is a harmonic function along with Schwartz reflection. % Using conformal distortion estimates, $$\bbE\big[e^{\frac{\beta-\gamma}{2}(Y,\theta^\eta_\e(x))}\big] = (1+o_\e(1))$$ is 
    As argued after~\cite[Eq. (4.12)]{AHS21}, by Girsanov's theorem, under the weighting~\eqref{eq:lm-qt-4-pted-5}, as $\e \to0$, the law of $(X,x,\eta)$ converges in vague topology  to 
    \begin{equation}\label{eq:prop-constant-3-4}
        c\,\bigg[\mathds{1}_{x\in(0,1)} \big|\psi_\eta'(x)\big|^{1-\Delta_\beta} (1-\psi_\eta(x))^{2\Delta_\beta-2}\,\LF_\bbH^{(\tilde\beta,0),(\tilde\beta,\infty),(\beta,1),(\beta,x)}(dX)\,dx\bigg]\cdot\psi_\eta'(1)^{1-\Delta_\beta}\big(\SLE_\kappa(W'-2;0)\big)(d\eta).
    \end{equation}
    where $c = \frac{\gamma(Q-\gamma)^2}{2c'(Q-\beta)^2(Q-\tilde\beta)^2}$.  {Intuitively, this is because when $\e\to0$,  $\theta_\e^\eta$ is roughly the uniform measure on $B_{\frac{\e}{\psi_\eta'(y)}}\cap\bbH$ and the conclusion follows by applying Lemma~\ref{lm:lf-change-weight} with $\e$ replaced by $\frac{\e}{\psi_\eta'(y)}$.}
\end{enumerate}
    Therefore we conclude the proof by observing that $H_{D_\eta}(x,1) = \psi_\eta'(x)\psi_\eta'(1)|\psi_\eta(x)-1|^{-2}$ and $b_W=1-\Delta_\beta$.
\end{proof}

\begin{remark}
In the case $W'=2$, one may check by~\cite[Proposition 3.5]{wu2020hypergeometric} that given the marked point $x$,  {the law of the interface $\eta$ above is $H_{D_\eta}(x,1)^{b_W}\SLE_\kappa$, which agrees with $Z(x)\mathrm{hSLE}_\kappa(\nu)$.} Here $\mathrm{hSLE}_\kappa(\nu)$ is the version of hypergeometric $\SLE_\kappa$ considered in~\cite{wu2020hypergeometric} with $\nu=W-4$ and marked points $x,1$,  {and the finite constant $Z(x)$ can be viewed as its partition function}. In this regime $\eta$ is non-boundary hitting, and it would be interesting to understand the analog where $W<\frac{\gamma^2}{2}$ and $\eta$ hits the boundary.
\end{remark}

\begin{proof}[Proof of Theorem~\ref{thm:N=2}]
    Consider the welding of a sample from $\QD_{4}$ and a quantum disk from $\Md_2(W_2)$. By Definition~\ref{def:QD} one can deduce that $\QD_{4}=\Md_{2,\bullet,\bullet}(2)$. Therefore if we start with the welding of a weight 2 quantum disk with a weight $W_2$ quantum disk as in Theorem~\hyperref[thm:disk-welding]{A}, and sample two boundary typical points from the quantum length measure on the weight 2 quantum disk, then it follows that we obtain a sample from $c\,\Md_{2,\bullet,\bullet}(W_2+2)$ decorated with an independent $\SLE_\kappa(0;W_2-2)$ curve (where $c\in(0,\infty)$ is some constant). Therefore the theorem follows immediately by applying Proposition~\ref{prop:disk+4-pt-disk} with $W'=W_1$ and $W=W_2+2$.
\end{proof}

Before proving Theorem~\ref{thm:main}, we first show that the left hand side of~\eqref{eq:thm-main} has the following rotational invariance. Given two link patterns $\alpha = \{\{i_1,j_1\},...,\{i_N,j_N\}\}$ and $\alpha'=\{\{i_1',j_1'\},...,\{i_N',j_N'\}\}$ in $\mathrm{LP}_N$, we say $\alpha$ and $\alpha'$ are rotationally equivalent if there exists some integer $0\le m\le 2N-1$ such that for every $1\le k\le N$, $i_k'=i_k+m$ and $j_k'=j_k+m$ ($\mathrm{mod}$ $2N$), and we write $\alpha'=\alpha+m$.
\begin{lemma}\label{lm:rotation}
    Let $\beta=\gamma-\frac{2}{\gamma}$. For any $0\le m\le 2N-1$, the  {right} hand side of~\eqref{eq:thm-main} when viewed as curve-decorated quantum surfaces is equal to 
    \begin{equation}
        {c_N} \int_{0<y_1<...<y_{2N-3}<1}\LF_\bbH^{(\beta,0),(\beta,1),(\beta,\infty),(\beta,y_1),...,(\beta,y_{2N-3})}\times \mathrm{mSLE}_{\kappa,\alpha+m}(\bbH,0,y_1,...,y_{2N-3},1,\infty)dy_1...dy_{2N-3}
    \end{equation}
\end{lemma}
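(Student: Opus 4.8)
Here I write $R_\alpha$ for the right‑hand side of~\eqref{eq:thm-main}, regarded as a measure on curve‑decorated quantum surfaces carrying $2N$ cyclically ordered boundary marked points; the lemma asserts $R_\alpha=R_{\alpha+m}$ for all $m$. The plan is to prove the single‑step identity $R_\alpha=R_{\alpha+1}$ for every $\alpha\in\mathrm{LP}_N$ and then iterate, which is consistent since $\alpha+2N=\alpha$. (For $N=1$ there is nothing to prove.)

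To run the single step, take a sample $(\bbH,\phi,x_1,\dots,x_{2N},\eta_1,\dots,\eta_N)$ from $R_{\alpha+1}$, so that $x_1=0$, $x_{2N-1}=1$, $x_{2N}=\infty$, the remaining $x_2<\dots<x_{2N-2}$ range over $\{0<y_1<\dots<y_{2N-3}<1\}$, $\phi$ has law $\LF_\bbH^{(\beta,x_1),\dots,(\beta,x_{2N})}$, and $\eta_k$ joins $x_{i_k+1}$ to $x_{j_k+1}$ for each link $\{i_k,j_k\}\in\alpha$. I would cyclically relabel the marked points by $x'_\ell:=x_{\ell+1\bmod 2N}$; then $\eta_k$ joins $x'_{i_k}$ to $x'_{j_k}$, so relative to the labeling $(x'_1,\dots,x'_{2N})$ the curves realize the link pattern $\alpha$ itself. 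Since all $2N$ insertions carry the same weight $\beta$, the field law $\LF_\bbH^{(\beta,\cdot)}$ is unchanged as a measure attached to these (now cyclically shifted) marked points; the only effect of the relabeling is that the three ``anchors'' $0,1,\infty$ are now occupied by $x'_{2N}=0$, $x'_{2N-1}=\infty$, $x'_1=y_1$ instead of by $x'_1,x'_{2N-1},x'_{2N}$.

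Next I would apply the unique Möbius automorphism $\psi$ of $\bbH$ with $\psi(x'_1)=0$, $\psi(x'_{2N-1})=1$, $\psi(x'_{2N})=\infty$, namely $\psi(z)=1-y_1/z$ (this maps $\bbH$ onto $\bbH$ because $y_1>0$). Re‑embedding each realized curve‑decorated quantum surface through $\psi$ does not change its equivalence class, hence leaves $R_{\alpha+1}$ unchanged; on the other hand, by the conformal covariance of the Liouville field (Lemma~\ref{lm:lcft-H-conf}) and of the multiple SLE measure (equation~\eqref{eq:msle-conf}) the explicit integrand is turned into
$$\prod_{j=1}^{2N}\psi'(x'_j)^{\Delta_\beta+b}\;\LF_\bbH^{(\beta,\psi(x'_1)),\dots,(\beta,\psi(x'_{2N}))}\times\mathrm{mSLE}_{\kappa,\alpha}\!\left(\bbH;\psi(x'_1),\dots,\psi(x'_{2N})\right),$$
where $\psi(x'_1)=0$, $\psi(x'_{2N-1})=1$, $\psi(x'_{2N})=\infty$ and the remaining $\psi(x'_j)$ are the new free variables. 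The final step is to change variables from $(y_1,\dots,y_{2N-3})$ to $(\psi(x'_2),\dots,\psi(x'_{2N-2}))$: a direct computation with $\psi(z)=1-y_1/z$ shows that this map sends $\{0<y_1<\dots<y_{2N-3}<1\}$ bijectively onto $\{0<u_1<\dots<u_{2N-3}<1\}$ with Jacobian equal, up to sign, to $\prod_{j=1}^{2N}\psi'(x'_j)^{-1}$. The arithmetic identity
$$\Delta_\beta+b=1\qquad\text{for }\beta=\gamma-\tfrac{2}{\gamma},\ \kappa=\gamma^2,\ b=\tfrac{6-\kappa}{2\kappa},\ \Delta_\beta=\tfrac{\beta}{2}\!\left(Q-\tfrac{\beta}{2}\right)$$
then makes the factor $\prod_j\psi'(x'_j)^{\Delta_\beta+b}$ cancel the Jacobian exactly, so the constant $c_N$ is preserved and the integrand reverts to the standard form, giving $R_{\alpha+1}=R_\alpha$.

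The conceptual ingredients — that conformally equivalent embeddings represent the same curve‑decorated quantum surface, and the two covariance rules — are immediate from the preliminaries, and the only ``miracle'' is the identity $\Delta_\beta+b=1$, which is exactly the statement that the LCFT boundary scaling weight of a $\beta$‑insertion matches the SLE boundary exponent $b$. I expect the main nuisance to be purely bookkeeping: tracking correctly which $x'_\ell$ plays the role of each anchor after the cyclic shift, handling the $\psi'(\infty)$ and $\psi'(0)$ derivative conventions from Section~\ref{sec:pre}, and carrying out the $(2N-3)$‑dimensional Jacobian computation (for which it is cleanest to work with the explicit $\psi(z)=1-y_1/z$ rather than a general $m$).
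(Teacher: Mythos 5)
Your proposal is essentially the same as the paper's proof: perform the cyclic relabeling, apply the Möbius automorphism determined by the three anchor points, invoke conformal covariance of $\LF_\bbH$ (Lemma~\ref{lm:lcft-H-conf}) and of $\mathrm{mSLE}$ (\eqref{eq:msle-conf}), use the identity $\Delta_\beta+b=1$, and cancel against the Jacobian. The cosmetic differences are the direction of the shift (you move $y_1$ to $0$ while the paper moves $y_{2N-3}$ to $1$ via $f_{y_{2N-3}}(z)=\tfrac{1-y_{2N-3}}{1-z}$) and the organization of the Jacobian: the paper first changes variables in the $(2N-4)$ coordinates $y_1,\dots,y_{2N-4}$ for fixed $y_{2N-3}$ and then separately performs the one-dimensional substitution $x_1=1-y_{2N-3}$, which makes the product of the four leftover derivatives $f'(y_{2N-3})f'(1)f'(\infty)f'(0)=1$ explicit, whereas you do the full $(2N-3)$-dimensional change at once.

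One thing to get straight when writing this up cleanly: with the paper's conventions for $\psi'(\infty)$ and $\psi'(0)$, a direct computation gives $\bigl|\det\tfrac{\partial(u_1,\dots,u_{2N-3})}{\partial(y_1,\dots,y_{2N-3})}\bigr|=\prod_{j=2}^{2N-3}\psi'(y_j)=\prod_{j=1}^{2N}\psi'(x'_j)$ (the four anchor factors $\psi'(y_1)\psi'(1)\psi'(\infty)\psi'(0)$ multiply to $1$), not $\prod_j\psi'(x'_j)^{-1}$ as you state. The reciprocal $\prod_j\psi'(x'_j)^{-1}=\bigl|\det\tfrac{\partial y}{\partial u}\bigr|$ is the factor that actually appears when replacing $d\mathbf{y}$ by $d\mathbf{u}$ in the integral, and it is this factor that cancels $\prod_j\psi'(x'_j)^{\Delta_\beta+b}=\prod_j\psi'(x'_j)$. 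So the cancellation you describe is correct, but the label ``Jacobian'' should be attached to $\det\partial y/\partial u$, not to $\det\partial u/\partial y$, for the arithmetic to read correctly.
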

\begin{proof}
    We write $y_{2N-2}=1$, $y_{2N-1}=\infty$ and $y_{2N}=0$. Without loss of generality we may work on the case where $m=1$. Let $f_{y_{2N-3}}(z):=\frac{1-y_{2N-3}}{1-z}$ be the conformal map $\bbH\to\bbH$ sending $(y_{2N-3},1,\infty,0)$ to $(1,\infty,0,x_1)$, and for $2\le k\le 2N$, let $x_k =f_{y_{2N-3}}(y_{k-1})$. Then by Lemma~\ref{lm:lcft-H-conf} and~\eqref{eq:msle-conf}, the  {right}  hand side of~\eqref{eq:thm-main}, when viewed as the law of curve-decorated quantum surfaces, is the same as
\begin{equation}\label{eq:lm-rotation-1}
    \begin{split}
        {c_N} \int_{0<y_1<...<y_{2N-3}<1}&\bigg[\prod_{k=1}^{2N}f_{y_{2N-3}}'(y_k)^{b+\Delta_\beta} \LF_\bbH^{(\beta,f_{y_{2N-3}}(y_1)),...,(\beta,f_{y_{2N-3}}(y_{2N}))}\\&\times \mathrm{mSLE}_{\kappa,\alpha+m}(\bbH,f_{y_{2N-3}}(y_1),\,...,f_{y_{2N-3}}(y_{2N}))\bigg]\,dy_1...dy_{2N-3}
    \end{split}
\end{equation}
Since $\Delta_\beta+b=1$, by a change of variables $x_k =f_{y_{2N-3}}(y_{k-1})$ for $2\le k\le 2N-3$, ~\eqref{eq:lm-rotation-1} is equal to 
\begin{equation}\label{eq:lm-rotation-2}
    \begin{split}
         {c_N}\int_{0<f_{y_{2N-3}}(0)<x_2<...<x_{2N-3}<1}&\bigg[f_{y_{2N-3}}'(y_{2N-3})f_{y_{2N-3}}'(1)f_{y_{2N-3}}'(\infty)f_{y_{2N-3}}'(0)\cdot \LF_\bbH^{(\beta,f_{y_{2N-3}}(0)),(\beta,x_2)...,(\beta,x_{2N})}\\&\times \mathrm{mSLE}_{\kappa,\alpha+m}(\bbH,f_{y_{2N-3}}(0),x_2,\,...,x_{2N}))\bigg]\,dy_{2N-3}dx_2...dx_{2N-3}
    \end{split}
\end{equation}    
where we used the convention $(x_{2N-2},x_{2N-1},x_{2N})=(1,\infty,0)$. Note that
$$f_{y_{2N-3}}'(y_{2N-3})f_{y_{2N-3}}'(1)f_{y_{2N-3}}'(\infty)f_{y_{2N-3}}'(0)=1.$$
Therefore we conclude the proof by the change of variable $x_1=1-y_{2N-3}$ in~\eqref{eq:lm-rotation-2}.
\end{proof}

\begin{proof}[Proof of Theorem~\ref{thm:main}]
    First we notice that by Proposition~\ref{prop:msle-margin}, when $W_1=W_2=2$ the measure $\mathfrak{m}_x(W_1,W_2)$ agrees with $\mathrm{mSLE}_\kappa(\bbH;0,x,1,\infty)$ and for the $N=2$ case,  Theorem~\ref{thm:main} follows immediately from Theorem~\ref{thm:N=2} when $\alpha=\{\{1,4\},\{2,3\}\}$ and Lemma~\ref{lm:rotation} when $\alpha=\{\{1,2\},\{3,4\}\}$. The constant can be traced by the one in~\eqref{eq:prop-constant-3-4}.

    Now assume we have proved Theorem~\ref{thm:main} for $N$ and we work on the $N+1$ case. By Lemma~\ref{lm:rotation}, without loss of generality we may assume $\{1,2N+2\}\in\alpha$, and let $\alpha_0\in\mathrm{LP}_N$ be the link pattern induced by $\alpha\backslash\{1,2N+2\}$.  Let $(\bbH,\phi,s_1,...,s_{2N-3},\eta_1,...,\eta_N)$ be a sample from the right hand side of~\eqref{eq:thm-main}  {(with $x$ replaced by $s$)} where the multiple $\SLE$ is sampled from $\mathrm{mSLE}_{\kappa,\alpha_0}(\bbH,0,s_1,...,s_{2N-3},1,\infty)$. As in the proof of Proposition~\ref{prop:disk+4-pt-disk}, we finish the induction by the following 3 steps.

    \emph{Step 1: Sample two extra boundary typical points and change coordinates.}  {Let $\bbR_-=(-\infty,0)$.} We weight the law of $(\bbH,\phi,s_1,...,s_{2N-3},\eta_1,...,\eta_N)$  {by $\frac{1}{2}\nu_\phi(\bbR_-)^2$ and sample two points $\xi_1,\xi_2$ from the probability measure proportional to $ 1_{\xi_1<\xi_2<0}\nu_\phi|_{\bbR_-}(d\xi_1)\nu_\phi|_{\bbR_-}(d\xi_2)$}. Then by Definition~\ref{def:QD}, if we conformally weld a quantum disk from $\QD_{2}$ to the arc $(\xi_1,\xi_2)$, the curve-decorated quantum surface we obtain has law {$\Wd_\alpha(\QD^{N+2})$}. On the other hand, by Lemma~\ref{lm:gamma-insertion}, the law of $(\phi,\eta_1,...,\eta_N,s_1,...,s_{2N-3},\xi_1,\xi_2)$ is now 
    \begin{equation}\label{eq:thm-main-1}
    \begin{split}
         {c_N}\cdot\mathds{1}_{\xi_1<\xi_2<0<s_1<...<s_{2N-3}<1}&\bigg[\LF_\bbH^{(\beta,s_1),...,(\beta,s_{2N}),(\gamma,\xi_1),(\gamma,\xi_2)}\times \mathrm{mSLE}_{\kappa,\alpha_0}(\bbH,s_1,...,s_{2N})\bigg]\,ds_1...ds_{2N-3}d\xi_1d\xi_2
    \end{split}
\end{equation}
    where we used the convention $(s_{2N-2},s_{2N-1},s_{2N})=(1,\infty,0)$. For $\xi_1<\xi_2<0$, consider the conformal map $f_{\xi_1,\xi_2}(z)=\frac{z-\xi_2}{z-\xi_1}$ from $\bbH$ to $\bbH$ sending $(\xi_1,\xi_2,\infty)$ to $(\infty,0,1)$. Let $y_1=f_{\xi_1,\xi_2}(0)$, $y_{2N-1}=f_{\xi_1,\xi_2}(1)$, $y_{2N}=1$ and $y_k = f_{\xi_1,\xi_2}(y_{k-1})$ for $2\le k\le 2N-2$.  Then by Lemma~\ref{lm:lcft-H-conf} and~\eqref{eq:msle-conf}, when viewed as the law of curve-decorated quantum surfaces, ~\eqref{eq:thm-main-1} is equal to
    \begin{equation}\label{eq:thm-main-2}
    \begin{split}
         {c_N}\cdot\mathds{1}_{\xi_1<\xi_2<0<s_1<...<s_{2N-3}<1}&\bigg[f_{{\xi_1,\xi_2}}'(\xi_1)f_{{\xi_1,\xi_2}}'(\xi_2)\prod_{k=1}^{2N}f_{{\xi_1,\xi_2}}'(s_k) \cdot\LF_\bbH^{(\beta,f_{{\xi_1,\xi_2}}(s_1)),...,(\beta,f_{{\xi_1,\xi_2}}(s_{2N})),(\gamma,0),(\gamma,\infty)}\\&\times \mathrm{mSLE}_{\kappa,\alpha_0}(\bbH,f_{{\xi_1,\xi_2}}(s_1),\,...,f_{{\xi_1,\xi_2}}(s_{2N}))\bigg]\,ds_1...ds_{2N-3}d\xi_1d\xi_2
    \end{split}
\end{equation}
where we used $\Delta_\beta+b=\Delta_\gamma=1$. Then by a change of variables $y'_k = f_{\xi_1,\xi_2}(s_{k-1})$ for $2\le k\le 2N-2$, ~\eqref{eq:thm-main-2} is equal to
\begin{equation}\label{eq:thm-main-3}
\begin{split}
     &{c_N}\cdot\mathds{1}_{0<f_{{\xi_1,\xi_2}}(0)<y'_2<...<y'_{2N-2}<f_{{\xi_1,\xi_2}}(1) <1}\bigg[f_{{\xi_1,\xi_2}}'(\xi_1)f_{{\xi_1,\xi_2}}'(\xi_2)f_{{\xi_1,\xi_2}}'(0) f_{{\xi_1,\xi_2}}'(1)f_{{\xi_1,\xi_2}}'(\infty)\cdot\\&\LF_\bbH^{(\beta,f_{{\xi_1,\xi_2}}(0)),(\beta,y'_2),...,(\beta,y'_{2N-2}), (\beta,f_{{\xi_1,\xi_2}}(1)),(\beta,1),(\gamma,0),(\gamma,\infty)}\times \mathrm{mSLE}_{\kappa,\alpha_0}(\bbH,y'_2,...,y'_{2N-2})\bigg]dy'_2...dy'_{2N-2}d\xi_1d\xi_2.
     \end{split}
\end{equation}
Since $y'_1=f_{{\xi_1,\xi_2}}(0)=\frac{\xi_2}{\xi_1}$, $y'_{2N-1}=f_{{\xi_1,\xi_2}}(1)=\frac{1-\xi_2}{1-\xi_1}$, it is straightforward to check that
$$\frac{\partial(y'_1,y'_{2N-1})}{\partial(\xi_1,\xi_2) } = \frac{\xi_2-\xi_1}{\xi_1^2(1-\xi_1)^2}. $$
On the other hand, we may compute
$$f_{{\xi_1,\xi_2}}'(\xi_1)f_{{\xi_1,\xi_2}}'(\xi_2)f_{{\xi_1,\xi_2}}'(0) f_{{\xi_1,\xi_2}}'(1)f_{{\xi_1,\xi_2}}'(\infty) =  \frac{\xi_2-\xi_1}{\xi_1^2(1-\xi_1)^2}.$$
Therefore by a change of variables to~\eqref{eq:thm-main-3}, the law of $(\bbH,\phi,\eta_1,...,\eta_N,0,s_1,...,s_{2N-3},\xi_1,\xi_2,1,\infty)/\sim_\gamma$ agrees with that of $(\bbH,\tilde\phi,\eta_1,...,\eta_N,y'_1,...,y'_{2N-1},0,1,\infty)/\sim_\gamma$, where $(\tilde\phi,\tilde\eta_1,...,\tilde\eta_N,y'_1,...,y'_{2N-1})$ is sampled from
\begin{equation}\label{eq:thm-main-4}
    \begin{split}
         {c_N}\cdot\mathds{1}_{0<y'_1<...<y'_{2N-1}<1}&\bigg[\LF_\bbH^{(\beta,y'_1),...,(\beta,y'_{2N-1}),(\beta,1),(\gamma,0),(\gamma,\infty)}\times \mathrm{mSLE}_{\kappa,\alpha_0}(\bbH,y'_1,...,y'_{2N-1},1)\bigg]\,dy'_1...dy'_{2N-1}.
    \end{split}
\end{equation}

\emph{Step 2: Adding boundary typical points to the welding of $\Md_2(2)$ and $\Md_{2,\bullet}(2;\beta)$.} Consider the welding in Proposition~\ref{prop:3-pt-disk} with $W_1=W_2=2$, and the surface $S$ on the left hand side of~\eqref{eqn-3-pt-disk} is embedded as $(\bbH,X,\eta,0,\infty,1)$ where $X\sim {\frac{\gamma}{2c(Q-\beta)^2}}\LF_\bbH^{(\beta,0),(\beta,\infty),(\beta,1)}$, and $\eta$ is the $\SLE_\kappa$ process in $\bbH$ weighted by $\psi_\eta'(1)^{1-\Delta_\beta}$. Let 
\begin{equation}\label{eq:lm-qt-4-pted-40}
   Y=X\circ\psi_\eta^{-1}+Q\log|(\psi_\eta^{-1})'|, 
\end{equation}
$D_\eta^1$ be the  component of $\bbH\backslash\eta$ to the left of $\eta$. Let $S_1=(D_\eta^1,X)/\sim_\gamma$, and $S_2 = (\bbH,Y,0,\infty,1)/\sim_\gamma$. Recall that $$\mathrm{mSLE}_{\kappa,\alpha_0}(\bbH,y_1,...,y_{2N-1},1) = \mathcal{Z}_{\alpha_0}(\bbH,y_1,...,y_{2N-1},1)\cdot\mathrm{mSLE}_{\kappa,\alpha_0}(\bbH,y_1,...,y_{2N-1},1)^\#.$$
By Lemma~\ref{lm:gamma-insertion}, as we sample $2N-1$ marked points on $S_2$ from the measure
$$\mathds{1}_{0<y_1<...<y_{2N-1}<1}\cdot\mathcal{Z}_{\alpha_0}(\bbH,y_1,...,y_{2N-1},1)\nu_Y(dy_1)...\nu_Y(dy_{2N-1}), $$
the surface $S$ is now the conformal welding of a weight $2$ quantum disk with a $2N$-pointed quantum disk with embedding
\begin{equation}\label{eq:thm-main-5}
    \begin{split}
         {\frac{\gamma}{2(Q-\gamma)^2}}\int_{0<y_1<...<y_{2N-1}<1}&\bigg[\LF_\bbH^{(\gamma,y_1),...,(\gamma,y_{2N-1}),(\beta,1),(\gamma,0),(\gamma,\infty)}\times \mathcal{Z}_{\alpha_0}(\bbH,y_1,...,y_{2N-1},1)\bigg]\,dy_1...dy_{2N-1}.
    \end{split}
\end{equation}
On the other hand, for $x_k = \psi_\eta^{-1}(y_k)$ and $k=1,...,2N-1$, the law of $(X,\eta,x_1,...,x_{2N-1})$ is given by
\begin{equation}\label{eq:thm-main-6}
    \begin{split}
         {\frac{\gamma}{2c(Q-\beta)^2}}\mathds{1}_{0<x_1<...<x_{2N-1}<1}&\bigg[\mathcal{Z}_{\alpha_0}\big(\bbH,\psi_\eta(x_1),...,\psi_\eta(x_{2N-1}),1\big)\nu_X(dx_1)...\nu_X(dx_{2N-1}) \bigg]\cdot\\&\LF_\bbH^{(\beta,1),(\beta,0),(\beta,\infty)}(dX)\cdot\psi_\eta'(1)^{1-\Delta_\beta}\,\SLE_\kappa(d\eta)  
    \end{split}
\end{equation}
which, by Lemma~\ref{lm:gamma-insertion}, is equal to 
\begin{equation}\label{eq:thm-main-7}
    \begin{split}
         {\frac{\gamma}{2c(Q-\beta)^2}}\mathds{1}_{0<x_1<...<x_{2N-1}<1}&\bigg[\mathcal{Z}_{\alpha_0}\big(\bbH,\psi_\eta(x_1),...,\psi_\eta(x_{2N-1}),1\big) \times\\&\LF_\bbH^{(\gamma,x_1),...,(\gamma,x_{2N-1}),(\beta,1),(\beta,0),(\beta,\infty)}(dX)dx_1...dx_{2N-1}\bigg]\cdot\psi_\eta'(1)^{1-\Delta_\beta}\,\SLE_\kappa(d\eta).  
    \end{split}
\end{equation}

\emph{Step 3: Change the insertion from $\gamma$ to $\beta$.} We weight the law of $(x_1,...,x_{2N-1},X,\eta)$ from~\eqref{eq:thm-main-7} by $$ {\frac{2c_N(Q-\gamma)^2}{\gamma}  }\prod_{k=1}^{2N-1}\big(\e^{\frac{\beta^2-\gamma^2}{4}} e^{\frac{\beta-\gamma}{2}Y_\e(y_k)}\big),$$ where $X$ is given by~\eqref{eq:lm-qt-4-pted-40} and $y_k=\psi_\eta(x_k)$. Then as in Step 3 of the proof of Proposition~\ref{prop:disk+4-pt-disk}, as we send $\e\to0$, the law of $(x_1,...,x_{2N-1},X,\eta)$ converges in vague topology to 
\begin{equation}\label{eq:thm-main-8}
    \begin{split}
         {\frac{c_N(Q-\gamma)^2}{(Q-\beta)^2}\cdot c^{-1}  }\mathds{1}_{0<x_1<...<x_{2N-1}<1}&\bigg[\mathcal{Z}_{\alpha_0}\big(\bbH,\psi_\eta(x_1),...,\psi_\eta(x_{2N-1}),1\big)\cdot \prod_{k=1}^{2N-1}\psi_\eta'(x_k)^{1-\Delta_\beta}\cdot \psi_\eta'(1)^{1-\Delta_\beta}\cdot\\&\LF_\bbH^{(\beta,x_1),...,(\beta,x_{2N-1}),(\beta,1),(\beta,0),(\beta,\infty)}(dX)\,dx_1...dx_{2N-1}\bigg]\SLE_\kappa(d\eta).  
    \end{split}
\end{equation}
Since $\Delta_\beta+b=1$, by~\eqref{eq:msle-conf} we have 
\begin{equation*}
    \mathcal{Z}_{\alpha_0}\big(\bbH,\psi_\eta(x_1),...,\psi_\eta(x_{2N-1}),1\big)\cdot \prod_{k=1}^{2N-1}\psi_\eta'(x_k)^{1-\Delta_\beta}\cdot \psi_\eta'(1)^{1-\Delta_\beta} = \mathcal{Z}_{\alpha_0}\big(D_\eta,x_1,...,x_{2N-1},1\big).
\end{equation*}
Therefore it follows from Proposition~\ref{prop:msle-margin} that if we draw the interfaces $(\eta_1^R,...,\eta_N^R)$ in $D_\eta$ sampled from $\mathrm{mSLE}_{\kappa,\alpha_0}(D_\eta,x_1,...,x_{2N-1},1)^\#$, then the law of $(x_1,...,x_{2N-1},X,\eta,\eta_1^R,...,\eta_N^R)$ is precisely given by the left hand side of~\eqref{eq:thm-main}. On the other hand,  {when disintegrated over} the interface length $\ell$ of $\eta$,  {by Lemma~\ref{lm:lf-change-weight}, the weighted law~\eqref{eq:thm-main-5} for $(Y,\psi_\eta\circ\eta_1^R,..., \psi_\eta\circ\eta_N^R,y_1,...,y_{2N-1})$  converges in vague topology  to ~\eqref{eq:thm-main-4} (disintegrated over the length $\ell$ of $\nu_{\tilde\phi}(\bbR_-)$). Then joint law of $(S_1,S_2)$ converges in vague topology to $\Md_2(2;\ell)\times \mu_{2,\ell}$, where $\mu_{2,\ell}$ is the law of the quantum surface with embedding~\eqref{eq:thm-main-4} with $\nu_{\tilde\phi}(\bbR_-)=\ell$.}  {In other words,  {once integrating over $\ell$}, we obtain the conformal welding of $\Wd_{\alpha_0}(\QD^{N+1})$ which can be embedded as~\eqref{eq:thm-main-4} with a sample from $\Md_2(2)$ according to the link pattern $\alpha$. } Therefore by Step 1 and our induction hypothesis, the conformal welding of $S_1$ and $S_2$ under the reweighted measure is a constant times $\Wd_\alpha(\QD^{N+2})$, which finishes the proof of Theorem~\ref{thm:main}. 
\end{proof}

\section{Conformal welding and Imaginary Geometry}\label{sec:IG}
In this section,  {we first provide background on  imaginary geometry in Section~\ref{subsec:pre-ig} and quantum triangle in Section~\ref{subsec:QT}. Then we prove Theorem~\ref{thm:IG} which gives the partition function of imaginary geometry flow lines from conformal welding of quantum triangles. Similar to the proof of Theorem~\ref{thm:main}, we use an inductive argument by adding marked points. We will need a variant of the $\SLE_\kappa(\underline{\rho})$ martingale by~\cite{SW05} which we recall in Section~\ref{subsec:martingale}. The proof of Theorem~\ref{thm:IG} is done in Section~\ref{subsec:pf-thm-IG}.
}

\subsection{$\SLE_\kappa(\underline{\rho})$ process and the imaginary geometry}\label{subsec:pre-ig}
 For the force points $x^{k,L}<...<x^{1,L}<x^{0,L}= 0^-<x^{0,R}= 0^+< x^{1,R}<...<x^{\ell, R}$ and the weights $\rho^{i,q}\in\bbR$, the $\SLE_\kappa(\underline{\rho})$ process is the probability measure on  {curves  $\eta$  in $\overline{\bbH}$} growing the same as ordinary SLE$_\kappa$ (i.e, satisfies \eqref{eq:def-sle}) except that the Loewner driving function $(W_t)_{t\ge 0}$ is now characterized by 
\begin{equation}\label{eq:def-sle-rho}
\begin{split}
&W_t = \sqrt{\kappa}B_t+\sum_{q\in\{L,R\}}\sum_i \int_0^t \frac{\rho^{i,q}}{W_s-V_s^{i,q}}ds; \\
& V_t^{i,q} = x^{i,q}+\int_0^t \frac{2}{V_s^{i,q}-W_s}ds, \ q\in\{L,R\}.
\end{split}
\end{equation}
It has been proved in \cite{MS16a} that the SLE$_\kappa(\underline{\rho})$ process a.s. exists, is unique and generates a continuous curve until the \textit{continuation threshold}, the first time $t$ such that $W_t = V_t^{j,q}$ with $\sum_{i=0}^j\rho^{i,q}\le -2$ for some $j$ and $q\in\{L,R\}$. {Moreover, the curve is disjoint from $(0,\infty)$ (resp.\ $(-\infty,0)$) if  $\sum_{i=0}^j\rho^{i,q}\ge\frac{\kappa}{2} -2$ for $q=R$ and every $0\le j\le \ell$ (resp.\ for $q=L$ and every $0\le j\le k$). } % Let $f_t := g_t-W_t$ be the \textit{centered Loewner flow.}

Now we recap the definition of the Dirichlet Gaussian Free Field. Let $D\subsetneq \mathbb{C}$ be a domain. We construct the GFF on $D$ with \textit{Dirichlet} \textit{boundary conditions}  as follows. Consider the space of smooth functions on $D$ with finite Dirichlet energy and zero value near $\partial D$, and let $H_0(D)$ be its closure with respect to the inner product $(f,g)_\nabla=\int_D(\nabla f\cdot\nabla g)\ dx dy$. Then the (zero boundary) GFF on $D$ is defined by 
\begin{equation}\label{eqn-def-gff}
h = \sum_{n=1}^\infty \xi_nf_n
\end{equation}
where $(\xi_n)_{n\ge 1}$ is a collection of i.i.d. standard Gaussians and $(f_n)_{n\ge 1}$ is an orthonormal basis of $H_0(D)$. The sum \eqref{eqn-def-gff} a.s.\ converges to a random distribution independent of the choice of the basis $(f_n)_{n\ge 1}$. For a function $g$ defined on $\partial D$ with harmonic extension $f$ in $D$ and a zero boundary GFF $h$, we say that $h+f$ is a GFF on $D$ with boundary condition specified by $g$. 
 See \cite[Section 4.1.4]{DMS14} for more details. 

Next we introduce the notion of \emph{GFF flow lines}. We restrict ourselves to the range $\kappa\in(0,4)$.  Heuristically, given a GFF $h$, $\eta(t)$ is a flow line of angle $\theta$ if
\begin{equation}
\eta'(t) = e^{i(\frac{h(\eta(t))}{\chi}+\theta)}\ \text{for}\ t>0, \ \text{where}\ \chi = \frac{2}{\sqrt{\kappa}}-\frac{\sqrt{\kappa}}{2}.
\end{equation} 
 {To make rigorous sense of this}, let $(K_t)_{t\ge 0}$ be the hull at time $t$ of the SLE$_\kappa(\underline{\rho})$ process described by the Loewner flow \eqref{eq:def-sle} with $(W_t, V_t^{i,q})$ solving \eqref{eq:def-sle-rho}, and let $\mathcal{F}_t$ be the filtration generated by  $(W_t, V_t^{i,q})$. %   introduces an exact coupling of a Dirichlet GFF with an $\SLE_\kappa(\underline{\rho})$, which we briefly recap as follows. with filtration $\mathcal{F}_t$. 
Let $\mathfrak{h}_t^0$ be the bounded harmonic function on $\mathbb{H}$ with boundary values 
\begin{equation}\label{eq:flowlinecouple}
    	-\lambda(1+\sum_{i=0}^j \rho^{i,L})\  \text{on} \ (V_t^{j+1, L}, V_t^{j,L}),\ \ \ \text{and}\ \ \lambda(1+\sum_{i=0}^j \rho^{i,R})\ \text{on}\ \ (V_t^{j, R}, V_t^{j+1,R})
\end{equation}
	and $-\lambda$ on $(V_t^{0,L},W_t)$,  $\lambda$ on $(W_t,V_t^{0,R})$ where $\lambda = \frac{\pi}{\sqrt{\kappa}}$, $x^{k+1, L} = -\infty, x^{\ell+1, R} = +\infty$. Set $\mathfrak{h}_t(z) = \mathfrak{h}_t^0(g_t(z))-\chi\arg g_t'(z)$. Let  $\tilde{h}$ be a zero boundary GFF on $\mathbb{H}$ and 
	\begin{equation}\label{eq:ig-gff}
	  h = \tilde{h}+ \mathfrak{h}_0.  
	\end{equation}
	Then as proved in \cite[Theorem 1.1]{MS16a}, there exists a coupling between $h$ and the  SLE$_\kappa(\underline{\rho})$ process $(K_t)$, such that for any $\mathcal{F}_t$-stopping time $\tau$ before the continuation threshold, $K_\tau$ is a local set  {(introduced in~\cite{SS13})} for $h$  and the conditional law of $h|_{\mathbb{H}\backslash K_\tau}$ given $\mathcal{F}_\tau$ is the same as the law of  {$\mathfrak{h}_\tau+\tilde{h}\circ g_\tau$}.
	
	For $\kappa<4$, the SLE$_\kappa(\underline{\rho})$ coupled with the GFF $h$ as above is referred as a \emph{flow line} of $h$ from 0 to $\infty$, and we say an SLE$_\kappa(\underline{\rho})$ curve is a flow line of angle $\theta$ if it can be coupled with $h+\theta\chi$ in the above sense. See Figure~\ref{fig:igflowline} for an example.% For $\kappa'>4$, the SLE$_{\kappa'}(\underline{\rho})$ curve coupled with a GFF $-h$ as above is referred as a \text{counterflow lines} of $h$. 

 \begin{figure}
     \centering
     \includegraphics[width=1\textwidth]{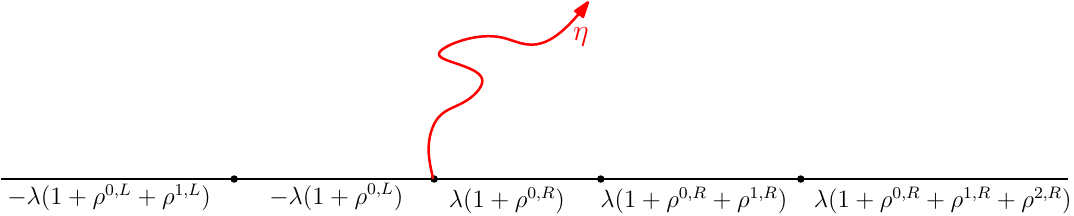}
     \caption{An $\SLE_\kappa(\rho^{0,L},\rho^{1,L};\rho^{0,R},\rho^{1,R},\rho^{2,R})$ process coupled with the GFF $h$ with illustrated boundary conditions as the (zero angle) flow line of $h$. The $\theta$ angle flow line of $h$ then has the law as $\SLE_\kappa(\rho^{0,L}-\frac{\theta\chi}{\lambda},\rho^{1,L};\rho^{0,R}+\frac{\theta\chi}{\lambda},\rho^{1,R},\rho^{2,R})$ process.}
     \label{fig:igflowline}
 \end{figure}

 We shall use the following result from~\cite[Theorem 1.1, Theorem 1.2]{MS16a},  {where the first theorem proved the existence of the flow line coupling, and the second showed that flow lines are a.s.\ determined by the free field}.
 \begin{theorem}\label{thm:IGflowline}
     Let $k,\ell\ge0$, $\rho^{0,L},...,\rho^{k,L},\rho^{0,R},...,\rho^{\ell,R},\theta\in\bbR$, $x^{k,L}<...<x^{0,L}=0^-$ {,} $0^+=x^{0,R}<...<x^{\ell,R}$ and $h$ be a Dirichlet GFF on $\bbH$ with boundary conditions specified by~\eqref{eq:ig-gff}. Suppose $h+\chi\theta>\lambda(\frac{\kappa}{2}-1)$ for $x>0$ and $h+\chi\theta<\lambda$ for $x<0$. Let $\eta$ be a flow line of $h$ of angle $\theta$ from 0 to $\infty$. Then $\eta\cap(0,\infty)=\emptyset$ a.s.. Moreover, given $\eta$, if we let $D_\eta$ be  {the  connected component of $\bbH\backslash\eta$ to the right of $\eta$},  $\psi_\eta:D_\eta\to\bbH$ be the conformal map fixing 0, 1, $\infty$, then $h_\eta:=h\circ \psi_\eta^{-1}-\chi\arg (\psi_\eta^{-1})'$ is the Dirichlet GFF on $\bbH$ with boundary conditions $\lambda-\theta\chi$ on $(-\infty,0)$ and $\psi_\eta\circ h$ on $(0,\infty)$. If  {$\theta_1<\theta$ and} $\eta_1$ is  {the $\theta_1$ angle flow line of $h$} from $x_1\ge0$ to $\infty$, then $\psi_\eta\circ\eta_1$ is the angle $\theta_1$ flow line of $h_\eta$  from $\psi_\eta(x_1)$ to $\infty$.
 \end{theorem}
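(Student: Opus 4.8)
The plan is to read Theorem~\ref{thm:IGflowline} off from the imaginary geometry theory of~\cite{MS16a}; no new probabilistic input is needed beyond that theory, so the work is purely in matching the boundary-data conventions of~\eqref{eq:flowlinecouple}--\eqref{eq:ig-gff}.

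\emph{Step 1 (the law of $\eta$ and non-intersection).} By definition the angle-$\theta$ flow line of $h$ is the angle-$0$ flow line of $h+\theta\chi$, hence an $\SLE_\kappa(\underline\rho)$ curve whose force points $0^\pm,x^{i,L},x^{i,R}$ and weights $\rho^{i,q}$ are read off from the jumps of the shifted boundary data, and the effective boundary data that this curve sees on $\bbR\setminus\{0\}$ is exactly the restriction of $h+\theta\chi$. Using the identity $\lambda(\frac\kappa2-1)=\lambda-\pi\chi$, the hypothesis $h+\theta\chi>\lambda(\frac\kappa2-1)$ on $(0,\infty)$ translates to $\sum_{i\le j}\rho^{i,R}\ge\frac\kappa2-2$ for every $j$, which by the boundary-hitting criterion for $\SLE_\kappa(\underline\rho)$ in~\cite{MS16a} forces $\eta\cap(0,\infty)=\emptyset$ a.s.; likewise $h+\theta\chi<\lambda$ on $(-\infty,0)$ gives $\sum_{i\le j}\rho^{i,L}>-2$ for every $j$, so the continuation threshold is never reached and $\eta$ is a.s.\ a continuous simple curve from $0$ to $\infty$ (simplicity because $\kappa<4$). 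This proves the first assertion and makes $D_\eta$ well defined.

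\emph{Step 2 (conditional law of the field).} By the coupling of~\cite[Theorem 1.1]{MS16a}, $\eta$ is a local set for $h$, and target invariance plus the Markov property of the coupling show that, given $\eta$, the restriction of $h$ to $D_\eta$ is a GFF whose boundary values agree with those of $h$ on $\partial D_\eta\cap(0,\infty)$ and, along $\eta$, equal the flow-line value, namely $\lambda$ to the right of the angle-$0$ flow line in the convention of~\eqref{eq:flowlinecouple}, hence $\lambda-\theta\chi$ plus the winding term for the angle-$\theta$ flow line. Transporting to $\bbH$ by $\psi_\eta$ via the imaginary-geometry coordinate change $\phi\mapsto\phi\circ\psi_\eta^{-1}-\chi\arg(\psi_\eta^{-1})'$, the winding term cancels against $-\chi\arg(\psi_\eta^{-1})'$ and leaves the constant $\lambda-\theta\chi$ on $\psi_\eta(\eta)=(-\infty,0)$, while on $(0,\infty)\subset\partial\bbH$ the map $\psi_\eta$ is a real homeomorphism and the data is transported with no derivative correction, i.e.\ it becomes $\psi_\eta\circ h$. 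This is exactly the stated law of $h_\eta$.

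\emph{Step 3 (the ``in particular'' clause, and the main difficulty).} For $\theta_1<\theta$, the flow-line monotonicity and interaction results of~\cite{MS16a} show that the angle-$\theta_1$ flow line $\eta_1$ of $h$ from $x_1\ge0$ a.s.\ stays (weakly) to the right of $\eta$, hence $\eta_1\subset\overline{D_\eta}$; since flow lines are a.s.\ determined by the field~\cite[Theorem 1.2]{MS16a} and the notion of an angle-$\theta_1$ flow line is covariant under the coordinate change above, $\psi_\eta\circ\eta_1$ is the angle-$\theta_1$ flow line of $h_\eta$ from $\psi_\eta(x_1)$, with Step 2 guaranteeing $h_\eta$ is a GFF to which the construction applies. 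The only real obstacle I anticipate is the bookkeeping in Steps 1--2: carefully translating the blanket inequalities on $h+\theta\chi$ into the per-force-point weight conditions (including the degenerate insertions at $0^\pm$ and the role of the point at $\infty$), and checking that every winding and $\arg(\psi_\eta^{-1})'$ term cancels so that the boundary data of $h_\eta$ is \emph{exactly} $\lambda-\theta\chi$ rather than off by an additive constant. None of this requires ideas beyond~\cite{MS16a}.
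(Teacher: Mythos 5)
Your proposal is correct and, in essence, is exactly what the paper does: the paper does not prove Theorem~\ref{thm:IGflowline} but states it as a consequence of~\cite[Theorem 1.1, Theorem 1.2]{MS16a}, and your three steps (translating the blanket bounds on $h+\theta\chi$ into the boundary-hitting and continuation-threshold criteria for $\SLE_\kappa(\underline\rho)$, reading off the conditional law of $h$ in $D_\eta$ from the local-set coupling and cancelling the winding against $-\chi\arg(\psi_\eta^{-1})'$, and using flow-line monotonicity plus field-determination for the angle-$\theta_1$ flow line) are precisely the bookkeeping needed to unfold that citation. The only cosmetic slip is the $\ge$ in Step 1: the strict hypothesis $h+\theta\chi>\lambda(\kappa/2-1)$ gives the strict inequality $\sum_{i\le j}\rho^{i,R}_{\mathrm{eff}}>\kappa/2-2$, which is harmless since the non-hitting criterion only requires $\ge$.
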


 \subsection{Quantum triangles}\label{subsec:QT}
Next we recap the notion of quantum triangle as in \cite{ASY22}. It is a quantum surface parameterized by weights $W_1,W_2,W_3>0$ and defined based on Liouville fields with three insertions and the thick-thin duality.

\begin{definition}[Thick quantum triangles]\label{def-qt-thick}
	Fix {$W_1, W_2, W_3>\frac{\gamma^2}{2}$}. Set $\beta_i = \gamma+\frac{2-W_i}{\gamma}<Q$ for $i=1,2,3$, and {let $\phi$ be sampled from $\frac{1}{(Q-\beta_1)(Q-\beta_2)(Q-\beta_3)}\textup{LF}_{\mathbb{H}}^{(\beta_1, \infty), (\beta_2, 0), (\beta_3, 1)}$}. Then we define the infinite measure $\textup{QT}(W_1, W_2, W_3)$ to be the law of $(\bbH, \phi, \infty, 0,1)/\sim_\gamma$.
\end{definition}

\begin{definition}[Quantum triangles with thin vertices]\label{def-qt-thin}
	Fix {$W_1, W_2, W_3\in (0,\frac{\gamma^2}{2})\cup(\frac{\gamma^2}{2}, \infty)$}. Let $I:=\{i\in\{1,2,3\}:W_i<\frac{\gamma^2}{2}\}$ and define the $\textup{QT}(W_1, W_2, W_3)$ to be the law of the quantum surface $S$ constructed as follows. First sample the surface $S_0 := (D, \phi, a_1, a_2, a_3)$ from $\textup{QT}(\tilde{W}_1, \tilde{W}_2, \tilde{W}_3)$ where $\tilde{W}_i = W_i$ if $i\notin I$, and $\tilde{W}_i = \gamma^2-W_i$ if $i\in I$ so that each point $a_i$ corresponds to the insertion as given by $\tilde{\beta}_i = \gamma+ \frac{2-\tilde{W}_i}{\gamma}$. For each $i\in I$, {independently sample a surface $S_i$ from $(1-\frac{2W_i}{\gamma^2})\mathcal{M}_2^{\textup{disk}}(W_i)$} and concatenate it with $S_0$ at point $a_i$, and let $S$ be the output.
\end{definition} 

\begin{figure}[ht]
	\centering
	\includegraphics[scale=0.43]{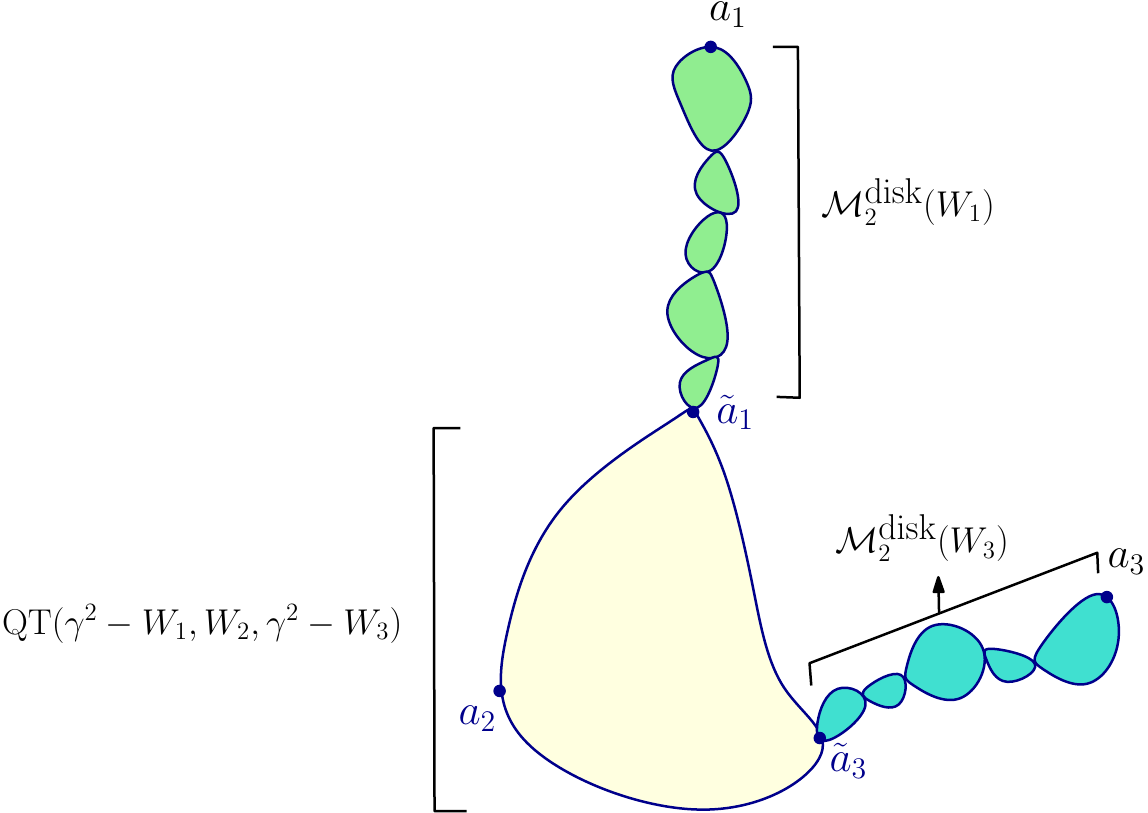}
	\caption{A quantum triangle with $W_2>\frac{\gamma^2}{2}$ and $W_1,W_3<\frac{\gamma^2}{2}$ embedded as $(D,\phi,a_1,a_2,a_3).$  The two thin disks (colored green) are concatenated with the thick triangle (colored yellow) at points $\tilde{a}_1$ and $\tilde{a}_3$.}\label{fig-qt}
\end{figure}

See Figure~\ref{fig-qt} for an example when $W_1,W_3<\frac{\gamma^2}{2}$ and  $W_2>\frac{\gamma^2}{2}.$ We remark that if any of $W_1,W_2,W_3$ is equal to $\frac{\gamma^2}{2}$, then the measure $\QT(W_1,W_2,W_3)$ is defined by a limiting procedure,  {which we omit here as this is of less interest in this paper. A detailed definition is in \cite[Section 2.5]{ASY22}}. 

Recall the notion of $\Md_{2,\bullet}(W)$ from Definition~\ref{def:three-pointed-disk}.
\begin{lemma}[Lemma 6.12 of~\cite{ASY22}]\label{lem:QT(W,W,2)}
   Let $W>0$. For some constant $C$ depending only on $W$, we have $\Md_{2,\bullet}(W) = C\QT(W,W,2)$.
\end{lemma}

The  {disintegration} of the measure  $\Md_{2}(W)$  over the boundary arc lengths  can be similarly extended to quantum triangles by setting
\begin{equation}
    \QT(W_1,W_2,W_3) = \iiint_{\bbR_+^3}\QT(W_1,W_2,W_3;\ell_1,\ell_2,\ell_3)\ d\ell_1d\ell_2d\ell_3
\end{equation}
where $\QT(W_1,W_2,W_3;\ell_1,\ell_2,\ell_3)$ is the measure supported on quantum surfaces $(D,\phi,a_1,a_2,a_3)/\sim_\gamma$ such that the boundary arcs between $a_1a_2$, $a_1a_3$ and $a_2a_3$ have quantum lengths $\ell_1,\ell_2,\ell_3$. We can also disintegrate over one or two boundary arc lengths of quantum triangles. For instance, we can define 
$$\QT(W_1,W_2,W_3;\ell_1,\ell_2) = \int_0^\infty \QT(W_1,W_2,W_3;\ell_1,\ell_2,\ell_3)\ d\ell_3$$
and $$\QT(W_1,W_2,W_3;\ell_1) = \iint_{\bbR^2_+}\QT(W_1,W_2,W_3;\ell_1,\ell_2,\ell_3)\ d\ell_2 d\ell_3.$$

Given a quantum triangle of weights $W+W_1,W+W_2,W_3$ with $W_2+W_3 = W_1+2$ embedded as $(D,\phi,a_1,a_2,a_3)$, we start by making sense of the $\SLE_\kappa(W-2;W_1-2,W_2-W_1)$ curve $\eta$ from $a_2$ to $a_1$. If the domain $D$ is simply connected (which corresponds to the case where $W+W_1,W+W_2, W_3\ge\frac{\gamma^2}{2}$), $\eta$ is just the ordinary $\SLE_\kappa(W-2;W_1-2,W_2-W_1)$  with force points at $a_2^-,a_2^+$ and $a_3$. Otherwise, let $(\tilde{D}, \phi,\tilde{a}_1,\tilde{a}_2, \tilde{a}_3)$ be the thick quantum triangle component, and sample an $\SLE_\kappa(W-2;W_1-2,W_2-W_1)$ curve $\tilde{\eta}$ in $\tilde{D}$ from $\tilde{a}_2$ to $\tilde{a}_1$. Then our curve $\eta$ is the concatenation of $\tilde{\eta}$ with $\SLE_\kappa(W-2;W_1-2)$ curves in each bead of the weight $W+W_1$ (thin) quantum disk  and $\SLE_\kappa(W-2;W_2-2)$ curves in each bead of the weight $W+W_2$ (thin) quantum disk.

With this notation, we state the welding of quantum disks with quantum triangles below.
\begin{theorem}[Theorem 1.1 of \cite{ASY22}]\label{thm:disk+QT}
Fix $W,W_1,W_2,W_3>0$ such that $W_2+W_3=W_1+2$. There exists some constant $c:=c_{W,W_1,W_2,W_3}\in (0,
\infty)$ such that
\begin{equation}\label{eqn:disk+QT}
    \QT(W+W_1,W+W_2,W_3) {\times} \SLE_\kappa(W-2;W_2-2,W_1-W_2) = c\Wd (\Md_{2}(W),\QT(W_1,W_2,W_3)).
\end{equation}
That is, if we draw an independent $\SLE_\kappa(W-2;W_1-2,W_2-W_1)$ curve $\eta$ on a sample from $\QT(W+W_1,W+W_2,W_3)$ embedded as $(D,\phi,a_1,a_2,a_3)$, then the quantum surfaces to the left and right of $\eta$ are  independent quantum disks and triangles conditioned on having the same interface quantum length.
\end{theorem}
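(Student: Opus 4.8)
\emph{Overview.} The plan is to derive \eqref{eqn:disk+QT} from the three-insertion disk welding of~\cite{AHS21} (Proposition~\ref{prop:3-pt-disk}) by two reductions that are by now standard in this circle of ideas: first, reduce the general thin / non-simply connected statement to a thick, simply connected core case; then, inside the core case, treat a triangle whose third vertex is quantum typical (where the identity is literally Proposition~\ref{prop:3-pt-disk}) and pass to a general third vertex by changing the weight of the target vertex $a_1$ through a Girsanov argument, exactly in the spirit of Step~3 of the proofs of Proposition~\ref{prop:disk+4-pt-disk} and Theorem~\ref{thm:main}.

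\emph{Reduction to the thick core case.} If $W+W_1,W+W_2,W_3,W_1,W_2>\frac{\gamma^2}{2}$, the two quantum triangles in \eqref{eqn:disk+QT} and the disk $\Md_2(W)$ are simply connected. Otherwise, by Definitions~\ref{def-thin-disk} and~\ref{def-qt-thin} each of these surfaces is a Poissonian concatenation of thick quantum triangles and thick two-pointed disks, and, by the definition of the curve $\eta$ recalled immediately before the statement, $\eta$ is the corresponding concatenation of $\SLE_\kappa(W-2;W_1-2,W_2-W_1)$-, $\SLE_\kappa(W-2;W_1-2)$- and $\SLE_\kappa(W-2;W_2-2)$-curves in the thick beads. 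Conformal welding commutes with concatenation, so, arguing bead by bead exactly as in~\cite[Section~2]{AHS20} (using Theorem~\hyperref[thm:disk-welding]{A} for the thin-disk beads), the general identity follows from the thick core case; the hypothesis $W_1+2=W_2+W_3$, equivalently that the middle force-point weight $W_1-W_2=W_3-2$ is $>-2$, is precisely what makes the bead decomposition of $\eta$ consistent across the weld. From now on all surfaces are thick and simply connected.

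\emph{Base case: a quantum-typical third vertex.} Let $\beta_3=\gamma+\frac{2-W_3}{\gamma}$. Comparing Definitions~\ref{def-qt-thick} and~\ref{def:m2dot-alpha}, the fields of $\QT(W_2,W_2,W_3)$ and $\QT(W+W_2,W+W_2,W_3)$ are, up to explicit positive constants, those of $\Md_{2,\bullet}(W_2;\beta_3)$ and $\Md_{2,\bullet}(W+W_2;\beta_3)$, the $\beta_3$-insertion playing the role of $a_3$ and the unmarked boundary arc playing the role of the $a_1a_2$-arc. Hence Proposition~\ref{prop:3-pt-disk}, applied to weld $\Md_2(W)$ to $\Md_{2,\bullet}(W_2;\beta_3)$ along the unmarked arc, says precisely that
\[
\QT(W+W_2,W+W_2,W_3)\otimes\widetilde{\SLE}_\kappa(W-2;W_2-2;1-\Delta_{\beta_3})=c'\int_0^\infty \Md_2(W;\ell)\times\QT(W_2,W_2,W_3;\ell)\,d\ell
\]
for some $c'\in(0,\infty)$; this is \eqref{eqn:disk+QT} in the case $W_1=W_2$ (the interface here is a reweighted $\SLE_\kappa$ rather than a plain $\SLE_\kappa(\underline\rho)$, because $\QT(W_2,W_2,W_3)$ is a good triangle only when $W_3=2$).

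\emph{From $W_1=W_2$ to general $W_1$, and the main obstacle.} Now set $W_1:=W_2+W_3-2$, disintegrate both sides of the base-case identity over the quantum length $\ell$ of the interface (Lemma~\ref{lm:lf-length}), and change the boundary insertion at the target vertex $a_1$ (which sits at $\infty$ in the embedding of the triangle side used in Proposition~\ref{prop:3-pt-disk}) from weight $W+W_2$ to weight $W+W_1$, i.e.\ from $\beta_1=\gamma+\frac{2-W-W_2}{\gamma}$ to $\beta_1'=\gamma+\frac{2-W-W_1}{\gamma}$, by the $R\to\infty$ weighting of Lemma~\ref{lm:lf-insertion-bdry} as in Step~3 of Proposition~\ref{prop:disk+4-pt-disk}. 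By Lemma~\ref{lm:lf-change-weight} (its $\infty$-analogue), for each fixed $\ell$ the disk side is unchanged and equals $\Md_2(W;\ell)$, while the triangle side becomes $\QT(W_1,W_2,W_3;\ell)$ and the welded surface becomes $\QT(W+W_1,W+W_2,W_3)$; the Girsanov computation moreover produces the factor $|\psi_\eta'(\infty)|^{\Delta_{\beta_1'}-\Delta_{\beta_1}}$ acting on the interface. Re-integrating over $\ell$ and collecting the constants from Theorem~\hyperref[thm:disk-welding]{A}, Proposition~\ref{prop:3-pt-disk} and this single Girsanov normalization then gives \eqref{eqn:disk+QT} with some $c\in(0,\infty)$, \emph{provided} one can show that $|\psi_\eta'(\infty)|^{\Delta_{\beta_1'}-\Delta_{\beta_1}}$ together with the reweighting $|\psi_\eta'(1)|^{1-\Delta_{\beta_3}}$ already present in $\widetilde{\SLE}_\kappa(W-2;W_2-2;1-\Delta_{\beta_3})$ and the cross-ratio factors coming from the Liouville kernel $C_\bbH^{(\beta_i,s_i)_i}$ assemble into the Loewner martingale of~\cite{SW05} that turns $\SLE_\kappa(W-2;W_2-2)$ into $\SLE_\kappa(W-2;W_2-2,W_3-2)=\SLE_\kappa(W-2;W_2-2,W_1-W_2)$. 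This last identification is the main obstacle: it is exactly here that the good-triangle relation $W_1+2=W_2+W_3$ is used (it is the algebraic condition making the product of derivative powers a genuine $\SLE_\kappa(\underline\rho)$ martingale rather than a hypergeometric-SLE derivative martingale), and carrying it out requires the precise $\SLE_\kappa(\underline\rho)$ Radon–Nikodym computation of~\cite{SW05} matched against the insertion bookkeeping of Lemma~\ref{lm:lcft-H-conf}, together with a check — via the imaginary-geometry coupling of Theorem~\ref{thm:IGflowline}, equivalently the continuation-threshold analysis for $\SLE_\kappa(\underline\rho)$ — that under $W_1+2=W_2+W_3$ the curve a.s.\ keeps $a_3$ strictly on its right and terminates at $a_1$, so that the force-point process and the decomposition of the two complementary surfaces are well defined.
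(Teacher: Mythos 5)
The paper does not actually prove this statement; it is quoted verbatim from~\cite{ASY22} (Theorem~1.1 there), so there is no internal proof to compare against, and I will assess your argument against the tools this paper makes available. Your thick/thin reduction and your base case are reasonable as sketches: the base case is exactly Proposition~\ref{prop:3-pt-disk} rewritten in triangle notation, and it gives the welding of $\Md_2(W)$ onto $\QT(W_2,W_2,W_3)$ with interface $\widetilde{\SLE}_\kappa(W-2;W_2-2;1-\Delta_{\beta_3})$, i.e.\ $\SLE_\kappa(W-2;W_2-2)$ reweighted by the a.s.\ finite and positive functional $|\psi_\eta'(1)|^{1-\Delta_{\beta_3}}$.

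The step you yourself flag as the main obstacle is, however, not a computation waiting to be carried out: the mechanism you propose fails. You want the $R\to\infty$ Girsanov weighting at the target vertex $a_1=\infty$ to split into a change of insertion for the field and a finite multiplicative factor $|\psi_\eta'(\infty)|^{\Delta_{\beta_1'}-\Delta_{\beta_1}}$ for the curve, so that the product of the two derivative factors becomes the Radon--Nikodym derivative of $\SLE_\kappa(W-2;W_2-2,W_1-W_2)$ with respect to $\SLE_\kappa(W-2;W_2-2)$. No such finite Radon--Nikodym derivative exists when $W_1\neq W_2$: adding the force point of weight $W_1-W_2$ at $a_3$ changes the total right-hand weight from $W_2-2$ to $W_1-2$, which (e.g.\ by time reversal, comparing adjacent force-point weights at the new starting point) makes the two laws mutually singular on the full curve, the singularity sitting exactly at the target point. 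The same obstruction is visible in the paper's own toolkit: the Schramm--Wilson local martingale of Proposition~\ref{prop:martingale} implementing this change contains the factor $|f_t(1)|^{(W_1-W_2)/\kappa}$ with $f_t(1)\to\infty$, and Proposition~\ref{prop:martingale-1} --- the only result here that converts $|\psi'|$-weights into new force points --- carries the hypothesis $\sum_i\rho_i=\sum_i\tilde\rho_i$ precisely to exclude this situation; that hypothesis fails in your application. Consequently the field/curve decoupling in the limit cannot be ``exactly in the spirit of Step~3'' of Proposition~\ref{prop:disk+4-pt-disk}, where the insertion being changed lies in the interior of a boundary arc and $\psi_\eta'$ there is a.s.\ finite and positive: changing the weight at the vertex where the curve terminates is a structurally different operation, and it has to be handled by another route (as it is in~\cite{ASY22}) rather than by reweighting the already-welded base case at $\infty$.
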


\subsection{The Schramm-Wilson $\SLE_\kappa(\underline{\rho})$ martingale}\label{subsec:martingale}
We start with the following analog of~\cite[Theorem 6 and Remark 7]{SW05}. Recall that by~\cite{MS16a}, an $\SLE_\kappa(\underline{\rho})$ process in $\bbH$ generates a continuous curve from 0 to $\infty$ as long as 
\begin{equation}\label{eq:continuation-threshold}
\sum_{i=0}^j\rho^{i,L}>-2\ \text{for all}\ 0\le j\le k\ \ \text{and} \ \ \sum_{i=0}^j\rho^{i,R}>-2  \ \text{for all}\ 0\le j\le \ell.  
\end{equation}
\begin{proposition}\label{prop:martingale}
    Fix $k,\ell\ge0$ and force points $x^{k,L}<...<x^{1,L}<x^{0,L}=0^-<x^{0,R}=0^+<x^{1,R}<...<x^{\ell,R}$. For $i\ge 0$ and $q\in\{L,R\}$, let $\rho^{i,q},\tilde{\rho}^{i,q}\in\bbR$ such that $\rho^{0,q}=\tilde\rho^{0,q}$ and~\eqref{eq:continuation-threshold} hold for both $\rho^{i,q}$ and $\tilde{\rho}^{i,q}$. Let $\eta$ be an $\SLE_\kappa(\underline{\rho})$ process and $(f_t)_{t\ge0}$ be its associated centered Loewner flow. Let
\begin{equation}\label{eq:swmartingale}
    M_t:=\prod_{i,q}|f_t'(x^{i,q})|^{\frac{(\tilde{\rho}^{i,q}-\rho^{i,q})(\tilde{\rho}^{i,q}+\rho^{i,q}+4-\kappa)}{4\kappa}}|f_t(x^{i,q})|^{\frac{\tilde{\rho}^{i,q}-\rho^{i,q}}{\kappa}}\cdot\prod |f_t(x^{i,q})-f_t(x^{i',q'})|^{\frac{\tilde{\rho}^{i,q}\tilde{\rho}^{i',q'} - {\rho}^{i,q}{\rho}^{i',q'} }{2\kappa}}
\end{equation}
where the second product is taken over all distinct pairs of $(i,q)$ and $(i',q')$.  {Let $\tau_0$ be the first time when any of $x^{i,q}$ with $\rho^{i,q}\neq\tilde{\rho}^{i,q}$ is separated from $\infty$.} Then $M_t$ is a local martingale, and  {for $t<\tau_0$}, when weighted by $M_t$, the law of $\eta$ is an $\SLE_\kappa(\underline{\tilde{\rho}})$ process.
\end{proposition}
 {Indeed $M_t$ is well-defined for $t<\tau_0$, and $\tau_0>0$ a.s.\ since by our assumption $\rho^{0,q}-\tilde\rho^{0,q}=0$.}

\begin{proof}
    We directly apply the It\^{o}'s formula from~\eqref{eq:def-sle-rho} (note that $f_t(x^{i,q}) = g_t(x^{i,q})-W_t = V_t^{i,q}$) and get
    \begin{equation}
        dM_t = \frac{1}{\sqrt{\kappa}}\cdot M_t\left( \sum_{i,q}\frac{\rho^{i,q}-\tilde{\rho}^{i,q}}{f_t(x^{i,q})}dB_t  \right)
    \end{equation}
    and thus $M_t$ is a local martingale. Therefore by Girsanov's theorem, once we weight the law of $W_t$ by $M_t$, the driving process $W_t$ has the law as the solution to 
    $$dW_t = \sqrt{\kappa}dB_t+\sum_{q\in\{L,R\}}\sum_i \frac{\tilde{\rho}^{i,q}}{W_t-V_t^{i,q}}dt $$
    which justifies the claim.
\end{proof}

We will analyze the $t\to\infty$ limit in~\eqref{eq:swmartingale}. We begin with the following lemma. 
\begin{lemma}\label{lm:martingale}
    Let  {$0<x_1<x_2<\infty$}. Let $\eta$ be a continuous {non-crossing} curve in $\bar{\bbH}$ from 0 to $\infty$ parameterized by the half-plane capacity such that $\eta(t)\to\infty$ as $t\to\infty$. Assume {$\eta\cap (x_1,x_2)=\emptyset$}. %Let $K_t$ be the compact $\bbH$-hull generated by $\eta([0,t])$,  $g_t:\bbH\backslash K_t \to\bbH$ be the unique conformal transformation such that $\lim_{|z|\to\infty}|g_t(z)-z|=0$, and $(W_t)_{t\ge0}$ be its Loewner driving function. 
    Let $(f_t)_{t\ge0}$ be the centered Loewner map generated by $\eta$. {Let $D$ be the connected component of $\bbH\backslash\eta$ with $x_1,x_2$ on its boundary, and $\sigma,\xi$ be the leftmost and rightmost point on $\partial D\cap\bbR$. Let $\tau$ be the time when $\eta$ hits $\xi$ (where $\tau=\infty$ if $\xi=\infty$). Let $\psi:D\to\bbH$ be the conformal map sending $(\sigma,x_2,\xi)$ to $(0,1,\infty)$. Then we have\footnote{If $\sigma\neq 0^+$, then following the convention of defining $\SLE_\kappa(\underline\rho)$ processes, $f_t(0^+) = f_t(\sigma)$ after $\eta$ hits $\sigma$.}
    $$\lim_{t\uparrow\tau}\frac{f_t(x_1)}{f_t(x_2)} = 1, \ \lim_{t\uparrow\tau}\frac{f_t'(x_1)}{f_t'(x_2)} = \frac{\psi'(x_1)}{\psi'(x_2)}, \ \text{and}\ \lim_{t\uparrow\tau}\frac{f_t(x_1)-f_t(0^+)}{f_t(x_2)-f_t(0^+)} = \frac{\psi(x_1)}{\psi(x_2)}.  $$
    Moreover, on the event where $\dist(\eta, [x_1,x_2])>\delta$, there exists some  {positive constants $C_1,C_2$} depending only on $\delta, x_1$ and $x_2$ such that for every $t<\tau$}, 
    \begin{equation}\label{eq:loewner-bound}
        C_1<\frac{f_t(x_1)-f_t(0^+)}{f_t(x_2)-f_t(0^+)}<C_2 .
    \end{equation}
\end{lemma}
\begin{proof}
 {First assume that $\tau = \infty$. The first statement is proved in Eq.\ (A.2) in~\cite{liu2020scaling}, while~\eqref{eq:loewner-bound} follows from the proof of Eq.\ (A.1) in the same paper. See also the proof of ~\cite[Proposition 2.11]{Zhan19a}.}
    Let $\psi_t(z) = \frac{f_t(z) - f_t(0^+)}{f_t(x_1)-f_t(0^+)}$  {and $D_t$ be the unbounded connected component of $\bbH\backslash\eta([0,t])$. Then $\psi_t:D_t\to\bbH$} is a conformal map  {sending $(0^+,x_1,\infty)$ to $(0,1,\infty)$}. By the Schwartz reflection principle, we can extend $\psi_t$ to  {the closure $\bar{D}_t$ of $D_t\cup\{z:\bar{z}\in D_t\})$}, and by the  Caratheodory kernel theorem, as $t\to\infty$, $\psi_t$ converges to $\psi$ uniformly on compact subsets of $\bar{D}:=D\cup\{z:\bar{z}\in D\}$. This in particular implies that $\psi_t(x_i)\to\psi(x_i)$ and $\psi_t'(x_i)\to \psi'(x_i)$ for $i=1,2$ and thus the second and the third claim follow.

    {Now for $\tau<\infty$, let  {$\varphi(z) = \frac{z}{\xi-z}$}. Then $\tilde{\eta}:=(\varphi\circ\eta)_{0\le t\le \tau}$ is a continuous simple curve from 0 to $\infty$. Then $\tilde{f}_t:=\frac{f_t\circ\varphi^{-1}}{f_t(\xi)-f_t\circ\varphi^{-1}}$ is a conformal map from $\bbH\backslash\tilde{\eta}([0,t])$ sending the tip of $\tilde{\eta}$ to 0 and fixes $\infty$, and we may choose some constant $c_t$ such that $\lim_{|z|\to\infty}|c_t\tilde{f}_t(z)-z|$ is a finite constant. In other words, $c_t\tilde{f}_t$ is the Lowener map of $\tilde{\eta}$. Now if we reparameterize $\tilde{\eta}$ by its half-plane capacity, by applying the results for $\tau = \infty$ case, $\lim_{t\uparrow \tau}\frac{\tilde{f}_t(\varphi(x_1))}{\tilde{f}_t(\varphi(x_2))}=1$, i.e.,
    \begin{equation}\label{eq:lowenerratio1}
        \lim_{t\uparrow \tau}\frac{f_t(x_1)}{f_t(x_2)}\cdot\frac{f_t(\xi)-f_t(x_2)}{f_t(\xi)-f_t(x_1)}=1.
    \end{equation}
    Then~\eqref{eq:lowenerratio1} implies that 
    \begin{equation}\label{eq:lowenerratio2}
        \lim_{t\uparrow \tau}\frac{f_t(x_2)-f_t(x_1)}{f_t(\xi)-f_t(x_1)}\cdot \frac{f_t(\xi)}{f_t(x_2)} = 0.
    \end{equation}
    Since $f_t(x_2)<f_t(\xi)$, by~\eqref{eq:lowenerratio2} we must have $\lim_{t\uparrow \tau}\frac{f_t(x_2)-f_t(x_1)}{f_t(\xi)-f_t(x_1)}=0$, which implies $\lim_{t\uparrow \tau}\frac{f_t(\xi)-f_t(x_1)}{f_t(\xi)-f_t(x_2)}=1$. Therefore it further follows from~\eqref{eq:lowenerratio1} that $\lim_{t\uparrow \tau}\frac{f_t(x_1)}{f_t(x_2)}=1$. The rest of the statements follows analogously.}
\end{proof}

\begin{proposition}\label{prop:martingale-1}
    {Let $\rho_->-2$, {$\kappa>0$}, $\ell>0$, $0^+=x_0<x_1<...<x_\ell<\infty$, and $\rho_0,...,\rho_\ell,\tilde\rho_0,...,\tilde\rho_\ell\in\bbR$ such that for each $0\le k\le\ell$, {$\sum_{i=0}^k\rho_i>\max\{-2,\frac{\kappa}{2}-4\},\ \sum_{i=0}^k\tilde\rho_i>\max\{-2,\frac{\kappa}{2}-4\}$},  {i.e., the curve  a.s.\ does not hit the continuation threshold, does not fill any boundary segment.} Also assume $\rho_0=\tilde\rho_0$ and  {$\bar\rho = \sum_{i=1}^\ell\rho_i=\sum_{i=1}^\ell\tilde\rho_i$}. Let $\eta$ be an $\SLE_\kappa(\rho_-;\rho_0,...,\rho_\ell)$ process with force points $(0^-;0^+,x_1,...,x_\ell)$ with centered Loewner map $(f_t)_{t\ge0}$. {Restrict to the event that $\eta\cap [x_1,x_\ell]=\emptyset$.} Let $D$ be the connected component of $\bbH\backslash\eta$ with $x_\ell$ on its boundary, {and $\sigma,\xi$ be the leftmost and rightmost point of $\bbR\cap \partial D$. Let $\psi:D\to\bbH$ be the conformal map sending $(\sigma,x_\ell,\xi)$ to $(0,1,\infty)$.}} Then if we weight the law of $\eta$ by
    \begin{equation}\label{eq:martingale-1}
        {\frac{1}{Z}\cdot\mathds{1}_{\eta\cap [x_1,x_\ell]=\emptyset}}\cdot\prod_{i=1}^\ell |\psi'(x_i)|^{\frac{(\tilde\rho_i-\rho_i)(\tilde\rho_i+\rho_i+4-\kappa)}{4\kappa}}\cdot  \prod_{i=1}^\ell |\psi(x_i)|^{\frac{(\tilde\rho_i-\rho_i)\rho_0}{2\kappa}}\cdot\prod_{1\le i<j\le \ell}|\psi(x_i)-\psi(x_j)|^{\frac{\tilde\rho_i\tilde\rho_j-\rho_i\rho_j}{2\kappa}}
    \end{equation}
    where $Z = \prod_{i=1}^\ell x_i^{\frac{(\tilde\rho_i-\rho_i)(2+\rho_-+\rho_0)}{2\kappa}}\cdot\prod_{1\le i<j\le \ell}|x_i-x_j|^{\frac{\tilde\rho_i\tilde\rho_j-\rho_i\rho_j}{2\kappa}}$, then $\eta$ evolves as an $\SLE_\kappa(\rho_-;\tilde\rho_0,...,\tilde\rho_n)$ process {restricted to the event that ${\eta\cap [x_1,x_\ell]=\emptyset}$}.
\end{proposition}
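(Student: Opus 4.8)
The plan is to apply Proposition~\ref{prop:martingale} and then let the time horizon tend to infinity, identifying the limit of the martingale with the weight in~\eqref{eq:martingale-1}.

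Set $a_i=\tilde\rho_i-\rho_i$ for $1\le i\le\ell$. The hypotheses $\rho_0=\tilde\rho_0$ and $\sum_{i=0}^\ell\rho_i=\sum_{i=0}^\ell\tilde\rho_i$ give $\sum_{i=1}^\ell a_i=0$, we have $\rho^{0,q}=\tilde\rho^{0,q}$ for $q\in\{L,R\}$, and both $(\rho_-;\rho_0,\dots,\rho_\ell)$ and $(\rho_-;\tilde\rho_0,\dots,\tilde\rho_\ell)$ satisfy~\eqref{eq:continuation-threshold} (since $\sum_{i=0}^k\rho_i\ge\frac{\kappa}{2}-2>-2$, similarly for $\tilde\rho$, and $\rho_->-2$). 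So Proposition~\ref{prop:martingale} applies with force points $(0^-;0^+,x_1,\dots,x_\ell)$: the process $M_t$ there is a local martingale, and weighting $\SLE_\kappa(\rho_-;\rho_0,\dots,\rho_\ell)$ by $M_\tau/M_0$ for a suitable stopping time $\tau$ produces $\SLE_\kappa(\rho_-;\tilde\rho_0,\dots,\tilde\rho_\ell)$ run up to $\tau$. Using $f_0=\mathrm{id}$, $f_0(0^\pm)=0$ and the identity $\frac1\kappa+\frac{\rho_-}{2\kappa}+\frac{\rho_0}{2\kappa}=\frac{2+\rho_-+\rho_0}{2\kappa}$, one checks at once that $M_0=Z$. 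It therefore remains to prove that (i) $M_t$ converges a.s.\ as $t\to\infty$ to the numerator of the right side of~\eqref{eq:martingale-1}, and (ii) one may take $\tau=\infty$.

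For (i): since $\sum_{i=0}^k\rho_i\ge\frac{\kappa}{2}-2$ and likewise for $\tilde\rho$, the curve $\eta$ a.s.\ avoids $(0,\infty)$, so $D$ and $\psi$ are as in Lemma~\ref{lm:martingale}; writing $\psi_t(z):=\frac{f_t(z)-f_t(0^+)}{f_t(x_n)-f_t(0^+)}$, the proof of that lemma gives $\psi_t\to\psi$ locally uniformly, hence $\psi_t(x_i)\to\psi(x_i)$, $\psi_t'(x_i)\to\psi'(x_i)$, and $f_t(x_i)/f_t(x_j)\to1$. Put $R_t=f_t(x_n)-f_t(0^+)>0$ and group $M_t=A_tB_tC_tD_tE_t$, where $A_t=\prod_i|f_t'(x_i)|^{p_i}$ with $p_i=\frac{a_i(\tilde\rho_i+\rho_i+4-\kappa)}{4\kappa}$, $B_t=\prod_i|f_t(x_i)|^{a_i/\kappa}$, $C_t=\prod_i|f_t(0^+)-f_t(x_i)|^{\rho_0a_i/(2\kappa)}$, $D_t=\prod_i|f_t(0^-)-f_t(x_i)|^{\rho_-a_i/(2\kappa)}$, $E_t=\prod_{i<j}|f_t(x_i)-f_t(x_j)|^{e_{ij}}$ with $e_{ij}=\frac{\tilde\rho_i\tilde\rho_j-\rho_i\rho_j}{2\kappa}$ (the pair $\{0^-,0^+\}$ has exponent $0$ and drops). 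Substituting $f_t'(x_i)=\psi_t'(x_i)R_t$, $f_t(x_i)-f_t(x_j)=(\psi_t(x_i)-\psi_t(x_j))R_t$, $f_t(0^+)-f_t(x_i)=-\psi_t(x_i)R_t$, and rewriting $B_t=\prod_i(f_t(x_i)/f_t(x_n))^{a_i/\kappa}$ and $D_t=\prod_i\bigl(1+\frac{f_t(x_i)-f_t(x_n)}{f_t(x_n)-f_t(0^-)}\bigr)^{\rho_-a_i/(2\kappa)}$, then using $\sum_ia_i=0$ together with the identity $\sum_ip_i=-\sum_{i<j}e_{ij}$ (both equal $\frac{1}{4\kappa}\sum_i(2a_i\rho_i+a_i^2)$ up to sign), every power of $R_t$ cancels; moreover $B_t\to1$ and $D_t\to1$ (for the latter, $\frac{f_t(x_i)-f_t(x_n)}{f_t(x_n)-f_t(0^-)}=\frac{f_t(x_i)/f_t(x_n)-1}{1-f_t(0^-)/f_t(x_n)}\to0$ since $f_t(0^-)<0<f_t(x_n)$), while $A_tE_t\to\prod_i|\psi'(x_i)|^{p_i}\prod_{i<j}|\psi(x_i)-\psi(x_j)|^{e_{ij}}$ and $C_t\to\prod_i|\psi(x_i)|^{\rho_0a_i/(2\kappa)}$. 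Hence $M_t$ converges to the numerator of~\eqref{eq:martingale-1}.

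The main obstacle is (ii): passing from $M_\tau/M_0$ with $\tau<\infty$ to weighting by $M_\infty/M_0$. Since both laws a.s.\ reach $\infty$ without hitting the continuation threshold, $M_t/M_0$ is, for each $t$, the Radon--Nikodym derivative of $\SLE_\kappa(\rho_-;\underline{\tilde\rho})$ with respect to $\SLE_\kappa(\rho_-;\underline\rho)$ on $\mathcal F_t$; I would argue this positive martingale is uniformly integrable---the two laws agree near the seed because $\rho^{0,\pm}$ coincide, and for large $t$ the tip separates from all of $x_1,\dots,x_\ell$, so no mass can escape---so that $\frac{d\,\SLE_\kappa(\rho_-;\underline{\tilde\rho})}{d\,\SLE_\kappa(\rho_-;\underline\rho)}=M_\infty/M_0$ on $\mathcal F_\infty$, which is exactly the asserted weighting; this mutual absolute continuity can also be quoted from the standard theory of $\SLE_\kappa(\underline\rho)$ processes (cf.~\cite{SW05,MS16a}). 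As a consistency check, the weight in~\eqref{eq:martingale-1} is invariant under $\psi\mapsto\lambda\psi$, its total homogeneity degree being $\sum_ip_i+\sum_{i<j}e_{ij}=0$, so it does not depend on which $x_i$ is normalized to $1$.
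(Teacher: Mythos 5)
Your proof is correct and follows essentially the same route as the paper: apply Proposition~\ref{prop:martingale} to obtain the local martingale $M_t$, verify $M_0=Z$, compute the $t\to\infty$ limit via Lemma~\ref{lm:martingale}, and justify the passage to $t=\infty$. The only differences are cosmetic---you scale out $R_t=f_t(x_\ell)-f_t(0^+)$ and use the identity $\sum_i p_i+\sum_{i<j}e_{ij}=0$, whereas the paper's~\eqref{eq:martingale-2} groups into boundary Poisson kernels $H_{D_t}(x_i,x_j)$ and ratios normalized by $x_\ell$; and for the final step the paper uses the stopping time $\tau_\e$ (first exit of $M_t$ from $(\e,1/\e)$) and sends $\e\to0$, while you invoke uniform integrability and mutual absolute continuity, both of which are standard and adequate here.
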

\begin{proof}
    Let $D_t$ be the connected component of  $\bbH\backslash\eta([0,t])$ with $x_1,x_2$ on its boundary and recall that $H_{D_t}(x_i,x_j) = \frac{f_t'(x_i)f_t'(x_j)}{(f_t(x_i)-f_t(x_j))^2}$ for $1\le i<j\le n$. Then the local martingale~\eqref{eq:swmartingale} can be rewritten as
    \begin{equation}\label{eq:martingale-2}
    \begin{split}
        M_t = \prod_{1\le i<j\le \ell} H_{D_t}(x_i,x_j)^{\frac{\rho_i\rho_j-\tilde\rho_i\tilde\rho_j}{4\kappa}}&\cdot\prod_{i=1}^{\ell-1} \bigg|\frac{f_t'(x_i)}{f_t'(x_\ell)}\bigg|^{\frac{(\tilde\rho_i-\rho_i)(\bar\rho+4-\kappa)}{4\kappa}}\cdot \prod_{i=1}^{\ell-1}\bigg|\frac{f_t(x_i)-f_t(0^+)}{f_t(x_\ell)-f_t(0^+)}  \bigg|^{\frac{(\tilde\rho_i-\rho_i)\rho_0}{2\kappa} }\\&\cdot  \prod_{i=1}^{\ell-1}\bigg|\frac{f_t(x_i)-f_t(0^-)}{f_t(x_\ell)-f_t(0^-)}  \bigg|^{\frac{(\tilde\rho_i-\rho_i)\rho_-}{2\kappa} }\cdot  \prod_{i=1}^{\ell-1}\bigg|\frac{f_t(x_i)}{f_t(x_\ell)}  \bigg|^{\frac{\tilde\rho_i-\rho_i}{\kappa} }.
        \end{split}
    \end{equation}
    Let $\tau$ be the first time (possibly infinity) when $\eta$ separates $x_\ell$ from infinity.  By Lemma~\ref{lm:martingale}, {under the event $\eta\cap [x_1,x_{\ell}] = \emptyset$, as $t\uparrow\tau$, $M_t$ converges to $Z$ times the expression~\eqref{eq:martingale-1}, and $Z = M_0$}. {Let $\tau_\delta$ be the first time when $\dist(\eta([0,t]), [x_1,x_\ell])<\delta$   {and we set $\tau_\delta=\infty$ if there is no such time}. We first show that $(M_{t\wedge\tau_\delta})_{t>0}$ is bounded. Fix $t<\tau_\delta$. For $1\le j\le \ell-1$, we have
    \begin{equation}
        \bigg|\frac{f_t'(x_j)}{f_t'(x_\ell)}\bigg| = H_{D_t}(x_j,x_\ell)\cdot\frac{(f_t(x_\ell) - f_t(x_j))^2}{f_t'(x_\ell)^2}.
    \end{equation}
    Let $\Omega_j(t)$ be the reflection of the domain $\ol\bbH\backslash (\eta([0,t])\cup (-\infty, x_j))$ over the real line. Then the map $\tilde f_t^j(z):=\frac{\sqrt{f_t(x_j)-f_t(z)}}{\sqrt{f_t(x_\ell)-f_t(x_j)}}$ maps $\Omega_j(t)$ to $\bbH$ sending $x_\ell$ to $i$. By the Koebe-1/4 theorem, $|(\tilde f_t^j)'(x_\ell)| = \frac{f_t'(x_\ell)}{2(f_t(x_\ell)-f_t(x_j))}$ is bounded from above and below by positive constants depending only on $x_j,x_\ell$ and $\delta$. Moreover, using the monotonicity property of the boundary Poisson kernel, if we write $\Omega_\delta$ for the $\delta$-neighborhood of $[x_1,x_\ell]$, then $H_{\Omega_\delta} \le H_{D_t}(x_j,x_\ell)\le H_\bbH(x_j,x_\ell) =  |x_j-x_{\ell}|^{-2}$. Therefore by combining~\eqref{eq:loewner-bound}, it follows that~\eqref{eq:martingale-2} is uniformly bounded for $t<\tau_\delta$.
    
    Now by the Girsanov theorem and Proposition~\ref{prop:martingale}, as we weight the law of ${\eta}$ by $M_{\tau_\delta}$,  {${\eta}$} evolves as an $\SLE_\kappa(\rho_-;\tilde\rho_0,...,\tilde\rho_n)$ process   until time $\tau_\delta$.  In particular, by sending $\delta\to0$, the proposition is true up to the time when $\eta$ first hits $[x_\ell,\infty)$. We also have $\eta\cap[x_1,x_\ell]=\emptyset$ under the weighted measure. Indeed, this can be seen by restricting on the event where $\dist(\eta,[x_1,x_\ell])>\delta$ and then letting $\delta\to0$. Since by Domain Markov property, the law of the both processes are the same $\SLE_\kappa(\rho_-;\sum_{j=0}^\ell \rho_j)$ afterwards, we conclude the proof.}
\end{proof}

\subsection{Proof of Theorem~\ref{thm:IG}}\label{subsec:pf-thm-IG}
In this section we prove Theorem~\ref{thm:IG}. The $n=2$ case is a quick consequence of Theorem~\ref{thm:disk+QT}, while to finish the induction step, we need the following result.

\begin{lemma}\label{lm:weld-IG}
    Let $\gamma\in(0,2)$ and $\kappa=\gamma^2$. Let $n\ge 1$ and {$\rho_0,...,\rho_n\in\bbR$}, such that for each {$0\le k\le n$}, $\sum_{i=0}^k\rho_i>\frac{\kappa}{2}-2$. Let $\rho_{n+1}=\sum_{i=0}^n\rho_i$. Let $W_->0$ and $\rho_-=W_--2$. For $0\le k\le n+1$, let $\beta_k = {\min\{\gamma-\frac{\rho_k}{\gamma}, 2Q-\gamma+\frac{\rho_k}{\gamma}\}}$. Let $(\phi,y_1,...,y_{n-1})$ be sampled from
    \begin{equation}\label{eq:lm-Weld-IG}
        \mathds{1}_{0<y_1<...<y_{n-1}<1} \prod_{0\le i<j\le n}(y_j-y_i)^{\frac{\rho_i\rho_j}{2\kappa} }\LF_\bbH^{(\beta_k,y_k)_{0\le k\le n}, (\beta_{n+1},\infty)}(d\phi)\, dy_1...dy_{n-1}
    \end{equation}
    with the convention $y_0=0$ and $y_n=1$. Let $S=(\bbH, \phi, 0, y_1, ..., y_{n-1}, 1, \infty)/\sim_\gamma$. Then if we conformally weld a weight $W_-$ quantum disk along the edge $(-\infty,0)$ of $S$, then for some constant $c\in(0,\infty)$, the output curve-decorated quantum surface $(S',\eta)$ can be described by $(\bbH,\phi,\eta,0,x_1,...,x_{n-1},1,\infty)/\sim_\gamma$ whose law is a constant multiple of
    \begin{equation}\label{eq:lm-Weld-IG-1}
        \mathds{1}_{0<x_1<...<x_{n-1}<1} \prod_{0\le i<j\le n}(x_j-x_i)^{\frac{\hat\rho_i\hat\rho_j}{2\kappa} }\LF_\bbH^{(\hat\beta_k,x_k)_{0\le k\le n}, (\hat\beta_{n+1},\infty)}(d\phi)\times\SLE_\kappa(\rho_-;\rho_0,...,\rho_n)(d\eta)  \, dx_1...dx_{n-1}
    \end{equation}
    where $x_0=0$, $x_n=1$, $\hat\rho_0 = \rho_-+\rho_0+2$, $\hat\rho_i=\rho_i$ for $1\le i\le n$, $\hat\rho_{n+1} = \rho_{n+1}+\rho_-+2$, and $\hat\beta_k = \gamma-\frac{\hat\rho_k}{\gamma}$ for $0\le k\le n+1$.
\end{lemma}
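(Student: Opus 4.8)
I would prove Lemma~\ref{lm:weld-IG} by building the right-hand side of~\eqref{eq:lm-Weld-IG-1} from the known welding of a quantum disk with a \emph{good} quantum triangle (Theorem~\ref{thm:disk+QT}), and then recovering the general configuration by adding boundary typical points and changing the weights of the insertions, exactly in the spirit of the proofs of Proposition~\ref{prop:disk+4-pt-disk} and Theorem~\ref{thm:main}, with the Schramm--Wilson martingale (Propositions~\ref{prop:martingale}--\ref{prop:martingale-1}) playing the role that Proposition~\ref{prop:msle-margin} plays there. When $n=1$ there are no points $y_1,\dots,y_{n-1}$ and the partition function is trivial: $S$ is then (a constant times) the good quantum triangle $\QT(\rho_0+\rho_1+2,\rho_0+2,\rho_1+2)$, and welding $\Md_2(W_-)$ along the arc joining $\infty$ and $0$ is precisely an instance of Theorem~\ref{thm:disk+QT} with $W=W_-$; one checks $W-2=\rho_-$, that the output insertion weights at $0,1,\infty$ are $\hat\beta_0,\beta_1,\hat\beta_2$, and that the interface has law $\SLE_\kappa(\rho_-;\rho_0,\rho_1)$ with force points $0^-;0^+,1$. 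So assume $n\ge2$.

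\textbf{The construction for $n\ge 2$.} Take the seed $S_0:=\QT(W_\infty^{(0)},W_0,W_1^{(0)})$ with $W_0=\rho_0+2$, $W_1^{(0)}=\sum_{i=1}^n\rho_i+2$ and $W_\infty^{(0)}=\sum_{i=0}^n\rho_i+2=W_0+W_1^{(0)}-2$, so that $S_0$ is a good triangle embedded as $(\bbH,\phi,\infty,0,1)$ with insertions $\beta_0$ at $0$, $\beta_{n+1}$ at $\infty$, and $\tilde\beta:=\gamma-\tfrac1\gamma\sum_{i=1}^n\rho_i$ at $1$. \emph{Step 1:} by Theorem~\ref{thm:disk+QT}, welding $\Md_2(W_-)$ along the arc of $S_0$ joining $\infty$ and $0$ (i.e.\ along $(-\infty,0)$) produces the good triangle $\QT(W_-+W_\infty^{(0)},W_-+W_0,W_1^{(0)})$ — whose $0$- and $\infty$-insertions are now $\hat\beta_0$ and $\hat\beta_{n+1}$ — carrying an independent $\SLE_\kappa(\rho_-;\rho_0,\sum_{i=1}^n\rho_i)$ curve $\eta$ with force points $0^-;0^+,1$. \emph{Step 2:} because the arc $(0,1)$ lies on the free (non-welded) side, sampling $n-1$ quantum typical points $y_1<\dots<y_{n-1}$ from $\nu_\phi|_{(0,1)}$ commutes with the welding of Step~1, and by Lemma~\ref{lm:gamma-insertion} it amounts to inserting $n-1$ $\gamma$-insertions into the right-hand surface; these points are passive (weight-$0$) force points of $\eta$. \emph{Step 3:} conditionally on $(y_1,\dots,y_{n-1})$, reweight by $\prod_{i=1}^{n-1}\bigl(\eps^{(\beta_i^2-\gamma^2)/4}e^{(\beta_i-\gamma)X_\eps(y_i)/2}\bigr)\cdot\eps^{(\beta_n^2-\tilde\beta^2)/4}e^{(\beta_n-\tilde\beta)X_\eps(1)/2}$, where $X$ denotes the field of the right-hand component embedded in $\bbH$ via the centered Loewner map $\psi_\eta$ of $\eta$. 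As in Step~3 of the proof of Proposition~\ref{prop:disk+4-pt-disk}, each factor splits into a piece acting on $X$ — which by Lemma~\ref{lm:lf-change-weight}, after disintegrating over the interface length via Lemma~\ref{lm:lf-length}, turns in the limit $\eps\to0$ the $\gamma$-insertions into $\beta_i$-insertions and the insertion $\tilde\beta$ at $1$ into $\beta_n$, producing the Liouville field of~\eqref{eq:lm-Weld-IG-1} — and a piece $|\psi_\eta'(y_i)|^{1-\Delta_{\beta_i}}$, resp.\ $|\psi_\eta'(1)|^{\Delta_{\tilde\beta}-\Delta_{\beta_n}}$, together with cross terms reconstructing the $|\psi_\eta(y_i)|$- and $|\psi_\eta(y_i)-\psi_\eta(y_j)|$-powers, acting on $\eta$. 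Using $1-\Delta_{\beta_i}=\tfrac{\rho_i(\rho_i+4-\kappa)}{4\kappa}$ and $\Delta_{\tilde\beta}-\Delta_{\beta_n}=\tfrac{(\rho_n-\sum_{i=1}^n\rho_i)(\rho_n+\sum_{i=1}^n\rho_i+4-\kappa)}{4\kappa}$, this curve-side reweighting is exactly the $t\to\infty$ limit of the Schramm--Wilson martingale of Proposition~\ref{prop:martingale} for the passage from weights $(\rho_0,0,\dots,0,\sum_{i=1}^n\rho_i)$ to $(\rho_0,\rho_1,\dots,\rho_n)$ at force points $0^-;0^+,y_1,\dots,y_{n-1},1$ (the total forward weight $\sum_{i=0}^n\rho_i$ being preserved). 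Hence by Proposition~\ref{prop:martingale-1} and Lemma~\ref{lm:martingale} the reweighted law of $\eta$ converges to $\SLE_\kappa(\rho_-;\rho_0,\dots,\rho_n)$ times the normalizing factor $Z=\prod_{1\le i\le n-1}y_i^{\frac{\rho_i(\rho_-+\rho_0+2)}{2\kappa}}\prod_{1\le i<j\le n}(y_j-y_i)^{\rho_i\rho_j/2\kappa}$, which, using $y_0=0$, $y_n=1$, $\hat\rho_i=\rho_i$ for $1\le i\le n$ and $\hat\rho_0=\rho_-+\rho_0+2$, is precisely $\prod_{0\le i<j\le n}(y_j-y_i)^{\hat\rho_i\hat\rho_j/2\kappa}$. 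Collecting constants and invoking the conformal covariance of the Liouville field (Lemma~\ref{lm:lcft-H-conf}) and of $\SLE_\kappa(\underline\rho)$ then yields~\eqref{eq:lm-Weld-IG-1}; since $S$ itself is obtained from $S_0$ by the same "add typical points, change weights" procedure (without welding), this identifies the welded surface with the left-hand side of~\eqref{eq:lm-Weld-IG} and hence proves the lemma.

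\textbf{Main obstacle.} The hard part is the bookkeeping in Step~3 and the justification of the vague $\eps\to0$ limit. One must verify that the reweighting factor genuinely splits as claimed — the delicate point being the $Q\log|\psi_\eta'|$ terms arising when comparing the circle average of the ambient field at $y_i$ with that of $X$ at $\psi_\eta(y_i)$ — and that all the resulting $\psi_\eta'(y_i)$, $\psi_\eta(y_i)$, $\psi_\eta(y_i)-\psi_\eta(y_j)$ powers assemble exactly into the martingale $M_t$ of Proposition~\ref{prop:martingale}, so that Proposition~\ref{prop:martingale-1} and Lemma~\ref{lm:martingale} apply. This uses crucially the hypotheses $\sum_{i=0}^k\rho_i>\tfrac\kappa2-2$, which guarantee $\eta\cap(0,\infty)=\emptyset$ a.s.\ for both the seed and the target $\SLE_\kappa(\underline\rho)$, hence that $\psi_\eta$ extends continuously to the $y_i$ and to $1$ and that the stopping-time argument controlling the martingale limit goes through; the uniform integrability needed for vague convergence is then handled as in Proposition~\ref{prop:martingale-1} and Step~3 of Proposition~\ref{prop:disk+4-pt-disk}. (If some weight lay on the boundary of the admissible range one would instead work with $Q^-$-insertions as in~\cite{ASY22}, but this is not needed here.)
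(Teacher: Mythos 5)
The overall architecture of your proof matches the paper's: same seed quantum triangle $\QT(\rho_{n+1}+2,\rho_0+2,\rho_{n+1}-\rho_0+2)$, same use of Theorem~\ref{thm:disk+QT} to weld $\Md_2(W_-)$ and produce the interface $\eta\sim\SLE_\kappa(\rho_-;\rho_0,\rho')$ with $\rho'=\rho_{n+1}-\rho_0$, same plan of adding $n-1$ marked points and changing insertion weights $\gamma\to\beta_i$ and $\beta'\to\beta_n$, and same appeal to the Schramm--Wilson martingale (Propositions~\ref{prop:martingale}--\ref{prop:martingale-1}) to pass from $\SLE_\kappa(\rho_-;\rho_0,\rho')$ to $\SLE_\kappa(\rho_-;\rho_0,\dots,\rho_n)$. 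The $n=1$ base case is also handled correctly.

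However, there is a genuine gap in Steps~2--3. In Step~2 you sample the $n-1$ points from $\nu_\phi|_{(0,1)}$ \emph{without} the partition-function weight, whereas the paper's proof samples them from the weighted measure
\[
\mathbf{1}_{0<x_1<\dots<x_{n-1}<1}\,\prod_{0\le i<j\le n}(x_j-x_i)^{\frac{\rho_i\rho_j}{2\kappa}}\,\nu_X(dx_1)\cdots\nu_X(dx_{n-1}).
\]
You then claim in Step~3 that the Girsanov reweighting $\eps^{(\beta_i^2-\gamma^2)/4}e^{(\beta_i-\gamma)X_\eps(\psi_\eta(y_i))/2}$ produces, on the $\eta$-side, not only the local factors $|\psi_\eta'(y_i)|^{1-\Delta_{\beta_i}}$ but also "cross terms reconstructing the $|\psi_\eta(y_i)|$- and $|\psi_\eta(y_i)-\psi_\eta(y_j)|$-powers." This is not the case. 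When the reweighting factor is decomposed through the coordinate change $X=\psi_\eta\bullet_\gamma Y$ (exactly as in Step~3 of the proof of Proposition~\ref{prop:disk+4-pt-disk}), it splits into a piece that changes the $\gamma$-insertion on $Y$ at $y_i$ into a $\beta_i$-insertion (with no interaction across distinct $y_i,y_j$ — the constants $C_\bbH^{(\beta_k,s_k)_k}$ are absorbed in the definition of the Liouville field) and the single local factor $|\psi_\eta'(y_i)|^{1-\Delta_{\beta_i}}$. Nothing in this step produces the $|\psi_\eta(y_i)|^{\rho_0\rho_i/2\kappa}$ and $|\psi_\eta(y_i)-\psi_\eta(y_j)|^{\rho_i\rho_j/2\kappa}$ powers required to complete the $t\to\infty$ martingale of Proposition~\ref{prop:martingale-1}. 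Consequently your curve-side reweighting consists only of the $\psi_\eta'$-powers, which is not the full martingale, so the conclusion of Proposition~\ref{prop:martingale-1} does not apply and the factor $Z=\prod_{0\le i<j\le n}(y_j-y_i)^{\hat\rho_i\hat\rho_j/2\kappa}$ is not produced. For the same reason your closing remark that "$S$ itself is obtained from $S_0$ by the same procedure" is false as stated: sampling $n-1$ points from $\nu_\phi$ and changing weights on $S_0$ produces $\int\LF_\bbH^{(\beta_k,x_k)_k,(\beta_{n+1},\infty)}\,dx_1\cdots dx_{n-1}$ \emph{without} the factor $\prod(x_j-x_i)^{\rho_i\rho_j/2\kappa}$, which is not the measure~\eqref{eq:lm-Weld-IG}. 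The fix is to build the partition function into the sampling of the marked points (as a density against $\nu_X^{\otimes(n-1)}$), which is precisely what the paper does; with that modification your argument becomes the paper's proof.
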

\begin{proof}
    The proof is very close to Step 2 and Step 3 of the proof of Theorem~\ref{thm:main} and we only list the key steps. Consider the conformal welding of a weight $W_-$ quantum disk and a quantum triangle of weight $(\rho_{n+1}+2, \rho_0+2, \rho_{n+1}-\rho_0+2)$ (which could be embedded as $(\bbH,\phi,0,1,\infty)$ with $\phi\sim\LF_\bbH^{(\beta_0,0),(\beta',1), (\beta_{n+1},\infty)}$ with $\beta'=\gamma-\frac{\rho_{n+1}-\rho_0}{\gamma}$). By Theorem~\ref{thm:disk+QT},  the output surface $S'$ can be embedded as  {$(\bbH,X,\eta,0,1,\infty)$} with 
    \begin{equation}\label{eq:lm-weld-CCC}
        (X,\eta)\sim c\LF_\bbH^{(\hat\beta_0,0),(\beta',1), (\hat\beta_{n+1},\infty)}\times \SLE_\kappa(\rho_-;\rho_0,\rho')
    \end{equation}
  where $\rho'=\rho_{n+1}-\rho_0$ and the force points of $\eta$ is located at $0^-;0^+,1$. Let $D_\eta$ be the component of $\mathbb{H}\backslash \eta$ containing $1$, and $\psi_\eta$ the unique conformal map from $D_\eta$ to $\mathbb{H}$ fixing 0, 1 and $\infty$. Let $Y = X\circ\psi_\eta^{-1}+Q\log|(\psi_\eta^{-1})'|$ and $S_2 = (\bbH,Y,0,1,\infty)/\sim_\gamma$. By Lemma~\ref{lm:gamma-insertion}, as we sample $n-1$ marked points on $S_2$ from the measure (where $y_0=0$, $y_n=1$)
  \begin{equation}\label{eq:lm-weld-DDD}
           \mathds{1}_{0<y_1<...<y_{n-1}<1} \prod_{0\le i<j\le n}(y_j-y_i)^{\frac{\rho_i\rho_j}{2\kappa}}\nu_Y(dy_1)...\nu_Y(dy_{n-1}), 
  \end{equation}
    and let $x_i = \psi_\eta^{-1}(y_i)$, then the law of $(X,\eta, x_1,..., x_{n-1})$ is given by
    \begin{equation}
        \begin{split}
              c\int_{0<x_1<...<x_{n-1}<1} \prod_{0\le i<j\le n}(\psi_\eta(x_j)-\psi_\eta(x_i))^{\frac{\rho_i\rho_j}{2\kappa} }&\LF_\bbH^{(\hat\beta_0,0),(\gamma,x_k)_{1\le k\le n-1},(\beta',1), (\hat\beta_{n+1},\infty)}(dY)\\&\times\SLE_\kappa(\rho_-;\rho_0,\rho')(d\eta)  \, dx_1...dx_{n-1}.  
        \end{split}
    \end{equation}
    Meanwhile, the surface $S$ is now the conformal welding of a weight $W_-$ quantum disk with a surface $S_2' = (\bbH,Y,0,y_1,...,y_{n-1},1,\infty)\sim_\gamma$ whose law can be written as
    \begin{equation}
         \int_{0<y_1<...<y_{n-1}<1} \prod_{0\le i<j\le n}(y_j-y_i)^{\frac{\rho_i\rho_j}{2\kappa} }\LF_\bbH^{(\beta_0,0),(\gamma,y_k)_{1\le k\le n-1},(\beta',1), (\beta_{n+1},\infty)}(dY)\, dy_1...dy_{n-1}.
    \end{equation}

    Now we weight the law of $(X,\eta, x_1,...,x_{n-1})$ by 
    \begin{equation}\label{eq:lm-weld-EEEE}
        \prod_{i=1}^{\ell-1}\big(\e^{\frac{\beta_i^2-\gamma^2}{4}}e^{\frac{\beta_i-\gamma}{2}Y_\e(y_i)} \big)\cdot \e^{\frac{\beta_n^2-(\beta')^2}{4}}e^{\frac{\beta_n-\beta'}{2}Y_\e(1)},
    \end{equation}
    applying Lemma~\ref{lm:lf-change-weight} to the surface $S_2'$, the law of $(X,\eta, x_1,...,x_{n-1})$ is the desired output surface in the conformal welding picture in the Lemma. On the other hand, following the same proof of Step 3 of Proposition~\ref{prop:disk+4-pt-disk}, as $\e\to0$ the law of $(X,\eta, x_1,...,x_{n-1})$ converges vaguely to 
    \begin{equation}
        \begin{split}
              c\int_{0<x_1<...<x_{n-1}<1}& \prod_{0\le i<j\le n}(\psi_\eta(x_j)-\psi_\eta(x_i))^{\frac{\rho_i\rho_j}{2\kappa} }\cdot \prod _{i=1}^{n-1}|\psi_\eta'(x_i)|^{1-\Delta_{\beta_i}}\cdot |\psi_\eta'(1)|^{\Delta_{\beta'}-\Delta_{\beta_n}}\\& \LF_\bbH^{(\hat\beta_k,x_k)_{0\le k\le n-1},(\hat\beta_{n+1},\infty)}(dX)\times\SLE_\kappa(\rho_-;\rho_0,\rho')(d\eta)  \, dx_1...dx_{n-1}.  
        \end{split}
    \end{equation}
    Since $1-\Delta_{\beta_i} = \frac{\rho_i(\rho_i+4-\kappa)}{4\kappa}$, and $\Delta_{\beta'}-\Delta_{\beta_n} = \frac{(\rho_n-\rho')(\rho_n+\rho'+4-\kappa)}{4\kappa}$, it follows from Proposition~\ref{prop:martingale-1} that when weighted by (note $\psi_\eta(0)=0$ and $\psi_\eta(y_n)=1$) $$\prod_{0\le i<j\le n}(\psi_\eta(x_j)-\psi_\eta(x_i))^{\frac{\rho_i\rho_j}{2\kappa} }\cdot \prod _{i=1}^{n-1}|\psi_\eta'(x_i)|^{1-\Delta_{\beta_i}}\cdot |\psi_\eta'(1)|^{\Delta_{\beta'}-\Delta_{\beta_n}},$$ the law of an $\SLE_\kappa(\rho_-;\rho_0,\rho')$ process with force points $0^-;0^+,1$ becomes $\prod_{0\le i<j\le n}(x_j-x_i)^{\frac{\hat\rho_i\hat\rho_j}{2\kappa} }$ times the   $\SLE_\kappa(\rho_-;\rho_0,...,\rho_n)$   with force points $0^-;0^+,x_1,...,x_{n-1},1$ and the claim thus follows.
\end{proof}

\begin{figure}
    \centering
    \begin{tabular}{ccc} 
		\includegraphics[scale=0.62]{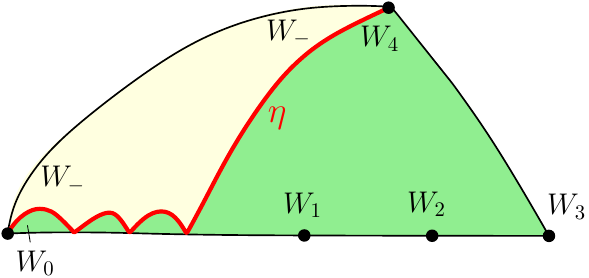}
		& \qquad   &
		\includegraphics[scale=0.7]{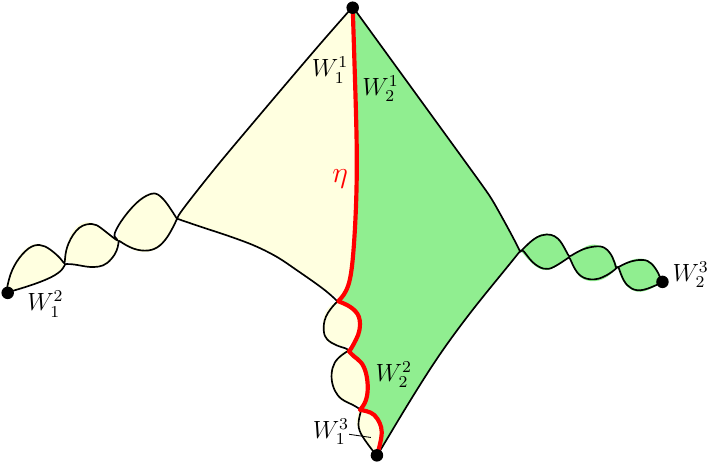}
	\end{tabular}
    \caption{\textbf{Left}: The conformal welding picture in Lemma~\ref{lm:weld-IG-A} with $n = 3$. Here $W_j=\rho_j+2$ for $j=0,1,...,4$. \textbf{Right}: The conformal welding picture in Corollary~\ref{corollary:IG} with $W_1^2,W_1^3,W_2^3<\frac{\gamma^2}{2}$.}
    \label{fig:weld-polygon}
\end{figure}

{We also have the following extension of Lemma~\ref{lm:weld-IG}  {to the case where $\rho_0$ is   less than $\frac{\kappa}{2}-2$.}
\begin{lemma}\label{lm:weld-IG-A}
    Let $\gamma\in(0,2)$, $\kappa=\gamma^2$, $n\ge1$, and $\rho_0\in(-2,\frac{\kappa}{2}-2)$. Set $W_0 = \rho_0+2$.  Let $W_->\frac{\gamma^2}{2}-W_0$ and $\rho_1,...,\rho_{n}\in\bbR$, such that for each {$0\le k\le n$}, $\sum_{i=0}^k\rho_i>\frac{\kappa}{2}-2$. Define $\rho_{n+1}, W_-,(\beta_k)_{0\le k\le n+1}$ in the same way as Lemma~\ref{lm:weld-IG}.  {Let $\mu_0$ be the law ~\eqref{eq:lm-Weld-IG}, $((\phi,y_1,...,y_{n-1}),S_0)$ be sampled from $\mu_0\times\Md_2(W_0)$, and let $S$ be the concatenation of}  $S_0$ with $(\bbH,\phi,0,y_1,...,y_{n-1},1,\infty)/\sim_\gamma$ at the  {marked point corresponding to} $0$. Let $\hat{y}$ be the other endpoint of $S_0$.   Then if we conformally weld a weight $W_-$ quantum disk along the edge between $-\infty$ and $\hat{y}$ of $S$, then an embedding of the output curve-decorated quantum surface $(S',\eta)$ is given by { some positive deterministic constant times}~\eqref{eq:lm-Weld-IG-1} as in Lemma~\ref{lm:weld-IG}.
\end{lemma}}

\begin{proof}
    The proof is almost identical to that of Lemma~\ref{lm:weld-IG}. Consider the conformal welding of a weight $W_-$ quantum disk and a quantum triangle of weight $(\rho_{n+1}+2, \rho_0+2, \rho_{n+1}-\rho_0+2)$ as in Lemma~\ref{lm:weld-IG}, and embed the output surface $S'$   as $(\bbH,X,\eta,0,1,\infty)$ following~\eqref{eq:lm-weld-CCC}.  Let $D_\eta$ be the component of $\mathbb{H}\backslash \eta$ containing $1$, $\sigma$ and $\xi$  be the leftmost and rightmost point of $\partial D\cap\bbR$, and $\psi_\eta$ the unique conformal map from $D_\eta$ to $\mathbb{H}$ sending $(\sigma,1,\xi)$ to $(0,1,\infty)$. Let $Y$ and $S_2$ be the same as in the proof of Lemma~\ref{lm:weld-IG}, and sample $n-1$ marked points $(y_1,...,y_{n-1})$ on $S_2$ from the measure~\eqref{eq:lm-weld-DDD}. Let $x_i = \psi_\eta^{-1}(y_i)$, then the law of $(X,\eta, x_1,...,x_{n-1})$ is given by
    \begin{equation}
        \begin{split}
              c\int_{0<x_1<...<x_{n-1}<1} \mathds{1}_{x_1,...,x_{n-1}\in \partial D_\eta}\prod_{0\le i<j\le n}(\psi_\eta(x_j)-\psi_\eta(x_i))^{\frac{\rho_i\rho_j}{2\kappa} }&\LF_\bbH^{(\hat\beta_0,0),(\gamma,y_k)_{1\le k\le n-1},(\beta',1), (\hat\beta_{n+1},\infty)}(dX)\\&\times\SLE_\kappa(\rho_-;\rho_0,\rho')(d\eta)  \, dx_1...dx_{n-1}.  
        \end{split}
    \end{equation}
    Now we weight the law of $(X,\eta, x_1,...,x_{n-1})$ by~\eqref{eq:lm-weld-EEEE}, and the rest of the proof is the same as in Lemma~\ref{lm:weld-IG} where we apply Proposition~\ref{prop:martingale-1} (and note that by assumption the $\SLE_\kappa(\rho_-;\rho_0,...,\rho_n)$ curve will a.s.\ not hit $[x_1,x_n]$).
\end{proof}

\begin{proof}[Proof of Theorem~\ref{thm:IG}]
    When $n=2$, the theorem follows directly by applying Theorem~\ref{thm:disk+QT} twice. In this setting, the marginal law of $\eta_1$ is $\SLE_\kappa(W_0-2;W_1^2-2,W_1^3+W_2-2)$ with force points $0^-;0^+,1$, while given $\eta_1$, $\eta_2$ is the $\SLE_\kappa(W_1^3-2,W_1^2-2;W_2-2)$ curve in the right component of $\bbH\backslash\eta_1$ with force points $1^-,0;1^+$, which agrees with the $\mathrm{IG}$ description from~\cite[Theorem 1.1]{MS16a}. 

    Now suppose the claim has been proved for $n$. For $n+1$, we work on the cases where $W_0+W_1^2$ is less, equal or larger than 2. As we shall see from the proof, when   $W_0+W_1^2=2$,  our theorem may be achieved by adding a marked point in the case for $n$ since the weight 2 is typical with respect to the quantum length measure. When $W_0+W_1^2$ is less or greater than 2, then we shall apply Lemma~\ref{lm:weld-IG} to cut or glue a quantum disk to match the corresponding picture.  {Throughout the proof, $c$ is some positive constant that may vary line by line.}

    \emph{Case 1. $W_0+W_1^2 = 2$.} Then $\rho_1=0$. Consider the conformal welding of samples from $\Md_2(W_1^3),\\\QT(W_2^1,W_2^2,W_2^3),...,\QT(W_n^1,W_n^2,W_n^3),\Md_2(W_{n+1})$ in that order. Then these weights satisfy the constraint in the theorem statement and therefore by our induction hypothesis, the output curve-decorated quantum surface can be embedded as $(\bbH, X', x_2', ..., x_{n+1}', \eta_2', ..., \eta_{n+1}')$ whose law is  
    \begin{equation}
        c\mathds{1}_{0<x_3'<...<x_n'<1}\prod_{2\le i<j\le n+1 }(x_j'-x_i')^{\frac{\rho_i\rho_j}{2\kappa}} \LF_\bbH^{(\beta_j,x_j')_{2\le j\le n+1},(\beta_\infty,\infty)}(dX')\times \IG_{\underline{x}',\underline{\lambda}',\underline{\theta}' }^\#(d\eta_2'...d\eta_{n+1}')  dx_3'...dx_{n}'
    \end{equation}
    where $\underline{x}' = (0,x_3',...,x_n',1)$, $\underline{\lambda}' = (\lambda_1-\lambda W_1^1,...,\lambda_{n+1}-\lambda W_1^1)$ and $\underline{\theta}' = (0,-\frac{\lambda W_2^1}{\chi}, ..., -\frac{\lambda(W_2^1+...+W_{n+1}^1)}{\chi})$, with the convention $x_2'=0,x_{n+1}'=1$. Then as we weight the law of $(X', x_2', ..., x_{n+1}', \eta_2', ..., \eta_{n+1}')$ by $\nu_{X'}((-\infty,0))$ and uniformly sample a point $x'$ from the probability measure proportional to $\nu_{X'}|_{(-\infty,0)}$, then by Lemma~\ref{lm:gamma-insertion}, the law of $(X', x',x_2', ..., x_{n+1}', \eta_2', ..., \eta_{n+1}')$ is 
    \begin{equation}\label{eq:thmpf-IG-1}
        c\mathds{1}_{x'<0<x_3'<...<x_n'<1}\prod_{2\le i<j\le n+1 }(x_j'-x_i')^{\frac{\rho_i\rho_j}{2\kappa}} \LF_\bbH^{(\gamma,y),(\beta_j,x_j')_{2\le j\le n+1},(\beta_\infty,\infty)}(dX')\times \IG_{\underline{x}',\underline{\lambda}',\underline{\theta}' }^\#(d\eta_2'...d\eta_{n+1}')  dx_3'...dx_{n}'.
    \end{equation}
    Consider the conformal map $f_{x'}(z):=\frac{z-x'}{1-x'}$. Let $x_j = f_{x'}(x_j')$, $X = X'\circ f_{x'}^{-1}+Q\log|(f_{x'}^{-1})'|$, $\eta_j = f_{x'}\circ\eta_j'$ for $2\le j\le n$. Then $\frac{\partial (x',x_3', ..., x_n') }{\partial (x_2, ..., x_n)} = (1-x_2)^{-n}$, and by Lemma~\ref{lm:lcft-H-conf}, the law of $(X, x_2, ..., x_n,\eta_2,...,\eta_n)$ is 
    \begin{equation}\label{eq:thmpf-IG-2}
       c \mathds{1}_{0<x_2<x_3<...<x_n<1}\prod_{2\le i<j\le n+1 }(x_j-x_i)^{\frac{\rho_i\rho_j}{2\kappa}} \LF_\bbH^{(\gamma,0),(\beta_j,x_j)_{2\le j\le n+1},(\beta_\infty,\infty)}(dX)\times \IG_{\underline{x}'',\underline{\lambda}',\underline{\theta}' }^\#(d\eta_2...d\eta_{n+1})  dx_2...dx_{n}
    \end{equation}
    with $\underline{x}''=(x_2,...,x_n,1)$. Here we used the fact $\sum_{j=2}^{n+1}\Delta_{\beta_j}-\Delta_{\beta_\infty} - \sum_{2\le i<j\le n+1}\frac{\rho_i\rho_j}{2\kappa} - n=0$. Given $(\eta_2,...,\eta_{n+1})$, if we draw an $\SLE_\kappa(W_0-2;W_1^2-2,W_1^3-2)$ curve $\eta_1$ with force points $0^-;0^+,x_2$ in the left component of $\bbH\backslash\eta_2$, then the law $(\eta_1,...,\eta_{n+1})$ agrees with the flow line picture $\IG_{\underline x, \underline \lambda'', \underline\theta''}^\#$ where $\lambda''_i = \lambda_i-\lambda W_1^1$ and $\theta_i'' = \theta_i+\frac{\lambda W_1^1}{\chi}$ for $0\le i\le n+1$, which further agrees with $\IG_{\underline x, \underline \lambda, \underline\theta}^\#$ by definition. This implies that as we conformally weld  samples from $\Md_{2,\bullet}(W_1^3),\QT(W_2^1,W_2^2,W_2^3),...,\QT(W_n^1,W_n^2,W_n^3),\Md_2(W_{n+1})$ in that order and draw the interface $\eta_1$ on the weight $W_1^3$ triply marked quantum disk, the law of the curve-decorated quantum surface has embedding $(X, x_1, ..., x_{n+1}, \eta_1, ..., \eta_{n+1})$ whose law is a constant times the right hand side of~\eqref{eq:thm-IG} for $n+1$. On the other hand, recall from Lemma~\ref{lem:QT(W,W,2)} that $\Md_{2,\bullet}(W) = C\QT(W,W,2)$, and therefore by Theorem~\ref{thm:disk+QT}, we may first weld the weight $W_0$ quantum disk with the weight $(W_1^1,W_1^2,W_1^3)$ quantum triangle together, which gives a weight $(W_1^3,2,W_1^3)$ quantum triangle decorated with an $\SLE_\kappa(W_0-2;W_1^2-2,W_1^3-2)$ curve. This concludes the proof for $W_0+W_1^2 = 2$.

    \emph{Case 2. $W_0+W_1^2>2$.} First assume $W_1^2=2$. Consider the conformal welding of samples from $\Md_{2,\bullet}(W_1^3),\QT(W_2^1,W_2^2,W_2^3),...,\QT(W_n^1,W_n^2,W_n^3),\Md_2(W_{n+1})$ as in the previous step. Then  {as argued in the previous case}, the output surface has embedding specified by~\eqref{eq:thmpf-IG-2}. Since $\rho_2, ..., \rho_{n+1}>\frac{\kappa}{2}-2$ and $\sum_{i=2}^k \rho_i>\frac{\kappa}{2}-2$ for each $2\le k\le n+1$,  by Lemma~\ref{lm:weld-IG}, if we weld the weight $W_0$ quantum disk to the left, then the output surface has embedding $(\bbH,Y,y_1,..., y_{n+1},\eta_1,...,\eta_{n+1})$ with $y_1=0$ and $y_{n+1}=1$, where the law of $(Y,y_1,..., y_{n+1},\eta_1,...,\eta_{n+1})$ is
    \begin{equation}
        c\mathds{1}_{0<y_2<...<y_n<1}\prod_{1\le i<j\le n+1}(y_j-y_i)^{\frac{\rho_i\rho_j}{2\kappa} }\LF_\bbH^{(\beta_i,y_i)_{1\le i\le n+1}, (\beta_\infty,\infty)}(dY)\times \mathcal{L}_{\underline y}(d\eta_1...d\eta_{n+1})dy_2...dy_{n}
    \end{equation}
    where a sample from $\mathcal{L}_{\underline y}$ is obtained by (i) first sample an $\SLE_\kappa(W_0-2;\rho_2,...,\rho_{n+1})$ process $\eta_1$ with force points $0^-;y_2,...,y_{n+1}$ and then (ii) sample $(\eta_2,...,\eta_{n+1})$ in the right component of $\bbH\backslash\eta_1$ from $ \IG_{\underline{x}',\underline{\lambda}',\underline{\theta}'}^\#$ with $\underline{\lambda}',\underline{\theta}'$ given in Step 1 and $\underline {x}' = \psi_{\eta_1}(\underline y)$ (where $\psi_{\eta_1}$ is the conformal map from the right component of $\bbH\backslash\eta_1$ fixing 0,1,$\infty$). By Theorem~\ref{thm:IGflowline}, this implies that for fixed $\underline y$, we have $\mathcal{L}_{\underline y} = \IG_{\underline y, \underline \lambda, \underline\theta}^\#$ and thus finishes the $W_1^2=2$ case. 

    \begin{figure}
    \centering
    \begin{tabular}{ccc} 
		\includegraphics[scale=0.45]{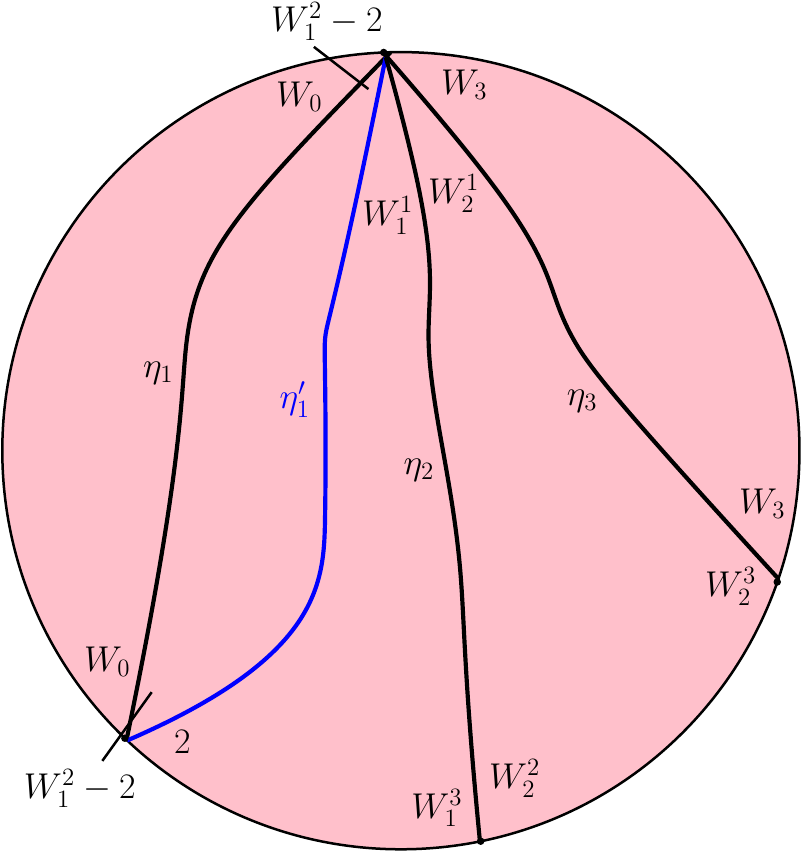}
		& \qquad \qquad \qquad &
		\includegraphics[scale=0.45]{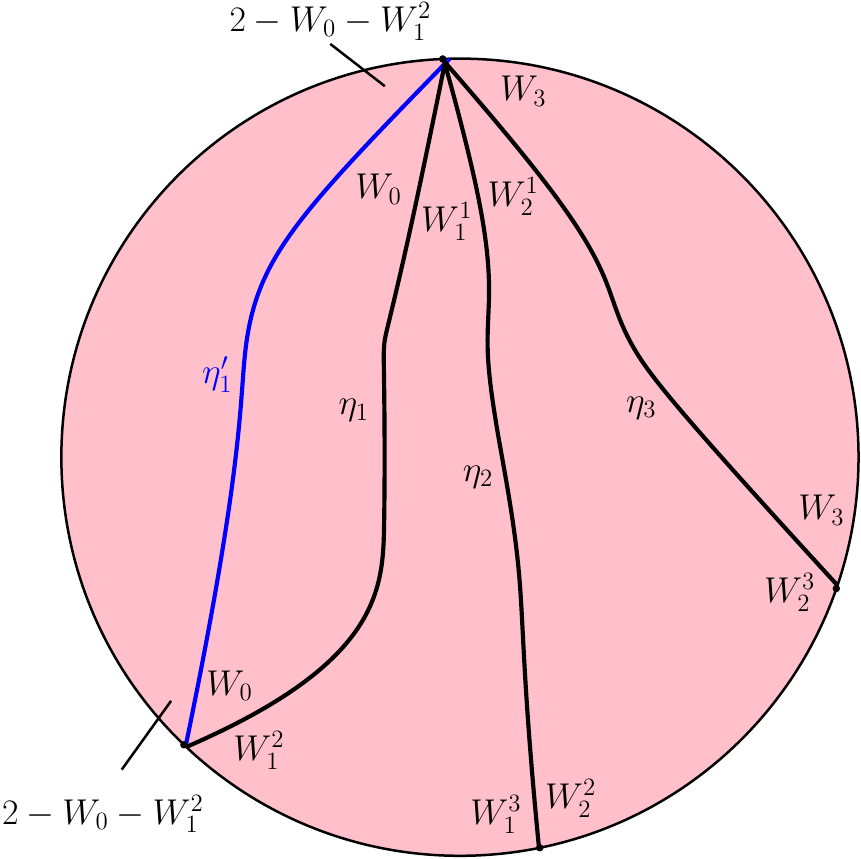}
	\end{tabular}
 \caption{Proof of Theorem~\ref{thm:IG}  for the induction step with $n+1=3$. \textbf{Left:} Case 2 when $W_0+W_1^2>2$ and $W_1^2>2$. \textbf{Right:} Case 3 when $W_0+W_1^2<2$.}\label{fig:thmpf-ig}
 \end{figure}
    
    If $W_1^2>2$, then consider the conformal welding of samples from $\Md_{2}(W_0),\Md_2(W_1^2-2), \QT(W_1^3,2,\\W_1^3),\QT(W_2^1,W_2^2,W_2^3),...,\QT(W_n^1,W_n^2,W_n^3),\Md_2(W_{n+1})$ (note $W_1^1-(W_1^2-2)=W_1^3$). See the left panel of Figure~\ref{fig:thmpf-ig} for an example when $n=3$. By  Theorem~\hyperref[thm:disk-welding]{A}, we may first weld the left two quantum disks together, where we obtain a weight $W_0+W_1^2-2$ quantum disk decorated with an $\SLE_\kappa(W_0-2;W_1^2-4)$ interface $\eta_1$. Then applying the result for $W_1^2=2$ case, it follows that the output surface we get has law 
      \begin{equation}\label{eq:thmpf-IG-3}
        c\mathds{1}_{0<y_2<...<y_n<1}\prod_{1\le i<j\le n+1}(y_j-y_i)^{\frac{\rho_i\rho_j}{2\kappa} }\LF_\bbH^{(\beta_i,y_i)_{1\le i\le n+1}, (\beta_\infty,\infty)}\times \mathcal{L}_{\underline y}'(d\eta_1d\eta_1'...d\eta_{n+1})dy_2...dy_{n}
    \end{equation}
    where a sample $(\eta_1,\eta_1',\eta_2,...,\eta_{n+1})$ from $\mathcal{L}_{\underline y}'$ is constructed by (i) sample $(\eta_1',\eta_2,...,\eta_{n+1})$ from $\IG_{\underline y, \underline\lambda''',\underline\theta'''}^\#$ with $\lambda'''_i = \lambda_i+\lambda(2-W_1^2)$ and $\theta'''_i = \theta_i-\frac{\lambda(2-W_1^2)}{\chi}$ and (ii) sample $\eta_1$ as an  $\SLE_\kappa(W_0-2;W_1^2-4)$ process in the left component of $\bbH\backslash\eta_1'$. On the other hand, we may apply Theorem~\ref{thm:disk+QT} to weld the weight $W_1^2-2$ quantum disk together with the weight $(W_1^3,2,W_1^3)$ quantum triangle, which gives the desired welding picture in the theorem statement once we forget the interface $\eta_1'$.  Therefore by Theorem~\ref{thm:IGflowline}, the jonit law of $(\eta_1,...,\eta_{n+1})$ is $\IG_{\underline y, \underline\lambda,\underline\theta}^\#$ and the claim  follows for $W_1^2>2$. The $W_1^2<2$ case is treated analogously, where we consider the conformal welding of samples from $\Md_{2}(W_0+W_1^2-2),\Md_2(2-W_1^2), \QT(W_1^1,W_1^2,W_1^3),\QT(W_2^1,W_2^2,W_2^3),...,\QT(W_n^1,W_n^2,W_n^3),\Md_2(W_{n+1})$ and first weld the weight $2-W_1^2$ quantum disk with the weight  $W_1^1,W_1^2,W_1^3$ quantum triangle and then apply the result for $W_1^2=2$ case. We omit the details.

    \emph{Case 3. $W_0+W_1^2<2$.} Consider the conformal welding of samples from $\Md_2(2-W_0-W_1^2), \Md_2(W_0), \\\QT(W_1^1,W_1^2,W_1^3),...,\QT(W_{n}^1,W_{n}^2,W_n^3),\Md_2(W_{n+1})$. See the right panel of Figure~\ref{fig:thmpf-ig} for an example when $n=3$. Then by Theorem~\hyperref[thm:disk-welding]{A} and Case 1, the output surface has embedding $(\bbH,X,y_1,..., y_{n+1},\eta_1',\eta_1,...,\\ \eta_{n+1})$ with $y_1=0$ and $y_{n+1}=1$, where the law of $(X,y_1,..., y_{n+1},\eta_1,..., \eta_{n+1})$ is
    \begin{equation}\label{eq:thmpf-IG-4}
        c\mathds{1}_{0<y_2<...<y_n<1}\prod_{1\le i<j\le n+1}(y_j-y_i)^{\frac{\rho_i'\rho_j'}{2\kappa} }\LF_\bbH^{(\beta_i',y_i)_{1\le i\le n+1}, (\beta_\infty',\infty)}\times \mathcal{L}_{\underline y}''(d\eta_1'd\eta_1...d\eta_{n+1})dy_2...dy_{n}
    \end{equation}
    where $\rho_1'=0$, $\beta_1'=\gamma$, $\beta_\infty' = \beta_\infty-\frac{2-W_0-W_1^2}{\gamma}$, $\rho_i'=\rho_i$ and $\beta_i'=\beta_i$ for $2\le i\le n+1$, while a sample from  $ \mathcal{L}_{\underline y}''$ is produced by (i) sample an $\SLE_\kappa(-W_0-W_1^2;\rho_1,\rho_2,...,\rho_{n+1})$ process $\eta_1'$ with force points $0^-;0^+,y_2,...,y_{n+1}$ and (ii) sample $(\eta_1,...,\eta_{n+1})$ from $\psi_{\eta_1'}^{-1}\circ\IG_{\underline x, \underline \lambda, \underline\theta}^\#$ where $\underline {x} = \psi_{\eta_1'}(\underline y)$. On the other hand, by Lemma~\ref{lm:weld-IG}, if we conformally weld a weight $2-W_0-W_1^2$ quantum disk to the $(-\infty,0)$ edge of the right hand side of~\eqref{eq:thm-IG} (for $n+1$), then the output curve-decorated quantum surface equals in law with~\eqref{eq:thmpf-IG-4}. This implies that the conformal welding of samples from $\Md_2(W_0), \QT(W_1^1,W_1^2,W_1^3),...,\QT(W_{n}^1,W_{n}^2,\\W_n^3),\Md_2(W_{n+1})$ equals in law with the right hand side of~\eqref{eq:thm-IG} (for $n+1$) and thus concludes the induction step and the proof.
 \end{proof}

As a consequence, we have the following result regarding the conformal welding of two quantum triangles.
\begin{corollary}\label{corollary:IG}
    Let $W_1^1,W_1^2,W_1^3,W_2^1,W_2^2,W_2^3>0$, such that $W_1^1+W_2^1, W_1^1+W_2^2, W_2^1+W_1^3, W_1^3+W_2^2>\frac{\gamma^2}{2}$, and $W_1^3+W_1^2-W_1^1 = W_2^3+W_2^2-W_2^1=2$. Further assume that $W_1^2, W_2^3\neq\frac{\gamma^2}{2}$. Let $\beta_1=\min\{\gamma+\frac{2-W_1^2}{\gamma},\frac{2+W_1^2}{\gamma}\}, \beta_2 = \gamma+\frac{2-W_1^3-W_2^2}{\gamma}, \beta_3 = \min\{\gamma+\frac{2-W_2^3}{\gamma},\frac{2+W_2^3}{\gamma}\}, \beta_\infty = \gamma+\frac{2-W_1^1-W_2^1}{\gamma}$. Consider the conformal welding of a weight $(W_1^1,W_1^2,W_1^3)$ quantum triangle and a weight $(W_2^1,W_2^2, W_2^3)$ quantum triangle as in Figure~\ref{fig:weld-polygon}. Let $\rho_i^j = W_i^j - 2$ for $i=1,2$ and $j=2,3$. If $W_1^2, W_2^3>\frac{\gamma^2}{2}$, then the output curve-decorated quantum surface $(S,\eta)$ can be embedded as $(\bbH,\phi,\eta,0,x,1,\infty) $ whose law can be described by 
    \begin{equation}\label{eq:weld-polygon-thin}
        c\mathds{1}_{x\in(0,1)} \LF_\bbH^{(\beta_1,0),(\beta_2,x),(\beta_3,1),(\beta_\infty,\infty)}(d\phi)\times x^{\frac{\rho_1^2(2+\rho_1^3+\rho_2^2)}{2\kappa}}(1-x)^{\frac{\rho_2^3(2+\rho_1^3+\rho_2^2)}{2\kappa}}\SLE_\kappa(\rho_1^3,\rho_1^2;\rho_2^2,\rho_2^3)(d\eta)\, dx
    \end{equation}
    where $\eta$ is from $x$ to $\infty$ with the force points  located at $x^-,0;x^+,1$.  {If $W_1^2,W_2^3<\frac{\gamma^2}{2}$, let $\mu_0$ be the law~\eqref{eq:weld-polygon-thin} and $((\phi_0,\eta_0,x),S_1,S_2)\sim \mu_0\times \Md_2(W_1^2)\times\Md_2(W_2^3)$. Then the output curve-decorated quantum surface $(S,\eta)$ is the concatenation of $((S_0,\eta_0),S_1,S_2)$ with $(S_0,\eta_0) = (\bbH,\phi_0,\eta_0,0,x,1,\infty)/\sim_\gamma$ at the points $0$ and $1$ correspondingly.   Similar description holds   when $W_1^2>\frac{\gamma^2}{2}$,  $W_2^3<\frac{\gamma^2}{2}$ or $W_1^2<\frac{\gamma^2}{2}$,  $W_2^3>\frac{\gamma^2}{2}$ (via concatenation of  thin quantum disks) as well.}
\end{corollary}
\begin{proof}
   Pick $W_0,W_3>\frac{\gamma^2}{2}$, and consider the conformal welding of samples from 
$\Md_2(W_0), \QT(W^1_1,W_1^2,W_1^3), \\ \QT(W_{2}^1,W_{2}^2,W_{2}^3),\Md_2(W_3)$  as in Figure~\ref{fig:thm-IG}. Then the conclusion follows by first applying Theorem~\ref{thm:IG}, and then applying Lemma~\ref{lm:weld-IG} or Lemma~\ref{lm:weld-IG-A} twice to cut off the weight $W_0$ and the weight $W_3$ quantum disk  as in Case 3 in the proof of Theorem~\ref{thm:IG}. We omit the details.
\end{proof}

\section{Boundary Green's function via conformal welding}\label{sec:green}
In this section we prove Theorem~\ref{thm:SLE-Green} and Theorem~\ref{thm:green-rho} which  gives the boundary  Green's function of $\SLE$ from conformal welding of LQG disks. We begin with Theorem~\ref{thm:green-rho}, and Theorem~\ref{thm:SLE-Green} follows analogously.

\begin{figure}
    \centering
    \begin{tabular}{cccc} 
		\includegraphics[scale=0.36]{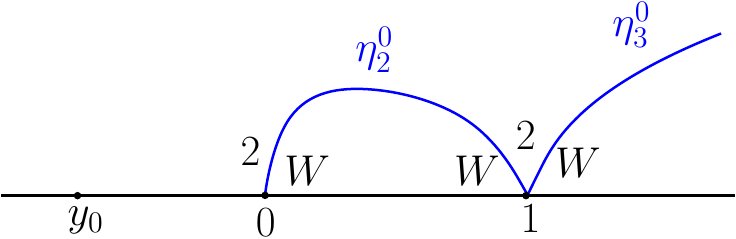}
		& \qquad &
		\includegraphics[scale=0.36]{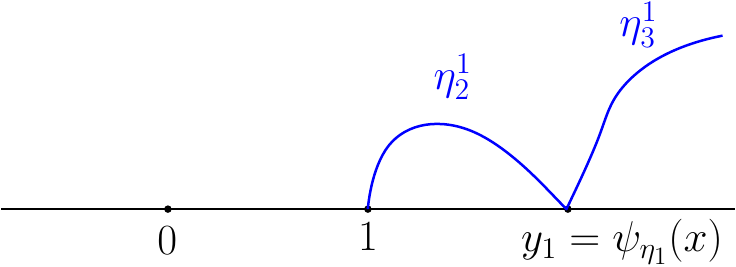}
  \qquad &
		\includegraphics[scale=0.36]{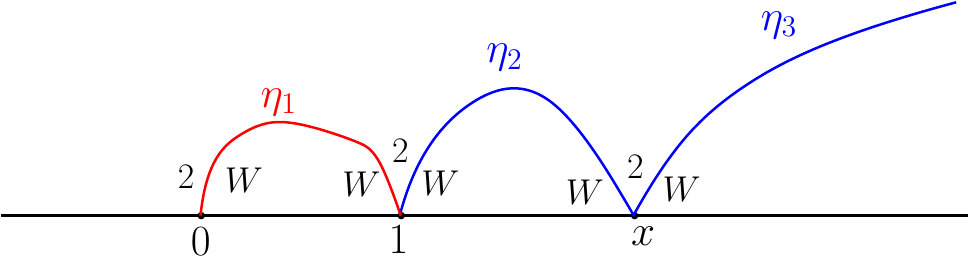}
	\end{tabular}
 \caption{Proof of Theorem~\ref{thm:green-rho}. \textbf{Left:} Conformal welding of two samples from $\Md_2(W)$ and a sample from $\QD_{3}$. We add a marked point $y_0$ from the LQG length measure on $(-\infty,0)$. \textbf{Middle:} We take the conformal map $f_{y_0}(z) = \frac{y_0-z}{y_0}$.  \textbf{Right:} We weld a weight $W$ quantum disk along the $(0,1)$ edge of the middle picture.}\label{fig:pfgreen-rho}
 \end{figure}

\begin{proof}[Proof of Theorem~\ref{thm:green-rho}]
    We begin with the conformal welding of two samples from $\Md_2(W)$ and a sample from $\QD_{3}$, embedded as $(\bbH, Y_0,\eta_2^0,\eta_3^0, 0,1,\infty)$ as in the left panel of Figure~\ref{fig:pfgreen-rho}. By Theorem~\ref{thm:disk+QT}, $\eta_2^0$ is an $\SLE_\kappa(2+\rho;\rho)$ process from $0$ to 1 with force points located at $\infty;0^+$, while given $\eta_2^0$, $\eta_3^0$ is an $\SLE_\kappa(\rho)$ curve from 1 to $\infty$ in the unbounded component of $\bbH\backslash\eta_2^0$. Note that by~\cite[Theorem 3]{SW05}, the law of $\eta_2^0$ is also the same as an $\SLE_\kappa(\rho,\kappa-8-2\rho)$ from 0 to $\infty$ with force points at $0^+,1$. Then we weight the law of $(Y_0,\eta_2^0,\eta_3^0)$ by $\nu_{Y_0}|_{(-\infty,0)}$ and sample a marked point $y_0$ on $(-\infty,0)$ according to $\nu_{Y_0}|_{(-\infty,0)}^\#$. By Lemma~\ref{lm:gamma-insertion}, the joint law of $(Y_0,y_0)$ is a constant times
    \begin{equation}
        \int_{-\infty}^0 \LF_\bbH^{(\gamma,y_0),(\beta_\rho, 0), (\beta_{2,\rho}, 1), (\beta_\rho,\infty)}(dY_0) dy_0.
    \end{equation}
Consider the conformal map $f_{y_0}(z) = \frac{y_0-z}{y_0}$. Let $Y_1 =  f_{y_0}\bullet_\gamma Y_0$ and $y_1 = f_{y_0}(1) = \frac{y_0-1}{y_0}$, $\eta_2^1 = f_{y_0}\circ\eta_2^0$ and $\eta_3^1 = f_{y_0}\circ \eta_3^0$. By Lemma~\ref{lm:lcft-H-conf}, the joint law of $(Y_1,y_1)$ is a constant multiple of
\begin{equation}\label{eq:pf-thm-green-rho-1}
       \mathds{1}_{y_1\in(1,\infty)}  (y_1-1)^{\Delta_{\beta_{2,\rho}}-1}\LF_\bbH^{(\gamma,0),(\beta_\rho, 1), (\beta_{2,\rho}, y_1), (\beta_\rho,\infty)}(dY_1) dy_1.
    \end{equation}

Now consider the conformal welding of a weight $W$ quantum disk and a weight $(2+W,2+W,2)$ quantum triangle embedded as $(\bbH, X,\eta_1, 0,1, \infty)$ with the interface $\eta_1$ running from $0$ to $1$. By Theorem~\ref{thm:disk+QT} and~\cite[Theorem 3]{SW05}, the law of $\eta_1$ is the $\SLE_\kappa(\rho,\kappa-8-2\rho)$ from 0 to $\infty$ with force points at $0^+,1$. Let $D_{\eta_1}$ be the unbounded component of $\bbH\backslash\eta_1$ and $\psi_{\eta_1}:D_{\eta_1}\to\bbH$ be the conformal map fixing 0,1,$\infty$. Let $D_{\eta_1}^L=\bbH\backslash D_{\eta_1}$, and $Z = (D_{\eta_1}^L,X,0,1)$. Also let $Y = \psi_{\eta_1}\bullet_\gamma X$. By  Lemma~\ref{lm:gamma-insertion}, as we sample a point $y_1'$ from the measure $(y_1'-1)^{\Delta_{\beta_{2,\rho}}-1}\nu_Y|_{(1,\infty)}(dy_1')$  (which induces a weighting over the law of $(X,\eta_1)$), the joint law of $(Z,Y,y_1')$ is given by
\begin{equation}
    \mathds{1}_{y_1'\in(1,\infty)} \int_0^\infty \Md_2(W;\ell)(dZ) (y_1'-1)^{\Delta_{\beta_{2,\rho}}-1}\LF_{\bbH,\ell}^{(\gamma,0),(\beta_\rho, 1), (\gamma, y_1'), (\beta_\rho,\infty)}(dY)\,d\ell  d{y_1'}
\end{equation}
where $\ell$ indicate the quantum length $\nu_Y(0,1)$. Then if we set $x_1 = \psi_{\eta_1}^{-1}(y_1')$,  the joint law of $(X,\eta_1,x_1)$ is 
\begin{equation}\label{eq:pf-thm-green-rho-2}
        \mathds{1}_{x_1\in(1,\infty)}    (\psi_{\eta_1}(x_1)-1)^{\Delta_{\beta_{2,\rho}}-1}\LF_\bbH^{(\beta_\rho,0),(\beta_\rho, 1), (\gamma, x_1), (\beta_\rho,\infty)}(dX) \times\SLE_\kappa(\rho,\kappa-8-2\rho)(d\eta_1)d{x_1}.
    \end{equation}
%    while the joint law of  the joint law of $(Z,X,y_1')$ is 
%    \begin{equation}
%      \int_1^\infty\int_0^\infty \Md_2(W;\ell)(dZ) (y_1'-1)^{\Delta_{\beta_{2,\rho}}-1}\LF_{\bbH,\ell}^{(\gamma,0),(\beta_\rho, 1), (\gamma, y_1'), (\beta_\rho,\infty)}(dX)\,d\ell dy_1'.
% \end{equation}
We weight the law of    $(X,\eta_1,x_1)$ by $\e^{\frac{\beta_{2,\rho}^2-\gamma^2}{4}}e^{\frac{\beta_{2,\rho}-\gamma}{2}Y_\e(y_1')}$. Following Lemma~\ref{lm:lf-change-weight} and the same argument as in the proof of Proposition~\ref{prop:disk+4-pt-disk}, as $\e\to0$ the joint law of $(X,\eta_1,x_1)$ converges vaguely to  
\begin{equation}\label{eq:pf-thm-green-rho-3}
        \mathds{1}_{x_1\in(1,\infty)} \psi_{\eta_1}'(x_1)^{1-\Delta_{\beta_{2,\rho}}}(\psi_{\eta_1}(x_1)-1)^{\Delta_{\beta_{2,\rho}}-1}\LF_\bbH^{(\beta_\rho,0),(\beta_\rho, 1), (\gamma, x_1), (\beta_\rho,\infty)}(dX) \times\SLE_\kappa(\rho,\kappa-8-2\rho)(d\eta_1)dx_1,
    \end{equation}
    while the joint law of $(Z,Y,y_1')$ converges vaguely to 
    \begin{equation}
    \mathds{1}_{y_1'\in(1,\infty)} \int_1^\infty\int_0^\infty \Md_2(W;\ell)(dZ) (y_1'-1)^{\Delta_{\beta_{2,\rho}}-1}\LF_{\bbH,\ell}^{(\gamma,0),(\beta_\rho, 1), (\beta_{2,\rho}, y_1'), (\beta_\rho,\infty)}(dY)d\ell dy_1'.
\end{equation}
 Note that $\psi_{\eta_1}-1$ is a constant multiple of $f_{\tau_1}$ (the centered Loewner map of $\eta_1$ at time $\tau_1$ when hitting 1), it follows that $\psi_{\eta_1}'(x_1)^{1-\Delta_{\beta_{2,\rho}}}(\psi_{\eta_1}(x_1)-1)^{\Delta_{\beta_{2,\rho}}-1} = f_\tau'(x_1)^{1-\Delta_{\beta_{2,\rho}}}(f_\tau(x_1)-1)^{\Delta_{\beta_{2,\rho}}-1}$. Also observe that $1-\Delta_{\beta_{2,\rho}} = b_\rho$. Since the law of $(\eta_2^1,\eta_3^1)$ agrees with the conditional law of $(\eta_2,\eta_3)$ given $\eta_1$ under $M(\rho;x)$ (as probability measure), it follows that the welding picture induced by the right panel of Figure~\ref{fig:pfgreen-rho} agrees with~\eqref{eq:thm-green-rho} and thus we conclude the proof.
\end{proof}

Before proving Theorem~\ref{thm:SLE-Green}, we observe the following dilation rule for $M_\alpha(x_0,...,x_N)$. 
\begin{lemma}\label{lm:green-dilation}
    Let $N\ge2,\alpha\in\mathrm{CLP}_N$ and $x_0<...<x_{N-1}$. Let $\varphi(z)=az+b$ where $a>0,b\in\bbR$. Then 
    \begin{equation}\label{eq:green-dilation}
        \varphi\circ M_\alpha(x_0,...,x_{N-1}) = a^{(N-1)b_2} M_\alpha(\varphi(x_0),...,\varphi(x_{N-1})).
    \end{equation}
\end{lemma}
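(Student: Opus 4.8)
The plan is to induct on $N$, exploiting that every ingredient entering the recursive definition of $M_\alpha$ --- chordal $\SLE_\kappa$ and $\SLE_\kappa(\kappa-8)$ aimed at $\infty$, their centered Loewner maps, the boundary derivative weights $f_{T_1}'(\cdot)^{b_2}$, and the function $G_1(z)=|z|^{-b_2}$ --- transforms in a controlled way under the affine map $\varphi(z)=az+b$, which is a conformal automorphism of $(\bbH,\infty)$. The key algebraic input is the covariance of the centered Loewner map. If $\eta$ is a curve in $\overline{\bbH}$ from a boundary point to $\infty$ with hulls $(K_t)_{t\ge 0}$ parameterized by half-plane capacity and centered Loewner maps $(f_t)_{t\ge 0}$, then since $\mathrm{hcap}(\varphi(K_t))=a^2\,\mathrm{hcap}(K_t)$, the image curve $\varphi\circ\eta$ has hull $\varphi(K_t)$ at capacity time $a^2t$, and a direct check of the hydrodynamic normalization gives that its centered Loewner map is $\hat f_{a^2t}=a\,f_t\circ\varphi^{-1}$. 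Hence $\hat f_{a^2t}(\varphi(x))=a\,f_t(x)$ and, differentiating, $\hat f_{a^2t}'(\varphi(x))=f_t'(x)$, so the derivative weights are invariant; moreover the capacity time at which $\varphi\circ\eta$ hits $\varphi(x_{i_1})$ is $a^2T_1$ and $\hat f_{a^2T_1}^{-1}=\varphi\circ f_{T_1}^{-1}\circ(a^{-1}\,\cdot\,)$. Before the induction I would record that the recursion is well posed, i.e.\ $\SLE_\kappa(\kappa-8)$ a.s.\ terminates at its force point and $|M_\alpha|<\infty$, both of which are cited in the excerpt.

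For the base case $N=2$, the probability measure $M_\alpha(x_0,x_1)^\#$ is the two-sided chordal $\SLE_\kappa$, obtained by concatenating an $\SLE_\kappa(\kappa-8)$ from $x_{i_0}$ aimed at $\infty$ (with force point $x_{i_1}$) with a chordal $\SLE_\kappa$ to $\infty$ in the unbounded complementary component; both are invariant under scaling and translation (relative to the target $\infty$), so $\varphi\circ M_\alpha(x_0,x_1)^\#=M_\alpha(\varphi(x_0),\varphi(x_1))^\#$. Since $G_\alpha(x_0,x_1)=G_1(x_1-x_0)=|x_1-x_0|^{-b_2}$ we get $G_\alpha(\varphi(x_0),\varphi(x_1))=a^{-b_2}G_\alpha(x_0,x_1)$, hence $\varphi\circ M_\alpha(x_0,x_1)=a^{b_2}M_\alpha(\varphi(x_0),\varphi(x_1))$, which is \eqref{eq:green-dilation} with $N=2$.

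For the inductive step, assume \eqref{eq:green-dilation} for all patterns in $\mathrm{CLP}_{N-1}$ (in particular for $\alpha^0$), and fix $\alpha=(i_0,\dots,i_{N-1})\in\mathrm{CLP}_N$. Couple a sample $(\tilde\eta_1,\tilde\eta_2,\dots)$ of $M_\alpha(\varphi(x_0),\dots,\varphi(x_{N-1}))$ with a sample $(\eta_1,\eta_2,\dots)$ of $M_\alpha(x_0,\dots,x_{N-1})$ by setting $\tilde\eta_1=\varphi\circ\eta_1$ (legitimate since $\SLE_\kappa(\kappa-8)$ aimed at $\infty$ is affine covariant) and, using the covariance of the centered Loewner map above together with the conformal invariance of the probability measure $M_{\alpha^0}^\#$, $(\tilde\eta_2,\dots)=\varphi\circ(\eta_2,\dots)$; indeed $\hat f_{a^2T_1}^{-1}\circ M_{\alpha^0}^\#\big(\hat f_{a^2T_1}(\varphi(x_\bullet))\big)=\varphi\circ f_{T_1}^{-1}\circ M_{\alpha^0}^\#\big(f_{T_1}(x_\bullet)\big)$ because $\hat f_{a^2T_1}(\varphi(x_j))=a\,f_{T_1}(x_j)$ and scaling by $a$ commutes with $M_{\alpha^0}^\#$. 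Under this coupling the weight defining $M_\alpha(\varphi(x_\bullet))$ equals $G_1\big(a(x_{i_1}-x_{i_0})\big)\prod_{j\ge 2}f_{T_1}'(x_{i_j})^{b_2}\,G_{\alpha^0}\big(a\,f_{T_1}(x_\bullet)\big)$; using $G_1(az)=a^{-b_2}G_1(z)$ and the induction hypothesis applied to $G_{\alpha^0}=|M_{\alpha^0}|$ (a $\mathrm{CLP}_{N-1}$ object, hence scaling by $a^{-(N-2)b_2}$) this is $a^{-(N-1)b_2}$ times the weight defining $M_\alpha(x_\bullet)$. Combining the two displays gives $M_\alpha(\varphi(x_0),\dots,\varphi(x_{N-1}))=a^{-(N-1)b_2}\,\varphi\circ M_\alpha(x_0,\dots,x_{N-1})$, which is \eqref{eq:green-dilation}. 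The main obstacle is the bookkeeping in this step: verifying the identity $\hat f_{a^2t}=a\,f_t\circ\varphi^{-1}$ and the consequent invariance of $f_{T_1}'(x_{i_j})^{b_2}$ together with the scaling $\hat f_{a^2T_1}(\varphi(x_j))=a\,f_{T_1}(x_j)$, and keeping straight that $M_{\alpha^0}^\#$ is the conformally invariant probability law while $G_{\alpha^0}$ is its total mass, so that the $a$-powers coming from $G_1$ and from the nested Green's function assemble to exactly $a^{-(N-1)b_2}$.
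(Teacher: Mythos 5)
Your proof is correct and follows essentially the same route as the paper's: both arguments induct on the recursive definition of $M_\alpha$, using the affine covariance of $\SLE_\kappa(\kappa-8)$ aimed at $\infty$ and the explicit transformation of the centered Loewner map under $\varphi$. Your formula $\hat f_{a^2 t}=a\,f_t\circ\varphi^{-1}$ (with the attendant identities $\hat f_{a^2T_1}(\varphi(x_j))=a f_{T_1}(x_j)$ and $\hat f_{a^2T_1}'(\varphi(x_j))=f_{T_1}'(x_j)$) is the correct form; the paper's shorthand $(a f_{t/a}(\cdot/a))_{t>0}$ has typos in the time reparametrization (should be $t/a^2$) and omits the translation, but the intended content agrees with what you wrote.
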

\begin{proof}
    Note that if $\eta$ is an $\SLE_\kappa(\kappa-8)$ curve from $x$ to $\infty$ with force point at $y$, then $\varphi\circ\eta$ is an $\SLE_\kappa(\kappa-8)$ curve from $\varphi(x)$ to $\infty$ with force point at $\varphi(y)$. Moreover, if $(f_t)_{t>0}$ is  the centered Loewner map for $\eta$, then the centered Loewner map for $\varphi\circ\eta$ is $(af_{t/a}(\cdot/a))_{t>0}$. The claim then follows by induction.
\end{proof}

\begin{proof}[Proof of Theorem~\ref{thm:SLE-Green}]
    Similar to the proof of Theorem~\ref{thm:green-rho} for $W=2$, the $N=2$ case is a direct consequence of Theorem~\ref{thm:disk+QT} and~\cite[Theorem 3]{SW05}. Now assume the theorem has been proved for $N$ and we are working on $N+1$ regime. By symmetry we only work on the case $i_1>i_0$.

    First assume $i_2>i_1$. Let $\alpha^0$ be the curve link pattern induced by removing the first segment of $\alpha$. Consider the picture induced by $\Wd_{\alpha^0}(\QD^{N+1})$. By our induction hypothesis, the entire surface and interface can be embedded as $(\bbH,Y_0,\eta_2^0,...,\eta_{N+1}^0,y_{i_0}',...,y_1',0,1,x_2',...,x'_{N-i_0-1})$ whose law is
    \begin{equation}\label{eq:pf-slegreen}
        \begin{split}
    c\cdot&\mathds{1}_{y'_{i_0}<...<y'_1<0<1<x'_2<...<x'_{N-i_0-1}}\bigg[\LF_\bbH^{(\beta,0),(\beta,\infty),(\beta_2,1),(\beta_2,x'_2),...,(\beta_2,x'_{N-i_0-1}),(\beta_2,y'_1),...,(\beta,y'_{i_0})}(dY_0)\times \\ &M_{\alpha^0}(y'_{i_0},...,y'_1,0,1,x'_2,...,x'_{N-i_0-1})(d\eta_2^0...d\eta_{N+1}^0) \bigg]dy'_1...dy'_{i_0}dx'_2...dx'_{N-i_0-1}.
    \end{split}
    \end{equation}

    Now we weight the law~\eqref{eq:pf-slegreen} by $\nu_{Y_0}({(y_1',0)})$ and   sample a point $\xi$ along the $(y_1',0)$ edge according to the probability measure $\nu_{Y_0}|_{(y_1',0)}^\#$ (take $y_1'=-\infty$ if $i_0=0$). Then the joint law of $(Y_0,\eta_1^0,...,\eta_{N}^0,y_{i_0}',...,y_1',\xi,\\x_2',...,x'_{N-i_0-1})$ is given by
    \begin{equation}\label{eq:pf-slegreen-1}
        \begin{split}
    c\cdot&\mathds{1}_{y'_{i_0}<...<y'_1<\xi<0<1<x'_2<...<x'_{N-i_0-1}}\bigg[\LF_\bbH^{(\beta,0),(\beta,\infty),(\beta_2,1),(\beta_2,x'_2),...,(\beta_2,x'_{N-i_0-1}),(\beta_2,y'_1),...,(\beta,y'_{i_0}), (\gamma,\xi)}(dY_0)\times \\ &M_{\alpha^0}(y'_{i_0},...,y'_1,0,1,x'_2,...,x'_{N-i_0-1})(d\eta_2^0...d\eta_{N+1}^0) \bigg]dy'_1...dy'_{i_0}dx'_2...dx'_{N-i_0-1}d\xi.
    \end{split}
    \end{equation}
    Consider the conformal map $f_\xi(z) = \frac{\xi-z}{\xi}$. Let $Y_1 = f_\xi\bullet_\gamma Y_0$, $\eta_j^1 = f_\xi\circ\eta_j^0$ for $j=2,...,N+1$.  Let $y_j'' = f_\xi(y_j')$ for $j=1,...,i_0$, $x_j'' = f_\xi(x_j')$ for $j = 2,...,N-i_0-1$, and $x_1'' = f_\xi(1)$. Then $(\bbH,Y_0,\eta_2^0,...,\eta_{N+1}^0,y_{i_0}',...,y_1',\xi,0,1,x_2',...,x'_{N-i_0-1})$ and $(\bbH,Y_1,\eta_2^1,...,\eta_{N+1}^1,y_{i_0}'',...,y_1'',0,1,x_1'',x_2'',...,x''_{N-i_0-1})$ represent the same quantum surface, while by Lemma~\ref{lm:lcft-H-conf}, Lemma~\ref{lm:green-dilation} and a change of variables, the latter tuple has law
    \begin{equation}\label{eq:pf-slegreen-2}
        \begin{split}
    c\cdot&\mathds{1}_{y''_{i_0}<...<y''_1<0<1<x_1''<x''_2<...<x''_{N-i_0-1}}\bigg[\LF_\bbH^{(\beta,1),(\beta,\infty),(\beta_2,x''_1),...,(\beta_2,x''_{N-i_0-1}),(\beta_2,y''_1),...,(\beta,y''_{i_0}), (\gamma,0)}(dY_1)\times \\ &M_{\alpha^0}(y''_{i_0},...,y''_1,1,x_1'',...,x''_{N-i_0-1})(d\eta_2^1...d\eta_{N+1}^1) \bigg]dy''_1...dy''_{i_0}dx''_1...dx''_{N-i_0-1}.
    \end{split}
    \end{equation}
    Here we have used the fact $\Delta_{\beta_2}+b_2=1$. By our definition, once we weld a quantum disk from $\QD_{2}$ along the $(0,1)$ edge, we obtain the picture $\Wd_\alpha(\QD^{N+2})$.

    On the other hand, consider the conformal welding of $\QD_{2}$ with a quantum triangle of weight $(4,2,4)$ embedded as $(\bbH,X,\eta_1,0,1,\infty)$ with the interface running from 0 to 1.  By Theorem~\ref{thm:disk+QT} and~\cite[Theorem 3]{SW05}, $\eta_1$ is an $\SLE_\kappa(\kappa-8)$ process from 0 to $\infty$ with force point at 1. Let $D_{\eta_1}$ be the unbounded component of $\bbH\backslash\eta_1$ and $\psi_{\eta_1}:D_{\eta_1}\to\bbH$ be the conformal map fixing 0,1,$\infty$. Also let $Y = \psi_{\eta_1}\bullet_\gamma X$. Let $D_{\eta_1}^L=\bbH\backslash D_{\eta_1}$ and $Z = (D_{\eta_1}^L,Y,0,1)$. Now we sample $\tilde x_1,...,\tilde x_{N-i_0-1},\tilde y_1,...,\tilde y_{i_0}$ from the measure
    $$1_{\tilde y_{i_0}<...<\tilde y_1<0<1<\tilde x_1<...<\tilde x_{N-i_0-1}}G_{\alpha^0}(\tilde y_{i_0},...,\tilde y_1,1,\tilde x_1,...,\tilde x_{N-i_0-1})\nu_Y(d\tilde y_{i_0})...\nu_Y(d\tilde y_1) \nu_Y(d\tilde x_1)...\nu_Y(d\tilde x_{N-i_0-1})$$
    and draw $\tilde \eta_2,...,\tilde\eta_{N+1}$ from the probability measure $M_{\alpha^0}^\#(\tilde y_{i_0},...,\tilde y_1,1,\tilde x_1,...,\tilde x_{N-i_0-1}).$ Then the joint law of $(Y,Z,\tilde y_{i_0},...,\tilde y_1,\tilde x_1,...,\tilde x_{N-i_0-1},\tilde \eta_2,...,\tilde\eta_{N+1})$ is 
    \begin{equation}\label{eq:pf-slegreen-3}
        \begin{split}
    c\cdot&\mathds{1}_{\tilde y_{i_0}<...<\tilde y_1<0<1<\tilde x_1<...<\tilde x_{N-i_0-1}}\int_0^\infty\bigg[\LF_{\bbH,\ell}^{(\gamma,0),(\beta,1),(\beta,\infty),(\gamma,\tilde x_1),...,(\gamma,\tilde x_{N-i_0-1}),(\gamma,\tilde y_1),...,(\gamma,\tilde y_{i_0})}(dY)\times\\& \Md_2(2;\ell)(dZ) \times M_{\alpha^0}(\tilde y_{i_0},...,\tilde y_1,1,\tilde x_1,...,\tilde x_{N-i_0-1})(d\tilde\eta_2^1...d\tilde\eta_{N+1}^1) \bigg]d\ell d\tilde y_1...d\tilde y_{i_0}d\tilde x_1...d\tilde x_{N-i_0-1},
    \end{split}
    \end{equation}
   where $\ell$ represents the quantum length $\nu_Y((0,1))$.  Let $x_j = \psi_{\eta_1}^{-1}(\tilde x_j)$ and $y_j = \psi_{\eta_1}^{-1}(\tilde y_j)$. Then the joint law of $(X,\eta_1,y_{i_0},...,y_1,x_1,...,x_{N-i_0-1})$  is 
    \begin{equation}\label{eq:pf-slegreen-4}
        \begin{split}
            c\cdot&\mathds{1}_{y_{i_0}<...<y_1<0<1<x_1<...<x_{N-i_0-1}}\bigg[\LF_\bbH^{(\beta,0),(\beta_2,1),(\beta,\infty),(\gamma,  x_1),...,(\gamma, x_{N-i_0-1}),(\gamma, y_1),...,(\gamma,  y_{i_0})}(dX)\times\\& G_{\alpha^0}\big(\psi_{\eta_1}(y_{i_0}),..., \psi_{\eta_1}(y_{1}),1,\psi_{\eta_1}(x_{1}),...,\psi_{\eta_1}(x_{N-i_0-1})\big)\SLE_\kappa(\kappa-8)(d\eta_1) \bigg] dy_{i_0}...dy_1dx_1..dx_{N-i_0-1}.
        \end{split}
    \end{equation}
    Now we weight the law of $(X,\eta_1,y_{i_0},...,y_1,x_1,...,x_{N-i_0-1})$ by $$\bigg(\prod_{j=1}^{i_0} \e^{\frac {\beta_2^2-\gamma^2}{4}}e^{\frac{(\beta_2-\gamma)X_\e(\tilde y_j) }{2} } \bigg)\cdot \bigg(\prod_{j=1}^{N-i_0-1} \e^{\frac {\beta_2^2-\gamma^2}{4}}e^{\frac{(\beta_2-\gamma)X_\e(\tilde x_j) }{2} } \bigg).$$
    Following Lemma~\ref{lm:lf-change-weight}, as we send $\e\to0$, the joint law of ~\eqref{eq:pf-slegreen-3} converges vaguely to 
    \begin{equation}\label{eq:pf-slegreen-5}
        \begin{split}
    c\cdot&\mathds{1}_{\tilde y_{i_0}<...<\tilde y_1<0<1<\tilde x_1<...<\tilde x_{N-i_0-1}}\int_0^\infty\bigg[\LF_{\bbH,\ell}^{(\gamma,0),(\beta,1),(\beta,\infty),(\beta_2,\tilde x_1),...,(\beta_2,\tilde x_{N-i_0-1}),(\beta_2,\tilde y_1),...,(\beta_2,\tilde y_{i_0})}(dY)\times\\& \Md_2(2;\ell)(dZ) \times M_{\alpha^0}(\tilde y_{i_0},...,\tilde y_1,1,\tilde x_1,...,\tilde x_{N-i_0-1})(d\tilde\eta_2...d\tilde\eta_{N+1}) \bigg]d\ell d\tilde y_1...d\tilde y_{i_0}d\tilde x_1...d\tilde x_{N-i_0-1},
    \end{split}
    \end{equation}
    which is the conformal welding of a curve-decorated surface from~\eqref{eq:pf-slegreen-2} with a sample from $\QD_{2}$ along the $(0,1)$ edge. Meanwhile, the joint law~\eqref{eq:pf-slegreen-4} converges vaguely to 
     \begin{equation}\label{eq:pf-slegreen-6}
        \begin{split}
            c\cdot&\mathds{1}_{y_{i_0}<...<y_1<0<1<x_1<...<x_{N-i_0-1}}\bigg[ \LF_\bbH^{(\beta,0),(\beta_2,1),(\beta,\infty),(\gamma,  x_1),...,(\gamma, x_{N-i_0-1}),(\gamma, y_1),...,(\gamma,  y_{i_0})}(dX)\times\\&\big( \prod_{j=1}^{i_0}\psi_{\eta_1}'(y_j)^{1-\Delta_{\beta_2}}\cdot \prod_{j=1}^{N-i_0-1}\psi_{\eta_1}'(x_j)^{1-\Delta_{\beta_2}}\big)G_{\alpha^0}(\psi_{\eta_1}(y_{i_0}),..., \psi_{\eta_1}(y_{1}),1,\psi_{\eta_1}(x_{1}),...,\psi_{\eta_1}(x_{N-i_0-1}))\\&\SLE_\kappa(\kappa-8)(d\eta_1) \bigg] dy_{i_0}...dy_1dx_1..dx_{N-i_0-1}.
        \end{split}
    \end{equation}
    Since $\psi_{\eta_1}-1$ is a constant multiple of $f_{\tau_1}$ (the centered Loewner map of $\eta_1$ at time $\tau_1$ when hitting 1), it follows from Lemma~\ref{lm:green-dilation} that if we let $\eta_j = \psi_{\eta_1}^{-1}\circ\tilde\eta_j$, the joint law of $(Y,y_{i_0},...,y_1,x_1,...,x_{N-i_0-1},\eta_1,...,\eta_{N+1})$ agrees with the left hand side of~\eqref{eq:thm-Green} (with $N$ replaced by $N+1$). Therefore we finish the proof for $i_2>i_1$ case.

    The $i_2<i_1$ case follows similarly. By our induction hypothesis, the surface and interface from $\Wd_{\alpha^0}(\QD^{N+1})$ can be embedded as $(\bbH,Y_0,\eta_2^0,...,\eta_{N+1}^0,y_{i_0}',...,y_2',0,1,x_1',...,x'_{N-i_0-1})$ whose law is
    \begin{equation}\label{eq:pf-slegreen-7}
\begin{split}
    c\cdot&\mathds{1}_{y_{i_0}'<...<y_2'<-1<0<x_1'<...<x_{N-i_0-1}'}\bigg[\LF_\bbH^{(\beta,0),(\beta,\infty),(\beta_2,1),(\beta_2,x_1'),...,(\beta_2,x_{N-i_0-1}'),(\beta_2,y_1'),...,(\beta,y_{i_0-1}')}(dY_0)\times \\ &M_{\alpha^0}(y_{i_0}',...,y_2',-1,0,x_1',...,x_{N-i_0-1}')(d\eta_2^0...d\eta_{N+1}^0)\bigg]dy_2'...dy_{i_0}'dx_1'...dx_{N-i_0-1}'.
    \end{split}
\end{equation}
We weight the law~\eqref{eq:pf-slegreen-7} by $\nu_{Y_0}((-1,0))$ and sample a point $\xi$ from $\nu_{Y_0}|_{(-1,0)}^\#$. Consider the conformal map $f_\xi(z) = \frac{\xi-z}{\xi}$, and let $y_j'' = f_\xi(y_j)$, $y_1'' = f_\xi(-1)$ and $x_j'' = f_\xi(x_j)$. Set $\eta_j^1 = f_\xi\circ\eta_j^0$ and $Y_1 = f_\xi\bullet_\gamma Y_0$. Then $(\bbH,Y_0,\eta_2^0,...,\eta_{N+1}^0,y_{i_0}',...,y_2',0,1,x_1',...,x'_{N-i_0-1})$ and $(\bbH,Y_1,\eta_2^1,...,\eta_{N+1}^1,y_{i_0}'',...,y_1'',0,1,x_1'',x_2'',...,x''_{N-i_0-1})$ represent the same quantum surface. Moreover by Lemma~\ref{lm:lcft-H-conf}, Lemma~\ref{lm:green-dilation} and a change of variables, the latter tuple has the same law as~\eqref{eq:pf-slegreen-2}.  The rest of the proof is the same as the previous case.

\end{proof}

\bibliographystyle{alpha}
\bibliography{theta}

\end{document}